\documentclass[12pt]{amsart}
\usepackage{mathrsfs}
\usepackage[dvipsnames]{xcolor}
\usepackage[latin1]{inputenc}
\usepackage{geometry}
\usepackage{amsmath}
\usepackage{amscd}
\usepackage{amstext}
\usepackage{amsbsy}
\usepackage{amsopn}
\usepackage{amssymb}
\usepackage{amsthm}
\usepackage{amsxtra}
\usepackage{verbatim}

\title[Analytic functions relative to a
covariance map $\eta$]{Analytic functions relative to a
covariance map $\eta$: \\I. Generalized Haagerup products and analytic relations}
\author[Yoann Dabrowski]{Yoann Dabrowski}
\thanks {Research partially supported by ANR Grant NEUMANN}
\subjclass[2000]{46L54, 46L07}
\keywords{Non-commutative analytic functions, Haagerup tensor products, free Fisher information, conditional free probability }

\def\R{\mbox{I\hspace{-.15em}R} }

\def\C{\mathbb{C}}

\def\I{\mathscr{J}}

\theoremstyle{plain}
      \newtheorem{theorem}{Theorem}
      \newtheorem{lemma}[theorem]{Lemma}
      \newtheorem{corollary}[theorem]{Corollary}
      \newtheorem{proposition}[theorem]{Proposition}
     
      \theoremstyle{definition}
      \newtheorem{definition}[theorem]{Definition}
      \theoremstyle{hypothesis}

     \theoremstyle{remark}
     \newtheorem{remark}[theorem]{Remark}
     \theoremstyle{Fact}
     
     \theoremstyle{Example}
     \newtheorem{exemple}[theorem]{Example}
\theoremstyle{notation}

\setlength{\textwidth}{16,5cm}
\begin{document}
\maketitle
\begin{abstract}
We generalize module weak-* Haagerup tensor products to obtain complete quotients of normal Haagerup tensor product included in canonical Hilbert spaces associated to completely positive normal (covariance) maps $\eta$ on a  finite von Neumann algebra $B$. We construct in this way dual operator spaces, providing new examples even in the case of module extended Haagerup tensor products. This is the basis for defining a matrix normed algebra of analytic functions that captures the relations of free semicircular variables with covariance $\eta$. We prove that a class of non-commutative random variables having finite Fisher information relative to $\eta$ have also no analytic relations among our class of analytic functions.
\end{abstract}

\section*{Introduction}

In a fundamental series of papers, Voiculescu introduced analogs of entropy and Fisher information in the context of free probability theory. A first microstate free entropy $\chi(X_{1},...,X_{n})$ is defined as a normalized limit of the volume of sets of microstate i.e. matricial approximations (in moments) of the n-tuple of self-adjoints $X_{i}$ living in a (tracial) $W^{*}$-probability space $M$. Starting from a definition of a free Fisher information \cite{Vo5}, Voiculescu also defined a non-microstate free entropy $\chi^{*}(X_{1},...,X_{n})$. For more details, we refer the reader to the survey \cite{VoS} for a list of properties as well as applications of free entropies in the theory of von Neumann algebras.

Morally speaking, finite entropy is a (strengthened) substitute for ``absolute continuity" with respect to free semicircular variables, which are the reference variables. Most assumptions of results applying free probability to von Neumann algebras are related to free entropy or free Fisher information. Especially, the most recent isomorphism results using monotone transport  \cite{alice-shlyakhtenko-transport} uses analytic assumptions on conjugate variables (the free analogue of score function) and obtains analytic transport maps.

Moreover, starting from \cite{Sp98}, semicircular variables relative to a subalgebra have been studied in the framework of conditional free probability. Those variables have been further investigated from a von Neumann algebra viewpoint, for instance in \cite{S99}. In this context, for a family $(S_i)_{i\in I}$ of semicircular variables over a von Neumann subalgebra $B$, the covariance map on $B$ defined by $\eta_{ij}(b)=E_B(S_ibS_j)$ is the basic parameter. In general, any $\eta:B\to B\otimes B(\ell^2(I))$ normal completely positive map can be obtained in such a way. However, we will only consider the tracial case where $B$ has faithful normal tracial state and where $\eta$ is $\tau$-symmetric, i.e. $\tau(\eta_{ij}(b)c)=\tau(b\eta_{ji}(c)),b,c\in B$ which is a necessary and sufficient condition so that the algebra generated by $B$ and $S_i$ to be tracial (see \cite[Prop 2.20]{S99}). 

The corresponding relative entropy has been introduced in \cite{Shl00}, see also section 3.3 bellow for a reminder.

Contrary to the setting with $\eta=\tau$ corresponding to ordinary free entropy, that prevents analytic relations \cite[lemma 37]{dabrowski:SPDE} (see also more recent developments in \cite{MS14,S14}), the general case has to deal with lots of relations. The covariance map $\eta$ encodes relations of $B$ and $S_i$. For instance, when $\eta:B\to B$ is a (trace preserving) conditional expectation $\eta=E_D,D\subset B$, then the variable $S$ commutes with $D$, and even $B$ and $S$ are free with amalgamation over $D$ so that the commutation relation is basically the only relation. 

Trying to generalize transport for semicircular variables with covariance thus implies to deal with these relations in a analytic function setting. The goal of this  paper and the following series is to develop the analytic tools needed in this respect. Motivated by an ongoing joint work with Guionnet and Shlyakhtenko on transport, our analysis will be based on Haagerup tensor products (see sections 1.1, 1.2 for reminders).

Even though the nice commutation relations of projective tensor product with $\ell^1$ direct sum would maybe suggest its use as a first choice for analytic functions, its lack of injectivity and its complicated kernel, when mapped to minimal tensor product in absence of approximation properties, makes it hard to use to capture relations.
On the other hand, being both projective and injective, having canonical maps $C\otimes_h C\to C$ for any operator algebra, the Haagerup tensor product will be much easier to use. The projective feature will be useful for the algebraic properties required by analytic functions while the injective feature will help taking care of relations. We will still need to capture our relations in a new variant of this tensor product in the general covariance case $\eta.$

The case of the already studied module Haagerup tensor product that captures the commutation relation, the case $\eta=E_D$ for us, will be our starting point and an important tool for our generalization. For obtaining weak-* compactness, we look for dual operator spaces. The study of module extended Haagerup tensor product with this feature has been started by Magajna in an impressive series of papers culminating in \cite{M97,M05} with a convenient duality theory. However, contrary to the non-module case, we don't have in general equality but only  $X\otimes_{eh D}Y\subset (X_\natural\otimes_{h D'} Y_\natural)^\natural$ for an appropriate notion of module predual $X_\natural$ and module dual $X^\natural$. Our first main result expresses how this issue does not appear in the situation we are interested in where $X,Y$ are finite von Neumann algebras (Theorem \ref{MainHaagerupModule}). At this stage, we will know $X\otimes_{eh D}Y$ is both a complete quotient of a huger non module Haagerup tensor product, the normal Haagerup tensor product $X\otimes_{\sigma h} Y$ and included, in the case $X=Y=M$ finite von Neumann algebra, in a canonical Hilbert space $\overline{BSB}^{L^2(W^*(S,B))}$ related to the semicircular $S$ of covariance $id_B$. 

Those two features will give our guiding line in the general case to get our Haagerup tensor product relative to $\eta$. We will look for a complete quotient included in a Hilbert space related to semicircular variable of covariance $\eta$ . Moreover the weak-* topology will be induced on bounded sets by the one of the Hilbert space and we will indeed obtain dual operator spaces. The extended Haagerup tensor product will also reappear to obtain a substitute of associativity in this context in Theorem \ref{SubmoduleEtaTHM}.

Finally, we will be able to introduce a class of analytic functions based on these Haagerup tensor products of subsection 2.2. Section 3 will explain the most general results, and obtain evaluation maps and free difference quotients. Then Theorem \ref{MainRelation} will show that the relations captured by these techniques are indeed related to the relative free entropy setting of \cite{Shl00}, generalizing partially \cite[lemma 37]{dabrowski:SPDE} and \cite{MS14,S14}. We should emphasize that our results only says that finite Fisher information prevents having more analytic relations than semicircular variables, it does not give any answer about the interesting conjecture that they should exactly the same relations, maybe under more restrictive assumptions on conjugate variables. A second paper in this series will deal with more technical results on analytic functions for applications to transport.

Let now describe the content in detail. The paper contains 3 sections after this introduction. Section 1 contains preliminary material, mostly using operator space techniques. Section 1.1 and 1.2 give background on Haagerup and module Haagerup tensor products. Section 1.3 starts the specialization to finite von Neumann algebras and describe module preduals in that case. Since analytic functions will need an appropriate type of $\ell^1$ direct sums, we describe an obvious candidate in section 1.4 keeping stable the class of operator modules. Section 1.5 describes various shuffle maps between various tensor products, mostly of projective type (or various duals of projective or nuclear tensor products) and Haagerup type (including normal and extended), that will be useful to define evaluations of our analytic functions. We also need in that respect a projective product adapted to operator modules. Technically functoriality often reduces some spaces to be taken only a matrix space. We also have to keep functoriality of the construction in order to be able to use several times iteratively on analytic functions. Subsection 1.6 and 1.7 are essentially technical. They use some matricially normed space structure (which are not operator spaces) on duals to obtain a density result enabling to prove the automatic normality result in section 2.1. To give an intuition, we need to put matrix norms natural as algebras of $CB$ maps (for compostion) on our finite von Neumann algebra instead algebras of bounded operators on a Hilbert space. This enables us to exploit reflexivity of $L^2$ in a case where considering only bounded operators on a Hilbert space make arrive annoyingly in $L^1$. Hopefully, the original paper of Blecher and Paulsen on Haagerup tensor product gave a guiding line for the use of such more general matricially normed spaces.

The result in section 2.1 is then easily obtained and we can, in that way, identify various module extended Haagerup products with duals of module Haagerup products without extra normality conditions. This will be crucial to exploit a dual operator space structure on them later. Section 2.2 then defines the generalized Haagerup products relative to covariance maps, and  expresses $n$-ary tensor products in terms of ordinary module extended Haagerup product of $2$-ary products. This will be what will serve as substitute of associativity since extended Haagerup product is associative. In the same way as multiplication with module products make appear a notion of commutant, we need to develop the appropriate substitute in section 3.3 for the case with covariance maps.

We are then ready to introduce analytic functions. We start by a universal class using normal Haagerup tensor products in section 3.1. It enables to keep weak-* continuity of evaluation. Then we introduce a natural complete quotient associated to our Haagerup tensor product with a covariance map. We can build in section 3.2  evaluation maps and free difference quotients while keeping some weak-* density of ordinary polynomials and some weak-* continuity properties. Finally, we recall free entropy relative to our covariance map in section 3.3 and explain our absence of analytic relation result. Most of the preliminary material has been developed in the universal context of section 3.1 and for building evaluation maps. The proof is thus quite similar to the case $\eta=\tau$, once the appropriate analytic machinery has been developed.

\section{Preliminaries}

\subsection{Haagerup tensor products}

We will use variants of Haagerup tensor product  $A\otimes_hA$ of some $C^*$ algebra $A$ as a building block for analytic functions.
Recall, there is always a multiplication $m:A\otimes_h A\to A$ and this characterizes operator algebras (see e.g \cite[Th 6.2]{PisierBook}).
 The Haagerup tensor product is well known to be associative, non-commutative, projective and injective.
We recall the following well known properties of this tensor product that will make it well suited for our purposes. We mostly refer to \cite{PisierBook} or to \cite{EffrosRuan} for details.

We now recall more sophisticated results about dual Haagerup tensor products. Even if we won't recall definitions, let us remind there are mostly three dual Haagerup tensor products : the weak-* Haagerup tensor product \cite{BlecherSmith} of dual operator spaces such that $X^*\otimes_{w^*\text{h}}Y^*=(X\otimes_h Y)^*$, its extension by injectivity the extended Haagerup tensor product $X\otimes_{eh}Y$ (cf e.g. \cite{EffrosRuanDual}) and finally a dual version of this last tensor product the normal Haagerup tensor product \cite{EffrosKishimoto} again defined only for dual operator spaces $X^*\otimes_{\sigma\text{h}}Y^*=(X\otimes_{eh} Y)^*.$

\begin{theorem}\label{DualHaagerup}
\begin{enumerate}
\item \cite{BlecherSmith,EffrosRuanDual} For any operator spaces $X_1,...,X_n$ there are completely isometric inclusions  $X_1\otimes_h...\otimes_hX_n\hookrightarrow  X_1\otimes_{eh}...\otimes_{eh}X_n$ and  $$(X_1\otimes_h...\otimes_hX_n)^*\simeq X_1^*\otimes_{eh}...\otimes_{eh}X_n^*\hookrightarrow  (X_1^*\otimes_{\sigma h}...\otimes_{\sigma h}X_n^*)\simeq (X_1\otimes_{eh}...\otimes_{eh}X_n)^*$$ the second inclusion having as retract the natural weak-* continuous projection dual of the first. Moreover $\otimes_{eh}$ and $\otimes_{\sigma h}$ are associative, define contractive tensor products of maps and if $X_i\subset Y_i$ completely isometrically, then $X_1\otimes_{eh}...\otimes_{eh}X_n\subset Y_1\otimes_{eh}...\otimes_{eh}Y_n,$ completely isometrically (i.e. $\otimes_{eh}$ is injective, but it is not projective)
\item \cite{EffrosKishimoto} For $M\subset B(H),N\subset B(K)$ von Neumann subalgebras, the lateral multiplication map $(x\otimes y)\mapsto (\psi_0(x\otimes y):T\mapsto xTy)$ extends uniquely to a weak-* $\sigma$-weakly homeomorphic isometric $(M,N)$-bimodule isomorphism of $M\otimes_{\sigma h} N$ with $(M',N')$ bimodular completely bounded maps on $B(K,H)$ : $CB_{M',N'}(B(K,H),B(K,H)).$

As a consequence, $(x\otimes y)\#(x'\otimes y')=(xx'\otimes y'y)$ extends to a Banach algebra multiplication on  $M\otimes_{\sigma h} N$  corresponding to composition in the isomorphim above.
\item \cite{BlecherSmith} For $M\subset B(H),N\subset B(K)$ von Neumann subalgebras, the lateral multiplication map $(x\otimes y)\mapsto (\psi_0(x\otimes y):T\mapsto xTy)$ extends uniquely to a complete isometric weak-* homeomorphic $(M,N)$-bimodule isomorphism of $M\otimes_{eh} N\simeq M\otimes_{w^*h} N$ with $(M',N')$ bimodular completely bounded maps from compacts $\mathcal{K}(K,H)$ to $B(K,H)$ : $CB_{M',N'}(\mathcal{K}(K,H),B(K,H)).$

Especially, there is a extension of the multiplication map $m:M\otimes_{\sigma h}M\to M.$

Moreover $(x\otimes y)\#(x'\otimes y')=(xx'\otimes y'y)$ extends to a Banach algebra multiplication on  $M\otimes_{eh} N$ weak-* continuous in the first but not in the second variable. 
\item \cite{BlecherSmith} For any von Neumann algebras $M,N$ the canonical map $M\otimes_{eh}N\to M\overline{\otimes}N$ to the von Neumann tensor product is an injection.

 As a consequence, if $M_0\subset M,N_0\subset N$ finite, we have inclusions $M\otimes_{h}N\subset M\otimes_{eh}N \subset M\overline{\otimes}N\subset L^2(M\otimes N,\tau)$ such that the projection $P_{L^2(M_0\otimes N_0)}$ restricted to $M\otimes_{eh}N$ equals $E_{M_0}\otimes_{eh}E_{N_0}$ and restricted to $M\otimes_{h}N$ equals $E_{M_0}\otimes_{h}E_{N_0}$.
 \end{enumerate}
\end{theorem}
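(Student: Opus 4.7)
The four items are essentially repackagings of known results, so the plan is to assemble them carefully from the cited references rather than produce new proofs.

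For item (1), the plan is to combine two fundamental dualities -- the identification $(X \otimes_h Y)^* \simeq X^* \otimes_{eh} Y^*$ from \cite{EffrosRuanDual} and $(X \otimes_{eh} Y)^* \simeq X^* \otimes_{\sigma h} Y^*$ from \cite{EffrosKishimoto} -- and iterate them using associativity of $\otimes_{eh}$ and $\otimes_{\sigma h}$ (both established in the same references). The completely isometric inclusion $X_1 \otimes_h \cdots \otimes_h X_n \hookrightarrow X_1 \otimes_{eh} \cdots \otimes_{eh} X_n$ then follows by embedding each $X_i$ into its bidual $X_i^{**}$, applying the duality with $X_i^*$ in place of $X_i$, and restricting via injectivity of $\otimes_{eh}$. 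The dual inclusion in the statement is simply the canonical embedding of a Banach space into its bidual, whose retract is the weak-* continuous adjoint of $\otimes_h \hookrightarrow \otimes_{eh}$.

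Items (2) and (3) are direct quotations from \cite{EffrosKishimoto} and \cite{BlecherSmith} respectively, identifying $M \otimes_{\sigma h} N$ and $M \otimes_{eh} N$ as spaces of bimodular CB maps via the lateral action $T \mapsto xTy$; the algebra structure $(x \otimes y)\#(x'\otimes y') = (xx' \otimes y'y)$ corresponds on the other side to composition of CB maps, which also gives both the weak-* continuity in the first variable and its failure in the second (since $\mathcal{K}(K,H)$ and $B(K,H)$ enter asymmetrically). The extension of the product $m: M \otimes_{eh} M \to M$ can be obtained either from this CB identification or, more elementarily, by extending the norm-bounded $m$ on $M \otimes_h M$ to its bidual via the duality from (1).

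Item (4) begins from the chain $M \otimes_{eh} N \hookrightarrow B(H) \otimes_{eh} B(K) \to B(H \otimes K)$, with injectivity of the second map by spatial reasons, giving the injectivity statement (as in \cite{BlecherSmith}). For the $L^2$ consequence in the finite setting, the strategy is functoriality of $\otimes_{eh}$ and $\otimes_h$: the normal conditional expectations $E_{M_0}, E_{N_0}$ are completely contractive, so $E_{M_0} \otimes_{eh} E_{N_0}$ makes sense as a contractive CB map into $M_0 \otimes_{eh} N_0$, and similarly for $\otimes_h$. On the algebraic tensor $M \otimes_{\mathrm{alg}} N$, this map and the $L^2$-orthogonal projection coincide by direct evaluation; the identification extends to all of $M \otimes_{eh} N$ using weak-* density of the algebraic tensor and continuity of the chain $M \otimes_{eh} N \hookrightarrow M \overline{\otimes} N \hookrightarrow L^2(M \overline{\otimes} N)$. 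The main technical obstacle is precisely this last compatibility step: harmonizing the a priori distinct operator-space and Hilbert-space settings so that the purely operator-space object $E_{M_0} \otimes_{eh} E_{N_0}$ truly agrees with the Hilbert-space orthogonal projection, which forces one to track continuity and density carefully through each link of the chain.
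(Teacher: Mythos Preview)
The paper does not actually give a proof of this theorem: it is stated as a collection of preliminary results from the literature, each item carrying an explicit citation to \cite{BlecherSmith}, \cite{EffrosRuanDual}, or \cite{EffrosKishimoto}, and is followed immediately by a remark rather than a proof environment. Your proposal --- assembling the statements from the cited references and indicating how they fit together --- is therefore exactly the right thing to do and matches the paper's intent.

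One small comment: in item (3) you mention obtaining the extended multiplication $m: M \otimes_{\sigma h} M \to M$ ``more elementarily, by extending the norm-bounded $m$ on $M \otimes_h M$ to its bidual via the duality from (1).'' Be careful here: the bidual of $M \otimes_h M$ is $M^{**} \otimes_{\sigma h} M^{**}$, not $M \otimes_{\sigma h} M$, so this route requires an additional normality/restriction argument. The CB-identification route (or the separate weak-* continuity argument via \cite[Prop.~5.9]{EffrosRuanDual}, which the paper uses repeatedly elsewhere) is cleaner.
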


The reader should note that from the proof by duality of the last statement, the crucial injection to the von Neumann tensor product of $M\otimes_{eh} N$ is not valid for $M\otimes_{\sigma h} N$ in general. Looking for such type of inclusion will be crucial for us to generalize those various Haagerup tensor products.

\subsection{Haagerup tensor products of $D$-modules.}\label{HaagD}
There are mainly 3 kinds of Haagerup tensor products of operator modules over a $C^*$ algebra $D$ which will be usually a von Neumann algebra. 

We follow \cite{M05} for notation.
For $X$ a right $D$ operator module (written $X\in OM_D$)  and $Y$ a left $D$ operator module, $X\otimes_{hD} Y$ is the quotient of $X\otimes_h Y$ by the closed subspace generated by elements of the form $xd\otimes y-x\otimes d y$, $x\in X,y\in Y, d\in D,$ cf. \cite[section 3.4]{BLM}. If $D$ is a von Neumann algebra, there is also a notion of extended Haagerup tensor product. This is the only one not having an obvious quotient description even though there is a less obvious one. This is not obvious because, even though the extended Haagerup product is often a dual operator space, the module variant is not a quotient by a weak-* closed subspace. The two other products are as follows, for $X$ a strong normal right $D$ operator module and $Y$ a strong normal left $D$ operator module (see \cite{M97}, written $X\in SOM_D$) $X\otimes_{ehD}Y$ and, for normal dual operator modules (see \cite{M05} especially for the terminology above, written $X\in NDOM_D, Y\in {}_DNDOM$) a normal Haagerup tensor product $X\otimes_{\sigma\text{h}D}Y$ which coincides with the obvious quotient by the weak-* closure of the module generated by $d\otimes 1- 1\otimes d$. We first recall a few known general results.% and then specify to the context we will be most interested in. 
The reader should remember the specific class where each product is well behaved and defined, respectively operator modules, strong operator modules and normal dual operator modules which is more and more restrictive.

\begin{theorem}\label{HaagerupModule}
\begin{enumerate}
\item (Associativity)
\cite[Th 3.4.10]{BLM} For $X\in OM_D, Y\in {}_DOM_D,Z\in {}_DOM$, we have $X\otimes_{h D}Y\in OM_D,Y\otimes_{h D}Z\in{}_DOM$ and :
$$(X\otimes_{h D}Y)\otimes_{h D}Z=X\otimes_{h D}(Y\otimes_{h D}Z).$$
\cite[Prop 4.1, Th 4.3]{M97} For $X\in SOM_D, Y\in {}_DSOM_D,Z\in {}_DSOM$, we have $X\otimes_{eh D}Y\in SOM_D,Y\otimes_{eh D}Z\in{}_DSOM$ and :
$$(X\otimes_{eh D}Y)\otimes_{eh D}Z=X\otimes_{eh D}(Y\otimes_{eh D}Z).$$
\item (Fonctoriality,Injectivity) \cite[lemma 3.4.5]{BLM} For any completely bounded D-module maps between operator spaces $$u_1:A_1\to B_1,u_2:A_2\to B_2,A_1,B_1\in OM_D,A_2,B_2\in {}_DOM$$ the maps $$u_1\otimes_D u_2:A_1\otimes_{h D}A_2\to B_1\otimes_{hD}B_2$$ is completely bounded, and \cite[section 7]{AP} it is completely isometric if $u_1,u_2$ are. 
\cite[Proof of Prop 3.3]{M97} (see  also \cite[Th 2.3]{B97b}) If $D$ is a von Neumann algebra, for any completely bounded D-module maps between strong operator spaces $u_1:A_1\to B_1,u_2:A_2\to B_2,A_1,B_1\in SOM_D,A_2,B_2\in {}_DSOM$ the maps $u_1\otimes_D u_2:A_1\otimes_{eh D}A_2\to B_1\otimes_{eh D}B_2$ is completely bounded, and \cite[Proof of Prop 3.12]{M05} it is injective if $u_1,u_2$ are. % we don't really need injectivity since our complete isometries will always have completely bounded left inverses given by conditional expectations  
\item (Module duals) \cite[lemma 2.4]{M97,M95} If $X\in SOM_D, Y\in {}_DSOM$ there are completely isometric inclusions  $X\otimes_{h D}Y\hookrightarrow  X\otimes_{eh D} Y$ 
\cite[Th 3.2,Th 4.2,]{M05}  if $M,D,N$ are finite von Neumann algebras in standard form on their $L^2$ space, $X\in  {}_MSOM_D, Y\in {}_DSOM_N$, if we call $X^\natural:=CB_M(X,B(L^2(D),L^2(M)))_D,$

\noindent $Y^\natural:=CB_D(X,B(L^2(N),L^2(D)))_N$ the (proper) bimodule duals, and $(X\otimes_{h D}Y)^{\natural D norm}$ the subspace of $(X\otimes_{h D}Y)^{\natural}:=CB_M(X\otimes_{h D}Y,B(L^2(N),L^2(M)))_N$ of maps normal in $D$, then if $D'\simeq D^op$ is the commutant of $D$ in its action on $L^2(D)$
$$(X\otimes_{h D}Y)^{\natural D norm}\simeq X^\natural\otimes_{eh D'}Y^\natural\supset X^\natural\otimes_{h D'}Y^\natural,$$
 $$%\hookrightarrow 
X^\natural\otimes_{\sigma\text{h} D'}Y^\natural\simeq (X\otimes_{eh D}Y)^{\natural}\simeq X^\natural\otimes_{\sigma\text{h}}Y^\natural/Q(D') .$$

with $Q(D)$ the weak-* closed subspace of $X^\natural\otimes_{\sigma\text{h}}Y^\natural$ generated by all elements of the form $xb\otimes y - x\otimes by, x\in X^\natural, y\in Y^\natural, b\in D'.$

%and the second inclusion having as retract the natural weak-* continuous projection dual of the previously described inclusion between.% 
\item (Multiplication, adjoint) \cite[Th 4.4]{M05} For $M\subset B(H),N\subset B(K)$ von Neumann subalgebras containing $D$ as a common sub-von Neumann algebra the lateral multiplication map $(x\otimes y)\mapsto (\psi_0(x\otimes y):T\mapsto xTy)$ extends uniquely to a weak-*  homeomorphic completely isometric $(M,N)$-bimodule isomorphism of $M\otimes_{\sigma \text{h} D} N$ with $(M',N')$ bimodular completely bounded maps from $B_D(K,H)=D'\cap B(K,H)$ to  $B(K,H)$ : 
$$CB_{M',N'}(B_D(K,H),B(K,H))=M\otimes_{\sigma \text{h} D} N.$$
Especially by definition $M\otimes_{\text{eh} D} N\hookrightarrow M\otimes_{\sigma \text{h} D} N$ completely isometrically.
As a consequence, $(x\otimes y)\#\sum (x'\otimes y')=\sum(xx'\otimes y'y)$ extends to a Banach space module map on  $M\otimes_{\sigma h D} N\times (D'\cap (M\otimes_{\sigma h D} N))$ or a Banach algebra multiplication on $D'\cap (M\otimes_{\sigma h D} N)\simeq CB_{M',N'}(B_D(K,H),B_D(K,H))$  corresponding to composition in the isomorphim above (thus weak-* continuous in each variable) and these maps restricts to the corresponding extended Haagerup tensor products and Haagerup tensor product.

Likewise, as a consequence, if $M=N$ the map $U\in  (M\otimes_{\sigma h D} M\mapsto U^{\star}$ defined by $U^{\star}(B)=(U\#B^*)^*$ for $B\in B_D(H)$ is an isometric antilinear involution that restricts to $(M\otimes_{eh D} M,(D'\cap (M\otimes_{eh D} M),D'\cap(M\otimes_{\sigma h D} M $.
%\item \cite{BlecherSmith} For $M\subset B(H),N\subset B(K)$ von Neumann subalgebras, the lateral multiplication map $(x\otimes y)\mapsto (\psi_0(x\otimes y):T\mapsto xTy)$ extends uniquely to a complete isometric weak-* homeomorphic $(M,N)$-bimodule isomorphism of $M\otimes_{eh} N\simeq M\otimes_{w^*h} N$ with $(M',N')$ bimodular completely bounded maps from compacts $\mathcal{K}(K,H)$ to $B(K,H)$ : $CB_{M',N'}(\mathcal{K}(K,H),B(K,H)).$

%Especially, there is a extension of the multiplication map $m:M\otimes_{\sigma h}M\to M.$

%Moreover $(x\otimes y)\#(x'\otimes y')=(xx'\otimes y'y)$ extends to a Banach algebra multiplication on  $M\otimes_{eh} N$ weak-* continuous in the first but not in the second variable. 
%\item \cite{BlecherSmith} For any von Neumann algebras $M,N$ the canonical map $M\otimes_{eh}N\to M\overline{\otimes}N$ to the von Neumann tensor product is an injection.

% As a consequence, if $M_0\subset M,N_0\subset N$ finite, we have inclusions $M\otimes_{h}N\subset M\otimes_{eh}N \subset M\overline{\otimes}N\subset L^2(M\otimes N,\tau)$ such that the projection $P_{L^2(M_0\otimes N_0)}$ restricted to $M\otimes_{eh}N$ equals $E_{M_0}\otimes_{eh}E_{N_0}$ and restricted to $M\otimes_{h}N$ equals $E_{M_0}\otimes_{h}E_{N_0}$.
 \end{enumerate}
\end{theorem}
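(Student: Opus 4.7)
The statement gathers four facts which in the paper are all attributed to the literature (\cite{BLM,M97,M05,AP,B97b,M95}), so the proof is really a matter of assembling the references and verifying that the hypotheses we work under (finite von Neumann algebras, standard form, bimodule structure) match what is proved there. That said, here is how I would organize a self-contained proof sketch for each item in order.

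For (1) (associativity) the plan for the module Haagerup case is to use the fact, recalled from \cite{BLM}, that for rows and columns realizing $X\otimes_{hD}Y$ via $\sum x_i\otimes y_i$ with the balanced tensor relation, associativity is inherited from associativity of $\otimes_h$ after checking that passing to the two successive quotients (first on the left pair, then on the right pair) commutes; this is a general abstract nonsense argument once one has the right quotient description. For the extended Haagerup case one cannot quotient so transparently, and the plan would be to follow \cite[Prop 4.1, Th 4.3]{M97}: realize both iterated tensor products as strong operator modules of completely bounded multi-module maps and check the natural triple product yields an isomorphism, using normality in $D$ to extend from the algebraic diagonal. For (2) (functoriality and injectivity) the module Haagerup functoriality is immediate from the universal quotient property; injectivity reduces, by restricting to finite rank rows and columns, to injectivity of the non-module Haagerup product \cite[section 7]{AP}. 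In the extended case one uses the dual description and argues by weak-$*$ density following \cite[Prop 3.12]{M05}.

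For (3) (module duals) the strategy is to invoke Magajna's duality. First identify $(X\otimes_{hD}Y)^\natural$ with completely bounded bimodule maps $X\times Y\to B(L^2(N),L^2(M))$ that factor through the balanced tensor product; the restriction to those that are normal in the middle variable $D$ lands exactly in the extended Haagerup tensor product $X^\natural\otimes_{ehD'}Y^\natural$, by the representation theorem of \cite[Th 3.2]{M05}. For the normal Haagerup tensor product identity, use the definition $X^\natural\otimes_{\sigma h D'}Y^\natural=(X\otimes_{eh D}Y)^\natural$ and then the quotient description by $Q(D')$ comes from taking the polar of the balanced tensor relation in the ambient $\otimes_{\sigma h}$. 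For (4) (multiplication and adjoint) I would follow \cite[Th 4.4]{M05}: the key is that $\psi_0$ is well-defined and isometric on elementary tensors, extends by the Christensen--Sinclair type representation of completely bounded bimodular maps on $B_D(K,H)$, and the composition of such maps gives an algebra structure on the corner $D'\cap(M\otimes_{\sigma h D}N)$ where both factors are bimodular. The adjoint map is then transported from the involution $T\mapsto T^*$ on $B_D(H)$ via this identification, and its restriction to the extended Haagerup subalgebra is automatic.

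The main conceptual obstacle — and the one where I would expect to spend the most care — is part (3), specifically the interplay between the module normality condition and the distinction between $\otimes_{ehD'}$ and $\otimes_{\sigma h D'}$. Concretely, one must verify that the map $(X\otimes_{hD}Y)^{\natural D\text{-norm}}\to X^\natural\otimes_{ehD'}Y^\natural$ produced by Magajna's theorem really does land in the extended (not merely normal) tensor product, which amounts to a factorization through compacts on the $D$-relative Hilbert module. Once this is in place, the other parts fall out by bimodule-equivariant versions of the arguments already used for the non-module Haagerup tensor products in Theorem \ref{DualHaagerup}, together with standard quotient/injectivity manipulations; the normality of $D$-action ensures all the weak-$*$ manipulations behave, and the finite standard form hypothesis guarantees that $D'\simeq D^{op}$ acts correctly so that the duality pairing is nondegenerate.
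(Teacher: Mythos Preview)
The paper does not give a proof of this theorem at all: it is presented purely as a compilation of known results, with each item carrying an explicit citation to \cite{BLM}, \cite{M97}, \cite{M05}, \cite{AP}, \cite{B97b}, or \cite{M95}, and the text resumes immediately afterward with notational remarks. Your proposal is therefore more than adequate --- you correctly recognize that the content is borrowed from the literature, and your sketches of how each part would be established (quotient descriptions for associativity and functoriality, Magajna's duality and representation theorems for (3), the Christensen--Sinclair type identification of $M\otimes_{\sigma h D}N$ with bimodular CB maps for (4)) are accurate outlines of the arguments in the cited sources.
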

For $X\in  {}_MOM_N$, we used the notation of \cite{M05} for the (proper) module dual
$X^{\natural}:=CB_M(X,B(L^2(N),L^2(M)))_N$.

Because of the normality issue in Theorem \ref{HaagerupModule}(3) above, it seems natural to define for $X,Y$ normal dual operator modules (over $D'$) with a canonical inclusion

$$I:X\otimes_{eh D'}Y\simeq(X_\natural\otimes_{h D}Y_\natural)^{\natural D norm}\subset (X_\natural\otimes_{h D}Y_\natural)^{\natural}=: X\otimes_{w^*h D'}Y.$$

In this way there is automatically a completely contractive projection $$P:X\otimes_{\sigma\text{h} D'}Y \to X\otimes_{w^*h D'}Y$$ dual to the inclusion between Haagerup and extended Haagerup tensor products.
However, contrary to the case $D=\C$ there is only a complete isometric embedding $$J:X^\natural\otimes_{eh D'}Y^\natural \hookrightarrow X^\natural\otimes_{\sigma\text{h} D'}Y^\natural$$ from \cite[Prop 3.10]{M97} (but in general no $X^\natural\otimes_{w^*h D'}Y^\natural \hookrightarrow X^\natural\otimes_{\sigma\text{h} D'}Y^\natural$) and of course one sees from the definitions that $P\circ J= I.$ % ( when $X^\natural\otimes_{w^*h D'}Y^\natural\neq  X^\natural\otimes_{eh D'}Y^\natural$ this would contradict \cite[Prop 3.10]{M97} that can also be used in the spirit of \cite{BlecherSmith} to prove the canonical isometric embedding $X^\natural\otimes_{eh D'}Y^\natural \hookrightarrow X^\natural\otimes_{\sigma\text{h} D'}Y^\natural$).
However, it is convenient to use the weak-* topology on $X^\natural\otimes_{w^*h D'}Y^\natural$ and consider the topology induced on $X^\natural\otimes_{eh D'}Y^\natural$
as will be motivated in the next proposition in the case we will be interested in. We can also consider on $X^\natural\otimes_{eh D'}Y^\natural$ the topology generated by the weak-* topology and the supplementary seminorms indexed by $x\in X, y\in Y$ and given for $\Omega\in (X\otimes_{h D}Y)^{\natural D norm}$ by :
$$p_{x,y}(\Omega)=\sup_{d\in D_1}|\Omega(x\otimes_Ddy)|.$$
We will call this the \textit{normal weak-* topology} on $X^\natural\otimes_{eh D'}Y^\natural$ and this of course insures preservation of normality in the limit.

Since we will use often several tensor product variants of \cite[Theorem 3.2]{M05} and this will be crucial for us, we write it explicitly and will use obvious variants of the normal weak-* topology. We may still refer to it as \cite[Theorem 3.2]{M05} except for the supplementary results. The reader should note that the dual setting is crucial here  for the complete quotient map since the extended Haagerup  product is known not to be projective by \cite{EffrosRuanDual} even in the operator space case.

\begin{lemma}\label{IteratedModule}
Let $A_1,...A_{n+1}$ von Neumann algebras and $X_i,Y_i\in {}_{A_i}NOM_{A_{i+1}}$, then $(X_1\otimes_{h A_2}...\otimes_{hA_n}X_n)^{\natural_{A_2,...,A_n normal}}= X_1^\natural\otimes_{eh A_2'}...\otimes_{eh A_n'}X_n^\natural$ completely isometrically as $A_1-A_{n+1}$ bimodules. If $I_i:X_i\subset Y_i$ is a complete isometry between strong modules, then the module dual of their tensor product induces a complete quotient map : $$(I_1\otimes...\otimes I_{n})^\natural :Y_1^\natural\otimes_{eh A_2'}...\otimes_{eh A_n'}Y_n^\natural\to X_1^\natural\otimes_{eh A_2'}...\otimes_{eh A_n'}X_n^\natural.$$

 If moreover $X_i\in {}_{A_i}NDOM_{A_{i+1}},$ then $$(X_1\otimes_{\sigma h A_2}...\otimes_{\sigma hA_n}X_n):=((X_1)_\natural\otimes_{eh A_2'}...\otimes_{ehA_n'}(X_n)_\natural)^\natural= (...(X_1\otimes_{\sigma h A_2}X_2)...\otimes_{\sigma hA_n}X_n).$$
 
 Finally, for $X_i\in {}_{A_i}SOM_{A_{i+1}},$ if $$I:X_1^\natural\otimes_{eh A_2'}...\otimes_{eh A_n'}X_n^\natural\subset (X_1\otimes_{h A_2}...\otimes_{hA_n}X_n)^{\natural}=:X_1^\natural\otimes_{w^* h A_2'}...\otimes_{w^* h A_n'}X_n^\natural,$$ is the canonical bimodular complete isometry and $$P:(X_1^\natural\otimes_{\sigma h A_2'}...\otimes_{\sigma hA_n'}X_n^\natural)\to X_1^\natural\otimes_{w^* h A_2'}...\otimes_{w^* h A_n'}X_n^\natural$$ the canonical bimodular weak-* continuous complete quotient map, then there is a bimodular complete isometry $$J:X_1^\natural\otimes_{eh A_2'}...\otimes_{eh A_n'}X_n^\natural\to (X_1^\natural\otimes_{\sigma h A_2'}...\otimes_{\sigma hA_n'}X_n^\natural) $$ with commutative diagram $P\circ J=I,$ and uniquely determined by the relation : $$J(v)(\sum_{i_1\in I_1,...i_n\in I_n}x^{(1)}_{i_1}\otimes  x^{(2)}_{i_1,i_2}\otimes ...\otimes x^{(2)}_{i_n})=\sum_{i_1\in I_1,...i_n\in I_n}v(x^{(1)}_{i_1}\otimes  x^{(2)}_{i_1,i_2}\otimes ...\otimes x^{(n)}_{i_n}).$$
\end{lemma}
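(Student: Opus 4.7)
The plan is to prove every assertion by induction on $n$, taking as the base case $n=2$ the dyadic identities from Theorem \ref{HaagerupModule}(3) (for the first and third claims) and \cite[Prop 3.10]{M97} as recalled in the excerpt (for the isometry $J$, with $P\circ J=I$ forced by the definitions of $P$ and $J$). For the first claim, using associativity of $\otimes_{hD}$ from Theorem \ref{HaagerupModule}(1), one writes
$$X_1\otimes_{h A_2}\cdots\otimes_{h A_n}X_n \;=\; X_1\otimes_{h A_2}\bigl(X_2\otimes_{h A_3}\cdots\otimes_{h A_n}X_n\bigr),$$
and applies the dyadic case: the $A_2$-normal bimodule dual equals $X_1^\natural\otimes_{eh A_2'}\bigl(X_2\otimes_{h A_3}\cdots\otimes_{h A_n}X_n\bigr)^\natural$. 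One then checks that an $(A_2,\dots,A_n)$-normal dual on the left-hand side corresponds precisely to an $A_2$-normal outer dual whose inner value is $(A_3,\dots,A_n)$-normal, which is exactly the situation where the induction hypothesis identifies the inner dual with $X_2^\natural\otimes_{eh A_3'}\cdots\otimes_{eh A_n'}X_n^\natural$. Associativity of $\otimes_{eh}$ from Theorem \ref{HaagerupModule}(1) then collapses the result into the stated form, as $A_1$-$A_{n+1}$ bimodules.

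For the complete quotient map statement, functoriality and injectivity of $\otimes_{hD}$ (Theorem \ref{HaagerupModule}(2)) applied iteratively shows that $I_1\otimes\cdots\otimes I_n$ is a complete isometry of the iterated Haagerup tensor products. A Hahn--Banach style argument in the operator module setting (using injectivity of $B(L^2(A_{n+1}),L^2(A_1))$ and the fact that a completely bounded $(A_2,\dots,A_n)$-normal module map extends to an $(A_2,\dots,A_n)$-normal CB map of the same norm on the ambient space) turns the adjoint into a complete quotient. The identification of this adjoint with $(I_1\otimes\cdots\otimes I_n)^\natural$ under the duality of the first part is functorial in the construction.

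For the $\otimes_{\sigma h}$-associativity, one observes that the dyadic definition amounts to $(X_1\otimes_{\sigma h A_2}X_2)_\natural = (X_1)_\natural\otimes_{eh A_2'}(X_2)_\natural$, which is exactly Magajna's predual computation underlying Theorem \ref{HaagerupModule}(3). Iterating $n-1$ such steps inside the iterated dyadic product, and using associativity of $\otimes_{eh}$ (Theorem \ref{HaagerupModule}(1)) to rebracket, shows that the iterated dyadic predual equals the iterated $\otimes_{eh}$ on preduals used in the "global" definition; dualizing gives the required equality of dual operator bimodules. The final isometry $J$ is then obtained by iterating the dyadic $J$ from \cite[Prop 3.10]{M97} compatibly with associativity, and the explicit action on sums $\sum_{i_1,\ldots,i_n} x^{(1)}_{i_1}\otimes\cdots\otimes x^{(n)}_{i_n}$ follows from weak-$*$ continuity of $J(v)$ together with weak-$*$ convergence of these sums inside the $\sigma h$-product; the commutative diagram $P\circ J=I$ is the dyadic identity $P\circ J=I$ iterated.

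The main obstacle is bookkeeping for normality under the induction: one has to verify that "$(A_2,\dots,A_n)$-normal" really does decouple into "$A_2$-normal outside" and "$(A_3,\dots,A_n)$-normal inside" under the associativity isomorphism of $\otimes_{hD}$, so that Theorem \ref{HaagerupModule}(3) can be applied cleanly at each inductive step. Concretely this requires unpacking Magajna's realization of the normal module dual as CB maps into $B(L^2)$ and checking that the flip between an outer and an inner variable preserves $\sigma$-weak continuity in each $A_i$ separately; once this is in hand, the rest is formal application of associativity of $\otimes_h$, $\otimes_{eh}$ and the dyadic duality.
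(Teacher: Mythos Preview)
Your overall strategy matches the paper's: induction on $n$ with the dyadic case furnished by \cite[Th.~3.2]{M05}, associativity of $\otimes_{hD}$ and $\otimes_{ehD}$ to rebracket, iterated application of \cite[Prop.~3.10]{M97} for $J$, and the predual computation for the $\sigma h$ associativity. The paper splits off the \emph{last} factor rather than the first, but this is cosmetic.

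There are two places where you correctly flag that work is needed but do not actually do it, and the paper's arguments there are the real content. First, the ``normality decoupling'' you identify as the main obstacle: the paper does \emph{not} verify abstractly that $(A_2,\dots,A_n)$-normality splits as outer-$A_n$-normal plus inner-$(A_2,\dots,A_{n-1})$-normal. Instead, given $u\odot v$ in $(X_1\otimes_{hA_2}\cdots\otimes_{hA_{n-1}}X_{n-1})^\natural\otimes_{ehA_n'}X_n^\natural$ lying in the full normal dual, it lets $p\in M_I(A_n')$ be the projection onto $\overline{\mathrm{span}}\{v(x)h:x\in X_n,\ h\in H_{n+1}\}$, rewrites $u\odot v=up\odot pv$, and then shows directly that each $up(e_i\otimes\cdot)$ inherits the $(A_2,\dots,A_{n-1})$-normality by approximating $p(e_i\otimes\xi)$ in norm by vectors in the range of $v$. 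This projection trick is what replaces your unproven ``flip preserves $\sigma$-weak continuity'' claim, and without it your inductive step is incomplete.

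Second, your ``Hahn--Banach style argument'' for the complete quotient map is too vague. The paper's mechanism is specific: given $\Phi\in M_n((X_1\otimes_{hA_2}X_2)^{\natural,A_2\text{-normal}})$ of norm $<1$, use the Christensen--Sinclair representation of \cite[Th.~3.9]{M97} to factor $\Phi(x_1\otimes x_2)=\Phi_1(x_1)\Phi_2(x_2)$ through a \emph{normal} $A_2$-module Hilbert space $H$, then extend each $\Phi_i$ to $\Psi_i$ on $Y_i$ via the strong-module quotient property \cite[Prop.~3.12(ii)]{M05}, and set $\Psi(y_1\otimes y_2)=\Psi_1(y_1)\Psi_2(y_2)$. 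Normality of $H$ forces $\Psi$ back into the normal dual. A generic Hahn--Banach extension of $\Phi$ as a single map would not a priori preserve the $A_2$-normality, so the factorization through $H$ is essential.
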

\begin{proof}
For the first part, the case $n=2$ is \cite[Theorem 3.2]{M05} and by induction, we have to check that  $(X_1\otimes_{h A_2}...\otimes_{hA_{n-1}}X_{n-1})^{\natural_{A_2,...,A_{n-1} normal}}\otimes_{eh A_n'}X_n^\natural=(X_1\otimes_{h A_2}...\otimes_{hA_n}X_n)^{\natural_{A_2,...,A_n normal}}.$
Recall that if $H_n$ is a fixed proper $A_n$ (Hilbert) module, $(X_1\otimes_{h A_2}...\otimes_{hA_n}X_n)^{\natural}=CB_{A_1}(X_1\otimes_{h A_2}...\otimes_{hA_n}X_n, B(H_{n+1},H_1))_{A_{n+1}}$. What we know by associativity and the case $n=2$ is that $(X_1\otimes_{h A_2}...\otimes_{hA_n}X_n)^{\natural_{A_n normal}}=(X_1\otimes_{h A_2}...\otimes_{hA_{n-1}}X_{n-1})^{\natural}\otimes_{eh A_n'}X_n^\natural.$ By injectivity of the module extended Haagerup tensor product, we know that the spaces we want to identify are subspaces. The complete isometry and modularity will we obvious once identified the subspaces. Note that \begin{align*}(X_1\otimes_{h A_2}...\otimes_{hA_{n-1}}X_{n-1})^{\natural_{A_2,...,A_{n-1} normal}}\otimes_{eh A_n'}X_n^\natural&=X_1^\natural\otimes_{eh A_2'}...\otimes_{eh A_n'}X_n^\natural\\&\subset(X_1\otimes_{h A_2}...\otimes_{hA_n}X_n)^{\natural_{A_2,...,A_n normal}},\end{align*}
is obvious with the same  explicit pairing that in (the easy part of) \cite[Theorem 3.2]{M05} we thus check the converse. Thus take $u\in M_{1,I}(X_1\otimes_{h A_2}...\otimes_{hA_n}X_n)^{\natural}=CB_{A_1}(X_1\otimes_{h A_2}...\otimes_{hA_{n-1}}X_{n-1}, B(\ell^2(I)\otimes H_{n},H_1))_{A_{n}}$, $v \in M_{I,1}(X_n^\natural)=CB_{A_n}(X_n, B(H_{n+1},\ell^2(I)\otimes H_{n})_{A_{n+1}}$ so that $u\odot v$ is a typical element of $(X_1\otimes_{h A_2}...\otimes_{hA_{n-1}}X_{n-1})^{\natural}\otimes_{eh A_n'}X_n^\natural$ and assume it lies in $(X_1\otimes_{h A_2}...\otimes_{hA_n}X_n)^{\natural_{A_2,...,A_n normal}}.$ Then define $H=Span\{v(x)[h], x\in X_n, h\in H_{n+1}\}\subset \ell^2(I)\otimes H_{n}.$ It is invariant by action of $A_n$ since for $a\in A_n$ $a[v(x)[h]]=v(ax)[h]$ and thus the projection on its closure $p\in A_n'\cap B(\ell^2(I)\otimes H_{n})= M_I(A_n').$ (by default $A_n'=A_n'\cap B(H_n)$).%compute commutant to be the same and use they are von neumann algebras thus both equal to their bicommutants
Thus by results of Magajna $u\odot v=u\odot pv=up\odot pv.$ Now it suffices to show that $up\in M_{1,I}(X_1\otimes_{h A_2}...\otimes_{hA_n}X_n)^{\natural_{A_2,...,A_{n-1} normal}},$ i.e for any $e_i,$ $i\in I$ for the canonical basis of $\ell^2(I)$, $up(e_i\otimes .)\in X_1\otimes_{h A_2}...\otimes_{hA_n}X_n)^{\natural_{A_2,...,A_{n-1} normal}}.$ Take $x_m\in H$ tending to $p(e_i\otimes \xi)$ for $\xi\in H_n$, it is easy to see that $u(x_m)$ has the right normality by the assumption on $u\odot v$, thus by a norm limit  $up(e_i\otimes \xi)$has it too or said otherwise $up(e_i\otimes .)$ is normal for the weak topology at the target and again by normwise density (of elementary tensors in trace class) this is enough. 

For the complete quotient map, first note that one has a complete contraction $(I_1\otimes...\otimes I_{n})^\natural :(Y_1\otimes_{h A_2}...\otimes_{hA_{n}}Y_{n})^{\natural}\to (X_1\otimes_{h A_2}...\otimes_{hA_{n}}X_{n})^{\natural},$ that induces on the normal part the map we want. Let us check it is a quotient map. Start with the case $n=2$, thus take $\Phi\in M_n((X_1\otimes_{h A_2}X_{2})^{\natural_{A_2 normal}})$ of norm $<1$ and find, by the version of Christensen-sinclair representation Theorem in \cite[Th 3.9]{M97}, an Hilbert space $H$ with a normal representation of $A_2$, $\Phi_1:X_1\to B(H,H_1^n),\Phi_2:X_2\to B(H_3^n,H)$ bimodular completely (strictly) contractive such that $\Phi(x_1\otimes x_2)=\Phi_1(x_1)\Phi_2(x_2)$. Since $\Phi_i\in X_i^\natural$ for some (maybe non proper) dual, one can use, since $X_i,Y_i$ are strong \cite[Prop 3.12 (ii)]{M05} to get complete quotient maps $Y_i^\natural\to X_i^\natural$ thus giving $\Psi_1:Y_1\to B(H,H_1^n),\Psi_2:Y_2\to B(H_3^n,H)$ bimodular, completely contractive extensions of $\Phi_i$'s. We thus define $\Psi(y_1\otimes y_2)=\Psi_1(y_1)\Psi_2(y_2)$ as extension of $\Phi$ and from the normality of $H$ it is easy to see $\Psi\in  M_n((Y_1\otimes_{h A_2}Y_{2})^{\natural_{A_2 normal}})$ as expected of norm $||\Psi||\leq 1$. This concludes to the complete quotient map in the case $n=2$, the general case is left to the reader.% (one induces normality as in the first part of the proof on $\Phi_1$ by restricting $H$ which remains normal and then iterate) . %i.e uses a multivariable variant of the normality condition in extension theorem([WRONG induction follows !!]We then reason by induction, from the obtained complete contraction $(I_1\otimes...\otimes I_{n-1})^\natural :Y_1^\natural\otimes_{eh A_2'}...\otimes_{eh A_{n-1}'}Y_{n-1}^\natural\to X_1^\natural\otimes_{eh A_2'}...\otimes_{eh A_{n-1}'}X_{n-1}^\natural,$ one deduces from the converse part of $\cite[Prop 3.11, 3.12 (ii)]{M05}$ $(I_1\otimes...\otimes I_{n-1})^\natural]_\natural:(Y_1^\natural\otimes_{eh A_2'}...\otimes_{eh A_{n-1}'}Y_{n-1}^\natural)_\natural\to (X_1^\natural\otimes_{eh A_2'}...\otimes_{eh A_{n-1}'}X_{n-1}^\natural)_\natural$ a complete isometry between proper preduals which are strong modules. Having obtained by the strong module property, we are reduced to the case $n=2.$)

For the last part, we consider only the case $n=3.$ Then by definition in \cite[section 4]{M05} $(X_1\otimes_{\sigma h A_2}X_2)\otimes_{\sigma hA_3}X_3=((X_1\otimes_{\sigma h A_2}X_2)_\natural\otimes_{e hA_3'}(X_3)_\natural)^\natural$. And from \cite[Th 2.17]{M05} $(X_1\otimes_{\sigma h A_2}X_2)_\natural=(X_1)_\natural\otimes_{eh A_2'}(X_2)_\natural$ if and only if this last module is a strong operator module which is the case by \cite[Prop 4.1]{M97}. The result, as well as the general $n$ case by an easy induction, follows from associativity of module extended Haagerup product of strong modules  \cite[Theorem 4.3]{M97}.

$J$ is obtained by iteratively applying \cite[Prop 3.10]{M97} and the uniqueness condition expressing the extension on standard tensors there implies the relation $P\circ J=I.$
\end{proof}
\subsection{Concrete examples of module duals in the finite case.}

From now on $B,D,M,N$ will be \textbf{finite von Neumann algebras with a fixed faithful normal tracial state $\tau$}. We may still call them only finite von Neumann algebras and sometimes more precisely $W^*$ probability space. $L^2(N)=L^2(N,\tau)$ will then be the associated Hilbert space realizing the standard form Hilbert space for $N$. All inclusions $B\subset N$ will be with agreeing trace.

 To describe the proper module dual in the cases we will be interested in, we will
need other results of the paper \cite{M05}. We equip in general the Hilbert space $L^2(N)$ with its left module structure and its column Hilbert space operator space structure as in this paper if not otherwise specified. If not otherwise specified, we write $L^2(N)^*$ for its dual, the right module with row Hilbert space structure \cite{BLM,M05}. Then, \cite[Corol 3.5]{M05}, for $X$ an $M-N$ operator bimodule, $X^{\natural}=(L^2(M)\otimes_{h M} X\otimes_{h N} L^2(N))^*\in {}_{M'}NDOM_{N'}$ (with operator space dual structure, the commutant actions referring to commutants on $L^2(M)$, $L^2(N)$).

For $X\in  {}_MNDOM_N$, there is a predual $X_{\natural}:=NCB_M(X,B(L^2(N),L^2(M)))_N.$ Then, \cite[Th 2.17, 3.7]{M05}, $X\in  {}_MOM_N$ is strong if and only if $(X^{\natural})_{\natural}=X$, and for any $X\in  {}_MNDOM_N$, $X\simeq (X_{\natural})^{\natural}$ as a completely isometric weak-* homeomorphic isomorphism. 

From \cite[Prop 4.3]{M05}, if $A,B\subset M$ von Neumann subalgebras and we consider $M\in {}_ANDOM_B$, then \begin{equation}\label{bimodpredualM}({}_AM_B)_{\natural}\simeq B_A(L^2(M),L^2(A))\otimes_{eh M'}B_B(L^2(B),L^2(M)),\end{equation}
by identification with the side  multiplication map.

We will relate these with natural objects studied by von Neumann algebraist, for $D\subset M,$ finite von Neumann algebras: the space of right bounded vectors is  $${}_ML^2(M)_{L^2(D)}:=\{\xi\in L^2(M)\ : \ \exists C>0\forall d\in D, ||\xi d||_2\leq C||d||_2\},$$
normed with $||\xi||=\sup_{||d||_2\leq 1,d\in D}||\xi d||_2=||E_D(\xi^*\xi)||^{1/2}.$ This space is thus an $D$-module in the sense of $C^*$-modules.
We recall the classical norm on $M_n({}_ML^2(M)_{L^2(D)})$ by $||(y_{ij})||^2=||[\sum_k E_D(y_{ki}^*y_{kj})]_{ij}||_{M_n(D)}$ as the canonical operator space structure of the (right)
$C^*$-modules studied in \cite{B97}. {It is shown there that ${}_ML^2(M)_{L^2(D)}\otimes_{hD}L^2(D)_c$ is completely isometric to a  column Hilbert space, namely $L^2(M)_c$.}
%Note that this definition makes sense when $M$ is only semifinite and $D$ finite (if one understands $E_D$ as the predual map of the inclusion). Especially, we will use later in the context above $L^2(\langle M,e_D\rangle)_{L^2(M)}.$

Likewise, we define 
\begin{align*}{}_ML^1(M)_{L^2(D)}:=\{\xi\in L^1(M)\ : &\ \exists C>0\forall n\forall d\in M_n(D)\subset[M_n(L^2(D))]^*,\\& ||\xi d||_{\mathscr{TC}(\R^n)\hat{\otimes} L^1(M)}\leq C||d||_2\},\end{align*}
normed with $||\xi||=\sup_{||d||_2\leq 1,d\in M_n(D)\subset[M_n(L^2(D))]^*}||\xi d||_{\mathscr{TC}(\R^n)\hat{\otimes} L^1(M)},$ similarly :
\begin{align*}{}_{L^2(D)}L^1(M)_M:=\{\xi\in L^1(M)\ &: \ \exists C>0\forall n\forall d\in M_n(D)\subset [M_n(L^2(D)^*)]^*,\\& ||d\xi ||_{\mathscr{TC}(\R^n)\hat{\otimes} L^1(M)}\leq C||d||_2\},\end{align*}
(recall that here the meaning of $[M_n(L^2(D)^*)]^*$ is the Banach space dual of $M_n(L^2(D))$ with row norm.)
 and 
\begin{align*}{}_{L^2(D)}L^1(M)_{L^2(D)}:=\{\xi\in L^1(M)\ &: \ \exists C>0\forall n\forall d_1,d_2\in D^n\subset (L^2(D))^n, \\&||(d_{1,i}\xi d_{2,j})_{i,j}||_{\mathscr{TC}(\R^n)\hat{\otimes} L^1(M)}\leq C||d_1||_2||d_2||_2\},\end{align*}
normed with $||\xi||=\sup_{||d_i||_2,\leq 1,d_i\in D^n}||(d_{1,i}\xi d_{2,j})_{i,j}||_{\mathscr{TC}(\R^n)\hat{\otimes} L^1(M)},$
% ${}_ML^1(M)_{L^2(D)}$ 
% as the completion of $L^2(M)$ for the norm of right $L^2(D)$-module in the sense of \cite{JungeLpmod} 
%defined by $||\xi||_{{}_ML^1(M)_{L^2(D)}}=||E_D(\xi^*\xi)||_1^{1/2},$ (the fact that this is a norm is proved there) clearly from Hlder inequality, we have $||\xi||_1\leq ||\xi||_{{}_ML^1(M)_{L^2(D)}}\leq ||\xi||_2$ and even since $E_D((d^*\xi^*\xi d))^{1/2})^2\leq E_D(d^*\xi^*\xi d))$ and by operator monotonicity of square root $E_D((d^*\xi^*\xi d))^{1/2})\leq (E_D(d^*\xi^*\xi d))^{1/2}$ so that by Hlder indequality for any $d\in D$ (and later extended to $L^2(D)$):
%$$||\xi d||_1=\tau((d^*\xi^*\xi d)^{1/2})\leq \tau ((d^*E_D(\xi^*\xi) d)^{1/2})=||(d^*E_D(\xi^*\xi) d)||_{1/2}^{1/2}\leq ||d||_2||\xi||_{{}_ML^1(M)_{L^2(D)}}.$$
 
%Let us check that actually : $${}_ML^1(M)_{L^2(D)}=\{\xi\in L^1(M)\ : \ \exists C_\xi>0\forall d\in D, ||\xi d||_1\leq C_\xi||d||_2\},$$
%normed with $||\xi||_{{}_ML^1(M)_{L^2(D)}}=\sup_{||d||_2\leq 1, d\in D}||\xi d||_1$. We already checked the inequality $\geq$ and taking $d=E_D(\xi^*\xi)^{1/2}/||\xi||_{{}_ML^1(M)_{L^2(D)}}$ one gets equality. It thus suffices to show that $L^2(M)$ is dense in the right hand side defined with this norm.
%For $\xi$ in the right hand side above, consider $\xi_\alpha=(\xi^*\xi+\alpha)^{-1/2}\xi\in (M)_1,$ then let 
%$d_\alpha\epsilon=E_D(\xi_\alpha^*\xi_\alpha)^{1/2}\in (D)_1$ and applying the inequality above :

%$$||\xi_\epsilon d_\epsilon||_1\leq ||d_\epsilon||_2||\xi_\epsilon||_{{}_ML^1(M)_{L^2(D)}}=||\xi_\epsilon||_{{}_ML^1(M)_{L^2(D)}}^2.$$

 \begin{proposition}\label{ModulePreDual}Let $D\subset M$ finite von Neumann algebras.
   We have completely isometric module isomorphisms :
 $${}_{M'}({}_{L^2(D)}L^2(M)_M^{ op})_{D'}\simeq ({}_MM_D)_{\natural},\ \ \  {}_{D'}({}_ML^2(M)_{L^2(D)}^{ op})_{M'}\simeq ({}_DM_M)_{\natural}.$$
 the first  with the canonical operator space structure when seen as ${}_{M'}({}_{L^2(D)}L^2(M)_M^{ op})_{D'}= {}_{M'}L^2(M')_{L^2(D')}$, namely as noted, the opposite structure as defined before. %(i.e. with the operator space structure $({}_{L^2(D)}L^2(M)_M)^{op}$).
  Moreover, we have isometric isomorphisms :
 $${}_ML^1(M)_{L^2(D)}\simeq ({}_ML^2(M)_{L^2(D)}^{ op})\otimes_{h M'}L^2(M)=({}_DM_{\C})_{\natural} ,$$ $$ {}_{L^2(D)}L^1(M)_M\simeq (L^2(M))^*\otimes_{h M'}({}_{L^2(D)}L^2(M)_M^{ op})= ({}_{\C}M_D)_{\natural},$$
and we equip the left hand sides with the operator space structure of the right hand sides. Likewise, we have :
 $${}_{L^2(D)}L^1(M)_{L^2(D)}\simeq ({}_ML^2(M)_{L^2(D)}^{ op})\otimes_{eh M'}{}_{L^2(D)}L^2(M)_M^{ op}=({}_DM_{D})_{\natural} .$$
%[Finally, there are completely contractive injections of $D'-D'$ bimodules with dense range :
%${}_{L^2(D)}L^1(M)_{L^2(D)}\hookrightarrow{}_ML^1(M)_{L^2(D)}\hookrightarrow L^1(M).$]
%The replacement of the extended Haagerup tensor product by the Haagerup tensor product is justified by \cite{M05}
  Moreover $({}_ML^2(M)_{L^2(D)}^{ op})$ is dense in $({}_DM_{D})_{\natural}=NCB(M,B(L^2(D)))$ for the topology of pointwise strong operator topology convergence and ${}_{L^2(D)}L^2(M)_M^{ op}$ is dense for the topology of pointwise *-strong operator topology convergence. %The statement about the embedding follows from the fact $S_{1,1}$ is weakly essentially surjective
 \end{proposition}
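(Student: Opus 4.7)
The strategy is to derive each isomorphism from the Magajna formula \eqref{bimodpredualM} for bimodule preduals, combined with the classical identification of commutant hom-spaces with $C^*$-modules of bounded vectors (cf.\ \cite{B97}).

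For the two $L^2$-isomorphisms, apply \eqref{bimodpredualM} to $({}_MM_D)_\natural$ with $A=M, B=D$. One factor is $B_M(L^2(M),L^2(M))=M'$, and since $M'$ is a von Neumann algebra acting unitally on the remaining factor, the module extended Haagerup tensor product $M'\otimes_{eh M'}(-)$ collapses to the identity. The surviving factor $B_D(L^2(D),L^2(M))$ I would then identify with ${}_{L^2(D)}L^2(M)_M^{op}$ by sending $\xi$ to bounded left multiplication $L_\xi$; using standard form, this space also identifies bimodularly with ${}_{M'}L^2(M')_{L^2(D')}$, and Blecher's canonical $C^*$-module operator space structure on bounded vectors agrees with the one inherited from $B(L^2(D),L^2(M))$ after the expected column/row (${}^{op}$) transposition. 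The case of $({}_DM_M)_\natural$ is entirely symmetric.

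For the $L^1$-type identifications, I would apply \eqref{bimodpredualM} with $A=D, B=\C$ (resp.\ $A=\C,B=D$; resp.\ $A=B=D$). For $({}_DM_\C)_\natural$ one obtains $B_D(L^2(M),L^2(D))\otimes_{eh M'}L^2(M)_c \simeq ({}_ML^2(M)_{L^2(D)}^{op})\otimes_{h M'}L^2(M)$, where the passage from $\otimes_{eh M'}$ to $\otimes_{h M'}$ uses that $L^2(M)_c$ is a column Hilbert module and the other factor is a strong right $C^*$-module over $M'$. The identification with ${}_ML^1(M)_{L^2(D)}$ should follow from the trace pairing, together with a check that the matrix norms defined via $M_n(D)\subset[M_n(L^2(D))]^*$ coincide with those transferred from the Haagerup tensor side by chasing the Blecher $C^*$-module structure. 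The two-sided $({}_DM_D)_\natural$ case keeps the \emph{extended} Haagerup tensor product (since both factors are non-trivially acted on by $M'$), and the identification with ${}_{L^2(D)}L^1(M)_{L^2(D)}$ then proceeds through the corresponding bilateral pairing.

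For the density claim, observe that $M\subset L^2(M)$ sits inside both ${}_ML^2(M)_{L^2(D)}$ and ${}_{L^2(D)}L^2(M)_M$ (via $\|md\|_2\leq\|m\|\|d\|_2$ on either side), so elementary operators coming from elements of $M$ already lie in the image of the embedding ${}_ML^2(M)_{L^2(D)}^{op}\hookrightarrow NCB(M,B(L^2(D)))$. For an arbitrary $\phi\in NCB(M,B(L^2(D)))$, I would invoke a Magajna-type Christensen--Sinclair representation \cite[Th 3.9]{M97} to write $\phi(m)=V^*\pi(m)W$ with $\pi$ a normal bimodular representation and $V,W$ bounded module maps, and then approximate $V,W$ in the appropriate strong operator sense by finite combinations of vectors from $M$ (respectively $L^2(M)$), yielding the desired pointwise $*$-SOT (respectively SOT) convergence.

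The main obstacle I anticipate is the careful bookkeeping of operator space structures---column versus row, the ${}^{op}$ twist, and Blecher's $C^*$-module structure---through each identification, together with matching the matrix norms defining the $L^1$-type spaces via pairings with $M_n(D)\subset[M_n(L^2(D))]^*$ against those transferred from the tensor product side. Distinguishing when the Haagerup tensor product suffices (the one-sided $L^1$ cases) from when one is forced into the extended Haagerup tensor product (the bimodule case) will in particular require delicate use of strong modularity.
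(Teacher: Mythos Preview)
Your approach to the isomorphisms is essentially the paper's: both apply \eqref{bimodpredualM}, collapse the trivial $M'\otimes_{eh M'}(-)$ factor, and identify the hom-spaces $B_D(L^2(D),L^2(M))$, $B_D(L^2(M),L^2(D))$ with bounded-vector spaces via explicit multiplication/conditional-expectation maps. The paper writes out the matrix-norm computations explicitly (e.g.\ $\|(\psi_1(x_{ij}))\| = \|E_D(\sum_i x_{ik}x_{ij}^*)\|^{1/2}$ and the adjoint relation $\psi_2(X)^*=\psi_1(X^*)$), which is precisely the ${}^{op}$/column-row bookkeeping you flag as the main obstacle, but nothing unexpected happens there. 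For the $L^1$ identifications the paper likewise goes back to the concrete description $({}_DM_\C)_\natural = NCB_D(M,L^2(D))$ etc.\ and exhibits the explicit maps $\psi_2,\psi_3,\psi_4$, rather than chasing matrix norms through the tensor side.

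The density argument, however, differs, and your version has a gap. The paper does \emph{not} invoke a Christensen--Sinclair representation. Having already expressed a general $\xi\in ({}_DM_D)_\natural$ as $x\otimes_{M'} y$ in the extended Haagerup decomposition $({}_ML^2(M)_{L^2(D)}^{op})\otimes_{eh M'}{}_{L^2(D)}L^2(M)_M^{op}$, it uses the resolvent regularization $y_\alpha=\alpha(\alpha+yy^*)^{-1}y=y\alpha(\alpha+y^*y)^{-1}$, so that the multiplied element $xy_\alpha$ lies in the one-sided space ${}_ML^2(M)_{L^2(D)}^{op}$, and then checks directly via $\|(y-y_\alpha)dm\|_2\to 0$ that $E_D(m d(xy_\alpha))\to E_D(md\xi)$ in $L^2(D)$ uniformly in $d'$, yielding the pointwise SOT convergence. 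Your proposed route---write $\phi(m)=V^*\pi(m)W$ and ``approximate $V,W$ by finite combinations of vectors from $M$''---does not obviously land in the specific subspace of maps of the form $m\mapsto E_D(md\xi)$ for a one-sided bounded $\xi$; Stinespring data for a general $D$-bimodular normal CB map into $B(L^2(D))$ carries no such structure without further argument. The paper's resolvent trick is both more elementary and tailored to the tensor decomposition already in hand.
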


\begin{proof}
 From formula \eqref{bimodpredualM}, we have  $$({}_MM_D)_{\natural}=M'\otimes_{eh M'}B_D(L^2(D),L^2(M))=B_D(L^2(D),L^2(M)),$$ $$({}_DM_M)_{\natural}=B_D(L^2(M),L^2(D)).$$

Let us show that the maps $\psi_1: {}_{L^2(D)}L^2(M)_M\to B_D(L^2(D),L^2(M)), \psi_2: {}_ML^2(M)_{L^2(D)}\to B_D(L^2(M),L^2(D))$ defined by $$\psi_1(\xi)(\eta)=\eta\xi,\ \ \ \ \psi_2(\xi)(\eta)=E_D(\eta\xi),$$
are isometric module isomorphisms, the last one is well defined since for $\xi\in {}_ML^2(M)_{L^2(D)}, \eta \in L^2(M), \eta\xi\in {}_ML^1(M)_{L^2(D)}$ and $E_D:{}_ML^1(M)_{L^2(D)}\to L^2(D)$ bounded. To check the module structure, take $x\in M',y\in D'$
$$\psi_1(x.\xi.y)(\eta)=\eta x\xi y=\eta y^o \xi x^o=\psi_1(\xi)(\eta y^o)x^o,$$ $$\psi_2(y.\xi. x)(\eta)=E_D(\eta x^o \xi y^o)=\psi_2(\xi)(\eta x^o)y^o,$$
The injectivity of $\psi_i$ is obvious, and the isometry also by duality, surjectivity can be proved in noting that   $B_D(L^2(M),L^2(D)),B_D(L^2(D),L^2(M))\subset B_D(L^2(M),L^2(M))=\langle M',e_B\rangle$ (the later inclusion using composition with $e_D$) and using the usual weakly-* dense subset of the basic construction.

To see that $\psi_1$ gives a complete isometry, let us make explicit the norm on the spaces involved : $M_n(B_D(L^2(D),L^2(M)))=B_D((L^2(D))^n,(L^2(M))^n)$ and for $[\xi_{ij}]\in {}_{M'}L^2(M')_{L^2(D')}$ we deduce the formula from the case $\xi_{ij}=(x_{ij})^o\in M^o$
Then $$||[\xi_{ij}]||^2=||E_{D^o}(\sum_{k}((x_{ki})^o)^*(x_{kj})^o)||_{M_n(D^o)}=||([E_{D}(\sum_{k}x_{kj}x_{ki}^*)]^o)||_{M_n(D^o)}.$$%=||([E_{D}(\sum_{k}x_{kj}x_{ki}^*)])||_{M_n(D)}.$$
But the norm of $(\psi_1(x_{ij}))$ is exactly given by \begin{align*}||(\psi_1(x_{ij}))||&=\sup_{\eta\in (L^2(D))^n}\left[\sum_{ijk}\tau((\eta_jx_{ij})^*\eta_kx_{ik})\right]^{1/2}\\&=\sup_{\eta\in (L^2(D))^n}\left[\sum_{ijk}\tau(\eta_kE_D(x_{ik}x_{ij}^*)\eta_j^*)\right]^{1/2}\\&=||E_D(\sum_i x_{ik}x_{ij}^*)||^{1/2},\end{align*}
proving the desired complete isometry.

Similarly, $\langle\psi_2((x_{ij}))((y_k)),(z_k)\rangle=\sum_{jk} \tau(z_k^*( y_jx_{kj})=\tau((z_kx_{kj}^*)^*y_j)$ so that :$\psi_2(X)^*=\psi_1(X^*)$ and thus for $X=(x_{ij})$ we have $$||(\psi_2(X))||=||(\psi_1(X^*)||=||E_D(\sum_i x_{ki}^*x_{ji})||^{1/2}=||X^T||_{M_n({}_ML^2(M)_{L^2(D)})}=||X||_{M_n({}_ML^2(M)_{L^2(D)}^{ op})}.$$

For $L^1$ spaces, as before formula \eqref{bimodpredualM} gives $$({}_DM_{\C})_{\natural}\simeq B_D(L^2(M),L^2(D))\otimes_{eh M'}L^2(M), ({}_{\C}M_D)_{\natural}\simeq (L^2(M))^*\otimes_{eh M'}B_D(L^2(D),L^2(M)).$$

This gives the second isomorphisms since for Hilbert spaces, the extended-Haagerup and haagerup products coincide \cite[Rmk 2.18]{M05}.

 The first isomorphism is given by coming back to the definition of the predual 
 $({}_DM_{\C})_{\natural}=NCB_D(M,L^2(D)), ({}_{\C}M_D)_{\natural}=NCB(M,(L^2(D))^*)_D.$. Let us show that the isomorphisms is thus given by $\psi_3: {}_{L^2(D)}L^1(M)_M\to NCB(M,L^2(D)^*)_D, \psi_2: {}_ML^1(M)_{L^2(D)}\to NCB_D(M,L^2(D)),$ with the same formula for $\psi_2$ and for $m\in M, d\in L^2(D)$:
 $$\psi_3(\xi)(m)(d)=\tau(\xi md).$$
 Bimodularity and isometry are obvious. The inverse being given by evaluation on $L^2(D)$ at $1$ and via identification of $L^1(M)$ with the predual, this concludes the proof of the second isomorphisms. Finally the computation of $({}_DM_{D})_{\natural}$ is similar, one of them is given by 
 $\psi_4: {}_{L^2(D)}L^1(M)_{L^2(D)}\to NCB_D(M,B(L^2(D)))_D,$
$$\psi_4(\xi)(m)(d)= E_D(md\xi).$$

For the last density result, write $\xi=x\otimes_{M'}y\in L^2(M)_{L^2(D)}^{ op}\otimes_{eh M'}{}_{L^2(D)}L^2(M)_M^{ op}$ in its canonical writing and look at $y_{\alpha}=\alpha(\alpha+yy^*)^{-1}y=y\alpha(\alpha+y^*y)^{-1}$ (written with opposite products and  $\xi_{\alpha}=x\otimes_{M'}y_{\alpha}$ get the corresponding multiplication in $xy_{\alpha}\in({}_ML^2(M)_{L^2(D)})$ and, with all products in $M^o$: $$\tau(d'E_{D^{op}}(x(y-y_{\alpha})dm))|^2\leq \tau(d'E_{D^{op}}(xx^*)d'^{*})||(y-y_{\alpha})dm||_2\leq ||d'||_2^2||x||^2||(y-y_{\alpha})dm||_2\to_{\alpha\to \infty} 0$$ even uniformly in $d'$, this concludes. The second case is similar.
\end{proof}
We will explain more on module extended Haagerup products of these spaces in subsection \ref{HaaModuleSup}.

\subsection{Operator space direct sums and operator modules}

To define analytic functions, we will need to take $\ell^1$ direct sums, but unfortunately, there is no reason for an operator space $\ell^1$ direct sum (see e.g. \cite[section 2.6]{PisierBook}) to be an operator module (this is however true for a matrix normed module, i.e. a module for the projective opertor tensor product by commutation of projective tensor product and $\ell^1$ direct sums, cf \cite[Rmk 3.1.9]{BLM}). However, the $\ell^\infty$ (also written merely $\oplus$, and $c_0$) direct sums obviously remain operator modules, we will have by duality and universal property, two (agreeing) candidates to be 
operator modules $\ell^1$ direct sums. We summarize this in the next~:

\begin{lemma}\label{Ell1Sum}
Let $(E_i)_{i\in I}$ a family of D-D normal operator modules. Consider $\oplus^{fin}_{i\in I}E_i\subset (\oplus_{i\in I}E_i^{\natural})^{\natural}$, then the completion $\ell_{D,D}^1(E_i;i\in I)\in {}_DNOM_D,$ with complete isometric injection $\epsilon_i:E_i\to \ell_{D,D}^1(E_i;i\in I)$ and complete quotient projection $\pi_i:\ell_{D,D}^1(E_i;i\in I)\to E_i$  and the duality relations :
$$c_0(E_i;i\in I)^\natural =\ell_{D,D}^1(E_i^\natural;i\in I), \ell_{D,D}^1(E_i;i\in I)^\natural=\oplus_{i\in I}E_i^\natural,$$
Moreover, if $E_i\in {}_DSOM_D,$ (resp. ${}_DNDOM_D$) then we have $\ell_{D,D}^1(E_i;i\in I)\in {}_DSOM_D.$ (resp. ${}_DNDOM_D$), and $(\oplus_{i\in I}E_i^\natural)_\natural=\ell_{D,D}^1(E_i;i\in I)$ (resp. in the normal operator dual case $c_0((E_i)_\natural;i\in I) =(\ell_{D,D}^1(E_i;i\in I))_\natural$).

Finally, $\ell_{D,D}^1(E_i;i\in I)$ has the following universal property: if $Z\in {}_DOM_D$ and $u_i:E_i\to Z$ are $D-D$ module completely contractive maps, there is a unique completely contractive $D-D$ module map $u:\ell_{D,D}^1(E_i;i\in I)\to Z$, $u\epsilon_i=u_i$ and $$CB_{D,D}(\ell_{D,D}^1(E_i;i\in I),Z)\simeq \oplus_{i\in I}CB_{D,D}(E_i,Z).$$
Thus if $E_i\subset F_i$ completely contractively as submodule, then 
$\ell_{D,D}^1(E_i;i\in I)\subset \ell_{D,D}^1(F_i;i\in I)$, and if $C\subset D$, there is a canonical contractive injection $L_{C\to D}:\ell_{C,C}^1(E_i;i\in I)\to \ell_{D,D}^1(E_i;i\in I),$ with dense image. For general $E_i,F_j$ normal operator modules, we also have the complete isometry: $$\ell_{D,D}^1(E_i;i\in I)\widehat{\otimes}_D\ell_{D,D}^1(F_j;j\in J)\simeq \ell_{D,D}^1(E_i\widehat{\otimes}_D F_j;(i,j)\in I\times J) .$$

\end{lemma}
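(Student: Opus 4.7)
My plan is to realize $\ell^1_{D,D}(E_i;i\in I)$ concretely as the closed $D$-$D$ submodule of the bidual $(\oplus_{i\in I}E_i^\natural)^\natural$ (with $\oplus$ denoting the $c_0$-direct sum of normal operator modules, i.e.\ the norm closure of the finite direct sum under the supremum matrix norms) generated by the image of $\oplus^{fin}_{i\in I}E_i$, and then to deduce the remaining claims from a universal property. The direct sum $\oplus_{i\in I}E_i^\natural$ lies in ${}_DNOM_D$ under componentwise action and lies in ${}_DNDOM_D$ when each $E_i^\natural$ is, so $(\oplus_iE_i^\natural)^\natural$ is a normal dual operator module. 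The embedding $\epsilon_j$ is the composition of the canonical complete isometry $E_j\hookrightarrow E_j^{\natural\natural}$ (valid for any normal operator module by Magajna's duality) with the complete isometry $E_j^{\natural\natural}\hookrightarrow(\oplus_iE_i^\natural)^\natural$ dual to the coordinate complete quotient $\oplus_iE_i^\natural\twoheadrightarrow E_j^\natural$; dually, $\pi_j$ is restriction to $E_j^\natural\subset\oplus_iE_i^\natural$ followed by identification with $E_j\subset E_j^{\natural\natural}$, well-defined on the norm closure since a limit of finite sums from $\oplus^{fin}_i\epsilon_i(E_i)$ restricts to a limit that lies in $E_j$. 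A direct duality computation against functionals in the unit ball of $\oplus_iE_i^\natural$, carried out at every matrix level, gives the expected $\ell^1$ matrix norm $\|\sum_i\epsilon_i(x_i)\|=\sum_i\|x_i\|$ on finite sums, so the closure is the operator-module $\ell^1$ sum; combined with $\pi_i\epsilon_j=\delta_{ij}\mathrm{id}_{E_j}$ this makes $\epsilon_j$ a complete isometry and $\pi_j$ a complete quotient.

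The universal property now falls out: given $u_i:E_i\to Z$ completely contractive $D$-$D$ module maps into any $Z\in{}_DOM_D$, the assembled map on $\oplus^{fin}_iE_i$ is completely contractive by the matricial triangle inequality against the $\ell^1$ matrix norm and extends by continuity to a unique completely contractive $u:\ell^1_{D,D}(E_i;i\in I)\to Z$, yielding $CB_{D,D}(\ell^1_{D,D}(E_i;i\in I),Z)\simeq\oplus_{i\in I}CB_{D,D}(E_i,Z)$ (with $\ell^\infty$-sum on the right). Specializing $Z$ to the target of the $\natural$-functor gives the duality $\ell^1_{D,D}(E_i;i\in I)^\natural=\oplus_{i\in I}E_i^\natural$, and the companion identity $c_0(E_i;i\in I)^\natural=\ell^1_{D,D}(E_i^\natural;i\in I)$ follows from the same matricial $c_0/\ell^1$ duality applied to the $c_0$-direct sum. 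For the refined categories, applying this established relation $c_0(X_i)^\natural=\ell^1_{D,D}(X_i^\natural)$ with $X_i=(E_i)_\natural$ (a strong module when $E_i\in{}_DNDOM_D$) yields $c_0((E_i)_\natural)^\natural=\ell^1_{D,D}(((E_i)_\natural)^\natural)=\ell^1_{D,D}(E_i;i\in I)$ via Magajna's $F=(F_\natural)^\natural$ for $F\in{}_DNDOM_D$, so $\ell^1_{D,D}(E_i;i\in I)\in{}_DNDOM_D$ with predual $c_0((E_i)_\natural;i\in I)$. When each $E_i\in{}_DSOM_D$ (so $(E_i^\natural)_\natural=E_i$), the same computation with $X_i=E_i^\natural$ gives $(\oplus_iE_i^\natural)_\natural=\ell^1_{D,D}((E_i^\natural)_\natural)=\ell^1_{D,D}(E_i;i\in I)$, and Magajna's characterization of strong modules then shows $\ell^1_{D,D}(E_i;i\in I)$ is strong.

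The remaining functorial claims and the tensor product identity all reduce to the universal property. An inclusion $E_i\subset F_i$ of submodules produces via $E_i\subset F_i\xrightarrow{\epsilon_i^F}\ell^1_{D,D}(F_j;j\in I)$ and the universal property a $D$-$D$ module map $\ell^1_{D,D}(E_i;i\in I)\to\ell^1_{D,D}(F_i;i\in I)$, completely isometric because its $\natural$-dual is the coordinatewise complete quotient $\oplus_iF_i^\natural\to\oplus_iE_i^\natural$. For $C\subset D$, viewing each $\epsilon_i^D$ as $C$-$C$ completely contractive and applying the $C$-$C$ universal property yields $L_{C\to D}$, with dense image because $\oplus^{fin}$ is dense in both $\ell^1_{C,C}$ and $\ell^1_{D,D}$. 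For the projective tensor product identity, both sides carry the same universal mapping property: bilinear $D$-bimodular completely bounded maps out of $\ell^1_{D,D}(E_i;i\in I)\times\ell^1_{D,D}(F_j;j\in J)$ correspond, by applying the $\ell^1_{D,D}$ universal property in each variable, to uniformly CB-bounded families of such bilinear maps out of $E_i\times F_j$, which factor through $E_i\widehat{\otimes}_DF_j$ by definition of the operator-module projective tensor product \cite[Rmk 3.1.9]{BLM}, and reassemble via the $\ell^1_{D,D}$ universal property. The main delicate step will be the duality identifications in paragraph two: the matricial $c_0/\ell^1$ identification at the operator-module level and the predual identifications in the NDOM/SOM cases require careful bookkeeping of matrix norms, $D$-bimodularity, and the interplay between $\natural$, ${}_\natural$, and $c_0/\ell^\infty$-direct sums via Magajna's module duality theory, after which the universal property will render everything else essentially formal.
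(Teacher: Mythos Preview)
Your overall architecture—realize $\ell^1_{D,D}$ inside $(\oplus_iE_i^\natural)^\natural$, prove a universal property, and derive everything else from it—matches the paper's. But there is a genuine error at the heart of your argument: the claim that the matrix norms on $\ell^1_{D,D}(E_i)$ are given by $\|\sum_i\epsilon_i(x_i)\|_{M_n}=\sum_i\|x_i\|_{M_n(E_i)}$ is false. Already for $D=\C$ (where $\ell^1_{D,D}$ is the ordinary operator space $\ell^1$-sum) this fails: with $E_1=E_2=\C$ one has $\ell^1_{\C,\C}(E_1,E_2)=\max(\ell^1_2)$, and for $x_1=e_{11},\,x_2=e_{22}\in M_2$ the element $e_{11}\otimes a_1+e_{22}\otimes a_2$ has norm $\max(\|a_1\|,\|a_2\|)$ for any contractions $a_i$, so $\|(e_{11},e_{22})\|_{M_2(\max(\ell^1_2))}=1$, not $\|e_{11}\|+\|e_{22}\|=2$. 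Indeed, if your formula held then $\ell^1_{D,D}$ would coincide completely isometrically with the operator space $\ell^1$-sum, forcing the latter to be an operator $D$-bimodule—precisely what the paragraph preceding the lemma says one cannot expect.

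This matters because you use the formula to prove the universal property via a ``matricial triangle inequality'' $\|\sum_iu_i(x_i)\|\le\sum_i\|x_i\|$. Since the $\ell^1_{D,D}$ matrix norm is in general \emph{strictly smaller} than $\sum_i\|x_i\|_{M_n}$, this step fails. The paper instead obtains the universal property by pure functoriality of the module dual: from $u_i\colon E_i\to Z$ one forms $\oplus_iu_i^\natural\colon Z^\natural\to\oplus_iE_i^\natural$ and then $(\oplus_iu_i^\natural)^\natural$, which is automatically completely contractive on all of $(\oplus_iE_i^\natural)^\natural$ and agrees with $\sum_iu_i$ on finite sums; one never needs to compute the matrix norms explicitly. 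A second, smaller gap: you assert $L_{C\to D}$ has dense image but do not argue injectivity, which is part of the statement; the paper proves it via the relation $\pi_i^D\circ L_{C\to D}=\pi_i^C$ together with a Goldstine-type density argument in the module bidual.
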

Note that we will write $E_1\oplus^1_DE_2$ for $\ell_{D,D}^1(E_i;i\in I)$ when $|I|=2,$ and $\ell_{D,D}^1=\ell_{D}^1.$

\begin{proof}
The first statement is obvious since module duality gives module as duals, $\epsilon_i,\pi_i$ comes from dualization of those for $\ell^\infty$ sums. Note that in order to have $\epsilon_i$ complete isometric, we use the normality of $E_i$ to identify $E_i\subset (E_i)^{\natural\natural}$ completely isometrically (cf. the characterization of the normal parts $(E_i)_n$ of an operator module \cite[Prop 5.7]{M05} which is $E_i$ itself when it is normal). Since $c_0(E_i;i\in I)\hookrightarrow \oplus_{i\in I}E_i^{\natural\natural}$ completely isometrically, one gets a complete isometric $D-D$ module map  $\ell_{D,D}^1(E_i^\natural;i\in I)\subset(\oplus_{i\in I}E_i^{\natural\natural})^\natural\to c_0(E_i;i\in I)^\natural.$ Since $\oplus^{fin}_{i\in I}E_i^\natural$ is obviously dense in this last space, this concludes.  %Indeed the restriction of a  CB map on c_0(E_i;i\in I) on finite sums give an element of $\oplus^{fin}_{i\in I}E_i^\natural$ converging, and since the limit agrees on finite sums, it agrees on any element of c_0(E_i;i\in I), it agrees on the full space this finite sums are norm dense there.
The second duality result is a specific case of the universal property, but we will use it in its proof. 

Likewise we have the canonical complete isometric bimodule map $\oplus_{i\in I}E_i^\natural\to (\oplus_{i\in I}E_i^\natural)^{\natural\natural}\to  \ell_{D,D}^1(E_i;i\in I)^\natural$ dualizing the defining inclusion, and the surjectivity is obvious since composing $\psi\in \ell_{D,D}^1(E_i;i\in I)^\natural$ with $\epsilon_i$ gives a family $(\psi\epsilon_i)\in \oplus_{i\in I}E_i^\natural$ that has to agree with $\psi$ by density of finite sums in $\ell_{D,D}^1(E_i;i\in I).$

Let us now check the stability of normal dual operator modules.
By \cite[Th 3.7]{M05} and our previous duality result :$c_0((E_i)_\natural;i\in I)^\natural=\ell_{D,D}^1(E_i;i\in I)$, thus the $\ell^1$ sum is a dual operator module, and it remains to check normality. Since a $(E_i)_\natural$ is a strong module and a $c_0$ or $\ell^\infty$ sum obviously remain strong, the predual computation then follows from \cite[Th 2.17]{M05}.
By \cite[Rmk 2.10]{M05}, we are done since $\ell_{D,D}^1(E_i;i\in I)$ is always a normal module as complete isometric subspace of a normal dual module.

%We use \cite[Th 2.4]{M05} and check for $x\in M_n( \ell_{D,D}^1(E_i;i\in I))$ the lower semicontinuity of $a\mapsto ||ax||$, $a\in M_n(D),$ on bounded sets (this is enough trhough the characterization by nets of projections). Thus take a bounded net $a_n$ weak-* converging to $a$. By boundedness of the net and metric density, it suffices to check this for $x\in M_n(\oplus^{fin}_{i\in I}E_i)$. By the definition of the norm on $\oplus^{fin}_{i\in I}E_i$ we have :  $||ax||=\sup_{T\in \mathcal{TC}(L^2(D)^n)_1}\sup_{\xi\in (\oplus_{i\in I}E_i^\natural)_1}|Tr(T\langle \xi,ax\rangle)|$.

%But  $a\mapsto Tr(T\langle \xi,ax\rangle)=\sum_i Tr(T\langle \xi_i,ax_i\rangle=Tr(Ta\langle \xi,x\rangle)$ is obviously weak-* continuous, thus 
%|Tr(T\langle \xi,ax\rangle)|=\lim |Tr(T\langle \xi,a_nx\rangle)|

For the stability of the property of being a strong module, it suffices to check the predual computation by characterization of a strong module \cite[Th 2.17]{M05}.
Thus take $\psi\in (\oplus_{i\in I}E_i^\natural)_\natural$ a normal completely bounded map. Obviously the inclusion $\epsilon_i:E_i^\natural\to (\oplus_{i\in I}E_i^\natural)$ is normal, thus $\psi\epsilon_i\in (E_i^\natural)_\natural=E_i$ by characterization of a strong module. Thus by definition $\psi'=\oplus_i \psi\epsilon_i\in \ell_{D,D}^1(E_i,i\in I),$ it remains to check this is indeed $\psi$. But for $x\in \oplus_{i\in I}E_i^\natural,\oplus_{i\in F}\pi_i(x)$ weak-* converge to $x$, and $\psi(\oplus_{i\in F}\pi_i(x))=\psi(\oplus_{i\in F}\epsilon_i\pi_i(x))=\psi'(\oplus_{i\in F}\pi_i(x))$ we indeed deduce by weak-* continuity $\psi=\psi'$.

The only non obvious part of the universal property, is to build $u$ from $u_i$. This comes from the fact $(\oplus u_i^\natural)^\natural$ agree with the finite sum of $u_i$ on finite sums. The consequences of the universal property are obvious. To check $L_{C\to D}$ is injective, one uses that $\pi_i^D\circ L_{C\to D}=\pi_i^C$ (where the superscript emphasizes the type of modules the projection is referring to). This identity is obvious on finite sums by definition and then goes to the completion. Thus if $L_{C\to D}(x)=0$ one has $\pi_i^C(x)=0$ for all $i$, thus it is obvious that $x$ is 0 in duality with $\oplus_{i\in I}^{fin}E_i^{\natural_C}$ thus evaluated against its norm closure $c_0(E_i^{\natural_C},i\in I)$ thus also against $(\oplus_{i\in I}E_i^{\natural_C})=[\ell_{C,C}^1(E_i;i\in I)]^{\natural_C}=(c_0(E_i^{\natural_C},i\in I))^{\natural\natural}=(c_0(E_i^{\natural_C},i\in I))^{{}_{C'}*_{C'}*}.$ Indeed, by Golstine lemma, $c_0(E_i^{\natural_C},i\in I)$ is weak-* dense in its bidual, thus also from the proof of \cite[Corol 3.6]{M05} in $(c_0(E_i^{\natural_C},i\in I))^{{}_{C'}*_{C'}*}\subset (c_0(E_i^{\natural_C},i\in I))^{**}$ (recall we write $*$ for operator space duality written $\#$ there) and from the isomorphism in \cite[Corol 3.5]{M05} with the dual bimodule, this weak-* topology  in $(c_0(E_i^{\natural_C},i\in I))^{{}_{C'}*_{C'}*} $ is the pointwise convergence on $\ell_{C,C}^1(E_i;i\in I)$ in the weak-* topology of operators. Thus this density is indeed enough to get vanishing on $x\in  \ell_{C,C}^1(E_i;i\in I)$.
 Thus $x$ is $0$ in 
$\ell_{C,C}^1(E_i;i\in I)\subset(\oplus_{i\in I}E_i^{\natural_C})^{\natural_{C'}}.$

For commutation with projective operator space tensor product, one uses \cite[Prop 3.5.9]{BLM}.
\end{proof}
\subsection{Shuffle maps}
A shuffle map between tensor products is an extension to completion of a map on algebraic tensor product $S:B\otimes C_1\otimes A\otimes C_2\to C_1\otimes A\otimes B\otimes C_2$ with $S(b\otimes c\otimes a\otimes d)=(c\otimes a \otimes b\otimes d)$ or of the inverse of $S$. Two results are available in the literature, commutation of nuclear and extended Haagerup product and its dual between normal Haagerup and von Neumann tensor product \cite[Th 6.1]{EffrosRuanDual}. There is also the similar relation between minimal and Haagerup tensor product from \cite[Th 5.15 ]{PisierBook}. We will need variants of these results for other tensor products. Recall that the normal Fubini tensor product $(B^*{{\otimes_F}}A^*)=CB(B,A^*)$ from \cite[Th 7.2.3]{EffrosRuan}. The von Neumann (or normal spatial) tensor product $B^*{\overline{\otimes}}A^*$ is the weak-* closed subspace generated by the algebraic tensor product. %Module tensor products are recalled in the next subsection.
We start by a module variant. We gave background on modules in  subsection \ref{HaagD}, but we refer mostly for normal dual operator bimodules ${}_DNDOM_D$ (or strong operator modules) to \cite{M05} and \cite[section 3.8]{BLM} but we use the terminology of Magajna.

\begin{proposition}\label{ShuffleNormalModule} Let $D$ be a von Neumann algebra. 
For any operator space $B$ and any normal dual operator $D$-bimodule $A,C_1,C_2,$ then $$(B^*{{\otimes_F}}A)=CB(B,A)\in {}_DNDOM_D, (B^*{\overline{\otimes}}A)\in {}_DNDOM_D$$ for the multiplications induced pointwise by $A$ and the shuffle map extends to  weak-* continuous completely contractive maps  (the second extension being unique) :
$$S_{F \sigma h}=S_{F \sigma h}^{B,C_1,A,C_2 }:C_1\otimes_{\sigma h D}(B{{\otimes_F}}A)\otimes_{\sigma h D}C_2\to B{{\otimes_F}}[C_1\otimes_{\sigma h D}A\otimes_{\sigma h D}C_2],$$
$$C_1\otimes_{\sigma h D}(B{\overline{\otimes}}A)\otimes_{\sigma h D}C_2\to B{\overline{\otimes}}[C_1\otimes_{\sigma h D}A\otimes_{\sigma h D}C_2].$$
The first shuffle map makes commutative diagrams with the canonical maps coming from functoriality of tensor products from weak-* continuous completely bounded bimodule maps between normal bimodules $j:C_1\to C_3, k:C_2\to C_4, l:A\to A_2.$ More precisely, we have $[Id\otimes (j\otimes l\otimes k)]\circ S_{F \sigma h}^{B,C_1,A,C_2 }=S_{F \sigma h}^{B,C_3,A_2,C_4 }\circ [ j\otimes (Id\otimes l)\otimes k].$
\end{proposition}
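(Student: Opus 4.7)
The plan is to equip both $B \otimes_{F} A = CB(B,A)$ and its subspace $B \,\overline{\otimes}\, A$ with their normal dual operator $D$-bimodule structure under pointwise multiplication, and then to build the shuffle via functoriality of $\otimes_{\sigma h D}$ applied to the evaluation maps $\mathrm{ev}_{b}: \phi \mapsto \phi(b)$ for $b \in B$.

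For the module structure, I would identify $B \otimes_{F} A = CB(B, A)$ as the operator space dual $(B \widehat{\otimes} A_{\natural})^{*}$, where $A_{\natural}$ denotes the strong module predual of $A$ from Magajna's theory. The pointwise bimodule action $(d \cdot \phi \cdot d')(b) := d\, \phi(b)\, d'$ is weak-* continuous because it is dual to the obvious $D \otimes D^{\mathrm{op}}$-action on the second factor of $B \widehat{\otimes} A_{\natural}$, and normality in $d, d'$ is inherited from $A$. This places $CB(B,A) \in {}_{D}NDOM_{D}$. Since $B \,\overline{\otimes}\, A$ is, by construction, the weak-* closure of the algebraic tensor product inside $CB(B,A)$, it is a weak-* closed invariant subspace and thus also lies in ${}_{D}NDOM_{D}$.

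Next, for each $b \in B$ the evaluation $\mathrm{ev}_{b}: B \otimes_{F} A \to A$ is weak-* continuous, completely contractive, and $D$-bimodular, and restricts to the corresponding evaluation on $B \,\overline{\otimes}\, A$. By functoriality of $\otimes_{\sigma h D}$ on weak-* continuous completely bounded bimodule maps (obtained by dualizing the functoriality of $\otimes_{eh D'}$ from Theorem \ref{HaagerupModule}(2) and Lemma \ref{IteratedModule}), $\mathrm{ev}_{b}$ extends to a weak-* continuous complete contraction
\[
E_{b} := \mathrm{id}_{C_{1}} \otimes \mathrm{ev}_{b} \otimes \mathrm{id}_{C_{2}} : C_{1} \otimes_{\sigma h D}(B \otimes_{F} A)\otimes_{\sigma h D} C_{2} \longrightarrow C_{1}\otimes_{\sigma h D} A \otimes_{\sigma h D} C_{2}.
\]
I then set $S_{F \sigma h}(z)(b) := E_{b}(z)$. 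To see $S_{F \sigma h}(z) \in CB(B, C_{1}\otimes_{\sigma h D} A \otimes_{\sigma h D} C_{2})$ with $\|S_{F \sigma h}(z)\|_{\mathrm{cb}} \leq \|z\|$, I would observe that $b \mapsto \mathrm{ev}_{b}$ is a complete contraction from $B$ into $CB(B \otimes_{F} A, A)$, and composition with the functorial complete contraction $\phi \mapsto \mathrm{id} \otimes \phi \otimes \mathrm{id}$ on weak-* continuous bimodule CB maps produces a complete contraction $b \mapsto E_{b}$, which is the transpose of $S_{F \sigma h}$; the same argument handles matrix amplifications. Weak-* continuity of $S_{F \sigma h}$ is immediate, since a bounded weak-* convergent net $z_{\alpha} \to z$ gives $E_{b}(z_{\alpha}) \to E_{b}(z)$ weak-* for every $b$, which is precisely the pointwise weak-* topology of $CB(B, \cdot)$. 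Uniqueness of the extension follows from weak-* density of algebraic tensors, and the naturality squares with $j, k, l$ reduce on elementary tensors to a trivial identity between shuffles and extend by weak-* continuity.

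The main obstacle will be the $B \,\overline{\otimes}\, A$ variant, where the image must lie in the strictly smaller $B\,\overline{\otimes}\,[\cdots]$ rather than merely in $B\otimes_{F}[\cdots]$. The shuffle built above restricts, via restriction of each $\mathrm{ev}_{b}$, to a map $C_{1} \otimes_{\sigma h D}(B\,\overline{\otimes}\, A) \otimes_{\sigma h D} C_{2} \to B\otimes_{F}[C_{1}\otimes_{\sigma h D} A \otimes_{\sigma h D} C_{2}]$. On elementary tensors the image plainly lies in the algebraic tensor product, hence in $B\,\overline{\otimes}\,[\cdots]$. To conclude, I would combine weak-* continuity of $S_{F \sigma h}$, weak-* closedness of $B\,\overline{\otimes}\,[\cdots]$ inside $B\otimes_{F}[\cdots]$, and the weak-* density of the algebraic tensor product of $C_{1}, B\,\overline{\otimes}\, A, C_{2}$ inside $C_{1}\otimes_{\sigma h D}(B\,\overline{\otimes}\, A)\otimes_{\sigma h D} C_{2}$, a density coming from the dual description in Lemma \ref{IteratedModule}. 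If that density turns out to be delicate in the $\overline{\otimes}$ case, the backup is to build the shuffle from scratch on algebraic tensors with target $B\,\overline{\otimes}\,[\cdots]$, check separate weak-* continuity into that smaller target, and invoke the universal property of $\otimes_{\sigma h D}$ as predual of $\otimes_{eh D'}$ to extend.
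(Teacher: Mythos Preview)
Your core mechanism is exactly the paper's: define the shuffle by $S(z)(b)=(1\otimes \mathrm{ev}_b\otimes 1)(z)$ using functoriality of the normal Haagerup product for weak-* continuous completely bounded maps, then check complete contractivity in $b$ at the matrix level. Two organizational differences are worth noting.

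First, you establish $CB(B,A)\in{}_DNDOM_D$ before building the shuffle, via the dual identification. A small correction: the predual you want is the operator space predual $A_*$, not the module predual $A_\natural$ (they differ in general, and $(B\widehat{\otimes}A_\natural)^*$ is not $CB(B,A)$). With $A_*$ the duality argument works, but to see that the pointwise action makes $CB(B,A)$ an \emph{operator} bimodule you still need a Haagerup-level shuffle $D\otimes_h CB(B,A)\otimes_h D\to CB(B,D\otimes_h A\otimes_h D)$; this is elementary, but note that the paper avoids this separate check by reversing the order. It first builds the shuffle with $D=\C$, composes with the module map $D\otimes_{\sigma h}A\otimes_{\sigma h}D\to A$ to obtain $D\otimes_{\sigma h}CB(B,A)\otimes_{\sigma h}D\to CB(B,A)$, and then invokes the dual CES theorem \cite[Th~3.8.3]{BLM} together with \cite[(5.22)]{EffrosRuanDual} to land in ${}_DNDOM_D$. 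Only after that does it pass to the module product via the quotient \cite[Th~4.2]{M05}, checking the kernel elements $cd\otimes T-c\otimes dT$ by hand.

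Second, for the $\overline{\otimes}$ variant you restrict the Fubini shuffle and argue by weak-* density and closedness that the range lies in $B\overline{\otimes}[\cdots]$. This works, but the paper takes a shorter route: the non-module shuffle $C_1\otimes_{\sigma h}(B\overline{\otimes}A)\otimes_{\sigma h}C_2\to B\overline{\otimes}[C_1\otimes_{\sigma h}A\otimes_{\sigma h}C_2]$ is already the content of \cite[Th~6.1]{EffrosRuanDual}, so one only needs to descend to the module quotient as before. Your ``main obstacle'' is thus a non-issue if you are willing to cite that result.
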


\begin{proof}
Let us start by building the first shuffle map in the case without module structure (namely $D=\C$) we build a canonical contractive map  $I\in CB(D\otimes_{\sigma h }CB(B,A)\otimes_{\sigma h }D , CB(B,D\otimes_{\sigma h }A\otimes_{\sigma h }D))$. But if $b\in B,$ $b\otimes .:CB(A_*, B\hat{\otimes} A_*)$ thus defines a dual weak-* continuous map $(b\otimes .)^*:CB(B,A)\to A$ which is merely the evaluation map. It is easy to see that $b\mapsto (b\otimes .)^*\in CB(B,NCB(CB(B,A),A))$. From \cite[(5.22)]{EffrosRuanDual} one gets a map $$Id\otimes (b\otimes .)^*\otimes Id\in NCB(D\otimes_{\sigma h }CB(B,A)\otimes_{\sigma h }D ,D\otimes_{\sigma h }A\otimes_{\sigma h }D))),$$
and one thus defines $I(x)(b)=(Id\otimes (b\otimes .)^*\otimes Id)(x)$. To check it is completely contractive in $b,$ one uses that for $b\in M_n(B),$  then $b\otimes .\in CB(M_n(A), M_n(B)\hat{\otimes} M_n(A)), (b\otimes .)^*\in NCB( CB(M_n(B),M_n(A)),M_n(A))\to NCB( CB(B,A),M_n(A))$ and $(Id\otimes (b\otimes .)^*\otimes Id)(x)\in D\otimes_{\sigma h }M_n(A)\otimes_{\sigma h }D\to M_n(D\otimes_{\sigma h }A\otimes_{\sigma h }D))$ by a very special case of the shuffle map from \cite[Th 6.1]{EffrosRuanDual}. This concludes the case with $D=\C$. The commutativity with functorial maps is obvious from the definitions.

Since $A\in {}_DNDOM_D$, thus $A$ is an operator module so that we have a map $D\otimes_hA\otimes_h D\to A$ and from the representation, it is separately weak-* continuous, thus from \cite[Prop 5.9]{EffrosRuanDual} it extends to a weak-* continuous completely contractive map $D\otimes_{\sigma h}A\otimes_{\sigma h} D\to A.$

From the result without module structure, one gets a weak-* continuous completely contractive map $D\otimes_{\sigma h }(B^*{{\otimes_F}}A)\otimes_{\sigma h }D \to B^*{{\otimes_F}}(D\otimes_{\sigma h }A\otimes_{\sigma h }D) \to (B^*{{\otimes_F}}A)$ and from \cite[Th 6.1]{EffrosRuanDual} we have the von Neumann tensor product analogue  $D\otimes_{\sigma h }(B^*{\overline{\otimes}}A)\otimes_{\sigma h }D \to B^*{\overline{\otimes}}(D\otimes_{\sigma h }A\otimes_{\sigma h }D) \to (B^*{\overline{\otimes}}A)$. Thus using also Effros-Ruan's CES Theorem for dual modules \cite[Th 3.8.3]{BLM} and from the characterization of weak-* continuity \cite[(5.22)]{EffrosRuanDual} one gets $(B^*{{\otimes_F}}A),(B^*{\overline{\otimes}}A)\in {}_DNDOM_D.$

For the shuffle map in the module case, it suffices to treat the case $C_2=\C$ by symmetry and associativity of the normal Haagerup product (see lemma \ref{IteratedModule} bellow).
Now composing the weak-* continuous map we obtained to the weak-* continuous completely quotient map from \cite[Th 4.2]{M05}, one gets : $$C_1\otimes_{\sigma h }(B{{\otimes_F}}A)\to B{{\otimes_F}}[C_1\otimes_{\sigma h }A]\to B{{\otimes_F}}[C_1\otimes_{\sigma h D}A],$$
and from the same theorem, it suffices to see it induces a map to the quotient. Since the kernel is weak-* closed, it suffices to show that it contains for any $c\in C_1, d\in D, T\in CB(B, A)$ $x=cd\otimes T-c\otimes dT$ but for $b\in B$, the image of  $x$ is $(id\otimes (b\otimes1)^*)(x)=cd\otimes T(b)-c\otimes dT(b)$ which is $0$ when mapped by the quotient map of the normal Haagerup product. This concludes. An easy diagram chasing with the definition of quotients proves the commutativity with functorial maps in the module case from the non-module case. The von Neumann tensor product case is similar starting from the non-module case from \cite[Th 6.1]{EffrosRuanDual}.
\end{proof}
As for $\ell^1$ direct sums, projective tensor products are not well behaved with respect to operator modules, but this suggests a definition of a module projective product.

\begin{corollary} \label{ShuffleStrongModule}
Let $D$ be a von Neumann algebra. 
For any operator space $B$ and any strong operator $D$-bimodule $A$, $B\widehat{\otimes}^{D-D}A:=(B^*\otimes_F A^\natural)_\natural$ is a strong operator $D$-bimodule with a canonical completely contractive map $B\widehat{\otimes}A\to B\widehat{\otimes}^{D-D}A.$
It satisfies the following universal property, it is the unique strong operator bimodule such that for any other strong operator bimodule $X$: $$CB_D(B\widehat{\otimes}^{D-D}A,X)_D=CB(B, CB_{D}(A,X)_{D}).$$
As a consequence, for $B_1,B_2$ operator spaces, we have :
$$B_1\widehat{\otimes}^{D-D}(B_2\widehat{\otimes}^{D-D}A)\simeq (B_1\widehat{\otimes}B_2)\widehat{\otimes}^{D-D}A.$$

 Moreover, the shuffle map extends for $C_1,C_2$ strong operator $D$-bimodule to a completely contractive $D$ bimodule map : $$S_{P e h}=S_{P e h}^{B,C_1,A,C_2 }:B\widehat{\otimes}^{D-D}[C_1\otimes_{e h D}A\otimes_{e h D}C_2]\to C_1\otimes_{e h D}[B\widehat{\otimes}^{D-D}A]\otimes_{e h D}C_2 ,$$
whose dual module map is the extension of the previous proposition \ref{ShuffleNormalModule} $$C_1^\natural\otimes_{\sigma h D'}(B^*{{\otimes_F}}A^\natural)\otimes_{\sigma h D'}C_2^\natural\to B^*{{\otimes_F}}[C_1^\natural\otimes_{\sigma h D'}A^\natural\otimes_{\sigma h D'}C_2^\natural].$$
Moreover the shuffle map makes commutative diagrams with the canonical maps coming from functoriality of tensor products from  completely bounded bimodule maps between strong bimodules $j:C_1\to C_3, k:C_2\to C_4, l:A\to A_2.$ More precisely, we have $[ j\otimes_{eh D} (Id\otimes l)\otimes_{eh D} k]\circ S_{P e h}^{B,C_1,A,C_2 }=S_{P e h}^{B,C_3,A_2,C_4 }\circ [Id\otimes (j\otimes_{eh D} l\otimes_{eh D} k)].$
\end{corollary}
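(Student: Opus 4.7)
The plan is to obtain every claim by taking preduals of the objects built in Proposition \ref{ShuffleNormalModule}, now applied over the commutant algebra $D'$. My first step is to check that $B\widehat{\otimes}^{D-D}A$ is a strong operator bimodule. Since $A \in {}_DSOM_D$, its proper module dual $A^\natural$ lies in ${}_{D'}NDOM_{D'}$, and Proposition \ref{ShuffleNormalModule} (applied with algebra $D'$, $C_1 = C_2 = D'$, and $A^\natural$ in place of $A$) gives that $B^* \otimes_F A^\natural = CB(B, A^\natural) \in {}_{D'}NDOM_{D'}$ with the pointwise bimodule structure. Its predual $(B^* \otimes_F A^\natural)_\natural$ is then a strong $D$-bimodule by \cite[Th 3.7]{M05}. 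The canonical map $B \widehat{\otimes} A \to B \widehat{\otimes}^{D-D}A$ comes from the bilinear complete contraction $(b,a) \mapsto \bigl(T \mapsto T(b)(a)\bigr)$ with values in $NCB_D(CB(B,A^\natural), B(L^2(D)))_D = (CB(B,A^\natural))_\natural$, extended by the universal property of the operator-space projective tensor product.

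For the universal property, since source and target are strong, the module-dual operation identifies $CB_D(B\widehat{\otimes}^{D-D}A, X)_D$ with $NCB_D(X^\natural, CB(B, A^\natural))_D$. The key currying step is to observe that $CB(B, A^\natural) \simeq (B \widehat{\otimes} A)^*$ carries the weak-* topology given by pointwise weak-* convergence on $A^\natural$; consequently, a map $T$ into $CB(B, A^\natural)$ is normal $D$-bimodular iff $b \mapsto T(\cdot)(b)$ takes values in $NCB_D(X^\natural, A^\natural)_D$ for every $b$. This produces $CB(B, NCB_D(X^\natural, A^\natural)_D) = CB(B, CB_D(A, X)_D)$ by the strong/dual duality again. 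The associativity $B_1 \widehat{\otimes}^{D-D}(B_2 \widehat{\otimes}^{D-D}A) \simeq (B_1 \widehat{\otimes} B_2) \widehat{\otimes}^{D-D}A$ then falls out by iterating this universal property with the projective tensor product's own adjunction and invoking uniqueness of representing objects.

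Next, to build the shuffle map, I would apply Proposition \ref{ShuffleNormalModule} over $D'$ to $C_1^\natural, C_2^\natural \in {}_{D'}NDOM_{D'}$ and $A^\natural$ in place of $A$, obtaining a weak-* continuous completely contractive bimodule map
\[ C_1^\natural \otimes_{\sigma h D'} (B^* \otimes_F A^\natural) \otimes_{\sigma h D'} C_2^\natural \to B^* \otimes_F \bigl[C_1^\natural \otimes_{\sigma h D'} A^\natural \otimes_{\sigma h D'} C_2^\natural\bigr]. \]
By Lemma \ref{IteratedModule}, since $C_1 \otimes_{eh D} A \otimes_{eh D} C_2$ is strong by \cite[Prop 4.1]{M97}, the right-hand side identifies with $(B \widehat{\otimes}^{D-D}[C_1 \otimes_{eh D} A \otimes_{eh D} C_2])^\natural$; and since $B \widehat{\otimes}^{D-D}A$ is strong by the first paragraph, Lemma \ref{IteratedModule} also identifies the left-hand side with $(C_1 \otimes_{eh D}(B \widehat{\otimes}^{D-D}A) \otimes_{eh D} C_2)^\natural$. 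Taking the predual of the displayed map yields $S_{P e h}$, and by construction its dual is exactly the Proposition \ref{ShuffleNormalModule} shuffle as required.

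Finally, commutation of $S_{P e h}$ with the functorial maps $j, k, l$ will be inherited, by predualization, from the corresponding commutation in Proposition \ref{ShuffleNormalModule} applied to $j^\natural, k^\natural, l^\natural$, together with the functoriality of $B \mapsto B\widehat{\otimes}^{D-D}(-)$ built into the universal property. The main obstacle I anticipate is the bookkeeping of the $D$ versus $D'$ actions and of normality conditions, especially the curry isomorphism in the universal-property step; once the weak-* topology of $CB(B,A^\natural) = (B \widehat{\otimes} A)^*$ has been correctly identified with pointwise weak-* convergence on $A^\natural$, the rest reduces to a diagram-chase of duality between strong and normal-dual modules.
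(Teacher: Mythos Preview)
Your approach is essentially the same as the paper's: both obtain every claim by dualizing between strong $D$-bimodules and normal dual $D'$-bimodules via \cite[Prop 3.11, 3.12, Th 3.7]{M05}, and both build $S_{Peh}$ as the module predual of the Proposition \ref{ShuffleNormalModule} shuffle over $D'$. Two minor remarks: first, your subscripts $NCB_D(X^\natural,\ldots)_D$ in the universal-property step should be $NCB_{D'}(\ldots)_{D'}$, since $X^\natural$ and $CB(B,A^\natural)$ are $D'$-bimodules (you flag this bookkeeping yourself); second, the paper adds an explicit check on elementary tensors that the predual $S$ really acts as the shuffle, via the pairing $\langle S(b\otimes c_1\otimes a\otimes c_2), L\otimes M\otimes N\rangle$, which you should also include since the predual map is a priori only characterized abstractly.
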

\begin{proof}
The strong module statement is obvious from the characterization of strong modules \cite{M05} from \cite[Prop 3.11, Th 3.7]{M05} if $X$ is a strong operator $D$-module $$CB_D((B^*\otimes_F A^\natural)_\natural, X)_D\simeq NCB_{D'}(X^\natural, B^*\otimes_F A^\natural)_{D'}\subset CB(B, CB_{D'}(X^\natural,A^\natural)_{D'})$$ and since both spaces we take modular $CB$ maps on the right hand side are strong, this isomorphism is even completely isometric from \cite[Prop 3.12]{M05}. Moreover, it is easy to see that $NCB_{D'}(X^\natural, B^*\otimes_F A^\natural)_{D'}= CB(B, NCB_{D'}(X^\natural,A^\natural)_{D'})\simeq CB(B, CB_{D}(A,X)_{D})$ maps that are pointwise valued in normal $CB$ maps. But $CB(B, CB_{D}(A,X)_{D})\subset CB(B\widehat{\otimes} A,X)$. Thus taking $X=(B^*\otimes_F A^\natural)_\natural$ one gets from the identity map the expected canonical map $B\widehat{\otimes} A\to (B^*\otimes_F A^\natural)_\natural$ which is $A-A$ modular for the obvious module structure. 
We have also checked the universal property. Associativity follows straightforwardly.

For the shuffle map, we already recalled the two spaces for which we want an extension are strong, thus from \cite[Prop 3.11, 3.12]{M05} again, the unique module predual map $S$ of the previous shuffle map exists and have same $CB$ norm thus is also contractive and it suffices to check it extends the shuffle map of the algebraic product. Indeed, for $c_1\in C_1, c_2\in C_2, b\in B, a\in A, M\in CB(B, A^\natural), L\in C_1^\natural, N\in C_2^\natural :$
\begin{align*}\langle S(b\otimes c_1\otimes a\otimes c_2),L\otimes M\otimes N\rangle&=\langle ( c_1\otimes a\otimes c_2),S^*(L\otimes M\otimes N)[b]\rangle\\&=\langle ( c_1\otimes a\otimes c_2),(L\otimes M(b)\otimes N)\rangle\\&=\langle ( c_1\otimes (b\otimes a)\otimes c_2),(L\otimes M\otimes N)\rangle\end{align*}
and by weak-* density of the  elements of the form $L\otimes M\otimes N$ one concludes to $S(b\otimes c_1\otimes a\otimes c_2)=c_1\otimes (b\otimes a)\otimes c_2.$
The functoriality follows from the dual one in the previous proposition.
\end{proof}

%For convenience as a technical tool, we will thus use the nuclear tensor product $X\otimes_{nuc} Y\subset X\otimes_{min} Y$ of two operator spaces studied e.g. in \cite{EffrosRuanDual} which is the canonical complete quotient of the operator space projective tensor product $X\widehat{\otimes} Y\to X\otimes_{nuc} Y$ included in the minimal tensor product. We will also use its weakly integral variant for too dual operator spaces with preduals $X_*,Y_*$ with complete isometric inclusion (see e.g. \cite[p 218, 302 ]{EffrosRuan})
%$$X\otimes_{nuc} Y\subset X\otimes_{wint} Y:=(X_*\otimes_{min}Y_*)^*.$$
We conclude by a technical shuffle lemma we did not find in the literature :

\begin{lemma}\label{Shuffle}
For any operator spaces $A,B,C_1,C_2,$ the shuffle map extends to a completely contractive maps in the following situations (the last two maps are unique extensions) :
$$B{\widehat{\otimes}}[C_1\otimes_{eh}A\otimes_{eh}C_2]\to C_1\otimes_{e h}(A\widehat{\otimes} B)\otimes_{e h}C_2,$$
$$B{\widehat{\otimes}}[C_1\otimes_{h}A\otimes_{h}C_2]\to C_1\otimes_{ h}(A\widehat{\otimes} B)\otimes_{ h}C_2,$$
$$B{{\otimes}_{nuc}}[C_1\otimes_{h}A\otimes_{h}C_2]\to C_1\otimes_{ h}(A{\otimes}_{nuc} B)\otimes_{ h}C_2,$$
$$C_1\otimes_{e h}CB(B,A)\otimes_{e h}C_2\to CB(B,C_1\otimes_{eh}A\otimes_{eh}C_2),$$
$$C_1\otimes_{ h}CB(B,A)\otimes_{ h}C_2\to CB(B,C_1\otimes_{h}A\otimes_{h}C_2).$$

Moreover, if $C_1$ (resp. $C_2$) is a D-E (resp. E-D) normal dual operator module over  von Neumann algebra $D,E$ and assume $A$ is now a normal dual operator E bimodule,  there is a completely contractive $D$ bimodular map extending the shuffle map, weak-* continuous on the normal Haagerup tensor product argument :
$$S_{P \sigma h}^{D,E}:B\widehat{\otimes}^{D-D}[C_1\otimes_{\sigma h E}A\otimes_{\sigma h E}C_2]\to C_1\otimes_{\sigma h E}(CB(B, A_\natural))^{\natural}\otimes_{\sigma h E}C_2,$$
where $CB(B, A_\natural)$ is given the structure of strong Haageruop E' module induced by the canonical map $E'\otimes_{eh}CB(B, A_\natural)\otimes_{eh}E'\to CB(B,E'\otimes_{eh}A_\natural\otimes_{eh}E')\to CB(B, A_\natural).$
%Moreover, if we assume $C_i$ completely complemented in their bidual via $P_i:C_i^{**}\to C_i$ then so is $C_1\otimes_{eh}A^*\otimes_{eh}C_2$ and there  is also a completely contractive extension, independent of $P_i$, of the shuffle map :
%$$B^{*}{\otimes_{nuc}}[C_1^*\otimes_{\sigma h}A^{**}\otimes_{\sigma h}C_2^*]\to C_1^*\otimes_{\sigma h}(B^*\otimes_{nuc} A)^{**}\otimes_{\sigma h}C_2^*.$$
Finally, when $E=D$, if $k=k_{B,A}^D:[B\widehat{\otimes}^{D-D}A]\to(CB(B, A_\natural))^{\natural}$ is the canonical map, $J:[C_1\otimes_{e h E}A\otimes_{e h E}C_2]\to [C_1\otimes_{\sigma h E}A\otimes_{\sigma h E}C_2]$ , $I: C_1\otimes_{e h E}(CB(B, A_\natural))^{\natural}\otimes_{e h E}C_2\to C_1\otimes_{\sigma h E}(CB(B, A_\natural))^{\natural}\otimes_{\sigma h E}C_2$ the contractive inclusions and $S_{P e h}:B\widehat{\otimes}^{D-D}[C_1\otimes_{e h D}A\otimes_{e h D}C_2]\to C_1\otimes_{e h D}[B\widehat{\otimes}^{D-D}A]\otimes_{e h D}C_2 $ the shuffle map of proposition \ref{ShuffleStrongModule}, then we have the commutative diagram : $I\circ (1\otimes k\otimes 1)\circ S_{P e h}=S_{P \sigma h}^{D,D}\circ [1\otimes J]$.

\end{lemma}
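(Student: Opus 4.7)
The plan is to derive all shuffle maps from a single template: for each $b\in B$, build a completely contractive map on the middle factor, tensor it with identities on $C_1,C_2$ via functoriality of the outer Haagerup-type product, and then linearize the assignment $b\mapsto(\text{induced map})$ via either currying of $CB$ or the universal property of the projective (operator space or operator module) tensor product.

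Concretely, for the two $CB$-target shuffles the middle ingredient is evaluation $ev_b:CB(B,A)\to A$; for the two $\widehat{\otimes}$-source shuffles it is $\iota_b:A\to A\widehat{\otimes}B$, $a\mapsto a\otimes b$; and for the nuclear shuffle the analogous $a\mapsto a\otimes b$ into $A\otimes_{nuc}B$. In each case functoriality of $\otimes_{eh}$ or $\otimes_h$ produces a completely contractive map between the corresponding triple tensor products, and complete boundedness of $b\mapsto ev_b$ and $b\mapsto\iota_b$ is immediate. Currying via $CB(X,CB(B,Y))=CB(B,CB(X,Y))$ (for $CB$ targets) or the defining property $CB(B\widehat{\otimes}Y,Z)=CB(B,CB(Y,Z))$ (for projective sources) then yields the desired shuffle; uniqueness of the normal-Haagerup extensions when targets are $CB$ follows from weak-$*$ density of elementary tensors in the normal Haagerup tensor product.

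For the module shuffle $S_{P\sigma h}^{D,E}$ the same template applies, once $CB(B,A_\natural)$ has been equipped with the strong $E'$-$E'$ operator module structure indicated in the statement -- which is itself an instance of the fourth shuffle above, applied with $E'$ in place of $C_1,C_2$ and $A_\natural\in{}_{E'}SOM_{E'}$. For each $b\in B$, evaluation $ev_b:CB(B,A_\natural)\to A_\natural$ is a completely contractive $E'$-$E'$ modular map, so its $\natural$-dual $(ev_b)^\natural:A=(A_\natural)^\natural\to (CB(B,A_\natural))^\natural$ is completely contractive, weak-$*$ continuous and $E$-$E$ modular. Lemma \ref{IteratedModule} then produces by functoriality of $\otimes_{\sigma h E}$ the tensored map
\[\phi_b:C_1\otimes_{\sigma h E}A\otimes_{\sigma h E}C_2\to C_1\otimes_{\sigma h E}(CB(B,A_\natural))^\natural\otimes_{\sigma h E}C_2,\]
weak-$*$ continuous, completely contractive and $D$-$D$ modular. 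Complete boundedness of $b\mapsto\phi_b$ together with the projective universal property recalled in Corollary \ref{ShuffleStrongModule} then linearizes this to $S_{P\sigma h}^{D,E}$. The commutative diagram $I\circ(1\otimes k\otimes 1)\circ S_{Peh}=S_{P\sigma h}^{D,D}\circ(1\otimes J)$ is verified on elementary tensors $b\otimes(c_1\otimes a\otimes c_2)$: both sides evaluate to $c_1\otimes k(b\otimes a)\otimes c_2$ in $C_1\otimes_{\sigma h D}(CB(B,A_\natural))^\natural\otimes_{\sigma h D}C_2$, where $k(b\otimes a)$ is identified with $(ev_b)^\natural(a)$ via the pairing $\langle k(b\otimes a),T\rangle=\langle T(b),a\rangle$. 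The identity then extends by continuity of $S_{Peh}$, $k$, $I$, $J$ together with density of elementary tensors.

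The main obstacle is this very first step of the module argument: one must verify that the shuffle-induced action $E'\otimes_{eh}CB(B,A_\natural)\otimes_{eh}E'\to CB(B,A_\natural)$ really satisfies the strong operator module axioms, and that the induced $E$-$E$ action on $(CB(B,A_\natural))^\natural$ is precisely the one for which $(ev_b)^\natural$ is modular. This bookkeeping is delicate because the module structure on $CB(B,A_\natural)$ is defined only indirectly through a composition of two shuffle maps, and one needs their associativity and normality to transfer cleanly to the dual side used to construct $\phi_b$.
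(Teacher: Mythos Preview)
Your template correctly handles maps 1, 2, 4, 5 and the module shuffle $S_{P\sigma h}^{D,E}$; this is essentially the paper's approach (for maps 4, 5 and the module part the paper does exactly what you describe, while for map 1 the paper instead takes the predual of the map in Proposition~\ref{ShuffleNormalModule}, which is equivalent but less direct than your $\iota_b$ argument).

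There is however a genuine gap at the third (nuclear) shuffle. Your template requires linearizing the assignment $b\mapsto(1\otimes\iota_b\otimes 1)$ over the \emph{source} tensor product, and this is exactly what the universal property of $\widehat{\otimes}$ provides. But $\otimes_{nuc}$ has no such linearization property: a completely bounded bilinear map on $B\times Y$ need not factor through $B\otimes_{nuc}Y$. The paper does not attempt this; it instead invokes the known commutation of $\otimes_{nuc}$ with $\otimes_{eh}$ from \cite[Th.~6.1]{EffrosRuanDual} to get a map $B\otimes_{nuc}[C_1\otimes_{eh}A\otimes_{eh}C_2]\to C_1\otimes_{eh}(A\otimes_{nuc}B)\otimes_{eh}C_2$, and then restricts source and target to the smaller Haagerup products using injectivity of $\otimes_h\hookrightarrow\otimes_{eh}$ and functoriality of $\otimes_{nuc}$.

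Your verification of the commutative diagram is also too loose as written. Elementary tensors $c_1\otimes a\otimes c_2$ are \emph{not} norm-dense in $C_1\otimes_{ehD}A\otimes_{ehD}C_2$ (only their Haagerup completion is, and that is strictly smaller), so ``checking on elementary tensors and extending by continuity'' does not close. You implicitly need to reduce to fixed $b$ via the universal property of $\widehat{\otimes}^{D-D}$ and then argue on the level of module predual maps, where one can exploit weak-$*$ density of $C_1^\natural\otimes_{hD'}CB(B,A_\natural)\otimes_{hD'}C_2^\natural$ in the $\sigma h$-product; this is precisely how the paper proceeds, decomposing $k=k_1^\natural\circ k_2$ and comparing $T(b)^\natural$ with $S'(b)^\natural\circ k_1$ on that dense set.
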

\begin{proof}

The first five maps are easy. To get the first, it suffices to take the predual map of the first map built in proposition \ref{ShuffleNormalModule} in the case $D=\C$ and reason as in corollary \ref{ShuffleStrongModule}.% start with Pisier's map \cite[Th 5.15 ]{PisierBook}
%$$[C_1^*\otimes_{h}(A^*\otimes_{min}B^*)\otimes_{h}C_2^*]\to B^*\otimes_{min}[C_1^*\otimes_{h}A^*\otimes_{h}C_2^*],$$

%and dualise to get :
%$B{\widehat{\otimes}}[C_1\otimes_{eh}A\otimes_{eh}C_2]\to C_1\otimes_{e h}(A\widehat{\otimes} B)\otimes_{e h}C_2,$
The second map follows from the first since the second range is included in the first and working on elementary tensors enables to restrict to this new range.
The third map is induced from the one  $$S': B{{\otimes}_{nuc}}[C_1\otimes_{eh}A\otimes_{eh}C_2]\to C_1\otimes_{ eh}(A{\otimes}_{nuc} B)\otimes_{ eh}C_2$$ from \cite[Th 6.1]{EffrosRuanDual} on extended Haagerup products using that $C_1\otimes_{ h}(A\widehat{\otimes} B)\otimes_{ h}C_2\subset  C_1\otimes_{ eh}(A\widehat{\otimes} B)\otimes_{ eh}C_2$ completely isometrically and $[C_1\otimes_{h}A\otimes_{h}C_2]\subset [C_1\otimes_{eh}A\otimes_{eh}C_2]$
 so that the canonical map $[C_1\otimes_{h}A\otimes_{h}C_2]\widehat{\otimes}B\to [C_1\otimes_{eh}A\otimes_{eh}C_2]\widehat{\otimes}B$
induces a map by quotient  $Q:[C_1\otimes_{h}A\otimes_{h}C_2]{\otimes}_{nuc}B\to [C_1\otimes_{eh}A\otimes_{eh}C_2]{\otimes}_{nuc}B,$ (using functoriality of the injective tensor product) so that $S'\circ Q$ has actually a range on the closed subspace  $C_1\otimes_{ h}(A{\otimes}_{nuc} B)\otimes_{ h}C_2$ as seen on elementary tensors and is thus our third map.

The fourth and fifth maps are obtained as follows in seeing 
\begin{align*}CB(C_1\otimes_{(e)h}&CB(B, A)\otimes_{(e)h}C_2, CB(B,C_1\otimes_{(e)h}A\otimes_{(e)h}C_2))\\&=CB(B,CB(C_1\otimes_{(e)h}CB(B, A)\otimes_{(e)h}C_2, C_1\otimes_{(e)h}A\otimes_{(e)h}C_2))\end{align*}
and in taking  in the right hand side the map  corresponding to $b\mapsto 1\otimes ev(b)\otimes 1$ and using the functoriality from \cite[(5.11)]{EffrosRuanDual} (and the well-known one in the Haaggerup case) and complete boundeness of $ev$ in $b$ using also usual completely contractive maps $C_1\otimes_{h}M_n(A)\otimes_{h}C_2\to M_n(C_1\otimes_{h}A\otimes_{h}C_2)$ from \cite[Thm 5.15]{PisierBook} and~: \begin{align*}C_1\otimes_{eh}M_n(A)\otimes_{eh}C_2&\subset [C_1^*\otimes_h (T_n\widehat{\otimes}A^*)\otimes_h C_2^*]^*\\&\to (T_n\widehat{\otimes}[C_1^*\otimes_h A^*\otimes_h C_2^*])^*\\&=M_n (C_1^{**}\otimes_{eh}A^{**}\otimes_{eh}C_2^{**})\supset M_n (C_1\otimes_{eh}A\otimes_{eh}C_2) \end{align*} the first inclusion using injectivity of extended Haagerup product and its relation to Haagerup product, the second map in the middle being dual to the second shuffle map built above and the final map arriving in the smaller target subset  without biduals (cf detail later in this proof for the normal Haagerup case.)

The canonical map to get the strong module structure is obtained in using the fourth map just built above.
 This concludes to the strong operator module structure on $CB(B,A_\natural),$ using $A_\natural$ is strong from \cite[Th 2.17]{M05}.

To build the $D$ bimodular map extension of  the shuffle map, one can use the universal property proved in proposition \ref{ShuffleStrongModule}. Indeed the target space is a strong $D$-bimodule as any dual operator module since the module map extends to normal Haagerup thus to the included extended Haagerup product  from which we deduce the defining stability for strong modules. Thus,  it suffices to build a map : $$S=S_{P \sigma h}^{D,E}\in CB(B,NCB_D([C_1\otimes_{\sigma h E}A^{*}\otimes_{\sigma h E}C_2], C_1\otimes_{\sigma hE}(CB(B, A_\natural))^{\natural}\otimes_{\sigma h E}C_2)_D)$$ corresponding to the shuffle map.

But for any $b\in B$, we already noted that $ev(b)\in CB_{E'}(CB(B,A_\natural),A_\natural)_{E'}$ with $ev\in CB(B,CB(CB(B,A)_\natural,A_\natural)\simeq CB(CB(B,A_\natural),CB(B,A_\natural))$ corresponds to identity.  Thus since $A_\natural, CB(B,A)_\natural$ are strong modules, by \cite[Prop 3.12,Th 3.7]{M05} the map induces a map $(ev(b))^{\natural}\in NCB_E(A,(CB(B,A_\natural))^{\natural})_E$ with same cb norm, so that $$S(b)=Id\otimes ((ev(b))^{\natural}\otimes Id\in NCB_D([C_1\otimes_{\sigma h E}A\otimes_{\sigma h E}C_2], C_1\otimes_{\sigma h E}(CB(B,A_\natural))^{\natural}\otimes_{\sigma h E}C_2)_D.$$ The $D$-modularity is obvious, so is normality, and we check the map is indeed completely bounded. Indeed for $b\in M_n(B)$, $$(ev(b))^{\natural}\in NCB_E(A,(CB(M_n(B),A_{\natural})^{\natural})_E\to NCB_E(A,M_n[(CB(B,A_\natural))^{\natural})])_E$$ the last map being obtained by  dualising the canonical map (obtained using evaluation maps and commutativity of projective product. We also  use \cite[Th 2.5.1]{PisierBook} to recall $M_n(A)\subset M_n(A^{**})\simeq CB(A^*,M_n(\C))=CB(A^*,CB(T_n,\C))=CB(T_n,A^{**})\supset CB(T_n,A)$ all inclusions being completely isometric so that  $CB(T_n,A)=M_n(A)$ with $T_n=M_n(\C)^*$) and for $K$ a proper  left Hilbert $E'$ module (the third map comes from the induced modular quotient of the fifth map above) : \begin{align*}T_n\widehat{\otimes}(K^*\otimes_{h E'}CB(B,A_\natural)\otimes_{h E'} K)&\to T_n\widehat{\otimes}(K^*\otimes_{h E'}CB(M_n(B),M_n(A_\natural))\otimes_{h E'} K)\\&=T_n\widehat{\otimes}(K^*\otimes_{h E'}CB(T_n,CB(M_n(B),A_\natural))\otimes_{h E'} K) \\&\to T_n\widehat{\otimes}CB(T_n,[(K^*\otimes_{h E'}CB(M_n(B),A_\natural))\otimes_{h E'} K])\\& \to  K^*\otimes_{h E'}CB(M_n(B),A_\natural)\otimes_{h E'} K.\end{align*}  Thus it suffices to note there is a completely contractive map $$C_1\otimes_{\sigma h E}M_n((CB(B,A_\natural))^{\natural})\otimes_{\sigma h E}C_2\to M_n(C_1\otimes_{\sigma h E}(CB(B,A_\natural))^{\natural}\otimes_{\sigma h E}C_2)$$ which is again a special case of (the module variant from proposition \ref{ShuffleNormalModule} of )\cite[Th 6.1]{EffrosRuanDual} to deduce the expected complete boundedness for $S.$
 
 It is also easy to see that $S$ restricts to the expected shuffle map on elementary tensors.

For the functoriality statement, since the maps are build by the universal property of the tensor product, it suffices to prove the relation for $b\in B$ fixed. Let also $S'=S_{P e h}$ for short.

Of course, one can define $$T(b)=Id\otimes ((ev(b))^{\natural}\otimes Id\in CB_D([C_1\otimes_{e h E}A\otimes_{e h E}C_2], C_1\otimes_{e h E}(CB(B,A_\natural))^{\natural}\otimes_{e h E}C_2)_D,$$ and $S(b)J=IT(b)$
is easy from the definitions.% by duality if E=\C, \langle \phi_1\o^{\sigma h}\phi_2\o^{\sigma h}\phi_3 J(v_1\odot v_2\odot v_3),x_1\odot x_2\odot x_3\rangle=v_1((\phi_1)_*[x_1])v_2((\phi_2)_*[x_2])v_3((\phi_3)_*[x_3]) from \cite[def after (5.23), lemma 5.2 and (5.16) for the pairing after J]{EffrosRuanDual} but v_2((\phi_2)_*[x_2])=(\phi_2(v_2))([x_2]) as seen coefficientwise and thus v_1((\phi_1)_*[x_1])v_2((\phi_2)_*[x_2])v_3((\phi_3)_*[x_3])=\langle I(\phi_1\o^{e h}\phi_2\o^{e h}\phi_3) (v_1\odot v_2\odot v_3),x_1\odot x_2\odot x_3\rangle by the same pairing descriptions. In the module case the extension is similar from \cite[Prop 3.10]{M97} replacing (5.23), an analogue of lema 5.2 can be obtained from \cite{M05} and its duality pairing eh/h products...

Then recall $[B\widehat{\otimes}^{D-D}A]=[CB(B,A^\natural)]_\natural$ and that $k=k_1^\natural\circ k_2$ with $k_2:[CB(B,A^\natural)]_\natural\to [CB(B,A^\natural)]^\natural$ the injection and $k_1^\natural:[CB(B,A^\natural)]^\natural\to [CB(B,A_\natural)]^\natural$ the dual of the complete isometry $k_1:CB(B,A_\natural)\to CB(B,A^\natural).$

We have to check $(1\otimes k\otimes 1)\circ S'(b)=T(b)$ and since the source and target space are strong modules, from \cite[Prop 3.11]{M05} there is a unique predual map so that it suffices to check $$T(b)^\natural=S'(b)^\natural \circ (1\otimes k^\natural\otimes 1)\in NCB(C_1^\natural\otimes_{\sigma h E'}(CB(B,A_\natural))^{\natural\natural}\otimes_{\sigma h E'}C_2^\natural, [C_1\otimes_{e h E}A\otimes_{e h E}C_2]^\natural).$$ Both map are weak-* continuous thus it suffices to check they agree on the weak-* dense set $C_1^\natural\otimes_{h E'}(CB(B,A_\natural))\otimes_{ h E'}C_2^\natural$ (from \cite[Th 4.2, Corol 3.6]{M05}% Corol 3.6 for the dense image in the module bidual
 and \cite[lemma 5.8]{EffrosRuanDual}) where $T(b)^\natural=1\otimes ev(b)\otimes 1.$ But on this subspace $(1\otimes k^\natural\otimes 1)=k_1$ and by definition $S'(b)^\natural=1\otimes ev(b)\otimes 1$ on $C_1^\natural\otimes_{\sigma h E'}(CB(B,A^\natural))\otimes_{\sigma h E'}C_2^\natural$ one indeed obtain the expected equality $S'(b)^\natural k_1=T(b)^\natural.$
 
 Of course one can deal with the case $E\neq D$ if we change the previous corollary to deal with this case.\end{proof}

\subsection{Extended Haagerup products with matricially normed spaces}
We will need to leave sometimes the class of operator spaces to consider more general matricially normed space in the sense \cite{BlecherPaulsen}, i.e. satisfying axioms (i),(ii) p 264. For $X$  such a matricially normed space, we call column dual $X'_c=(X'_l)^{op}=(B_l(X,\C))^{op}$ the opposite of the left dual of their example 2.7, alternatively described in proposition 2.9 using example 2.8. We recall that a  matrix cross normed space in the sense of \cite[p 1764]{MathesPaulsen} is a matricially normed space such that the norm on $M_n(X)$ is a cross normed in Banach space sense (also called reasonnable tensor norm, namely a norm between projective and injective tensor norm).

To stick to matricially normed spaces in the sense above, we will use a variant of $B_l(X,Y)$ as in \cite[ex 2.8]{BlecherPaulsen}, agreeing in the case $Y=\C$. We won't use the variant $\Pi_l(X,Y)$ they suggested, but rather put a alternative matricially normed structure on completely bounded maps.

  \begin{exemple} Let $X,Y$ be matricially normed spaces.  We write $CB_l(X,Y)$ the set of completely bounded maps with matricially normed space structure defined (as for $\Pi_l$) by the isometry $M_n(CB_l(X,Y))\simeq CB(C_n\otimes_h X,C_n\otimes_h Y)$
 with $\Phi=(\phi_{ij})\in M_n(CB_l(X,Y))$ mapped to $\Phi((x_1,...,x_n)^T)=(\sum_{i}\phi_{1i}(x_i),...,\sum_{i}\phi_{ni}(x_i))^T.$
 Similarly the space of completely bounded maps is given the matricially normed space structure $CB_r(X,Y)=(CB_l(X^{op},Y^{op}))^{op}$. Obviously $CB_r(X,.), CB_l(X,.)$ are (covariant) functors.
\end{exemple}

We will even have to use an extended Haagerup tensor product with one argument being a matricially normed space and not an operator space. This will be used mostly to handle infinite matrix spaces over matricially normed spaces and corresponding operations. For this we use Pisier's factorization by an Hilbert space as a definition. %Not surprisingly, this will be crucial for interpolation of Haagerup tensor product as \cite[Th 6.6]{PisierOH} since its factorization norms techniques are used to interpolate Haagerup tensor products in the operator space case (see \cite[Th 5.22]{PisierBook}).

If $X_1,...,X_n$ are operator spaces and $Y$ is a matricially normed space, we define 
$$X_1\otimes_{eh}...\otimes_{eh}X_n\otimes_{eh}Y:=\Gamma_R^{\sigma}(X_1^*\otimes_{h}...\otimes_{h}X_n^*,Y),\ Y\otimes_{eh}X_1\otimes_{eh}...\otimes_{eh}X_n:=% (X_n^{op}\otimes_{eh}...X_1^{op}\otimes_{eh}Y^{op})^{op}=
\Gamma_C^{\sigma}(X_1^*\otimes_{h}...\otimes_{h}X_n^*,Y).$$
where $\Gamma_R(X,Y)$ (resp. $\Gamma_C(X,Y)=\Gamma_R(X^{op},Y^{op})^{op}$) is as in \cite[p 39-40]{PisierOH} is the matricially normed space of maps factorizing through a row Hilbert space (resp. a column Hilbert space) with matrix normed structure as in \cite{EffrosRuanFacto} where this space is studied in more detail. The index $\sigma$ indicates we take maps weak-* continuous in each arguments, namely with the notation of \cite{EffrosRuanDual} for normal maps $$\Gamma_R^{\sigma}(X_1^*\otimes_{h}...\otimes_{ h}X_n^*, Y):=\Gamma_R(X_1^*\otimes_{ h}...\otimes_{ h}X_n^*, Y)\cap CB_m^{\sigma}(X_1^*\times...\times X_n^*, Y^{**}).$$

Note that when $Y$ is an operator space we can gather results from the literature to get  the expected consistency statement :
\begin{lemma}\label{MatricialEHaagerup}
If $X_1,...,X_n,Y$ are operator spaces the usual extended Haagerup product satisfies $$X_1\otimes_{eh}...\otimes_{eh}X_n\otimes_{eh}Y=\Gamma_R^{\sigma}(X_1^*\otimes_{h}...\otimes_{h}X_n^*,Y),\ Y\otimes_{eh}X_1\otimes_{eh}...\otimes_{eh}X_n=% (X_n^{op}\otimes_{eh}...X_1^{op}\otimes_{eh}Y^{op})^{op}=
\Gamma_C^{\sigma}(X_1^*\otimes_{h}...\otimes_{h}X_n^*,Y).$$
Moreover, if $Y$ is only a matricially normed space, the new extended Haagerup result has the following associativity maps which are completely contractive :
$$X_1\otimes_{eh}...\otimes_{eh}X_n\otimes_{eh}Y\simeq (X_1\otimes_{eh}...\otimes_{eh}X_n)\otimes_{eh}Y, X_1\otimes_{eh}(Y\otimes_{eh}X_2)\simeq (X_1\otimes_{eh}Y)\otimes_{eh}X_2,$$ $$ X_1\otimes_{eh}X_2\otimes_{eh}Y\to X_1\otimes_{eh}(X_2\otimes_{eh}Y).$$
The last map is even a complete isometry when $X_2=H_c.$
We also have for any Hilbert space $H$ and $Y$ matricially normed spaces , a complete isometry $Y\otimes_{eh} H_r\simeq CB_l(H_c^*,Y).$

Finally, if $M_{1,n}(Y)$ is 2-column summing for all $n$, we have a completely isometric map :
$$X_1\otimes_{h}...\otimes_{h}X_n\otimes_{h}Y\to X_1\otimes_{eh}...\otimes_{eh}X_n\otimes_{eh}Y.$$
\end{lemma}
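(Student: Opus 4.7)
The approach rests on Pisier's and Effros--Ruan's factorization theorems for extended Haagerup products (cf.\ \cite{PisierOH,EffrosRuanFacto,EffrosRuanDual}). For the consistency statement in the operator space case, I would appeal to the standard identification that $X_1\otimes_{eh}\cdots\otimes_{eh}X_n\otimes_{eh}Y$ is precisely the space of weak-* separately continuous multilinear forms $X_1^*\times\cdots\times X_n^*\times Y^*\to\C$ whose associated $n$-linear map from $X_1^*\otimes_h\cdots\otimes_h X_n^*$ to $Y^{**}$ factors through a row Hilbert space, which is exactly $\Gamma_R^\sigma(X_1^*\otimes_h\cdots\otimes_h X_n^*,Y)$ once one checks that factorization lands in $Y\subset Y^{**}$; the column version is symmetric. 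This is essentially a reformulation of Pisier's description of $\otimes_{eh}$ via $\gamma_2$-factorization.

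For the associativity maps of Part 2, the first equivalence uses Theorem \ref{DualHaagerup} to identify $(X_1\otimes_{eh}\cdots\otimes_{eh}X_n)^*=X_1^*\otimes_{\sigma h}\cdots\otimes_{\sigma h}X_n^*$, and then exploits the weak-* density of $X_1^*\otimes_h\cdots\otimes_h X_n^*$ in its normal Haagerup closure: a map in $\Gamma_R^\sigma(X_1^*\otimes_h\cdots\otimes_h X_n^*,Y)$ extends uniquely and weak-*-continuously to the normal Haagerup, preserving the row factorization. For the middle-position associativity, the key is iterated factorization: an element of $X_1\otimes_{eh}(Y\otimes_{eh}X_2)=\Gamma_R^\sigma(X_1^*,\Gamma_C^\sigma(X_2^*,Y))$ is a map $X_1^*\to\Gamma_C^\sigma(X_2^*,Y)$ factoring as $X_1^*\to K_r\to\Gamma_C^\sigma(X_2^*,Y)$; composing pointwise with the column factorization $X_2^*\to L_c\to Y$ of the target yields a bilinear map $X_1^*\otimes_h X_2^*\to Y$ factoring through $K_r\otimes_h L_c$, which is the $\Gamma_R^\sigma$ description of $(X_1\otimes_{eh}Y)\otimes_{eh}X_2$ (using $K_r\otimes_h L_c\subset\Gamma_R$). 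The third map of Part 2 is built similarly by collapsing $X_2\otimes_{eh}Y\hookrightarrow$ a $\Gamma_R$-space, and one obtains an isometry (not merely contraction) when $X_2=H_c$ because the row-column shuffle $K_r\otimes_h H_c\to H_c\otimes_h K_r$ is isometric in this Hilbertian case.

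For Part 3, we compute $Y\otimes_{eh}H_r=\Gamma_C^\sigma(H_r^*,Y)=\Gamma_C^\sigma(H_c,Y)$. Any linear map from the column Hilbert space $H_c$ to $Y$ admits the trivial factorization through $H_c$ itself (identity composed with the map), so the $\Gamma_C$-norm collapses to the $CB_l$-norm; the weak-* continuity is automatic by reflexivity of $H_c$. Identifying $H_c$ with $H_r^*=H_c^{**}$ (or equivalently writing $H_c=H_r^*$ directly) yields the stated complete isometry with $CB_l(H_c^*,Y)$, the matricial structures matching because both $\Gamma_C$ and $CB_l$ are defined via $C_n\otimes_h$ at matrix level.

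For Part 4, the injection of the Haagerup product into the extended Haagerup product uses the 2-column summing hypothesis on $M_{1,n}(Y)$ as follows: an element of $X_1\otimes_h\cdots\otimes_h X_n\otimes_h Y$ defines by pairing a map $X_1^*\otimes_h\cdots\otimes_h X_n^*\to Y$, and the operator-space Pietsch-type theorem (as in \cite{PisierOH}) says that a $2$-column summing map automatically factors completely boundedly through a row Hilbert space. Thus one lands in $\Gamma_R^\sigma$ with norm control, and the $2$-column summing property for each $M_{1,n}(Y)$ gives the complete (rather than merely contractive) isometry at every matrix level. The main obstacle I foresee is precisely this Part~4: the literature on $2$-summing operator space maps is stated for operator spaces, whereas here $Y$ is only matricially normed, so one must carefully verify that Pisier's factorization argument goes through with the weaker structure, essentially because the $2$-column summing hypothesis replaces the operator space axiom where needed. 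A secondary delicacy, in Part 2, is keeping the weak-* continuity intact through the various row/column shuffles, which is what restricts the last isometry statement to the Hilbertian case $X_2=H_c$.
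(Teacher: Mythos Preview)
Your overall architecture follows the paper closely and the citations are the right ones, but two of your justifications do not work as stated.

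\textbf{The $X_2=H_c$ isometry.} Your reason---``the row-column shuffle $K_r\otimes_h H_c\to H_c\otimes_h K_r$ is isometric''---is false: $K_r\otimes_h H_c$ is a trace-class space while $H_c\otimes_h K_r$ is a space of compact operators, and the canonical map between them is a strict contraction. The paper's mechanism is different. When $X_2=H_c$ the dual $X_2^*=H_c^*$ is itself a \emph{row} Hilbert space, so $\Gamma_R^\sigma(H_c^*,Y)=CB(H_c^*,Y)$ trivially (the identity factorization is optimal). Then for $U\in\Gamma_R^\sigma(X_1^*,CB(H_c^*,Y))$ one writes $U=AB$ with $B:X_1^*\to K_r$ and $A\in CB(K_r,CB(H_c^*,Y))\simeq CB(K_r\widehat{\otimes}H_c^*,Y)\simeq CB(K_r\otimes_h H_c^*,Y)$, the last complete isometry being the well-known identity \cite[Prop.~9.3.2]{EffrosRuan} for a projective tensor of row Hilbert spaces. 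Since $K_r\otimes_h H_c^*$ is again row, this exhibits the required row factorization of the associated map on $X_1^*\otimes_h H_c^*$ with no loss of norm. So the reverse inequality rests on $\widehat{\otimes}=\otimes_h$ for two row (or two column) Hilbert spaces, not on any row-column shuffle.

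\textbf{The matrix structures in Part 3.} Your claim that $\Gamma_C$ is ``defined via $C_n\otimes_h$ at matrix level'' is not how the $\Gamma_C$ matricial structure is set up in \cite{EffrosRuanFacto}: there the norm of $(\phi_{ij})\in M_n(\Gamma_C(H_c^*,Y))$ is an infimum over factorizations $\phi_{ij}=\tau_i\circ\sigma_j$ through a single column space, which is not obviously the $CB(C_n\otimes_h H_c^*,C_n\otimes_h Y)$ norm. The paper verifies the two inequalities by hand: given such a $\tau,\sigma$ one builds $\sigma':C_n\otimes_h H_c^*\to K_c$ via the trace and bounds $\|\phi\|_{M_n(CB_l)}\le\|\tau\|\|\sigma\|$; conversely one takes $K_c=C_n\otimes_h H_c^*$ with $\sigma(x)=x\otimes\mathrm{Id}_n$ and $\tau=\phi$ itself to get the reverse. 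Your trivial-factorization remark only gives the scalar level.

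\textbf{Part 4.} Your Pietsch-factorization idea is morally the paper's, but the paper's route is cleaner: reduce to $n=1$ by associativity and injectivity of the Haagerup product (both already established), then invoke the remark after \cite[Th.~3.11]{BlecherPaulsen} that under the 2-column summing hypothesis the $\Gamma_R$-norm equals the norm in $X_1''\otimes_h Y$, which by injectivity (since $X_1$ is an operator space) equals the norm in $X_1\otimes_h Y$. This avoids reproving Pietsch in the matricially normed setting.
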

%A matricially normed space for which $(H_c\otimes_{eh}Z)\otimes_{eh}K_r\simeq H_c\otimes_{eh}(Z\otimes_{eh}K_r)$ isometrically will be called a \textit{strongly matricially normed space}. We thus obtained conditions for $CB_l(Y,Z),\Pi_l(Y,Z)$ to be of this type. An operator space is also of this type. <we will write $M_{I,J}(Z)=(\ell^2(I))_c\otimes_{eh}(Z\otimes_{eh}(\ell^2(J))_r)$ for cardinals $I,J$ in this case.

\begin{proof}
From \cite{EffrosRuanDual} we have in the operator space case $$X_1\otimes_{eh}...\otimes_{eh}X_n\otimes_{eh}Y=(X_1^*\otimes_{ h}...\otimes_{ h}X_n^*\otimes_h Y^*)^*_\sigma\subset (X_1^*\otimes_{h}...\otimes_{ h}X_n^*\otimes_h Y^*)^*$$

Moreover, from \cite[(5.8)]{EffrosRuanFacto} 
 $$(X_1^*\otimes_{h}...\otimes_{ h}X_n^*\otimes_h Y^*)^*=\Gamma_R(X_1^*\otimes_{ h}...\otimes_{ h}X_n^*, Y^{**})\simeq \Gamma_R(X_1^*,...,\Gamma_R(X_n^*,Y^{**})...).$$
 
 Thus adding the normality conditions and using the notation of \cite{EffrosRuanDual}, one gets $$X_1\otimes_{eh}...\otimes_{eh}X_n\otimes_{eh}Y= \Gamma_R(X_1^*\otimes_{ h}...\otimes_{ h}X_n^*, Y^{**})\cap  CB_m^{\sigma}(X_1^*\times...\times X_n^*, Y^{**})\cap CB((X_1^*\otimes_{ h}...\otimes_{ h}X_n^*), Y)$$
But from  \cite[Prop 5.2]{EffrosRuanFacto} and its proof, one gets a complete isometry (since the intersection norm is the factorization norm): $$\Gamma_R(X_1^*\otimes_{ h}...\otimes_{ h}X_n^*, Y^{**})\cap  CB((X_1^*\otimes_{ h}...\otimes_{ h}X_n^*), Y)\simeq  \Gamma_R(X_1^*\otimes_{ h}...\otimes_{ h}X_n^*,  Y).$$
 
%WRONG The reader should note that the norm we put on $M_n(\Gamma_R(X,Y))=\Gamma_R(M_{1,n}(X),M_{1,n}(Y))$ is indeed the same as the one defined in this paper since if $\varphi_{ij}$ a matrix, the norm in  \cite{EffrosRuanFacto} is defined as an inf over $||\sigma||_{cb}||\tau||_{cb}$ with $\varphi=(\varphi_{ij})=(\tau_i\circ\sigma_j)$, $\tau:H_r\to M_{1,n}(Y)$, $\sigma:X\to M{n,1}(H_r)$ so that applied on $v=(v_j)\in M_{1,n}(X)$, $\varphi(v)=(\tau_i(\sum_j\sigma_j(v_j)))=\tau\circ\tilde{\sigma}(v)$ and  $||\tilde{\sigma}||_{cb}$
% via $CB(X\otimes_{h}M_{1,n},H_r)\simeq CB(X, B_l(M_{1,n},H_r))$
 
%if $w\in M_K(B_l(M_{1,n},H_r))=B_l(M_{K,n},M_{K,1}(H_r))$, $(w_i)_{jl}=[w(e_{j,i})]_l\in H_r$ we can see in $M_{K,K}(M_{n,1}(H_r))$ as $W_{(j,i),l}=(w_i)_{jl}$ so that : \begin{align*}||w||_{  M_K(B_l(M_{1,n},H_r)})&=\sup_{||m||_{M_{K,n}}\leq 1}||\langle \sum_{k\leq K,l\leq n}m_{kl}(w_l)_{ki},\sum_{k'\leq K,m\leq n}m_{k'm}(w_m)_{k'j}\rangle ||_{M_{K}}
%\\&=\sup_{||m||_{M_{K,n}}\leq 1}|| \sum_{k\leq K,l\leq n}\sum_{k'\leq K,m\leq n}(m^*)_{lk}\langle(w_l)_{ki},(w_m)_{k'j}\rangle m_{k'm}||_{M_{K}}\end{align*}
We thus got the expected formula and the column case is similar.

 %$ \Gamma_R(X,\Gamma_R(M_{1,n},H_r))$
 
Let now $Y$ be any matricially normed space. For simplicity we only consider the case $n=2$. By \cite[(5.22)]{EffrosRuanDual}, 
we have an obvious completely contractive restriction map to $ X_1^*\otimes_{h}X_2^*\subset (X_1\otimes_{eh}X_2)^*$ :   \begin{align*}(X_1\otimes_{eh}X_2)\otimes_{eh}Y&=\Gamma_R((X_1\otimes_{eh}X_2)^*,Y)\cap CB^{\sigma}((X_1\otimes_{eh}X_2)^*,Y^{**})\\&=\Gamma_R((X_1\otimes_{eh}X_2)^*,Y)\cap CB_m^{\sigma}(X_1^*\times X_2^*, Y^{**})
\to X_1\otimes_{eh}X_2\otimes_{eh}Y.\end{align*}
 
To show it is a completely isometric isomorphism, it suffices to show any map in the target has an extension with the same norm in the source (injectivity follows from \cite[lemma 5.8]{EffrosRuanDual}. But take a map $u\in \Gamma_R(X_1^*\otimes_{h}X_2^*,Y)\cap CB_m^{\sigma}(X_1^*\times X_2^*, Y^{**})$ with factorization $u=AB, A\in CB(H_r,Y), B\in CB(X_1^*\otimes_{h}X_2^*,H_r)$. Up to getting smaller norms we can replace $H$ by a space anti-isomorphic to $K^*=A^*(Y^*)\subset H^*$ which is a closed subspace and composing $B$ with the projection, but in this case, for $k=A^*(y^*)\in K^* k\circ B=y^*\circ u$ is normal in each argument thus so is $B\in CB_m^{\sigma}(X_1^*\times X_2^*,K_r)$ and by \cite[(5.22)]{EffrosRuanDual} again $B$ extends to $\tilde{B}\in CB^{\sigma}((X_1\otimes_{eh}X_2)^*,K_r)$ and thus $\tilde{u}=A\tilde{B}$ is the desired factorization for the unique extension (which thus exists with this factorization). We can reason similarly at the matrix level to see complete isometry.

The second associativity isomorphism easily comes from (the proof of) commutativity of projective tensor product. Indeed, if $U\in X_1\otimes_{eh}(Y\otimes_{eh}X_2)$, one has a factorzation $U=AB$, $B\in CB(X_1^*,H_r),A\in CB(H_r,\Gamma_C(X_2^*,Y)).$ For any $e_i\in H_r, i\in I$ in an orthonormal basis, one writes $A(e_i)=C_iD_i$, $D_i\in CB(X_2^*,(K_i)_c),C_i\in CB((K_i)_c,Y)$. Then define, $K=\oplus_{i\in I}K_,i$ $D(\sum\lambda_i e_i)(y)=\oplus_{i\in I}\lambda_i D_i(y)$ so that $D\in B(H_r,CB(X_2^*,K_c))$ and if $C(\oplus_i x_i)\sum_i C_i(x_i)$ so that if we assumed before that $D_i(X_2^*)$ is dense in $K_i$...

For $u\in  X_1\otimes_{eh}X_2\otimes_{eh}Y$ one  takes a decomposition $u=AB$ as before and we now see $B\in CB_m^{\sigma}(X_1^*\times X_2^*,K_r)\subset \Gamma_R(X_1^*\times X_2^*,K_r)\simeq \Gamma_R(X_1^*,\Gamma_R(X_2^*,K_r))$ by \cite[(5.8)]{EffrosRuanFacto} Thus take $B=CD, D\in \Gamma_R(X_1^*,H_r), C\in \Gamma_R(H_r,\Gamma_R(X_2^*,K_r))$, it is easy to see that taking $H$ as the closure of $D(X_1^*)$, one can get $C(\xi)\in \Gamma_R(X_2^*,K_r) $ normal and thus also, from the choice of $K$ above, $[[A\circ .]\circ C](\xi):=A\circ(C(\xi))\in X_2\otimes_{eh}Y$ and $||[[A\circ .]\circ C](\xi)||_{X_2\otimes_{eh}Y}\leq ||A||_{cb}||(C(\xi))||_{cb}\leq ||A||_{cb}||C||_{\Gamma_R(H_r,\Gamma_R(X_2^*,K_r))} ||\xi||$, i.e. $[[A\circ .]\circ C]\in CB(H_r,X_2\otimes_{eh}Y)$ and from a matrix variant one estimates the completely bounded norm by $||A||_{cb}||C||_{cb}$. We have thus written $u=[[A\circ .]\circ C]D\in \Gamma_R(X_1^*,X_2\otimes_{eh}Y)$ with the right norm estimate. It only remains to check normality, i.e. $u\in CB^{\sigma}(X_1^*,(X_2\otimes_{eh}Y)^{**}).$ But take a net $x_n\to x$ in $X^*$ weak-*. Arguing as in the previous paragraph, maybe replacing $H$, one can assume $D$ weak-* continuous and thus $D(x_n)\to D(x)$ weak-* in $H$ and let $\phi\in (X_2\otimes_{eh}Y)^*.$ Since $\phi(u(x_n))$ bounded, it suffices to get $\phi(u(x))$ as unique cluster point, thus take a subnet with $\phi(u(x_n))$ converging to $y$, $(D(x_n),\phi(u(x_n)))$ converging to $(D(x),y)$ in an Hilbert (thus reflexive) space, and by Hahn Banach there is a convex combination of points in this net converging normwise to $(D(x),y)$. Taking a corresponding convex combination $z_n$ of $x_n$, we have another net with $(D(z_n),\phi(u(z_n)))\to (D(x),y)$. But now $u(z_n)$ converges normwise to $u(x)$%. Take another net $\phi_m\to \phi$ converging weak-* with $\phi_m\in (X_2)^*\otimes Y^*$, thus since it is bounded, it suffices to show by a $3\epsilon$ argument that $\phi_m(u(z_n))\to \phi_m(u(z))$ for every $m$ to get the desired result 
thus $\phi(u(z_n))\to \phi(u(z))=y.$ 
Now, conversely,  if $X_2=H_c$, $U\in\Gamma_R^\sigma(X_1^*,\Gamma_R^\sigma(X_2^*,Y)),$ U factorizes as AB, $A\in CB(K_r,CB(H_c^*,Y)),B\in CB(X_1^*,K_r)$ completely isometrically.
But we have  $CB(K_r,CB(H_c^*,Y))\simeq CB(K_r\hat{\otimes} H_c^*,Y)\simeq CB(K_r\otimes_h H_c^*,Y),$ (the second complete isometry coming from a well known identity e.g. \cite[Prop 9.3.2]{EffrosRuan}, the first identity being similar to the case $Y$ operator space). Thus $A(B\otimes I)$ is a row Hilbert space factorization of $\Phi(U)$ seen in $CB^{\sigma}(X_1^*\times X_2^*,Y)$ (which is multiplicative as a consequence). This gives $$||\Phi(U)||_{X_1\otimes_{eh}(X_2\otimes_{eh}Y)}\leq ||U||_{X_1\otimes_{eh}X_2\otimes_{eh}Y},$$
using the separate normality is equivalent to the weak-* to weak continuity coming from $X_1\otimes_{eh}X_2$ by \cite[Prop 5.9]{EffrosRuanDual} (which does not use the target operator space and is mostly a Banach space result). The complete isometry is obvious since it boils down to replacing $Y$ by $M_n(Y).$

We can now prove the final statement relating Haagerup and extended Haagerup products. First from associativity of Haagerup and extended Haagerup products,injectivity of the Haagerup product in this setting \cite{BlecherPaulsen} and since it is known in the operator space case $X_1\otimes_h...\otimes_hX_n\to X_1\otimes_{eh}...\otimes_{eh}X_n$
is a complete isometry \cite{EffrosRuanDual}, we can be content with the case $n=1$. This is then a consequence of the reamrk after \cite[Th 3.11]{BlecherPaulsen} since the factorization norm is the norm taken in $X_1''\otimes_hY$, which is, since $X_1$ is an operator space and we are in position to apply the matricial case of injectivity of Haagerup tensor product, the same as the norm in $X_1\otimes_hY.$

Let us now prove the identity with $H_r$. It boils down to the study of the matrix norm given in \cite{EffrosRuanFacto} to $\Gamma_C(H^*_c,Y)$ since the isometry is obvious by definition. Thus take $\phi\in(\phi_{ij})\in M_n(\Gamma_C(H^*_c,Y))$ and write it as $\phi_{ij}=\tau_i\circ \sigma_j$ as in this paper with $\sigma:H_c\to M_{1,n}(K_c), \tau:K_c\to M_{n,1}(Y)$ for $K_c$ a column Hilbert space.
Then one induces $\sigma\otimes 1: H_c\otimes_h C_n\to K_c\otimes_h R_n\otimes_hC_n$
 and composing with the trace on trace class $Tr: R_n\otimes_hC_n\to \C$ for $v=(v_j)_{j=1...n}\in H_c\otimes_h C_n=M_{n,1}(H_c)$

$\sigma':v\mapsto (1\otimes Tr)\circ (\sigma\otimes 1)[v]=\sum_j\sigma_j(v_j)$ which has $CB$ norm less than $||\sigma||$. Thus the map $v\mapsto \sum_{j}\phi_{ij}(v_j)$ is $\tau\circ \sigma'$ and thus $||\phi||_{M_n(CB_l(H^*_c,Y))}\leq ||\tau|-||\sigma||$ and taking the infemum, $||\phi||_{M_n(CB_l(H^*_c,Y))}\leq ||\phi||_{\Gamma_C(H^*_c,Y)}.$

Conversely, we consider the following decomposition with $K_c=M_{n,1}(H_c)=C_n\otimes_hH_c$, $\sigma(x)=x\otimes Id_n=(e_1\otimes x,...,e_n\otimes x))\in M_{1,n}(K_c)$ so that of course $||\sigma||_{cb}=1$. Then we take $\tau(\phi):M_{n,1}(H_c)\to M_{n,1}(Y)$ the map with $\tau(\phi)(\sum_j e_j\otimes v_j)=(\sum_j \phi_{ij}(v_j))_i.$
Thus indeed $\tau(\phi)_i\circ \sigma_j(v)=\tau(\phi)_i(e_j\circ v)=\phi_{ij}(v)$ as expected and we thus deduce the reverse inequality :$||\phi||_{\Gamma_C(H^*_c,Y)}\leq ||\phi||_{M_n(CB_l(H^*_c,Y))}.$
\end{proof}

%We are now ready to consider extended Haagerup products for more general sequences of matricially normed spaces.

We take inspiration of infinite matrix representations of \cite{EffrosRuanHopf} and of the operator module case in \cite{M97}. We even push further the idea to avoid the use of operator space duality as in the first context and of concrete Hilbert space representations as in the second.

For $Z$ a matricially normed space, we write $$M_{I,J}(Z)=(\ell^2(I))_c\otimes_{eh}(Z\otimes_{eh}(\ell^2(J)^*)_r)\simeq ((\ell^2(I))_c\otimes_{eh}Z)\otimes_{eh}(\ell^2(J)^*)_r$$ for sets $I,J$.
Note that from lemma \ref{MatricialEHaagerup}, one deduces $M_{K,L}(M_{I,J}(Z))\simeq M_{I\times K,J\times L}(Z).$

For $B\in M_{K,I}(\C)=B(\ell^2(I),\ell^2(K))=CB(\ell^2(I)_c,\ell^2(K)_c)$ we have actions $M_{K,I}(\C)\times M_{I,J}(Z)\to M_{K,J}(Z),M_{I,J}(Z)\times M_{J,K}(Z)\to M_{I,K}(Z).$ For instance the first one $BU\in M_{K,J}(Z)\simeq CB((\ell^2(K)^*)_r,M_{1,J}(Z))$ is the composition $U\circ B^*$ with the same viewpoint for $U\in CB((\ell^2(I)^*)_r,M_{1,J}(Z)).$

We will need this in the next preparatory subsection.

\subsection{Left module duals and Haagerup tensor product}

As a technical tool for our next result, we have now to use a left module duality. We start by some reminders.

For $X$ a matricially normed spaces with A-B h-bimodule (i.e. with a map $A\otimes_{h}X\otimes_{h}B\to X$, with $A,B$  von Neumann algebras, really soon assumed finite) we will write $CB_{l}(X,B)_B\subset CB_{l}(X,B)$ completely isometrically the subspace of right $B$ modular maps i.e. with $\phi(.b)=\phi(.)b, b\in B$. It has a B-A bimodule structure given by $(b\phi a)(x)=b\phi(ax)$. More generally we have 

\begin{lemma}\label{ModuleCBL}Let $X$ a matrix cross normed space as well as all $M_{n}(X)$ and $Y,Z$ matricially normed spaces we have canonical complete contractions :
$$S^X_{Y,Z}:X\otimes_hCB_l(Y,Z)\to CB_l(Y,X\otimes_hZ),\ \ \  ev_{Y,Z}^l:CB_l(Y,Z)\otimes_h Y\to Z,$$
$$A_{X,Y}^Z:CB_l(X\otimes_hY,Z)\to CB_l(X,CB_l(Y,Z)).
$$
The last map is even a complete isometry when $X,Y$ operator spaces.

Let $A,B,C$ $C^*$ algebras. As a consequence, for $X$ an A-B h-bimodule, $Y$ a C-B h-bimodule, $CB_{l}(X,Y)_B$ is a $C-A$ h-bimodule, i.e. the module structure $(c\phi a)(x)=c\phi(ax), c\in C, a\in A$ extends to map $$C\otimes_{h}CB_{l}(X,Y)_B\otimes_{h}A\to CB_{l}(X,Y)_B.$$
If moreover $X,Y$ are dual Banach spaces and $A\otimes_hX\to X,C\otimes_hY\to Y$ are pointwise (in $A,C$) weak-* continuous (in $X,Y$), the space of weak-* continuous map $NCB_{l}(X,Y)_B$ is also a C-A h-bimodule.
\end{lemma}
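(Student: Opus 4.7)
The plan is to construct the three basic maps using the Blecher--Paulsen factorization description of the Haagerup norm (every $u$ in an iterated Haagerup product admits, up to arbitrarily small perturbations, a representation $u=\alpha\odot\beta$ with $\alpha\in M_{1,k}(\cdot), \beta\in M_{k,1}(\cdot)$ and $\|u\|_h=\inf\|\alpha\|\|\beta\|$), and then derive the module statements by composition.

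\textit{Construction of $ev^l$, $S^X$, $A^Z$.} For $u\in CB_l(Y,Z)\otimes_h Y$ written $u=\alpha\odot\beta$ with $\alpha\in M_{1,k}(CB_l(Y,Z))$ and $\beta\in M_{k,1}(Y)=C_k\otimes_h Y$, view $\alpha$ via the injection $M_{1,k}(CB_l(Y,Z))\hookrightarrow M_k(CB_l(Y,Z))\simeq CB(C_k\otimes_h Y, C_k\otimes_h Z)$ (reading off the first row) as a map $C_k\otimes_h Y\to Z$, and set $ev^l_{Y,Z}(u):=\alpha(\beta)$. The bound $\|\alpha(\beta)\|_Z\leq\|\alpha\|_{cb}\|\beta\|$ yields contractivity, and the matrix-level analogue (with $u\in M_n$) follows the same pattern after embedding into $M_{nk}(CB_l(Y,Z))$ and using the matrix cross normed hypothesis on $X$ and its amplifications to pass through the shuffle $C_n\otimes_h Z\hookrightarrow M_n(Z)$. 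The map $S^X_{Y,Z}$ is defined dually on a factorization $u=\alpha\odot\beta$, $\alpha\in M_{1,k}(X)$, $\beta\in M_{k,1}(CB_l(Y,Z))$, by $S^X(u)(y):=\alpha\odot\beta(y)\in M_{1,k}(X)\odot M_{k,1}(Z)\to X\otimes_h Z$; again the Haagerup factorization provides the matricial estimates. The curry map $A^Z_{X,Y}$ sends $\phi\mapsto\widetilde\phi$ where $\widetilde\phi(x)(y)=\phi(x\otimes y)$; complete contractivity is clear by estimating on Haagerup factorizations on both sides. When $X,Y$ are operator spaces, the universal property of the Haagerup tensor product (Blecher--Paulsen) provides the inverse uncurrying with matching completely bounded norm, upgrading $A^Z$ to a complete isometry.

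\textit{h-bimodule structure on $CB_l(X,Y)_B$.} The left action $A\otimes_h X\to X$ (extracted from the h-bimodule structure by inserting the unit of $B$) corresponds via $A^Z_{A,X}$ to a completely contractive representation $A\to CB_l(X,X)$, $a\mapsto\ell_a$; analogously $C\to CB_l(Y,Y)$, $c\mapsto\ell_c$. The iterated evaluation
\[
CB_l(Y,Y)\otimes_h CB_l(X,Y)\otimes_h CB_l(X,X)\otimes_h X\xrightarrow{1\otimes 1\otimes ev^l}CB_l(Y,Y)\otimes_h CB_l(X,Y)\otimes_h X\xrightarrow{1\otimes ev^l}CB_l(Y,Y)\otimes_h Y\xrightarrow{ev^l}Y
\]
is a completely contraction, and $A^Z$ (applied to the $X$ slot) turns it into the triple composition $CB_l(Y,Y)\otimes_h CB_l(X,Y)\otimes_h CB_l(X,X)\to CB_l(X,Y)$ sending $\psi\otimes\phi\otimes\rho$ to $\psi\circ\phi\circ\rho$. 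Precomposing with $\ell_c$ and $\ell_a$ yields the desired completely contractive map $C\otimes_h CB_l(X,Y)_B\otimes_h A\to CB_l(X,Y)$ realizing $(c,\phi,a)\mapsto(x\mapsto c\phi(ax))$; right $B$-modularity is preserved since $(c\phi a)(xb)=c\phi(axb)=c\phi(ax)b$, so the image lies in $CB_l(X,Y)_B$.

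\textit{Normality preservation.} Under the additional assumption that $X,Y$ are dual Banach spaces with the actions pointwise weak-* continuous, each $\ell_a$ (resp. $\ell_c$) is weak-* continuous on $X$ (resp. $Y$). The composition $\ell_c\circ\phi\circ\ell_a$ of a normal $\phi$ with weak-* continuous maps on each side is again weak-* continuous, so the bimodule action restricts to $C\otimes_h NCB_l(X,Y)_B\otimes_h A\to NCB_l(X,Y)_B$.

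The main technical obstacle is the matrix-level bookkeeping in the first paragraph: one must carefully interpret rows and columns in the nonstandard matricial structure of $CB_l$ (defined through $C_n\otimes_h$) and exploit the matrix cross normed hypothesis on $X$ and all $M_n(X)$ to control the requisite shuffle operations between $M_{1,k}/M_{k,1}$ slices and Haagerup products. Once these amplifications are in place, the module and normality statements follow formally from composition with known completely contractive and weak-* continuous maps.
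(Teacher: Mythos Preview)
Your approach is essentially the same as the paper's: the three basic maps are built via Blecher--Paulsen factorizations and the module structure is then obtained by composing them. The only organizational difference is that you package the right $A$-action through a triple-composition map $CB_l(Y,Y)\otimes_h CB_l(X,Y)\otimes_h CB_l(X,X)\to CB_l(X,Y)$ precomposed with representations $a\mapsto\ell_a$, $c\mapsto\ell_c$, whereas the paper builds an abstract transform $\Psi^Z_{X,Y}:CB(CB_l(X,Y),CB_l(Z\otimes_h X,Y))\to CB(CB_l(X,Y)\otimes_h Z,CB_l(X,Y))$ out of $ev^l$ and $A^Z$, and applies it to the functorial dual $m^*$ of the multiplication $m:A\otimes_h X\to X$; the left $C$-action is handled separately via $S^C$ composed with the module map $C\otimes_h Y\to Y$. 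These are equivalent rearrangements of the same ingredients.

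One small slip: in your paragraph on $ev^l$ you invoke the matrix cross normed hypothesis on $X$, but $ev^l:CB_l(Y,Z)\otimes_h Y\to Z$ does not involve $X$ at all. That hypothesis is needed precisely in the estimate for $S^X$, where one must know $\|A_k\otimes Id_K\|_{M_K(M_{n,n_k}(X))}=\|A_k\|_{M_{n,n_k}(X)}$ to push the $K$-amplification through the $X$-slot; this is exactly how the paper uses it. Your normality argument (composition of a normal $\phi$ with pointwise weak-$*$ continuous $\ell_a,\ell_c$ remains normal, then pass to norm limits) matches the paper's, which likewise checks normality on elementary tensors and extends by density, implicitly using that $NCB_l$ is norm-closed in $CB_l$.
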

\begin{proof}
The first map is induced from $S^X_{Y,Z}(x,\phi):b\mapsto x\otimes\phi(b)$ and for a decomposition $U=\sum_k A_k\odot B_k$, $A_k\in M_{n,n_k}(X), B_k\in M_{n_k,n}(CB_{l}(Y,Z))=CB(M_{n,1}(Y),M_{n_k,1}(Z))$ one wants to show $S^X_{Y,Z}(U)\in CB(M_{n,1}(Y),M_{n,1}(X\otimes_hZ))$ thus take $y=(y_{(ij),l})_{j\leq n,i,l\leq K}\in M_K(M_{n,1}(Y))$ so that $B_k(y)\in M_K(M_{n_k,1}(Z))$
\begin{align*}||S^X_{Y,Z}(U)(y)||_{M_K(M_n(X\otimes_hZ))}&=||\sum_k (A_k\otimes Id_K) \odot (B_k(y))||\\&\leq \sum_k||A_k\otimes Id_K||_{M_K(M_{n,n_k}(X))}||B_k(y)||_{ M_K(M_{n_k,1}(Z))}\\&\leq \sum_k||A_k||_{M_{n,n_k}(X)}||B_k||_{CB(M_{n,1}(Y),M_{n_k,1}(Z))}||y||_{M_K(M_{n,1}(Y))}\end{align*}
using $||A_k\otimes Id_K||_{M_K(M_{n,n_k}(X))}=||A_k||_{M_{n,n_k}(X)}$
since $X$ is a matrix cross normed space, and thus establishing the stated complete contraction.

Similarly for $U=\sum_k A_k\odot B_k$, $A_k\in M_{n,n_k}(CB_l(Y,Z))=CB(M_{n_k,1}(Y),M_{n,1}(Z)), B_k\in M_{n_k,n}(Y)$ we have the estimate giving the second complete contraction :
$$||ev_{Y,Z}^l(U)||_{M_n(Z)}\leq \sum_k||A_k(B_k)||_{M_n(Z)}\leq ||A_k||_{CB(M_{n_k,1}(Y),M_{n,1}(Z))}||B_k||_{M_{n_k,n}(Y)}$$

For $\phi\in CB(X\otimes_hY,Z)$ we have 
  \begin{align*}||A_{X,Y}^Z(\phi)&||_{CB(X,CB_l(Y,Z))}=\sup_{KL}\sup_{||(x_{kl})||_{M_{KL}(X)}\leq 1}||A_{X,Y}^Z(\phi)(x_{kl})||_{M_{KL}(CB_l(Y,Z))=CB(M_{L,1}(Y),M_{K1}(Z))}\\&=\sup_{KLn}\sup_{||(x_{kl})||_{M_{KL}(X)}\leq 1}\sup_{||(y_{(jl)i})||_{M_n(M_{L,1}(Y))}\leq 1}||\sum_l A_{X,Y}^Z(\phi)(x_{kl}\otimes Id_{jj})(y_{(jl)i})||_{M_n(M_{K1}(Z))}
  \\&=\sup_{KLn}\sup_{||(x_{kl})||_{M_{KL}(X)}\leq 1}\sup_{||(y_{(jl)i})||_{M_n(M_{L,1}(Y))}\leq 1}||\phi(\sum_{lj} (x_{kl}\otimes Id_{jj}) \otimes y_{(jl)i})||_{M_n(M_{K,1}(Z))}
 \\&\leq\sup_{Kn }\sup_{||(u_{kn})||_{M_n(M_{K,1}(X\otimes_h Y))}\leq 1}||\phi( u_{ki})||_{M_{Kn}(Z)} =||\phi||_{CB(X\otimes_hY,Z)}
  \end{align*}
since $||(x_{kl}\otimes Id_{jj})_{(kj),(lj)}||=||(x_{kl})||$ so that one gets the stated contraction and we have equality in the operator space case, using \cite[lemma 3.2]{BlecherPaulsen}
For the complete statement, one uses :
\begin{align*}&M_n(CB_l(X\otimes_hY,Z))\simeq CB(C_n\otimes_hX\otimes_hY,C_n\otimes_h Z)\\& CB(C_n\otimes_hX,CB_l(Y,C_n\otimes_hZ))\simeq CB(C_n\otimes_hX,C_n\otimes_hCB_l(Y,Z))\simeq M_n(CB_l(X,CB_l(Y,Z))).\end{align*}

One can now compose respectively the following contractions and complete contractions~:
$$.\otimes_hId_Z:CB(CB_{l}(X,Y),CB_l(Z\otimes_hX,Y)))\to CB(CB_{l}(X,Y)\otimes_hZ,CB_l(Z\otimes_hX,Y))\otimes_hZ),$$
$$A_{CB_l(Z\otimes_hX,Y))\otimes_hZ,X}^Y(ev_{Z\otimes_hX,Y}^l):CB_l(Z\otimes_hX,Y))\otimes_hZ\to CB_l(X,Y)$$
for the previously built $ev_{Z\otimes_hX,Y}^l\in CB(CB_l(Z\otimes_hX,Y)\otimes_h(Z\otimes_h X),Y)$ to define $\Psi_{X,Y}^Z(u)[x]=A_{CB_l(Z\otimes_hX,Y))\otimes_hZ,X}^Y(ev_{Z\otimes_hX,Y}^l) [(u\otimes_hId_Z)(x)]$ so that we get a contraction
$$\Psi_{X,Y}^Z:CB(CB_{l}(X,Y),CB_l(Z\otimes_hX,Y)))\to CB(CB_{l}(X,Y)\otimes_{h}Z, CB_{l}(X,Y))$$

Starting with the adjoint of a multiplication map $m:A\otimes_h X\to X$  one gets by functoriality of duality a map $m^*:CB_{l}(X,Y)\to CB_{l}(A\otimes_h X,Y).$ Applying the above map gives exactly the multiplication map we want: $\Psi_{X,Y}^A(m^*)\in CB(CB_{l}(X,Y)\otimes_{h}A, CB_{l}(X,Y))$

%Let $\phi\in CB(CB_{l}(X,Y),CB_l(A\otimes_hX,Y)))$ and $u\in M_n(CB_{l}(X,Y)\otimes_{h}Z)$ elementary tensor with a decomposition$ u=\sum_{k=1}^mA_k\odot B_k$, $A_k\in M_{n,n_k}(CB_{l}(X,Y)), B_k\in M_{n_k,n}(Z)$. Note that $\phi(A_k)\in CB_l(M_{n_k,1}(Z\otimes_hX),M_{n,1}(Y))$

%We define $\Psi(\phi)(u)\in CB(M_{n,1}(X),M_{n,1}(Y))$ by $\Psi(\phi)(u)[x]= \sum_{k=1}^m\phi(A_k)(B_k\otimes x)$  in noting $B_k\otimes x=M_{n_k,1}(Z\otimes_hX).$

Moreover, for the $B-A$ h-bimodule structure, $B\otimes_{h}CB_{l}(X,B)_B\to CB_{l}(X,B)_B$ is induced from the composition of the canonical map and the multiplication map for h-modules $B\otimes_{h}CB_{l}(X,Y)\to CB_{l}(X,B\otimes_hY)\to CB_{l}(X,Y).$
%The other map correspond to building a map in  \begin{align*}CB_l(CB_{l}(X,Y)\otimes_{h}A, CB_{l}(X,Y))&\simeq CB_l(CB_{l}(X,Y),CB_l(A, CB_{l}(X,Y)))\\&\simeq CB_l(CB_{l}(X,Y),CB_l(A\otimes_hX,Y)))\end{align*}  
%and the map we want is of course the map obtained from the product map $A\otimes_h X\to X$  in getting by functoriality of duality a map $CB_{l}(X,Y)\to CB_{l}(A\otimes_h X,Y).$
%is defined in sending $L_{ij}\otimes a_{j1}$ to $x\mapsto L_{ij}(a_{j1}x)$ using composition by $A\otimes_hX\to X.$

The weak-* continuous case is obvious since then  we have a map $C\otimes_{h}NCB_{l}(X,Y)_B\otimes_{h}A\to CB_{l}(X,Y)_B,$ and since  $c\otimes_h\phi \otimes_hb \mapsto (x\mapsto c\phi(ax))$ has range in $NCB_{l}(X,Y)_B$, so is the norm convergent extension.

\end{proof}
Thus for $X$ a matricially normed spaces with A-B h-bimodule
$CB_{l}(X,B)_B=Y$ is B-A h-bimodule and thus $CB_l(A,Y)_A$ is matricially normed spaces with B-A h-bimodule structure.
We define the module dual as $$X_l^\natural:=CB_l(A, CB_{l}(X,B)_{B})_{A}.$$
 Note that if say $A,X$ are operator spaces  $CB_l(A, CB_{l}(X,B))=CB_l(A\otimes_hX,B).$
Thus using the multiplication map :$A\otimes_hX\to X$ and its right inverse $1\otimes. :X\to A\otimes_hX$ one gets  maps $CB_l(X,B)\to CB_l(A\otimes_hX,B)\to CB_l(X,B)$ which restricts to a complete isomorphism :$CB_{l}(X,B)_{B}\simeq X_l^\natural.$ If moreover $X$ has a strong A-module structure, namely an extension of the multiplication map $A\otimes_{eh}X\to X$ this is a completely isometric subspace  $X_l^\natural\subset CB_l(A\otimes_{eh}X,B)_B.$
If $A,B$ are von Neumann algebras and $X$ is a normal dual operator module, we introduce a predual left module dual :$$X_{l\natural}=NCB_{l}(X,B)_{B}.$$ 

\begin{lemma}\label{matrixMultPredual}
Let $A,B$ von Neumann algebras, and $X$ a strong operator A-B bimodule, then  \begin{align*}M_{i,I}(X_l^\natural)&=CB_l({\ell^2(I)}_c,CB_{l}(X,M_{i,1}(B))_{B}))\simeq CB_l({\ell^2(I)}_c\otimes_h X,M_{i,1}(B))_{B})\\&\subset CB_l({\ell^2(I)}_c\otimes_{eh} X,M_{i,1}(B))_{B})=CB_l(M_{I,1}( X),M_{i,1}(B))_{B}).\end{align*} 

Moreover, there are multiplication maps for any sets $I,J,K$, and $k,k'$ finite, extending the operator module structure (resp. for $X\in {}_ANDOM_B$):
$$.[.]:M_{k,I}(X^\natural_l)\times M_{I,J}(A)\to M_{k,J}(X^\natural_l), (resp . \ \ .[.]:M_{k,I}(X_{l\natural})\times M_{I,J}(A)\to M_{k,J}(X_{l\natural})\ \ ),$$ $$\boxdot:M_{k,I}(X^\natural_l)\times M_{I,J}(X)\to M_{k,J}(B).$$
Finally, we have the relations $\forall  b\in M_{k',k}(B),\forall  u\in M_{k,I}(X^\natural_l),\forall  v\in M_{I,J}(X)$:
$$\forall x\in M_{I,J}(A), (bu)[x]=b(u[x]),\ \ \forall a\in M_{J,K}(A), u[x a]=(u[x])[a],\ \ \ b(u\boxdot v)= (bu)\boxdot v,$$
and if $X\in {}_ANDOM_B$ we also have $$\forall  u\in M_{k,I}(X_{l\natural}), \ \ \ \ \forall c\in M_{J,K}( B), u\boxdot (vc)= (u\boxdot v)c,\ \ \forall a\in M_{I,I}( A), (u[a])\boxdot v= u\boxdot (av).$$
%\subset 
\end{lemma}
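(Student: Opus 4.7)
The plan proceeds in three stages: unwind the definitions to establish the chain of complete isometries and the inclusion into the extended Haagerup version; build the two multiplication maps from that concrete description; and verify the algebraic relations by direct computation on the module structure. For the first identification, I would first invoke the remarks preceding the lemma to identify $X_l^\natural \simeq CB_l(X,B)_B$ completely isometrically, the outer $CB_l(A,\cdot)_A$ wrapping being trivialized by the multiplication $A \otimes_h X \to X$ together with the section $1 \otimes \cdot : X \to A \otimes_h X$. Writing
\[
M_{i,I}(X_l^\natural) = (\ell^2(i))_c \otimes_{eh} CB_l(X,B)_B \otimes_{eh} (\ell^2(I)^*)_r,
\]
apply successively (i) the isomorphism $Z \otimes_{eh} (\ell^2(I)^*)_r \simeq CB_l(\ell^2(I)_c, Z)$ from Lemma \ref{MatricialEHaagerup}, (ii) the adjunction $CB_l(\ell^2(I)_c, CB_l(X,B)_B) \simeq CB_l(\ell^2(I)_c \otimes_h X, B)_B$ from Lemma \ref{ModuleCBL}, and (iii) the absorption $(\ell^2(i))_c \otimes_{eh} CB_l(W, B) \simeq CB_l(W, M_{i,1}(B))$ coming from the matricial structure of $CB_l$ (trivial for finite $i$), yielding the stated identification. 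For the inclusion into $CB_l(\ell^2(I)_c \otimes_{eh} X, M_{i,1}(B))_B = CB_l(M_{I,1}(X), M_{i,1}(B))_B$, I would use crucially that $X$ is a strong $A$-module: tensoring $A \otimes_{eh} X \to X$ with identity describes $\ell^2(I)_c \otimes_{eh} X$ as absolutely convergent column sums over $X$, and any $B$-modular completely bounded map into the dual target $M_{i,1}(B)$ extends uniquely from $\ell^2(I)_c \otimes_h X$ by norm density of the algebraic tensor product together with the strong operator structure of the target.

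For the multiplication maps, view $u \in M_{k,I}(X^\natural_l)$ through the above identification as a $B$-modular $CB_l$ map on $M_{I,1}(X)$ with values in $M_{k,1}(B)$. The right $A$-action $u[a]$ for $a \in M_{I,J}(A)$ is then precomposition with the matricial multiplication $M_{I,J}(A) \times M_{J,1}(X) \to M_{I,1}(X)$ furnished by the strong $A$-module structure: set $(u[a])(v) := u(a \cdot v)$ for $v \in M_{J,1}(X)$. Right $B$-modularity and complete boundedness are preserved, placing $u[a]$ in $M_{k,J}(X^\natural_l)$. The pairing $\boxdot$ reads $v \in M_{I,J}(X)$ as its $J$-indexed family of columns $v_j \in M_{I,1}(X)$ and declares the $j$-th column of $u \boxdot v$ to be $u(v_j) \in M_{k,1}(B)$, assembling to an element of $M_{k,J}(B)$. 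For $u \in M_{k,I}(X_{l\natural})$ the same formulas apply: normality of $u$ is preserved because the $A$-action on $X$ is separately weak-* continuous when $X \in {}_A NDOM_B$, so the composition $u \circ (a \cdot)$ remains normal.

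The compatibility relations then follow by direct evaluation. For $b \in M_{k',k}(B)$, applying $(bu)[a]$ to $v$ gives $(bu)(a \cdot v) = b \cdot u(a \cdot v) = b(u[a])(v)$, hence $(bu)[a] = b(u[a])$; the same identity column by column yields $b(u \boxdot v) = (bu) \boxdot v$. Associativity $u[xa] = (u[x])[a]$ for $x \in M_{I,J}(A), a \in M_{J,K}(A)$ reduces to the associativity of the matricial $A$-action on $M_{\cdot,1}(X)$ inherited from $A \otimes_{eh} A \otimes_{eh} X \to X$ (available since $X$ is strong). In the predual case, the right $B$-modularity $u \boxdot (vc) = (u \boxdot v) c$ for $c \in M_{J,K}(B)$ is the defining condition $u \in CB_l(\cdot, \cdot)_B$ applied columnwise, and $(u[a]) \boxdot v = u \boxdot (a v)$ is tautological from $(u[a])(w) = u(a \cdot w)$. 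The main obstacle will be the first paragraph: correctly juggling the matricial $CB_l$ structure in this matricially-normed (non-operator-space) setting, and justifying the extension from Haagerup to extended Haagerup tensor product via the strong module property. Once this is in hand, the multiplication maps and all the algebraic relations follow essentially formally.
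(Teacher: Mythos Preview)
Your overall strategy is sound and in fact more direct than the paper's. The identification chain is handled the same way, via Lemmas~\ref{MatricialEHaagerup} and~\ref{ModuleCBL}. Where you diverge is in the construction of $.[.]$ and $\boxdot$: you work entirely inside the concrete model $CB_l(M_{I,1}(X),M_{k,1}(B))_B$ and define everything by precomposition, whereas the paper first builds an h-bimodule structure over the trace-class algebra $\ell^2(I)_c\otimes_h A\otimes_h(\ell^2(I)^*)_r$ and then extends to $M_{I,I}(A)$ by weak-* density, checking pointwise weak-* continuity along the way. Your route is shorter; the paper's route has the advantage of making the weak-* continuity of $a\mapsto u[a]$ and $v\mapsto u\boxdot v$ explicit, which is what is actually consumed downstream in Proposition~\ref{ModuleLeftDual}.

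One point deserves correction. For the inclusion $CB_l(\ell^2(I)_c\otimes_h X,M_{i,1}(B))_B\subset CB_l(\ell^2(I)_c\otimes_{eh} X,M_{i,1}(B))_B$, you invoke ``$X$ is a strong $A$-module''. That hypothesis is irrelevant here: the left $A$-action plays no role in extending a map from $\ell^2(I)_c\otimes_h X$ to $\ell^2(I)_c\otimes_{eh} X$. What matters is the target. An element of $\ell^2(I)_c\otimes_{eh}X$ is a formal column $\sum_{\alpha}e_\alpha\otimes x_\alpha$, and the extension of $\phi$ is $\sum_\alpha\phi(e_\alpha\otimes x_\alpha)$; one needs this sum to converge in $M_{i,1}(B)$. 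The paper obtains this from \cite[Th~5.1]{EffrosRuanDual} (the $B=\C$ case being their Th~5.7) together with $B$ being a von Neumann algebra, hence strongly closed in $B(H)$. Your phrase ``strong operator structure of the target'' is the right idea, but ``norm density of the algebraic tensor product'' is not what carries the extension---the algebraic tensor product is norm-dense in $\otimes_h$, not in $\otimes_{eh}$.

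A smaller remark on the final two identities: your argument for $u\boxdot(vc)=(u\boxdot v)c$ with $c\in M_{J,K}(B)$ does require commuting $u$ past an infinite sum over $J$, and this is precisely where normality of $u$ enters (not just $B$-modularity). You have the hypothesis $u\in X_{l\natural}$, so this is fine, but it is worth saying explicitly; the paper handles it by reducing to finite matrices and extending via the weak-* continuity of $v\mapsto u\boxdot v$ that it established earlier.
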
 

\begin{proof}
The first identity comes from lemma \ref{MatricialEHaagerup} and the second from lemma \ref{ModuleCBL}.
This last inclusion follows from \cite[Th 5.1]{EffrosRuanDual} as their Th 5.7 corresponding to $B=\C$ first in the case $B=B(H),$ the general case using strongly  $B$ von Neumann algebra thus closed for the strong operator topology in $B(H)$. %Then we can define a multiplication with $a\in M_{I,J}(A)=CB_l(\overline{\ell^2(J)}_c,M_{I,1}(A))$ extending the action of $A$ above in the finite set $I,J$ case. Taking $\xi\in {\ell^2(J)}_c, x\in X$ $a(\xi)x\in M_{I,1}(X)$ using the multiplication map in $X$, thus 
%$ua\in CB_l({\ell^2(J)}_c,CB_{l}(X,M_{i,1}(B))_{B}))$
% is defined as $(ua)(\xi)(x)=u(a(\xi)x)\in M_{i,1}(B)$ by  the extension of $u$ to $M_{I,1}(X)$ explained above. The modularity and complete boundedness of this expression is obvious by the definition.

Note that ${\ell^2(I)}_c\otimes_h X$ is a $[{\ell^2(I)}_c\otimes_h A\otimes_h (\ell^2(I)^*)_r]-B$ h-bimodule  from the map obtained from associativity and the canonical trace on trace class operators $[(\ell^2(I)^*)_r\otimes_h{\ell^2(I)}_c]\to \C$, and the product $[A\otimes_h X\otimes_h B]\to X$: $$[{\ell^2(I)}_c\otimes_h A\otimes_h (\ell^2(I)^*)_r]\otimes_h[{\ell^2(I)}_c\otimes_h X]\otimes_h B\simeq 
{\ell^2(I)}_c\otimes_h[A\otimes_h [(\ell^2(I)^*)_r\otimes_h{\ell^2(I)}_c]\otimes_h X\otimes_h B]\to {\ell^2(I)}_c\otimes_hX$$
and this is of course consistent with inclusions of set $I\subset J.$ From lemma \ref{ModuleCBL}, one deduces $M_{k,I}(X^\natural_l)$ is an $M_k(B)-[{\ell^2(I)}_c\otimes_h A\otimes_h (\ell^2(I)^*)_r]$ h-bimodule (consistent with inclusion in $k,I$. It suffices to extend it to an $M_k(B)-M_{I,I}(A)$ h-bimodule for the canonical inclusion $$[{\ell^2(I)}_c\otimes_h A\otimes_h (\ell^2(I)^*)_r]\subset M_{I,I}(A)\simeq M_I(\C)\overline{\otimes} A\simeq [{\ell^2(I)}_c\otimes_{\sigma h} A\otimes_{\sigma h} (\ell^2(I)^*)_r]$$
the weak-* continuous isomorphism coming from the predual identity \cite[(9.3.3-4-7)]{EffrosRuan}:
$$(M_{I}(\C))_*\widehat{\otimes}A_*\simeq[{\ell^2(I)^*}_r\otimes_h A_*\otimes_h ({\ell^2(I)})_c]\simeq[{\ell^2(I)^*}_r\otimes_{eh} A_*\otimes_{eh} ({\ell^2(I)})_c] .$$

Let us take as $M_k(B)-M_{I,I}(A)$ h-bimodule structure the unique pointwise (on ${\ell^2(I)}_c\otimes_h X$) weak-* (for $M_{i,1}(B)$ and $M_I(A)$) continuous extension on $CB_l({\ell^2(I)}_c\otimes_h X,M_{i,1}(B))_{B})$ i.e. with $a\mapsto u[a](x), x\in {\ell^2(I)}_c\otimes_h X$ weak-* continuous. Uniqueness is obvious, for existence, note that since ${\ell^2(I)}_c\otimes_{eh} X$ is an $M_I(A)-B$ h-bimodule (agreeing by restriction, $CB_l({\ell^2(I)}_c\otimes_{eh} X,M_{i,1}(B))_{B})$ is a $B-M_I(A)$ h bimodule and it is easy to see it is weak-* continuous in the $M_I(A)$ argument. Then 
$$CB_l({\ell^2(I)}_c\otimes_{eh} X,M_{i,1}(B))_{B})\otimes_h M_{I}(A)\to CB_l({\ell^2(I)}_c\otimes_{eh} X,M_{i,1}(B))_{B})\to CB_l({\ell^2(I)}_c\otimes_{h} X,M_{i,1}(B))_{B})$$
is also agreeing and weak-* continuous as expected (the last map being the adjoint of restriction), and also when restricted to $CB_l({\ell^2(I)}_c\otimes_{h} X,M_{i,1}(B))_{B}).$ All the identities for $.[.]$ for the module structure are extended by (separate) weak-* continuity (of product) (we extend in $x\in M_{I,J}(A)$ first then in $a\in M_{J,K}(A)$)

The case $X$ normal dual is easier. $$M_{k,I}(X_{l\natural})=CB_l^\sigma({\ell^2(I)}_c\otimes_hX,M_{i,1}(B))_{B}))=NCB_l({\ell^2(I)}_c\otimes_{\sigma h}X,M_{i,1}(B))_{B}))$$
the first $\sigma$ meaning meaning separately normal in each argument (the $\ell^2$ argument being automatic by reflexivity) and the normal Haagerup identity coming from \cite{EffrosRuanDual}. Since ${\ell^2(I)}_c\otimes_{\sigma h}X={\ell^2(I)}_c\otimes_{e h}X$ is a normal dual $M_I(A)-B$ bimodule, the normal case of lemma \ref{ModuleCBL} applies directly (and the map is of course the restriction of the previous map).

We reason similarly for the second map. One uses the identities noted before $u\in M_{k,I}(CB_{l}(X,B)_{B})\simeq CB_{l}(\ell^2(I)_c\otimes_hX,C_k\otimes_h B)_{B}\subset CB_{l}(\ell^2(I)_c\otimes_{eh}X,C_k\otimes_h B)_{B}$. Moreover we obtain for $v\in M_{I,J}(X)=CB((\ell^2(J))_c,CB((\ell^2(I)^*)_r,X))$ defines an element $I\otimes v\in CB((\ell^2(J))_c,CB((\ell^2(I))_c\otimes_{eh}(\ell^2(I)^*)_r,(\ell^2(I))_c\otimes_{eh}X))$ so that one can compose $u\circ (I\otimes v)\in CB((\ell^2(J))_c,CB((\ell^2(I))_c\otimes_{eh}(\ell^2(I))_r,C_k\otimes_h A_{1}))=CB((\ell^2(J))_c,CB(M_J(\C),C_k\otimes_h A_{1}))).$ Applying this to $Id\in M_J(\C)$ one defines our expected  product $u\boxdot v=[u\circ (I\otimes v)](Id)\in CB((\ell^2(J))_c,C_k\otimes_h A_{1})= M_{k,J}(A_1).$ 

The identities are easy to check for finite matrices  $a,b,c$ and it suffices to note the appropriate continuities to extend them by weak-* density in those von Neumann algebras. Obviously  $u\mapsto u\boxdot v$ is continuous from point weak-* topology convergence to (operator) weak topology convergence, and the point weak-* topology was exactly the target topology of continuity of $a\mapsto u[a]$ as expected. If moreover $u\in M_{k,I}(NCB_{l}(X,B)_{B})= NCB_{l}(\ell^2(I)_c\otimes_{eh}X,C_k\otimes_h B)_{B}$ and note that what we called $v\mapsto (I\otimes v)(Id)$ is nothing but the composition at target with the map $CB((\ell^2(I)^*)_r,X)\to(\ell^2(I))_c\otimes_{eh}X)$  dual of the canonical map  \cite[(9.3.4)]{EffrosRuan} $(\ell^2(I)^*)_r\otimes_{h}X_*\simeq (\ell^2(I)^*)_r\widehat{\otimes} X_*.$
Thus $v\mapsto (u\boxdot v)$ is continuous from the weak-* topology to the operator weak topology since a convergent net in $v$ implies pointwise on $\ell^2(J)$ weak-* convergence of $(I\otimes v)(Id)$ by the result above and by normality of $u$ convergence after application of $u$ weak-* in $M_{i,1}(B)$. The operator weak topology is weaker than this point weak-* $M_{i,1}(B)$ convergence. Since $M_{I,J}(X)$ is also normal dual operator module, one can use weak-* continuity of $c\mapsto vc$, $a\mapsto av$ to extend the identities.
\end{proof}

%The module Haagerup tensor product is defined as quotient as in the operator space case.
 %We gather what we need in the next :

We are now ready to prove our module duality result and exploit it to obtain a useful density result.

\begin{proposition}\label{ModuleLeftDual} Let $A_i, i=0,...,n$ von Neumann algebras.
Let $X_i$ be normal dual $A_{i}-A_{i+1}$ operator bimodules , then their bimodule left predual $(X_i)_{l\natural}$ are matricially normed $A_{i+1}-A_{i}$ bimodules and, we have a completely contractive canonical map of $A_{n+1}-A_0$ bimodules :
$$(X_n)_{l\natural}\otimes_{h A_n}... \otimes_{h A_1}(X_0)_{l\natural}\to (X_0\otimes_{eh A_1}... \otimes_{eh A_n}X_n)^\natural_l.$$

As a consequence, for $D\subset (M,\tau)$ finite von Neumann algebras ${}_ML^2(M)_{L^2(D)}\subset ({}_DM_D)_r^\natural=({}_DM_{\C})_r^\natural$ completely isometrically and there is a canonical contractive map $$ L^2(M^{op})_r\otimes_{h D^{op}}({}_ML^2(M)_{L^2(D)}^{op})^{\otimes_{h D^{op}}n}\to ({}_{L^2(D)}L^1(M)_{L^2(D)})^{\otimes_{h D'} n}\otimes_{h D'}({}_ML^1(M)_{L^2(D)})$$ with normwise dense range.%$L^2(M)\otimes_{hD}({}_{L^2(D)}L^2(M)_M)^{\otimes_{hD}n}\to ({}_{L^2(D)}L^1(M)_{L^2(D)})^{\otimes_{h D'} n}\otimes_{h D'}({}_ML^1(M)_{L^2(D)}).$ 
\end{proposition}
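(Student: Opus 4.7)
The strategy for the first part is to build the evaluation map by iterated composition and control it matricially using Lemma~\ref{matrixMultPredual}. By Lemma~\ref{ModuleCBL}, each $(X_i)_{l\natural}=NCB_l(X_i,A_{i+1})_{A_{i+1}}$ carries an $A_{i+1}-A_i$ h-bimodule structure via $(b\psi a)(x)=b\psi(ax)$, so the Haagerup tensor product in the source is balanced along the matching $A_i$ actions. On elementary tensors I set
$$\widetilde{\mathrm{ev}}(\psi_n\otimes\cdots\otimes\psi_0)(x_0\otimes\cdots\otimes x_n)=\psi_n\bigl(\psi_{n-1}(\cdots\psi_1(\psi_0(x_0)\cdot x_1)\cdot x_2\cdots)\cdot x_n\bigr),$$
where each $\psi_i(y)\in A_{i+1}$ acts on $x_{i+1}\in X_{i+1}$ through the left $A_{i+1}$-module structure of $X_{i+1}$. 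Balancedness across each $A_i$ is immediate from the definition of the bimodule action on $(X_i)_{l\natural}$, right $A_{n+1}$-modularity is inherited from $\psi_n$, and the outer $A_{n+1}-A_0$ bimodule structure matches by construction.

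For complete contractivity I proceed by induction on $n$, reducing to $n=1$ via associativity of $\otimes_{eh}$ from Lemma~\ref{IteratedModule} and of $\otimes_{hD}$ from Theorem~\ref{HaagerupModule}(1). In the base case, for $u\in M_{k,I}((X_1)_{l\natural})$ and $v\in M_{I,J}((X_0)_{l\natural})$, the matrix-level evaluation on $x_0\in M_{J,L}(X_0)$, $x_1\in M_{L,1}(X_1)$ reads $u\boxdot\bigl((v\boxdot x_0)\cdot x_1\bigr)$, using the operations $\boxdot$ and $.[.]$ of Lemma~\ref{matrixMultPredual} together with the associativity identities listed there. This yields the matrix norm estimate $\|\widetilde{\mathrm{ev}}(u\otimes v)\|\leq \|u\|\,\|v\|$ required for a completely contractive bilinear map factoring through the Haagerup tensor product. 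The weak-* continuity and normality of the $\psi_i$, combined with the separate weak-* continuity in Lemma~\ref{matrixMultPredual}, ensure that the resulting functional is normal in each variable, hence lies in the $l\natural$ predual of the extended Haagerup product (Lemma~\ref{IteratedModule}). Bimodularity is preserved at each inductive step.

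For the consequence I use the right analogue of the first part, obtained by the symmetry op-duality. The inclusion ${}_ML^2(M)_{L^2(D)}\hookrightarrow ({}_DM_D)_r^\natural=({}_DM_{\C})_r^\natural$ follows from Proposition~\ref{ModulePreDual}, which identifies ${}_{L^2(D)}L^2(M)_M^{op}\simeq ({}_MM_D)_\natural$; applying the op-duality and composing with the canonical completely isometric embedding of the proper predual into the corresponding right module dual gives the claimed inclusion (the equality $({}_DM_D)_r^\natural=({}_DM_{\C})_r^\natural$ reflects that the right $D$-modularity becomes automatic in the targets considered). For the canonical contractive map, I apply the right version of part one with $A_0=\C,\ A_1=\cdots=A_{n+1}=D$, $X_0={}_{\C}M_D$ and $X_i={}_DM_D$ for $i\geq 1$, identifying via Proposition~\ref{ModulePreDual} the preduals $({}_DM_D)_\natural\simeq {}_{L^2(D)}L^1(M)_{L^2(D)}$ and $({}_{\C}M_D)_\natural\simeq {}_ML^1(M)_{L^2(D)}$. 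The source then corresponds to the Haagerup product of the dense subspaces ${}_ML^2(M)_{L^2(D)}^{op}$ sitting inside these preduals, with the $D^{op}$-balancing absorbed into the $D'$-balancing via the inclusion $D^{op}\hookrightarrow D'$. Normwise density of the image follows from the last part of Proposition~\ref{ModulePreDual}, which provides density of ${}_ML^2(M)_{L^2(D)}^{op}$ in the module preduals, together with the standard norm density of finite-sum elementary tensors in Haagerup tensor products.

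The main obstacle is the inductive complete contraction in the second paragraph: the matrix identities of Lemma~\ref{matrixMultPredual} must be threaded carefully through each tensor slot while preserving normality for the extended Haagerup product at each stage. The remaining bookkeeping, especially matching the $M'\subset D'$ module structures in the consequence and tracking opposites, is routine but delicate.
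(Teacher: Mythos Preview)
Your approach to the first part is essentially the paper's: the iterated evaluation built from the $\boxdot$ and $.[.]$ operations of Lemma~\ref{matrixMultPredual}. The paper constructs the map for all $n$ at once rather than by induction, but this is cosmetic. One caution: your inductive reduction invokes associativity of $\otimes_{hA_i}$ from Theorem~\ref{HaagerupModule}(1), but that statement is for operator modules, while the $(X_i)_{l\natural}$ are only matricially normed $h$-bimodules. The paper sidesteps this by working with the non-module Haagerup product first and then checking the balancedness relations factor the map through the quotient; you should do the same.

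The real gap is in the consequence. Two points:

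First, the inclusion ${}_ML^2(M)_{L^2(D)}\subset ({}_DM_D)_r^\natural$ does not follow from Proposition~\ref{ModulePreDual} as you claim. That proposition identifies $({}_MM_D)_\natural$ and related preduals, not the right module dual $({}_DM_D)_r^\natural=CB_{rD}(M,D)$. The paper proves the inclusion by a direct matrix computation: for $(\xi_{ij})\in M_n({}_ML^2(M)_{L^2(D)})$ one estimates $\|\sum_jE_D(m_{(k,K)(j,l)}\xi_{ji})\|$ via the operator-valued Cauchy--Schwarz for $E_D$, and injectivity is checked separately.

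Second, and more seriously, your density argument fails. You invoke ``the last part of Proposition~\ref{ModulePreDual}, which provides density of ${}_ML^2(M)_{L^2(D)}^{op}$ in the module preduals,'' but that density is only for the topology of \emph{pointwise strong operator convergence}, not norm density. You cannot tensor weak densities to conclude norm density of the Haagerup product of subspaces inside the Haagerup product of the larger spaces. The paper handles this by induction on $n$: given $\xi\otimes\eta$ with $\eta$ in the $(n{-}1)$-fold product, one first approximates $\eta$ in norm by $\eta'$ in the ``nice'' subspace (inductive hypothesis), then takes a \emph{bounded} net $\xi_n\to\xi$ in the pointwise sense of Proposition~\ref{ModulePreDual}, and finally shows $\|\xi_n\otimes\eta'-\xi\otimes\eta\|\to 0$ by working in the larger dual norm $({}_D(M^{\otimes_{ehD(n+1)}})_{\C})^\natural$, where the pointwise convergence of $\xi_n$ against the now-fixed $\eta'$ suffices. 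This interplay between weak density in one slot and norm control from the inductive hypothesis in the others is the missing idea.
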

%Note the first part is a slightly weaker module variant of lemma \ref{MatricialEHaagerup2} because we won't need and thus don't want to consider module variants of extended Haagerup product in the matricially normed case.

\begin{proof}

To get the map from the non-module Haagerup tensor product, take $u=u_n\odot ...\odot u_0$ $u_i\in M_{k_i,k_{i-1}}((X_i)_{l\natural})$ ($k_{-1}=1=k_n$, $k_i$ finite). Let $x=x_0\odot ...\odot x_n,$ $x_i\in M_{I_i,I_{i+1}}(X_i)$ ($I_{0}=1,I_{n+1}$ finite, $I_i$ sets) then we define for $a\in A_0$ $u(a)[x]\in M_{1,I_{n+1}}(A_{n+1})$ in noting %$u_0(a)[x_0]$ 
that by using our various maps built above in lemma \ref{matrixMultPredual} : $$u_0[a]\in M_{k_0,I_0}((X_0)_{l\natural}), u_0[a]\boxdot x_0\in M_{k_0,I_1}(A_1), $$ $$u_1[u_0[a]\boxdot x_0]\in M_{k_1,I_1}((X_1)_{l\natural}),u_1[u_0[a]\boxdot x_0]\boxdot x_1\in M_{k_1,I_2}(A_2)...$$
and finally one defines  $$u(a)[x]=u_n[\cdots [u_1[u_0[a]\boxdot x_0]\boxdot x_1]\cdots ]\boxdot x_n\in M_{k_n,I_{n+1}}(A_{n+1})=M_{1,I_{n+1}}(A_{n+1}).$$
From the modularity of the operations it is easy to see it only depends on $x$ in the extended Haagerup product and it is completely contractive in $x$ from \cite{M05} equation (2.6). Then, the modularity in $u$ variables shows $u\mapsto u(.)[.]\in (X_0\otimes_{eh A_1}... \otimes_{eh A_n}X_n)^\natural_l$ induces a map on the quotient which is the module Haagerup tensor product.

We now turn to statement for the inclusion $D\subset M$. We saw before the proof $({}_DM_D)_r^\natural=CB_{rD}(M,D)=({}_DM_{\C})_r^\natural.$
Now, take $(\xi_{ij})\in M_n({}_ML^2(M)_{L^2(D)})$ and consider for $(m_{(k,K)(j,l)})\in M_{k\times L,n\times L}(M)$, then $\sum_{j} E_D(m_{(k,K)(j,l)}\xi_{ji})\in M_{k\times L,n\times L}(D)$ and 
\begin{align*}||\sum_{j} E_D(m_{(k,K)(j,l)}\xi_{ji})||_{M_{k\times L,n\times L}(D)}^2&=||\sum_{j,j',k,K} E_D(\xi_{j'i'}^*m_{(k,K)(j',l')}^*)E_D(m_{(k,K)(j,l)}\xi_{ji})||_{M_{n\times L,n\times L}(D)}\\&\leq ||\sum_{j,j'} E_D(\xi_{j'i'}^*(m^*m)_{(j',l')(j,l)}\xi_{ji})||_{M_{n\times L,n\times L}(D)}
\\&\leq ||\sum_{j} E_D(\xi_{ji'}^*\xi_{ji})||_{M_{n\times L,n\times L}(D)}||m^*m||_{M_{n\times L,n\times L}(M)}\\&=||m||_{M_{k\times L,n\times L}(M)}^2||\xi||_{M_n({}_ML^2(M)_{L^2(D)})}^2\end{align*}
showing that $(\xi_{ij})$ defines a map in $M_n(CB_{rD}(M,D))=CB_{rD}(M_{1,n}(M),M_{1,n}(D))$. Since $[(1+\sum_{j} E_D(\xi_{j.}^*\xi_{j.}))^{-1}]_{lk}\xi_{ik}^*\in M_n(M)$
 if the image of $(\xi_{il})$ is zero, one deduces  $[(1+\sum_{j} E_D(\xi_{j.}^*\xi_{j.}))^{-1}]_{lk}\sum_iE_D(\xi_{ik}^*\xi_{il})=0$ and thus by functional calculus $\sum_iE_D(\xi_{ik}^*\xi_{il})=0$ i.e. $(\xi_{il})=0.$ We thus obtained the expected injection ${}_ML^2(M)_{L^2(D)}\subset ({}_DM_D)_r^\natural$.
 
 Of course $(({}_DM_D)_r^\natural)^{op}$ is a $D^{op}-D^{op}$ module, and we thus deduce from the first part of the proof a map  $$({}_ML^2(M)_{L^2(D)}^{op})^{\otimes_{h D^{op}}(n+1)}\subset [({}_DM_D)_r^\natural)^{op}]^{\otimes_{h D^{op}}n}\otimes_{h D^{op}}[({}_DM_{\C})_r^\natural)^{op}]\to ([{}_D(M^{\otimes_{eh D}n}\otimes_{eh D}M)_{\C}]_r^\natural)^{op}.$$
 
 Now, by \cite{B97} $L^2(M^{op})_r\otimes_{h D^{op}}({}_ML^2(M)_{L^2(D)}^{op})^{\otimes_{h D^{op}}n}=L^2(D^{op})_r\otimes_{h D^{op}}({}_ML^2(M)_{L^2(D)}^{op})^{\otimes_{h D^{op}}(n+1)}$
 
 and we have a canonical map from (a CB/modular variant of) lemma \ref{PilHaagerup} :\begin{align*}[{}_D(M^{\otimes_{eh D}n}\otimes_{eh D}M)_{\C}]_r^\natural\otimes_{hD} L^2(D)_c&=CB_{rD}(M^{\otimes_{eh D}n}\otimes_{eh D}M,D)\otimes_{hD} L^2(D)_c%\\&%\to CB_{rD}(M^{\otimes_{eh D}n}\otimes_{eh D}M,D)\otimes_{hD} L^2(D)_c
 \\&\to CB_{rD}(M^{\otimes_{eh D}n}\otimes_{eh D}M,D\otimes_{hD} L^2(D)_c)\\&=CB_{rD}({}_D(M^{\otimes_{eh D(n+1)}})_{\C}, L^2(D)_c)\\&=({}_D(M^{\otimes_{eh D(n+1)}})_{\C})^\natural\end{align*}
 the last equality being only isometric and not competely isometric and comes from the definition in \cite{M05}. Now from the definition of duality there, the map from $ ({}_ML^2(M)_{L^2(D)}^{op})^{\otimes_{h D^{op}}(n+1)}  \to({}_{L^2(D)}L^1(M)_{L^2(D)})^{\otimes_{h D'} n}\otimes_{h D'}({}_ML^1(M)_{L^2(D)})%({}_D(M^{\otimes_{eh D(n+1)}})_{\C})_\natural
 \subset ({}_D(M^{\otimes_{eh D(n+1)}})_{\C})^\natural$\footnote{the last inclusion being isometric since ${}_D(M^{\otimes_{eh D(n+1)}})_{\C}\subset ({}_{L^2(D)}L^1(M)_{L^2(D)})^{\otimes_{h D'} n}\otimes_{h D'}({}_ML^1(M)_{L^2(D)}))^\natural$ completely isometrically, cf the next proposition  from \cite{M05}, so that \begin{align*}({}_{L^2(D)}L^1(M)_{L^2(D)})^{\otimes_{h D'} n}\otimes_{h D'}({}_ML^1(M)_{L^2(D)}))&\subset [{}_{D'}({}_{L^2(D)}L^1(M)_{L^2(D)})^{\otimes_{h D'} n}\otimes_{h D'}({}_ML^1(M)_{L^2(D)})_{\C}]^{\natural\natural}\\&\subset ({}_D(M^{\otimes_{eh D(n+1)}})_{\C})^\natural\end{align*}} can be identified with the inclusion in $ L^2(D^{op})_r\otimes_{h D^{op}}({}_ML^2(M)_{L^2(D)}^{op})^{\otimes_{h D^{op}}(n+1)}$ followed by the opposite map to $[{}_D(M^{\otimes_{eh D}n}\otimes_{eh D}M)_{\C}]_r^\natural\otimes_{hD} L^2(D)_c$ followed by the map above. Since the first inclusion is dense, one deduces our contraction by extension
$$ L^2(D^{op})_r\otimes_{h D^{op}}({}_ML^2(M)_{L^2(D)}^{op})^{\otimes_{h D^{op}}(n+1)}\to ({}_{L^2(D)}L^1(M)_{L^2(D)})^{\otimes_{h D'} n}\otimes_{h D'}({}_ML^1(M)_{L^2(D)}).$$

It remains to show it has dense range inductively on $n$ equivalently when replacing $L^2(D^{op})$ by $D^{op}.$ The case $n=0$ is obvious since $M$ normwise dense in $L^2(M)$ and by the description of $({}_ML^1(M)_{L^2(D)})$ in the last proposition. Assume recurrence hypothesis, take $\xi\otimes \eta, \xi \in M_{1,k}({}_{L^2(D)}L^1(M)_{L^2(D))},\eta \in M_{k,1}({}_{L^2(D)}L^1(M)_{L^2(D)})^{\otimes_{h D'} n-1}\otimes_{h D'}({}_ML^1(M)_{L^2(D)})$ in the dense subspace of elementary tensors and  get $\eta'\in ({}_ML^2(M)_{L^2(D)}^{op})^{\otimes_{h D^{op}}n}$ with $||\eta'-\eta||\leq \epsilon/2$. One can assume $||\xi||\leq 1.$
 take a bounded net $M_{1,k}({}_ML^2(M)_{L^2(D)}^{op})\ni\xi_n\to \xi$ converging in the sens of the last proposition and let us show that for $n$ large enough $||\xi_n\otimes \eta'-\xi\otimes\eta||_{({}_D(M^{\otimes_{eh D(n+1)}})_{\C})^\natural}\leq \epsilon$ which will be enough by the isometric embedding above. Of course it suffices $||(\xi_n-\xi)\otimes \eta'||_{({}_D(M^{\otimes_{eh D(n+1)}})_{\C})^\natural}\leq \epsilon/2.$

But we have by definition the inequality $$||(\xi_n-\xi)\otimes \eta'||_{({}_D(M^{\otimes_{eh D(n+1)}})_{\C})^\natural}\leq \sup_{m_1\in (M_{1,I}(M))_1, m_2\in (M_{I,1}(M^{\otimes_{eh D}n}))_1}\sum_j ||E_D[m_1E_D(m_2\eta'_{j1})(\xi_n-\xi)_{1,j}]||_2.$$
This indeed tends to 0 since $E_D(m_2\eta'_{j1})\in M_{I,1}(D)$ thus $m_1E_D(m_2\eta'_{j1})\in M$ and the type of convergence of the previous proposition is thus enough.
\end{proof}

\section{Generalized weak-$*$ Haagerup products}

\subsection{Supplementary results on Haagerup tensor products of $D$-modules.}\label{HaaModuleSup}

\begin{proposition}\label{Submodules}Let $D\subset M$ finite von Neumann algebras.
   We have completely isometric isomorphisms and embeddings (for $k\geq 0$):\begin{align*}({}_MM_{D})\otimes_{eh D}&({}_DM_{D})^{\otimes_{eh D}k}\otimes_{eh D}({}_DM_{M})\\&\simeq [({}_{L^2(D)}L^2(M)_M)\otimes_{h D'}({}_{L^2(D)}L^1(M)_{L^2(D)})^{\otimes_{h D'} k}\otimes_{h D'}({}_ML^2(M)_{L^2(D)})]^{\natural D'norm}\\&\subset [({}_{L^2(D)}L^2(M)_M)\otimes_{h D'}({}_{L^2(D)}L^1(M)_{L^2(D)})^{\otimes_{h D'} k}\otimes_{h D'}({}_ML^2(M)_{L^2(D)})]^{\natural}\\&=({}_MM_{D})\otimes_{w^*h D}({}_DM_{D})^{\otimes_{w^*h D}k}\otimes_{w^*h D}({}_DM_{M}),\end{align*}
 so that (with an ordinary duality compatible with $M-M$ module structure) 
  \begin{align*}({}_MM_{D})\otimes_{eh D}&({}_DM_{D})^{\otimes_{eh D}k}\otimes_{eh D}({}_DM_{M})=:M^{\otimes_{eh D}k+2}\\&=[({}_{L^2(D)}L^1(M)_M)\otimes_{h D'}({}_{L^2(D)}L^1(M)_{L^2(D)})^{\otimes_{h D'} k}\otimes_{h D'}({}_ML^1(M)_{L^2(D)})]^{*D'norm}\\&\subset[({}_{L^2(D)}L^1(M)_M)\otimes_{h D'}({}_{L^2(D)}L^1(M)_{L^2(D)})^{\otimes_{h D'} k}\otimes_{h D'}({}_ML^1(M)_{L^2(D)})]^*\\&=({}_MM_{D})\otimes_{w^*h D}({}_DM_{D})^{\otimes_{w^*h D}k}\otimes_{w^*h D}({}_DM_{M})=:M^{\otimes_{w^*h D}k+2},\end{align*} such that for instance for $m_1\otimes_D ...\otimes_D m_{k+2}\in M^{\otimes_{w^*h D}k+2},\xi_1\otimes_{D'} ...\otimes_{D'} \xi_{k+2}\in  {}_{L^2(D)}L^1(M)_M)\otimes_{h D'}({}_{L^2(D)}L^1(M)_{L^2(D)})^{\otimes_{h D'} k}\otimes_{h D'}({}_ML^1(M)_{L^2(D)})=:L^1(M)^{\otimes_{h D'} k+2}$:
\begin{align*}\langle  &\xi_1\otimes_{D'} ...\otimes_{D'} \xi_{k+2},m_1\otimes_D ...\otimes_D m_{k+2}\rangle\\&=\tau(\xi_1m_1E_D(m_2E_D(m_3...E_D(m_{k+1}E_D(m_{k+2}\xi_{k+2})\xi_{k+1})....\xi_3)\xi_2)).
 \end{align*}
 
   The inclusion $M^{\otimes_{eh D}k+2}\subset M^{\otimes_{w^*h D}k+2}$ is a weak-* closed subspace so that both spaces are dual operator spaces.% above have unit balls included as weak-* closed subsets and the induced weak-* topology has the same closed bounded convex sets than the normal weak-* topology. %[explain weak-* continuity of the product in second right variable]
   
Moreover, there is a completely contractive weak-* continuous $M-M$ bimodule embeddings :$M^{\otimes_{w^*h D}k+2}\hookrightarrow (L^2(M)^{\otimes_{D}k+2})^*$, 
such that moreover the projection $E_{D'}$ on $L^2(M)^{\otimes_{D}k+2}$ restricts to contractive maps  $M^{\otimes_{w^*h D}k+2}\to D'\cap M^{\otimes_{w^*h D}k+2}\simeq (L^1(M)^{\otimes_{h D'} k+2}/\overline{[D,L^1(M)^{\otimes_{h D'} k+2}]})^*,$ weak-* continuous on bounded sets and from  $M^{\otimes_{eh D}k+2}\to D'\cap M^{\otimes_{eh D}k+2}.$

Finally, for any $x\in L^2(M),y\in M$, the map $T_{x,y}: M^{\otimes_{eh D}2}\to (L^2(M)^*\otimes_{h D} L^2(M))$ induced from $T:L^2(M)^*\otimes_{eh}( M^{\otimes_{ehh D}2})\otimes_{eh}L^2(M)\to (L^2(M)^*\otimes_{h D} L^2(M))$ (by $T_{x,y}(U)=T(x\otimes U\otimes y)$) is completely contractive weak-* continuous and an embedding when $x=y=1.$ %[Proof of this FINALLY to be added].

 \end{proposition}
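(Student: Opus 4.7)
The plan is to build $T$ as a composition of module actions followed by an identification of Hilbert--$D$-module tensor products. Since $L^2(M)^*$ is a strong right $M$-operator module and $L^2(M)$ is a strong left $M$-operator module, the actions give completely contractive maps
$$\mu_L: L^2(M)^*\otimes_{eh}M\to L^2(M)^*,\quad \mu_R: M\otimes_{eh}L^2(M)\to L^2(M),$$
which are respectively right and left $D$-modular. Using the associativity (Theorem \ref{HaagerupModule}(1)) and functoriality (Theorem \ref{HaagerupModule}(2), cf.\ Lemma \ref{IteratedModule}) of the module extended Haagerup product, one factors
$$L^2(M)^*\otimes_{eh}(M\otimes_{ehD}M)\otimes_{eh}L^2(M)\simeq (L^2(M)^*\otimes_{eh}M)\otimes_{ehD}(M\otimes_{eh}L^2(M))$$
and applies $\mu_L\otimes_{ehD}\mu_R$ to land in $L^2(M)^*\otimes_{ehD}L^2(M)$. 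Because $L^2(M)^*$ and $L^2(M)$ are (row/column) Hilbert $D$-modules in the sense of \cite{B97}, the module extended Haagerup and module Haagerup tensor products coincide on them (both coincide with the $C^*$-module tensor product, i.e.\ an appropriate $L^2$ completion), giving the desired $T:\cdots\to L^2(M)^*\otimes_{hD}L^2(M)$.

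Complete contractivity of $T_{x,y}$ is inherited. Weak-$*$ continuity follows by factoring through $M\otimes_{\sigma hD}M$: by Proposition \ref{Submodules} the embedding $M\otimes_{ehD}M\hookrightarrow M\otimes_{\sigma hD}M$ is weak-$*$ continuous on the ball, and under the Magajna identification (Theorem \ref{HaagerupModule}(4)) $M\otimes_{\sigma hD}M\simeq CB_{M',M'}(B_D(L^2(M)),B(L^2(M)))$ the operation $U\mapsto (xa)\otimes_D(by)$ becomes the weak-$*$ continuous evaluation of $\psi_0(U)$ in two fixed Hilbert vectors.

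For the embedding statement at $x=y=1$, one tests via the predual description already obtained in Proposition \ref{Submodules}: $M\otimes_{ehD}M$ is the dual operator space of ${}_{L^2(D)}L^1(M)_M\otimes_{hD'}{}_ML^1(M)_{L^2(D)}$ under the pairing $\langle \xi\otimes_{D'}\eta, a\otimes_D b\rangle=\tau(\xi a E_D(b\eta))$. The image $T_{1,1}(a\otimes_D b)=a\otimes_D b\in L^2(M)^*\otimes_{hD}L^2(M)$ is characterized by its pairings with elements $\xi\otimes_{D'}\eta\in L^2(M)\otimes_{hD'}L^2(M)$, which give exactly the same formula since $L^2\subset L^1$. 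By the norm-density result at the end of Proposition \ref{ModuleLeftDual}, the image of $L^2\otimes_{hD'}L^2$ is dense in $L^1(M)\otimes_{hD'}L^1(M)$, so vanishing on all such test elements forces vanishing as a functional, i.e.\ injectivity. The complete isometry is obtained by applying the same argument at matrix level, identifying $M_n(L^2(M)^*\otimes_{hD}L^2(M))$ with the $n$-fold row/column amplification.

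The main obstacle is the identification $L^2(M)^*\otimes_{ehD}L^2(M)\simeq L^2(M)^*\otimes_{hD}L^2(M)$: while this is natural for Hilbert $C^*$-modules, one needs to track carefully that the module $C^*$-module tensor product coincides with both Haagerup and extended Haagerup module products, using in particular the factorization results of \cite{B97} and the fact that row/column Hilbert modules have no ``extended'' elements beyond the Haagerup ones. A subsidiary technical point is promoting injectivity of $T_{1,1}$ to a complete isometry, which requires verifying that the $L^2$-test functions are matrix-norm dense, not just norm dense, in the predual.
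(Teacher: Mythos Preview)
Your construction of $T$ via the module multiplication maps $\mu_L,\mu_R$ and associativity is correct and yields the same map, but it is a more laborious route than the paper's. The paper builds $T_{x,y}$ in one stroke as the restriction to $M^{\otimes_{ehD}2}$ of the adjoint of an explicit predual map
\[
S_{x,y}:({}_{L^2(D)}L^2(M)_M)\otimes_{hD^{op}}({}_ML^2(M)_{L^2(D)})\to ({}_{L^2(D)}L^1(M)_M)\otimes_{hD^{op}}({}_ML^1(M)_{L^2(D)}),\qquad \xi\otimes\xi'\mapsto \xi x\otimes y\xi',
\]
using the identifications of Proposition~\ref{ModulePreDual}. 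This buys everything at once: complete contractivity and weak-$*$ continuity are automatic for a dual map, so your detour through $M\otimes_{\sigma hD}M$ and the Magajna $CB_{M',M'}$-picture is unnecessary; and the embedding at $x=y=1$ follows immediately because $S_{1,1}$ is then the canonical inclusion of the $L^2$-level predual into the $L^1$-level predual, which has dense range by exactly the density result you cite. Your approach works, but you have to argue each of these properties separately, and the identification $L^2(M)^*\otimes_{ehD}L^2(M)\simeq L^2(M)^*\otimes_{hD}L^2(M)$ that worries you never arises in the paper's argument since the target space comes directly from the predual description.

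One minor point: the statement only claims that $T_{1,1}$ is an embedding (i.e.\ injective), not a complete isometry, so your concern about upgrading norm-density to matrix-norm density in the predual is not needed here. Injectivity of $S_{1,1}^*$ follows from norm-density of the range of $S_{1,1}$ alone.
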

 \begin{proof} 
  The two first statements are consequences of general results stated above and the formula for the duality couplings follows easily from our previous computations. 
  The statement that the extended Haagerup product is a dual operator space is standard from the weak* closedness statement (cf. e.g. \cite[lemmas 1.4.6, 1.4.7]{BLM}).
  
The fact that the extended Haagerup tensor product is closed in the weak-* Haagerup tensor product follows from the statement that for a bounded (say by 1) convex sets $C$ in the extended space, its closure for the weak-* topology inside the  weak-* product is actually its closure for the normal weak-* topology (and thus is in the extended product). For, let a net $(\xi_i)_{i\in I}\in C\subset ({}_MM_{D})\otimes_{eh D}({}_DM_{D})^{\otimes_{eh D}k}\otimes_{eh D}({}_DM_{M})$ converges to $\xi$  (in the weak-* product) for the weak-* topology. Define $J=]0,1[\times L\times L'$ where $L$ is the set of finite sets $\eta=\{\eta_{1},...,\eta_{n}\}$ of any length such that $\eta_l=(\eta_{l,a}, \eta_{l,b})$, $\eta_{l,a}\in ({}_{L^2(D)}L^1(M)_M)\otimes_{h D'}({}_{L^2(D)}L^1(M)_{L^2(D)})^{\otimes_{h D'} m}, 0\leq m=m(l)\leq k$  %({}_{L^2(D)}L^1(M)_M), 
$\eta_{l,b}\in ({}_{L^2(D)}L^1(M)_{L^2(D)})^{\otimes_{h D'} k-m}\otimes_{h D'}({}_ML^1(M)_{L^2(D)})$ % ({}_{L^2(D)}L^1(M)_{L^2(D)})^{\otimes_{h D'} k}, j\in[\![2,k+1]\!],\eta_{l,k+2}\in({}_ML^1(M)_{L^2(D)})$ 
with $||\eta_{i,j}||\leq 1$ in their respective norms and $L'$ is the set of $\mu=\{\mu_{1},...,\mu_{n}\}, \mu_i\in ({}_{L^2(D)}L^1(M)_M)\otimes_{h D'}({}_{L^2(D)}L^1(M)_{L^2(D)})^{\otimes_{h D'} k}\otimes_{h D'}({}_ML^1(M)_{L^2(D)}).$ We will order $J$ by product order (inverse order in $]0,\epsilon[$, and inclusion for $L,M$) and build a net indexed by $J$ converging to $\xi$ in the weak-* topology and moreover converging in the normal  weak-* topology and  still in $C$. Thus fix $(\epsilon,\eta,\mu)\in J.$

{We know $({}_ML^2(M)_{L^2(D)}^{op})^{\otimes_{h D'} k-m(l)+1}$ is normwise dense in $({}_{L^2(D)}L^1(M)_{L^2(D)})^{\otimes_{h D'} k-m(l)}\otimes_{h D'}({}_ML^1(M)_{L^2(D)})$ by the previous lemma \ref{ModuleLeftDual}. Thus let $\eta_{l,c}\in ({}_ML^2(M)_{L^2(D)}^{op})^{\otimes_{h D'} k-m(l)+1}$ such that $$||\eta_{l,c}-\eta_{l,b}||\leq \frac{\epsilon}{4\min(1,||\eta_{l,a}||)}.$$

%Since by \cite{B97}, $L^2(D)_r\otimes_{hD}({}_{L^2(D)}L^2(M)_M)^{\otimes_{h D'} k-m(l)+1})\simeq L^2(M)_r\otimes_{hD}(({}_{L^2(D)}L^2(M)_M)^{\otimes_{h D'} k-m(l)})\subset ({}_{L^2(D)}L^1(M)_{L^2(D)})^{\otimes_{h D'} k-m(l)}\otimes_{h D'}({}_ML^1(M)_{L^2(D)})$ completely contractively, 
One  also gets from the same lemma : $$||d\eta_{l,c}||_{({}_{L^2(D)}L^1(M)_{L^2(D)})^{\otimes_{h D'} k-m(l)}\otimes_{h D'}({}_ML^1(M)_{L^2(D)})}\leq ||d||_2||\eta_{l,c}||_{({}_ML^2(M)_{L^2(D)}^{op})^{\otimes_{h D'} k-m(l)+1}}.$$

Thus $\langle \Psi(\eta_{l,a},\eta_{l,c})( m_1\otimes_D ...\otimes_D m_{k+2}), d\rangle=\langle  \eta_{l,a}\otimes_{D'} d\eta_{l,c},m_1\otimes_D ...\otimes_D m_{k+2}\rangle$ defines $\Psi(\eta_{l,a},\eta_{l,c})( m_1\otimes_D ...\otimes_D m_{k+2})\in L^2(D).$

But $\Psi(\eta_{l,a},\eta_{l,c})(\xi_i)$ weakly converges in $L^2(D)$ for all $l$. Indeed, we saw it is bounded in $L^2(D)$, and since $D\subset L^2(D)$ is normwise dense, we only have to see weak convergence against elements in $D$, which is part of the assumed weak convergence of $\xi_i$.  Thus  if we write its limit with a slight abuse  of notation $(\Psi(\eta_{l,a},\eta_{l,c})(\xi))_{l\in 1,...,n}\in\overline{((\Psi(\eta_{1,a},\eta_{1,c}),...,\Psi(\eta_{n,a},\eta_{n,c}))(C)}^{||.||_2}$ by a standard consequence of Hahn-Banach theorem. %(likewise we also write $(\Psi(\eta_l)(\xi_i)$   the weak  limit in $D^*$ of $(\Psi(\eta_l)(\xi_i')$ since the inequality above is still valid for these limits% for this take a linear form in $D$ on the equality giving the inequality and get the limit from the two weak convergences, taking a sup, this gives an inequality beteen $L^1$ norms and thus the $L^2$ norm inequality.
One can take $\Xi_{\epsilon,\eta,\mu}\in C$ such that for all $l\in 1,...,n$ $||\Psi(\eta_{l,a},\eta_{l,c})(\Xi_{\epsilon,\eta,\mu}-\xi)||_2\leq \frac{\epsilon}{2}$ and thus 
$||(\Psi(\eta_{l,a},\eta_{l,c})(\Xi_{\epsilon,\eta,\mu}-\xi)||_1\leq \epsilon.$ We can also get $|\langle \mu_i ,\Xi_{\epsilon,\eta}-\xi\rangle|\leq \epsilon.$ Thus $\Xi_{\epsilon,\eta,\mu}$ is a net of $C$ converging to $\xi$ for the weak-* topology and converging for the normal weak-* topology, as wanted.}

For the last embedding (say for $k=0$, the general case is similar using the last density result of the previous proposition), it suffices to get an essentially surjective completely contrative predual map :
$$(L^2(M)\otimes_{D}L^2(M))\to ({}_{L^2(D)}L^1(M)_M)\otimes_{h D'}({}_ML^1(M)_{L^2(D)}).$$ 
Let us see ${}_{\C}({}_{L^2(D)}L^2(M)_M)_{D'}= {}_{M'}L^2(M')_{L^2(D')}$ as a (right) $\C-D'$ correspondence as in the sense of \cite[section 3]{B97b} and $L^2(M)$ $D'-\C$ correspondence. Thus by \cite[Theorem 3.1]{B97b} (or even \cite[Theorem 4.3]{B97} since we don't use the weak-* topology)
$L^2(M)\otimes_{D}L^2(M)\simeq ({}_{L^2(D)}L^2(M)_M)\otimes_{eh D'}L^2(M)\simeq ({}_{L^2(D)}L^2(M)_M)\otimes_{h D'}L^2(M)$  completely isometrically (the last isomorphism is explained in  \cite{M05}, here recall that $L^2(M)\otimes_{D}L^2(M)$ and $L^2(M)$ have their column Hilbert space structures). 

Now we have essentially surjective (canonical) complete contractions $$\pi_1: {}_{\C}({}_{L^2(D)}L^2(M)_M)_{D'}\simeq M'\otimes_{h M'}({}_{L^2(D)}L^2(M)_M\to {}_{L^2(D)}L^1(M)_M\simeq (L^2(M))^*\otimes_{h M'}({}_{L^2(D)}L^2(M)_M)$$ and $$\pi_2: L^2(M)\simeq M'\otimes_{h M'}L^2(M)\to {}_ML^1(M)_{L^2(D)}\simeq ({}_ML^2(M)_{L^2(D)})\otimes_{h M'}L^2(M)$$%[DENSITY OF IMAGE OF $\pi_2$ NOT OBVIOUS??? either use the image contains a variant of the image of $\pi_1$ or another weak topology]
 using the complete isometries given in (1) for compatibility of the operator space structures. The reader should note that $\pi_1$ is normwise essentially surjective, while $\pi_2$ is only weakly essentially surjective with a proof similar to the last density statement in proposition \ref{ModulePreDual}. 
  Obviously, ${\pi_1}\otimes_{h D'}\pi_2$ is the map we wanted which is weakly essentially surjective. The case with more than two tensors is similar using also the last density statement in proposition \ref{ModulePreDual}. 

For the conditional expectations, note that on $L^2$, it is well known that $E_{D'}(x)$ is a limit of a net of convex combinations $\xi_i=\sum \lambda_uuxu^*, u\in\mathcal{U}(D).$ Since when $x$ is in the weak-* Haagerup tensor product; such sum has also bounded norm, it has a weak-* convergent limit point which has to coincide with $E_{D'}(x)$. The statement for extended Haagerup tensor product then follows from the normal weak-* closability statement of weak-* closed bounded sets. 
The weak-* continuity follows by duality and density in preduals.
 $E_{D'}$ induces the isomorphism between the commutant and the quotient. The identification of the predual of the commutant is obvious from the bimodule structures.
 
It remains to prove the final statement. For, we have to produce $T_{x,y}$ as the adjoint of  a map : $$S_{x,y}: [({}_{L^2(D)}L^2(M)_M)\otimes_{h D^{op}}(({}_ML^2(M)_{L^2(D)})]\to (({}_{L^2(D)}L^1(M)_M)\otimes_{h D^{op}}({}_ML^1(M)_{L^2(D)})).$$
We define $S_{x,y}(\xi\otimes_{h D^{op}}\xi')=x\otimes_{M'}\xi\otimes_{h D^{op}}\xi'\otimes_{M'}y\in [(L^2(M))^*\otimes_{h M'}({}_{L^2(D)}L^2(M)_M)]\otimes_{h D^{op}}[({}_ML^2(M)_{L^2(D)})\otimes_{h M'}L^2(M)]$  with the isomorphism of Proposition \ref{ModulePreDual}, or said otherwise $$S_{x,y}(\xi\otimes_{h D^{op}}\xi')=(\xi x\otimes_{h D^{op}}y\xi').$$
Now, we have the claimed adjoitness realtion :$$\langle S_{x,y}(\xi\otimes_{h D^{op}}\xi'),\eta\otimes_{h D}\eta'\rangle=\tau(E_D(\xi x\eta)\eta'y\xi')=\langle \xi\otimes_{h D^{op}}\xi',x\eta\otimes_{h D}\eta'y\rangle=\langle \xi\otimes_{h D^{op}}\xi',T_{x,y}(\eta\otimes_{h D}\eta')\rangle,$$
thus $T_{x,y}=S_{x,y}^*|_{M^{\otimes_{eh D}2}} $ is indeed completely contractive and weak-* continuous.
\end{proof}

We can now finish improving our last result to get our first main theorem.
\begin{theorem}\label{MainHaagerupModule}
Let $D\subset M$ finite von Neumann algebras.
   We have a completely isometric weak-* homeomorpic isomorphism
 (for $k\geq 0$):\begin{align*}({}_MM_{D})\otimes_{eh D}&({}_DM_{D})^{\otimes_{eh D}k}\otimes_{eh D}({}_DM_{M})\simeq ({}_MM_{D})\otimes_{w^*h D}({}_DM_{D})^{\otimes_{w^*h D}k}\otimes_{w^*h D}({}_DM_{M}).\end{align*}
\end{theorem}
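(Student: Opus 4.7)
\emph{Plan.} By Proposition \ref{Submodules}, the canonical inclusion $\iota : M^{\otimes_{eh D}k+2}\hookrightarrow M^{\otimes_{w^*h D}k+2}$ is a completely isometric, weak-$*$ continuous embedding whose image is a weak-$*$ closed subspace. The statement thus reduces to showing that $\iota$ is surjective: once $\iota$ is a bijection, $\iota^{-1}$ is a complete isometry (since $\iota$ is one) and is weak-$*$ continuous (a bounded bijection between dual Banach spaces with one direction weak-$*$ continuous automatically has weak-$*$ continuous inverse, via the open mapping theorem at the level of preduals), yielding the weak-$*$ homeomorphism property.

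Since the image of $\iota$ is weak-$*$ closed, surjectivity will follow from weak-$*$ density. The natural candidate dense subspace is the image of the algebraic tensor product $M^{\otimes(k+2)}$ under the composition $M\otimes_h\cdots\otimes_h M\to M^{\otimes_{h D}k+2}\to M^{\otimes_{eh D}k+2}$. By Hahn--Banach, weak-$*$ density in $M^{\otimes_{w^*h D}k+2}=[L^1(M)^{\otimes_{h D'}k+2}]^*$ amounts to the nondegeneracy statement: any $\phi\in L^1(M)^{\otimes_{h D'}k+2}$ which pairs to zero against every algebraic tensor $m_1\otimes_D\cdots\otimes_D m_{k+2}$, $m_j\in M$, must vanish.

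To establish this nondegeneracy, I would use the explicit pairing formula from Proposition \ref{Submodules},
\[
\langle \xi_1\otimes_{D'}\cdots\otimes_{D'}\xi_{k+2},\, m_1\otimes_D\cdots\otimes_D m_{k+2}\rangle = \tau\big(\xi_1 m_1 E_D(m_2 E_D(\cdots E_D(m_{k+2}\xi_{k+2})\xi_{k+1})\cdots)\xi_2\big),
\]
together with faithfulness of $\tau$ and the density of $M$ in $L^2(M)$ and of ${}_M L^2(M)_{L^2(D)}^{\,op}$ in the relevant $L^1$-type preduals (Propositions \ref{ModulePreDual} and \ref{ModuleLeftDual}). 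The argument proceeds by induction on $k$: at each stage, one fixes an outer variable $m_j\in M$, extracts from the vanishing of the pairing a vanishing statement for an inner partial evaluation via faithfulness of $\tau$, and identifies the remaining pairing as a lower-level instance of the same nondegeneracy problem.

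The main obstacle I anticipate is this inductive step. After evaluating $E_D(m_{k+2}\xi_{k+2})\in L^2(D)$ and absorbing it into $\xi_{k+1}$ by right multiplication, the resulting element lies in a different $L^1$-type space than $\xi_{k+1}$ itself, so one must use the density statements of Proposition \ref{ModulePreDual} and the module Haagerup identifications to realize the truncated pairing as a smaller-$k$ instance. It is precisely this reduction that requires finiteness of the trace in an essential way, consistent with the finite von Neumann algebra hypothesis of the theorem.
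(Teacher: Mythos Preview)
Your overall strategy coincides with the paper's: both reduce to showing that the image of $\iota$ is all of $M^{\otimes_{w^*h D}k+2}$ by combining (i) weak-$*$ closedness of the image, supplied by Proposition~\ref{Submodules}, with (ii) weak-$*$ density of the algebraic tensors $m_1\otimes_D\cdots\otimes_D m_{k+2}$ in $M^{\otimes_{w^*h D}k+2}$. The difference is entirely in how (ii) is obtained.

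The paper does not prove nondegeneracy of the pairing directly. Instead it first shows, via a short chain of completely isometric inclusions using $D'\simeq L^2(D)_c\otimes_{eh}(L^2(D))^*_r$ inside $B(L^2(D))$, that the predual $L^1(M)^{\otimes_{h D'}k+2}$ embeds completely isometrically into $L^1(M)^{\otimes_{eh}(k+2)}$. Dualizing yields a weak-$*$ continuous complete quotient map $M^{\otimes_{\sigma h}(k+2)}\twoheadrightarrow M^{\otimes_{w^*h D}k+2}$. Since the algebraic tensor product is weak-$*$ dense in $M^{\otimes_{\sigma h}(k+2)}$ by \cite[Lemma~5.8]{EffrosRuanDual}, and is carried into $M^{\otimes_{eh D}k+2}$, density follows immediately. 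This is a two-line transfer argument once the quotient map is in place.

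Your route---Hahn--Banach plus an inductive nondegeneracy argument on the explicit pairing formula---is in principle workable, but the inductive step you flag as the ``main obstacle'' is genuinely delicate: a general $\phi\in L^1(M)^{\otimes_{h D'}k+2}$ is not an elementary tensor, so ``absorbing $E_D(m_{k+2}\xi_{k+2})$ into $\xi_{k+1}$'' must be made sense of uniformly over Haagerup-type representations $\phi=\sum_l\xi_1^{(l)}\odot\cdots\odot\xi_{k+2}^{(l)}$, and the reduction to a $(k{+}1)$-fold pairing has to respect the module Haagerup norm. This can likely be done using the density results of Propositions~\ref{ModulePreDual} and~\ref{ModuleLeftDual}, but it is substantially more work than the paper's route, which sidesteps the issue by importing the density from the non-module normal Haagerup product.
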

\begin{proof}
First, note that (following the argument in \cite[Th 4.2]{M05} by injectivity of Haagerup tensor product and \begin{align*}{}_{L^2(D)}(L^{1}(M))_{M}&\otimes_{h D'}{}_{M}(L^{1}(M))_{L^2(D)}\\&\simeq {}_{L^2(D)}(L^{1}(M))_{M}\otimes_{h D'}D'\otimes_{h D'}{}_{M}(L^{1}(M))_{L^2(D)}\\&\subset {}_{L^2(D)}(L^{1}(M))_{M}\otimes_{h D'}B(L^2(D))\otimes_{h D'}{}_{M}(L^{1}(M))_{L^2(D)}\\&\simeq {}_{L^2(D)}(L^{1}(M))_{M}\otimes_{h D'}[L^2(D)\otimes_{eh}(L^2(D))^*]\otimes_{h D'}{}_{M}(L^{1}(M))_{L^2(D)}
\\&\subset {}_{L^2(D)}(L^{1}(M))_{M}\otimes_{eh D'}[L^2(D)\otimes_{eh}(L^2(D))^*]\otimes_{eh D'}{}_{M}(L^{1}(M))_{L^2(D)}\\&\simeq [{}_{L^2(D)}(L^{1}(M))_{M}\otimes_{eh D'}L^2(D)]\otimes_{eh}[(L^2(D))^*\otimes_{eh D'}{}_{M}(L^{1}(M))_{L^2(D)}\\&\simeq (L^{1}(M))_{M}\otimes_{eh}[{}_{M}(L^{1}(M))]\end{align*}
Thus the dual map gives a weak-* continuous complete quotient map $M\otimes_{\sigma h }M\to M\otimes_{w^*h D}M$ (this map is also the composition of known maps $M\otimes_{\sigma h }M\to M\otimes_{\sigma h  D}M\to M\otimes_{w^*h D}M$). Similarly, one gets weak-* continuous complete quotient map :$$ M^{\otimes_{\sigma h }(k+2)}\to ({}_MM_{D})\otimes_{w^*h D}({}_DM_{D})^{\otimes_{w^*h D}k}\otimes_{w^*h D}({}_DM_{M}).$$
But from \cite[lemma 5.8]{EffrosRuanDual}, the algebraic tensor product is weak-* dense in $M^{\otimes_{\sigma h }(k+2)}$ and obviously mapped to $({}_MM_{D})\otimes_{eh D}({}_DM_{D})^{\otimes_{eh D}k}\otimes_{eh D}({}_DM_{M})$, thus this space is also weak-* dense in $({}_MM_{D})\otimes_{w^*h D}({}_DM_{D})^{\otimes_{w^*h D}k}\otimes_{w^*h D}({}_DM_{M}).$ But we saw in the last proposition it is also weak-* closed, thus they are equal.
\end{proof}

\subsection{Haagerup tensor products relative to covariance maps}

We start by observing that $M\otimes_{w^*h D}M$ is weak-* continuously completely isometric  to a quotient of $M\otimes_{\sigma h }M$ as in the proof of theorem \ref{MainHaagerupModule}. 

And thus $M\otimes_{\sigma h }M\to M\otimes_{w^*h D}M\subset (L^2(M)\otimes_DL^2(M))^*$ and the adjoint map actually factorizes (with value in the predual) $$J:L^2(M)\otimes_DL^2(M)\to {}_{L^2(D)}(L^{1}(M))_{M}\otimes_{h D'}[{}_{M}(L^{1}(M))_{L^2(D)}]\subset (L^{1}(M))_{M}\otimes_{eh}[{}_{M}(L^{1}(M))].$$

We even have the following explicit description when we  see $(L^{1}(M))_{M}\otimes_{eh}[{}_{M}(L^{1}(M))]\subset(M\otimes_h M)^*$ that will be useful to compare with our general theory :

\begin{lemma}\label{IdentifED}
For tensors in the algebraic tensor product, $y\otimes x\in L^2(M)\otimes_DL^2(M) m\otimes n\in M\otimes_h M$ the duality pairing is given by $$\langle J(y\otimes x), m\otimes n\rangle= \tau(xm E_D(ny)).$$
\end{lemma}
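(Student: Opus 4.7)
The identity is essentially the unpacking of the construction of $J$ given in the proof of Proposition~\ref{Submodules}, combined with the $k=0$ case of the pairing formula proved there. Recall that formula asserts, for $\xi_1\otimes_{D'}\xi_2\in {}_{L^2(D)}L^1(M)_M\otimes_{hD'}{}_ML^1(M)_{L^2(D)}$ and $m_1\otimes_D m_2\in M\otimes_{w^*hD}M$,
$$\langle \xi_1\otimes_{D'}\xi_2,\, m_1\otimes_D m_2\rangle \;=\; \tau\bigl(\xi_1\, m_1\, E_D(m_2\, \xi_2)\bigr).$$
If one substitutes $(\xi_1,\xi_2)=(x,y)$ and $(m_1,m_2)=(m,n)$, the right-hand side is exactly $\tau(xm\,E_D(ny))$. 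So the plan is to reduce the lemma to the identification
$$J(y\otimes_D x)\;=\;x\otimes_{D'}y$$
on algebraic tensors, and then apply the above pairing formula. The extension from algebraic tensors to general elements of $L^2(M)\otimes_D L^2(M)$ follows by norm density of $M\subset L^2(M)$ together with the continuity of $J$.

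To confirm the identification $J(y\otimes_D x)=x\otimes_{D'}y$, I would trace through the construction of $J$ in the proof of Proposition~\ref{Submodules}: it is the composition of Blecher's Hilbert-correspondence isomorphism
$$L^2(M)\otimes_D L^2(M)\;\simeq\;\bigl({}_{L^2(D)}L^2(M)_M\bigr)\otimes_{hD'}L^2(M)$$
(via \cite[Theorem 3.1]{B97b}) with the tensor product $\pi_1\otimes_{hD'}\pi_2$ of the dense inclusions $L^2\hookrightarrow L^1$ of Proposition~\ref{ModulePreDual}. The essential point is that Blecher's identification rests on the opposite-module identification ${}_{L^2(D)}L^2(M)_M\simeq {}_{M'}L^2(M')_{L^2(D')}$ from Proposition~\ref{ModulePreDual}; passing through the opposite reverses the order of multiplication, which is what forces the swap between the two slots. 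As a sanity check one can verify the swap in the easy case $D=\C$, where both sides of the claimed identity reduce, via traciality, to $\tau(xm)\tau(ny)$ exactly when the swap is present.

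The main (and essentially the only) obstacle is the notational bookkeeping between the $D$-module structure on $L^2(M)\otimes_D L^2(M)$ and the $D'$-module structure on the Haagerup tensor side, along with the opposite-algebra flip. Once this is sorted out, the lemma is immediate: it is a one-line substitution of $(\xi_1,\xi_2)=(x,y)$, $(m_1,m_2)=(m,n)$ into the pairing formula of Proposition~\ref{Submodules}.
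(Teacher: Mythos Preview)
Your approach is different from the paper's, and it has a gap at the step you yourself flag as the main one.

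The paper does not reduce to the pairing formula of Proposition~\ref{Submodules}. Instead it follows the explicit chain of inclusions from the proof of Theorem~\ref{MainHaagerupModule}: the element $y\otimes_D x$ is represented by $x\otimes 1_D\otimes y$ in ${}_{L^2(D)}L^1(M)_M\otimes_{hD'}B(L^2(D))\otimes_{hD'}{}_ML^1(M)_{L^2(D)}$, and then $1_D$ is expanded over an orthonormal basis $(e_i)$ of $L^2(D)$ as $\sum_i e_i\otimes e_i^*\in L^2(D)_c\otimes_{eh}L^2(D)_r$. This yields $J(y\otimes_D x)=\sum_i e_ix\otimes ye_i^*$ inside $L^1(M)\otimes_{eh}L^1(M)$, and the pairing with $m\otimes n$ is then a direct one-line computation using $\tau(e_i\,\cdot\,)=\tau(e_iE_D(\cdot))$ and Parseval in $L^2(D)$.

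Your reduction to Proposition~\ref{Submodules} hinges entirely on $J(y\otimes_D x)=x\otimes_{D'}y$, but you do not prove this, and more importantly you misidentify where $J$ comes from. The map $J$ is \emph{not} constructed in Proposition~\ref{Submodules}; it is defined in the paragraph preceding the lemma as the predual of $M\otimes_{\sigma h}M\to M\otimes_{w^*hD}M\subset (L^2(M)\otimes_D L^2(M))^*$, where the second arrow is built from the chain of inclusions in the proof of Theorem~\ref{MainHaagerupModule}. The map in Proposition~\ref{Submodules} (Blecher's isomorphism composed with $\pi_1\otimes_{hD'}\pi_2$) is a \emph{different} construction with the same source and target. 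They do coincide --- both are preduals of the same embedding $M\otimes_{w^*hD}M\hookrightarrow(L^2(M)\otimes_D L^2(M))^*$ --- but that identification is an argument you owe and do not give. And even granting it, you still have to trace through Blecher's isomorphism and the $\pi_i$ concretely to extract the swap; the appeal to ``the opposite reverses the order'' plus a $D=\C$ sanity check is not a proof. Once you actually do either of these two things, you have done at least as much work as the paper's orthonormal-basis computation.
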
 
\begin{proof}
If $(e_i)$ is an orthonormal basis of $L^2(D)$ so that $y\otimes x$ is represented by $x\otimes 1_D\otimes y\in {}_{L^2(D)}(L^{1}(M))_{M}\otimes_{h D'}B(L^2(D))\otimes_{h D'}{}_{M}(L^{1}(M))_{L^2(D)}$ in the inclusion above and then, decomposing $1_{D'}=1=\sum_i e_i\otimes e_i^*\in [L^2(D)\otimes_{eh}(L^2(D))^*]$, by $ \sum_i e_ix\otimes ye_i^*.$
Thus by definition the pairing is given by \begin{align*}\langle J(y\otimes_D x), m\otimes n\rangle&= \sum_i  \langle e_ix\otimes ye_i^*,m\otimes n\rangle=\sum_i\tau(e_ixm)\tau( ye_i^*n)\\&=\sum_i\tau(e_iE_D(xm))\tau( E_D(ny)e_i^*)=\tau(xm E_D(ny)).\end{align*}
\end{proof}

Replacing $L^2(M,E_D)\simeq L^2(M)\otimes_DL^2(M)$ by $L^2(M,\eta\circ E_D)$ we will now generalize this situation to general covariance maps.

Recall that $\eta:D\to B(\ell^2(I))\otimes D$ is a normal $\tau$-symmetric completely positive map with $\sup_{i\in I}||\eta_i(1)||\leq C$. We let $(S_i)_{i\in I}$ a semicircular system of covariance $\eta$ over $D$. We define various "direct sums" of Haagerup tensor products to take care of our several variable setting. For $X,Y$ matricially normed spaces we define :

$$X\otimes_h^{(I,1)} Y=X\otimes_h \ell^{1}(I)\otimes_h Y, \ \ X\otimes_h^{(I,\infty)} Y=X\otimes_h c_0(I)\otimes_h Y$$

with $\ell^{1}(I)$ given its standard operator space structure as dual of $c_0(I),$ predual of $\ell^{\infty}(I)$ both being given their standard operator space structure as $C^*$-algebras.
Likewise, if $X,Y$ are operator spaces (respectively for $\sigma h$ products dual operator spaces), we define :

$$X\otimes_{eh}^{(I,1)} Y=X\otimes_{eh} \ell^{1}(I)\otimes_{eh} Y, \ \ X\otimes_{eh}^{(I,\infty)} Y=X\otimes_{eh} \ell^{\infty}(I)\otimes_{eh} Y,$$

$$X\otimes_{\sigma h}^{(I,1)} Y=X\otimes_{\sigma h} (\ell^{\infty}(I))^*\otimes_{\sigma h} Y, \ \ X\otimes_{\sigma h}^{(I,\infty)} Y=X\otimes_{\sigma h} \ell^{\infty}(I)\otimes_{\sigma h} Y.$$

%We thus also define $\otimes_{eh}^{I,\alpha}$ product as above in this case. 
We have the following result following from well-known properties of Haagerup tensor products : 

\begin{lemma}\label{IHaagerup}
For $X,Y$ matricially normed spaces, one has completely contractive maps~: $$(X^*\otimes_{eh}^{(I,\infty)} Y^*)\to (X\otimes_h^{(I,1)} Y)^*, \ \ (X^*\otimes_{eh}^{(I,1)} Y^*)\to (X\otimes_h^{(I,\infty)} Y)^*.$$
If moreover, $X,Y$ are operator spaces the maps above are complete isomorphisms and we have completely isometric maps :
$$(X^*\otimes_{\sigma h}^{(I,\infty)} Y^*)\simeq (X\otimes_{eh}^{(I,1)} Y)^*, \ \ (X^*\otimes_{\sigma h}^{(I,1)} Y^*)\simeq (X\otimes_{eh}^{(I,\infty)} Y)^*.$$
%Finally, if $(X_1,X_2),(Y_1,Y_2)$ are compatible couples of matricially normed spaces with $M_{n,1}(X_i)$'s 2-row summing,$M_{1,n}(Y_i)$'s 2-column summing and of the pair composed of operator spaces, $\alpha\in \{1,\infty\}$ and $\theta\in (0,1)$, then we have a complete contraction :
%$$((X_1,X_2)_\theta\otimes_h^{(I,\alpha)} (Y_1,Y_2)_\theta)\to ((X_1\otimes_h^{(I,\alpha)} Y_1),(X_2\otimes_h^{(I,\alpha)} Y_2))_\theta,
%$$ which is a complete isometry in the operator space case.
\end{lemma}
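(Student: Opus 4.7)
The approach is to combine the two binary duality theorems recalled in Theorem~\ref{DualHaagerup}, namely Blecher--Smith's $(A\otimes_h B)^*\simeq A^*\otimes_{eh} B^*$ and Effros--Kishimoto's $(A\otimes_{eh} B)^*\simeq A^*\otimes_{\sigma h} B^*$, with the associativity of $\otimes_h$, $\otimes_{eh}$, $\otimes_{\sigma h}$ (also recorded there) and the elementary operator space identifications $(\ell^1(I))^*=\ell^\infty(I)$ and $(c_0(I))^*=\ell^1(I)$.

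For the operator space case (the claimed complete isomorphisms), I would proceed by rebracketing and iterating the binary dualities. Using associativity of $\otimes_h$,
\begin{align*}
(X\otimes_h^{(I,1)} Y)^* &= (X\otimes_h(\ell^1(I)\otimes_h Y))^* \\
&\simeq X^*\otimes_{eh}(\ell^1(I)\otimes_h Y)^* \\
&\simeq X^*\otimes_{eh}(\ell^\infty(I)\otimes_{eh} Y^*) \\
&\simeq X^*\otimes_{eh}\ell^\infty(I)\otimes_{eh} Y^*,
\end{align*}
the second and third isomorphisms applying Blecher--Smith, and the fourth being associativity of $\otimes_{eh}$. The $(I,\infty)$ version is symmetric with the roles of $\ell^1(I)$ and $\ell^\infty(I)$ exchanged. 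The two $\sigma h$ statements follow by the same argument with Effros--Kishimoto in place of Blecher--Smith, using that $\ell^\infty(I)$ and $\ell^1(I)$ are standard dual operator spaces (with the obvious preduals).

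For the matricially normed case (where only complete contractions are asserted), the binary dualities above are no longer equalities but they still provide canonical completely contractive maps $A^*\otimes_{eh} B^*\to (A\otimes_h B)^*$ coming from the trilinear evaluation pairing. I would construct the desired map as the composition of such contractions for each bracketing step above, then verify the matricial complete contraction estimate using the Pisier row/column Hilbert space factorization description of $\otimes_{eh}$ with matricially normed arguments recorded in Lemma~\ref{MatricialEHaagerup}: an element of $X^*\otimes_{eh}\ell^\infty(I)\otimes_{eh} Y^*$ factors through a row (resp. column) Hilbert space on the left (resp. right), and the pairing with $X\otimes_h\ell^1(I)\otimes_h Y$ reduces to a Cauchy--Schwarz estimate on those Hilbert spaces, matching the matrix norm inherited from the Haagerup product.

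The main obstacle is the matricially normed case, since Blecher--Smith as stated in Theorem~\ref{DualHaagerup} is for operator spaces and both the source and the composition steps mix matricially normed factors $X^*,Y^*$ with the operator space factor $\ell^\infty(I)$. The resolution is that the Blecher--Paulsen construction of $\otimes_h$ for matricially normed spaces still yields the required completely contractive (though not necessarily isomorphic) map $A^*\otimes_{eh}B^*\to(A\otimes_h B)^*$, which suffices to iterate. Once that building block is in place, everything else reduces to routine associativity and functoriality.
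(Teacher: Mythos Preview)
Your operator space arguments are essentially the paper's: the paper cites the multilinear duality \cite[Th 5.3]{EffrosRuanDual} and the definition of $\otimes_{\sigma h}$ directly, whereas you iterate the binary versions with associativity, but these are the same content.

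For the matricially normed case the paper takes a shorter route than your iteration. Rather than building binary contractions $A^*\otimes_{eh}B^*\to(A\otimes_h B)^*$ and chaining them, the paper uses in one stroke the identification $X^*\otimes_{eh}\ell^\infty(I)\otimes_{eh}Y^*=CB_m^\sigma((X^*)^*\times(\ell^\infty(I))^*\times(Y^*)^*,\C)$ (valid since $X^*,\ell^\infty(I),Y^*$ are operator spaces) and simply precomposes with the canonical completely bounded maps $X\to(X^*)^*$, $\ell^1(I)\to(\ell^\infty(I))^*$, $Y\to(Y^*)^*$; the Blecher--Paulsen theory then gives the resulting trilinear form as an element of $(X\otimes_h\ell^1(I)\otimes_h Y)^*$. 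This sidesteps the obstacle you flag (that Blecher--Smith is stated for operator spaces) without needing the $\Gamma_R^\sigma$ description or any iterated bookkeeping. Your approach is not wrong, but it carries more moving parts than necessary: the single ``compose with bidual embeddings'' step does all the work.
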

\begin{proof}The first operator space duality result is for instance \cite[Th 5.3]{EffrosRuanDual} and since, for instance $(X^*\otimes_{eh}^{(I,\infty)} Y^*)=CB_m^\sigma((X^*)^*\times (\ell^\infty(I))^*\times (Y^*)^*,\C)$ composing with completely bounded maps to standard operator space biduals $X\to (X^*)^*$, $Y\to (Y^*)^*, \ell^1(I)\to(\ell^\infty(I))^*$ one gets the result in the matricially normed space case using \cite{BlecherPaulsen}. The second duality result is mostly the definition \cite[p 148]{EffrosRuanDual} and  duality results of sequence spaces.\end{proof}

Note that one gets weak-* continuous completely bounded maps $$.\#S:M\otimes_{\sigma h}^{(I,1)} M\to W^*(M,S_i,i\in I)=N_\eta\subset L^2(N_\eta)_c\cap L^2(N_\eta)_r$$ this map extending $(m_i)\mapsto \sum_im_i\#S_i$ defined on Haagerup tensor product.

More precisely, since $S=(S_i)_{i\in I}\in \oplus_{i\in I}N_\eta=NCB((\ell^{\infty}(I))^*,N_\eta)$, one defines $x\#S=m[(1\otimes S\otimes 1)(x)]$ when $x\in M\otimes_{\sigma h}^{(I,1)} M=M\otimes_{\sigma h}(\ell^{\infty}(I))^*\otimes_{\sigma h}M$, $1\otimes S\otimes 1$ is defined  on this space by \cite{EffrosRuanDual} so that $(1\otimes S\otimes 1)(x)\in M\otimes_{\sigma h}N_\eta\otimes_{\sigma h}M$ is weak-* continuous in $x$ and $m:M\otimes_{\sigma h}N_\eta\otimes_{\sigma h}M\to N_\eta$ is the canonical weak-* continuous multiplication map (see beginning of subsection \ref{etavar} for more details).

Of course the image of this map is a set dense into an isomorphic copy of $(L^2(M,\eta\circ E_D)$ we will thus see as a closed subspace of $L^2(N_\eta).$

Let us examine the dual map in using our section \ref{columndual} and prove :

\begin{proposition}\label{InclusionEHCovariance}The dual map of $.\#S:M\otimes_{\sigma h}^{(I,1)} M\to (L^2(M,\eta\circ E_D)_c\cap (L^2(M,\eta\circ E_D)_r\subset L^2(N_\eta)_c\cap L^2(N_\eta)_r$  when restricted to $M\otimes_{ h}^{(I,1)} M$ gives a predual map $(.\#S)_*:(L^2(M,\eta\circ E_D)_c+ L^2(M,\eta\circ E_D)_r)\to L^1(M)\otimes_{eh}^{(I,\infty)}L^1(M)\subset ((M\otimes_{h} M)^I)^*$ for the original $.\#S.$
\end{proposition}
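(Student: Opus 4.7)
The strategy is to assemble three operator-space duality facts and verify their compatibility.

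First, applying Lemma~\ref{IHaagerup} with $X=Y=L^{1}(M)$ (so $X^{*}=Y^{*}=M$) yields the completely isometric identifications
\[
(M\otimes_{h}^{(I,1)} M)^{*}\;\simeq\; L^{1}(M)\otimes_{eh}^{(I,\infty)}L^{1}(M)\;\simeq\;(M\otimes_{\sigma h}^{(I,1)}M)_{*}.
\]
In particular the Banach adjoint of the restriction $(\#S)|_{M\otimes_{h}^{(I,1)}M}$ automatically takes values in $L^{1}(M)\otimes_{eh}^{(I,\infty)}L^{1}(M)$, and postcomposing with the canonical map into the dual of the product sequence space produces the announced embedding into $((M\otimes_{h}M)^{I})^{*}$.

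Second, the column/row duality recalled in the referenced Section~\ref{columndual} shows that for any Hilbert space $H$ the intersection $H_{c}\cap H_{r}$ is a dual operator space whose operator-space predual is the sum $H_{r}+H_{c}$ (via $(H_{c})_{*}=H_{r}$, $(H_{r})_{*}=H_{c}$ and the general fact that the predual of an intersection of two dual operator spaces sharing the same underlying Banach space is the sum of their preduals). Specializing to $H=L^{2}(M,\eta\circ E_{D})$ identifies the relevant predual as $L^{2}(M,\eta\circ E_{D})_{r}+L^{2}(M,\eta\circ E_{D})_{c}$.

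Third, the map $\#S$ is weak-* continuous: it factors as $\#S=m\circ(1\otimes S\otimes 1)$ where $S=(S_{i})_{i\in I}\in\oplus_{i\in I}N_{\eta}=NCB((\ell^{\infty}(I))^{*},N_{\eta})$, the insertion $1\otimes S\otimes 1:M\otimes_{\sigma h}(\ell^{\infty}(I))^{*}\otimes_{\sigma h}M\to M\otimes_{\sigma h}N_{\eta}\otimes_{\sigma h}M$ is weak-* continuous by \cite[(5.22)]{EffrosRuanDual}, and the multiplication $m:M\otimes_{\sigma h}N_{\eta}\otimes_{\sigma h}M\to N_{\eta}$ is weak-* continuous by Theorem~\ref{DualHaagerup}(2). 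Hence $\#S$ admits an operator-space preadjoint
\[
(\#S)_{*}:L^{2}(M,\eta\circ E_{D})_{r}+L^{2}(M,\eta\circ E_{D})_{c}\;\longrightarrow\;L^{1}(M)\otimes_{eh}^{(I,\infty)}L^{1}(M),
\]
and a direct check on elementary tensors $m\otimes e_{i}\otimes n\in M\otimes_{h}^{(I,1)}M$ shows that this preadjoint coincides with the Banach adjoint of $(\#S)|_{M\otimes_{h}^{(I,1)}M}$: both functionals send $\xi$ to $\langle\xi,mS_{i}n\rangle_{L^{2}(N_{\eta})}$, and norm-density of such elementary tensors pins the two maps down uniquely under the identification of the first paragraph.

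The one genuinely delicate point — and hence the main obstacle — is the middle step: verifying that the preadjoint of a weak-* continuous map into $H_{c}\cap H_{r}$ does land in the \emph{operator-space} predual $H_{r}+H_{c}$ (rather than merely in the larger Banach dual $H^{*}$, which here coincides with $H$ as a vector space but carries no natural operator-space structure making the identification automatic). Once this duality is granted via Section~\ref{columndual}, the remaining bookkeeping is formal.
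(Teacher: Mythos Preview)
Your first paragraph contains a genuine error. Lemma~\ref{IHaagerup} with $X=Y=L^{1}(M)$ gives $(M\otimes_{eh}^{(I,\infty)}M)\simeq (L^{1}(M)\otimes_{h}^{(I,1)}L^{1}(M))^{*}$, not the identification you wrote. In fact $(M\otimes_{h}^{(I,1)}M)^{*}\simeq M^{*}\otimes_{eh}\ell^{\infty}(I)\otimes_{eh}M^{*}$ by Theorem~\ref{DualHaagerup}(1), and since $L^{1}(M)=M_{*}\subsetneq M^{*}$ for infinite-dimensional $M$, the space $L^{1}(M)\otimes_{eh}^{(I,\infty)}L^{1}(M)$ is a \emph{proper} subspace of $(M\otimes_{h}^{(I,1)}M)^{*}$ (this is why the statement of the proposition writes ``$\subset$''). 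Consequently your claim that the Banach adjoint of the restriction ``automatically'' takes values in $L^{1}(M)\otimes_{eh}^{(I,\infty)}L^{1}(M)$ is unjustified: this is precisely the content of the proposition, not a tautology.

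That said, your paragraph~3 argument is correct and in fact suffices on its own: once you know $\#S$ is weak-* continuous on the $\sigma h$ product, it has a preadjoint valued in the predual $L^{1}(M)\otimes_{eh}^{(I,\infty)}L^{1}(M)$, and your elementary-tensor check shows this preadjoint, viewed via the embedding $L^{1}(M)\otimes_{eh}^{(I,\infty)}L^{1}(M)\hookrightarrow (M\otimes_{h}^{(I,1)}M)^{*}$, agrees with the Banach adjoint of the restriction. This is essentially the paper's argument too, which phrases it more tersely by invoking \cite[Prop~5.9]{EffrosRuanDual}: the $\sigma h$ map is the unique weak-* continuous extension of the $h$ map, so the dual of the restriction --- by reflexivity of the Hilbert target --- is forced to land in the space of separately weak-* continuous forms, i.e.\ the predual. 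You have also inverted the difficulty: the $H_{c}\cap H_{r}$ predual identification you flag as delicate is standard for reflexive spaces, whereas the point you treated as automatic (landing in $L^{1}(M)$ rather than $M^{*}$) is the actual content.
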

\begin{proof}
Since we checked the first map is weak-* continuous, it has a predual map, and, moreover, the former is from \cite[Prop 5.9]{EffrosRuanDual} the unique weak-* continuous extension of the restriction. The dual map of the restriction, by reflexivity of the target, has a target space the space of separately continuous maps, i.e. as written, the predual and thus necessarily agrees with the predual map as in the proof of their proposition 5.9. 
\end{proof}

We could have taken $D=M$ from the beginning in starting from a covariance map on $M$ instead of $\eta\circ E_D.$
\begin{definition}
We define the \textit{weak-* Haagerup tensor product relative to the covariance map $\eta$ } on $M$ by the operator space dual :$$M\otimes_{w^*h\eta}M= (\overline{(.\#S)_*[(L^2(M,\eta)_c+ L^2(M,\eta)_r)]}^{ L^1(M)\otimes_{eh}^{(I,\infty)}L^1(M)})^*$$
\end{definition}

By definition and the last proposition there is a  weak-* continuous completely quotient map (also called complete metric surjection, see e.g. \cite[Prop 2.4.2]{PisierBook}) from the $\sigma$ haagerup product and a weak-* continuous completely contractive injection to a Hilbert space (with intersection of row and column structure) which is even weak-* homeomorphic on bounded sets~: $$p_\eta:M\otimes_{\sigma h}^{(I,1)} M\to M\otimes_{w^*h\eta}M,\ \ i_\eta:M\otimes_{w^*h\eta}M\subset L^2(M,\eta)_c\cap (L^2(M,\eta)_r.$$

Note that $Ker(p_\eta)=Ker(.\#S).$

\begin{definition}
We define the \textit{extended Haagerup tensor product relative to the covariance map $\eta$} on $M$ by the image : $M\otimes_{eh\eta}M:=p_\eta(M\otimes_{eh}^{(I,1)} M)\subset M\otimes_{w^*h\eta}M.$
\end{definition}
The above definition uses the canonical injection $M\otimes_{eh}^{(I,1)} M\subset M\otimes_{\sigma h}^{(I,1)} M$ coming from \cite{EffrosRuandual}

\begin{exemple}\label{CondExp}
If $|I|=1$ and $\eta=id_D$, $(L^2(M,\eta\circ E_D))_r=(L^2(M)\otimes_DL^2(M))_r\simeq L^2(M)_r\otimes_{eh D^{op}}{}_M L^2(M)_{L^2(D)}^{op}=L^2(M)_r\otimes_{h D^{op}}{}_M L^2(M)_{L^2(D)}^{op}$  (by \cite{B97b} where the opposite identification is given $(L^2(M)\otimes_DL^2(M))_c\simeq {}_ML^2(M)_{L^2(D)}\otimes_{eh D}L^2(M)_c$ and using also e.g. \cite[Rmk 2.18]{M05} for the last equality)  is the usual module tensor product and $(.\#S)_*$ coincides with the restriction of the map explained at the beginning of this subsection ${}_{L^2(D)}(L^{1}(M))_{M}\otimes_{h D'}{}_{M}(L^{1}(M))_{L^2(D)}\subset (L^{1}(M))_{M}\otimes_{eh}[{}_{M}(L^{1}(M))],$ as we can see from it concrete inclusion in the dual of $M\otimes_hM$ in lemma \ref{IdentifED}. The closure considered in the definition of the weak-* haagerup tensor product is thus merely (using the density result in lemma \ref{ModuleLeftDual}) ${}_{L^2(D)}(L^{1}(M))_{M}\otimes_{h D'}{}_{M}(L^{1}(M))_{L^2(D)}.$ Its operator space dual is thus  what we defined as $M\otimes_{w^*h D}M$ in section \ref{HaagD} and it agrees with our new $M\otimes_{w^*h (id_D)}M.$ Since from \cite{M05}, one sees any element in $M\otimes_{eh D}M$ can be seen as an image of an element in $M\otimes_{eh} M$ and conversely, one also gets $M\otimes_{eh D}M\simeq M\otimes_{eh (id_D)}M.$
\end{exemple}

\begin{exemple}
If $|I|=1$ and $\eta=E_B$, the trace preserving conditional expectation to $B\subset D$ it is easy to see by construction (using $E_B$ semicircular systems agree with $id_B$ ones over $B$) that  $M\otimes_{w^*h (E_B)}M=M\otimes_{w^*h (id_B)}M\simeq M\otimes_{w^*h B}M,M\otimes_{eh (E_B)}M=M\otimes_{eh (id_B)}M\simeq M\otimes_{eh B}M.$ (The last isomorphisms from our previous example).
\end{exemple}

Another interesting examples comes from the relation of free probability to cross-products  explained in \cite[section 7.3]{S99}.
\begin{exemple}
If $|I|=2$ and $\alpha:D\to D$ is a trace preserving $*$-automorphism, we define $\eta_{11}(a)=\alpha(a)+\alpha^{-1}(a)=\eta_{22}(a), \eta_{12}(a)=-i(\alpha(a)-\alpha^{-1}(a))=-\eta_{21}(a)$, so that $\eta:A\to M_2(A)$ is a covariance map. It is shown in \cite[section 7.3]{S99} that for $(S_1,S_2)$ $\eta$-semicircular system, $C=S_1+iS_2$ then for all $d\in D,$ $dC=C\alpha(d)$.  This suggests a description of our previous tensor products we now explain.  
\end{exemple}

We briefly generalize our tensor products to $n$-ary tensor products. %The slightly tricky setting of the proof of proposition \ref{InclusionEHCovariance} will enable a straightforward extension. 

Thus for $j=1,...,n$, let $\eta_j:M\to B(\ell^2(I_j))\otimes M$ be  normal $\tau$-symmetric completely positive map with $\sup_{i\in I_j}||\eta_i(1)||\leq C_j$. We let $(S_i^{(j)})_{i\in I_j}$ a semicircular system of covariance $\eta_j$ over $M$. We sometimes call $\eta=(\eta_1,...,\eta_n)$ for short.
We can build them all in $N_{(\eta)}=N_{\eta_1}*_M...*_MN_{\eta_n}$ with the previous notation for $N_{\eta_j}=W^*(M,S_i^{(j)}, i\in I_j).$

One gets an obvious  definition of $M\otimes_{\sigma h}^{(I_1,1)}... \otimes_{\sigma h}^{(I_n,1)}M=(...(M\otimes_{\sigma h}^{(I_1,1)}M)... \otimes_{\sigma h}^{(I_n,1)}M$ by associativity and similarly for other tensor products  and of : $$.\#(S^{(1)},...,S^{(n)}):M\otimes_{\sigma h}^{(I_1,1)}... \otimes_{\sigma h}^{(I_n,1)}M\to N_{(\eta)}\subset L^2(N_{(\eta)})_c\cap L^2(N_{(\eta)})_r.$$

\begin{proposition}\label{InclusionEHCovariancenAry}The dual map of $.\#(S^{(1)},...,S^{(n)}):M\otimes_{\sigma h}^{(I_1,1)}... \otimes_{\sigma h}^{(I_n,1)}M\to N_{(\eta)}\subset L^2(N_{(\eta)})_c\cap L^2(N_{(\eta)})_r$  when restricted to $M\otimes_{h}^{(I_1,1)}... \otimes_{ h}^{(I_n,1)}M\to N_{(\eta)}$ gives a predual map $(.\#(S^{(1)},...,S^{(n)}))_*:L^2(N_{(\eta)})_c+ L^2(N_{(\eta)})_r)\to L^1(M)\otimes_{eh}^{(I_1,\infty)}... \otimes_{eh}^{(I_n,\infty)}L^1(M)\subset (M\otimes_{h}^{(I_1,1)}... \otimes_{ h}^{(I_n,1)}M)^*$ for the original map $.\#(S^{(1)},...,S^{(n)}).$
\end{proposition}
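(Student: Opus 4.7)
The plan is to iterate the argument of Proposition~\ref{InclusionEHCovariance}. First I would verify that $.\#(S^{(1)},\ldots,S^{(n)}):M\otimes_{\sigma h}^{(I_1,1)}\cdots\otimes_{\sigma h}^{(I_n,1)}M\to N_{(\eta)}$ is separately weak-$*$ continuous. Starting from the weak-$*$ continuous $.\#S^{(1)}:M\otimes_{\sigma h}^{(I_1,1)}M\to N_{\eta_1}\subset N_{(\eta)}$ built prior to Proposition~\ref{InclusionEHCovariance}, the construction $x\#S^{(j)}=m[(1\otimes S^{(j)}\otimes 1)(x)]$ applies verbatim with $M$ replaced by $N_{(\eta)}$ on the outer slots, since the multiplication on $N_{(\eta)}$ is weak-$*$ continuous in each variable by Theorem~\ref{DualHaagerup}(2) applied inside the enveloping von Neumann algebra. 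Associativity of $\otimes_{\sigma h}$ (Lemma~\ref{IteratedModule}) then assembles these pieces into the claimed separately weak-$*$ continuous multilinear map, by induction on $n$.

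Being weak-$*$ continuous, the map admits a predual map. An $n$-fold iteration of Lemma~\ref{IHaagerup} identifies the predual of the source with $L^1(M)\otimes_{eh}^{(I_1,\infty)}\cdots\otimes_{eh}^{(I_n,\infty)}L^1(M)$, while the self-duality of $L^2$ in row and column forms gives that the predual of $L^2(N_{(\eta)})_c\cap L^2(N_{(\eta)})_r$ is $L^2(N_{(\eta)})_r+L^2(N_{(\eta)})_c$ (with $N_{(\eta)}$ sitting isometrically in the intersection). This yields a predual map with source and target exactly as stated.

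To match this predual map with the dual of the restriction of $.\#(S^{(1)},\ldots,S^{(n)})$ to the Haagerup product, I would use that the inclusion $M\otimes_h^{(I_1,1)}\cdots\otimes_h^{(I_n,1)}M\hookrightarrow M\otimes_{\sigma h}^{(I_1,1)}\cdots\otimes_{\sigma h}^{(I_n,1)}M$ is a complete isometry, so the restriction is mere precomposition. For any $\xi\in L^2(N_{(\eta)})_r+L^2(N_{(\eta)})_c$, composition with the weak-$*$ continuous extension produces a separately weak-$*$ continuous functional on the $\sigma h$-product; by reflexivity of $L^2$ and \cite[Prop.~5.9]{EffrosRuanDual}, exactly as in the $n=1$ case, the dual of the restriction therefore lands in the predual subspace and coincides with the predual map constructed above.

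The main obstacle I expect is the first step: carefully tracking separate weak-$*$ continuity through the iterated $\#$-multiplication, i.e. justifying the substitution of $N_{(\eta)}$ for $M$ on the outer slots and the associativity manipulations required to combine the pieces into a well-defined map on the full $n$-fold $\sigma h$-product. Once this is in place, the remaining identifications are routine bookkeeping via Lemma~\ref{IHaagerup} and standard operator-space duality.
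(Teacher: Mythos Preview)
Your proposal is correct and follows essentially the same approach as the paper: the paper's proof is the single line ``This is again \cite[Prop 5.9]{EffrosRuanDual},'' referring back to the argument already spelled out for Proposition~\ref{InclusionEHCovariance}, and what you have written is precisely an explicit unpacking of that reference in the $n$-ary setting. Your care about separate weak-$*$ continuity of the iterated $\#$-map is appropriate, though the paper treats this as implicit in the phrase ``obvious definition \ldots\ by associativity'' given just before the proposition.
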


\begin{proof}This is again \cite[Prop 5.9]{EffrosRuanDual}.
\end{proof}

There are some subtleties in the multitensor case. We start by an intermediate definition.

\begin{definition}
We define the \textit{strict weak-* Haagerup tensor power relative to the list of  covariance maps $\eta=(\eta_1,...,\eta_n)$ } on $M$ by the operator space dual :$$M^{\otimes_{sw^*h}\eta}= (\overline{(.\#(S^{(1)},...,S^{(n)}))_*[(L^2(N_{(\eta)})_c+ L^2(N_{(\eta)})_r)]}^{ L^1(M)\otimes_{eh}^{(I_1,\infty)}... \otimes_{eh}^{(I_n,\infty)}L^1(M)})^*$$
with weak-* continuous completely quotient map and completely contractive injection (the injection is even weak-* homeomorphic on bounded sets) :$$p_{s\eta}:M\otimes_{\sigma h}^{(I_1,1)}... \otimes_{\sigma h}^{(I_n,1)}M\to M^{\otimes_{sw^*h}\eta},\ \ i_{s\eta}:M^{\otimes_{sw^*h}\eta}\subset L^2(N_{(\eta)})_c\cap (L^2(N_{(\eta)})_r.$$

%We define the \textit{weak-* Haagerup tensor power relative to the list of  covariance maps $\eta=(\eta_1,...,\eta_n)$ } on $M$ by the operator space dual (writing for short $(.\#(S^{(i)}))_{*i}=1^{\otimes (i-1)}\o(.\#(S^{(i)}))\otimes1^{\otimes (n-i)} )_*$ and $[(L^2(N_{\eta_i})_c+ L^2(N_{\eta_i})_r)]_i:=L^1(M)^{\otimes_{eh}^{(I_1,...,I_{i-2},\infty)}}\otimes_{eh}^{(I_{i-1},\infty)}[(L^2(N_{\eta_i})_c+ L^2(N_{\eta_i})_r)]\otimes_{eh}^{(I_{i+1 },\infty)}L^1(M)^{\otimes_{eh}^{(I_{i+2 },...,I_n,\infty)}}]$):$$M^{\otimes_{w^*h}\eta}= (\overline{\cap_{i=1}^n(.\#(S^{(i)}))_{*i}[(L^2(N_{\eta_i})_c+ L^2(N_{\eta_i})_r)]_i}^{ L^1(M)^{\otimes_{eh}^{(I_1,...,I_n,\infty)}}})^*$$
%with weak-* continuous completely quotient map and completely contractive injection :$$p_{\eta}:M\otimes_{\sigma h}^{(I_1,1)}... \otimes_{\sigma h}^{(I_n,1)}M\to M^{\otimes_{w^*h}\eta},\ \ i_{s\eta}:M^{\otimes_{w^*h}\eta}\subset L^2(N_{(\eta)})_c\cap (L^2(N_{(\eta)})_r.$$
%We define the \textit{extended Haagerup tensor power relative to $\eta$} on $M$ by the image with induced norm from the inclusion : $M^{\otimes_{eh}\eta}:=p_\eta(M\otimes_{eh}^{(I_1,1)}... \otimes_{eh}^{(I_n,1)}M)\subset M^{\otimes_{w^*h}\eta}.$
\end{definition}
%If $\eta$ is the empty list, both spaces are by convention $M$.

\begin{exemple}\label{CondExp2} As in example  \ref{CondExp}, using the density result in lemma \ref{ModuleLeftDual}, one sees $M^{\otimes_{sw^*h}(E_D,...,E_D)}=M^{\otimes_{w^*hD}n}$ completely isometrically and weak-* homeomorphically.
\end{exemple}

Let us now describe more explicitly the completion involved in the definition above as predual in operator algebraic terms. We start by recalling basic identities that will enable us to define a better notion of tensor product with associativity.

\begin{remark}
Using a (specific case $D=\C$ of a) description in proposition  \ref{HaagerupModule} $L^1(M)\simeq L^2(M)_r\otimes_{h M'}L^2(M)_c\simeq L^2(M)_r\otimes_{eh M'}L^2(M)_c$ (via \cite[Rmk 2.18]{M05}), one gets :
$$L^1(M)^{\otimes_{eh}(n+1)}%\simeq L^2(M)_r\otimes_{h M'}(L^2(M)_c\otimes_{eh}L^2(M)_r)^{\otimes_{h M'} n}\otimes_{h M'}L^2(M)_c
\simeq L^2(M)_r\otimes_{h M'}([B(L^2(M))]^{\otimes_{eh M'} n})\otimes_{h M'}L^2(M)_c$$

so that one recovers the identification for the duals (from \cite{EffrosKishimoto},\cite{EffrosExel} using \cite{M97} to identify definitions of tensor products) $$M^{\otimes_{\sigma h}(n+1)}=([B(L^2(M))]^{\otimes_{eh M'} n})^{\natural_{M',M'}}=CB_{M'}([B(L^2(M))]^{\otimes_{eh M'} n};B(L^2(M)))_{M'}.$$

For the Haagerup product, one replaces bounded operators by compact ones : $$L^1(M)^{\otimes_{h}(n+1)}%\simeq L^2(M)_r\otimes_{h M'}(L^2(M)_c\otimes_{eh}L^2(M)_r)^{\otimes_{h M'} n}\otimes_{h M'}L^2(M)_c
\simeq L^2(M)_r\otimes_{h M'}([\mathcal{K}(L^2(M))]^{\otimes_{h M'} n})\otimes_{h M'}L^2(M)_c,$$
$$M^{\otimes_{e h}(n+1)}=CB_{M'}([\mathcal{K}(L^2(M))]^{\otimes_{h M'} n};B(L^2(M)))_{M'}.$$
%This is the situation we want to generalize.

\end{remark}

\begin{definition}
We define the \textit{weak-* Haagerup tensor power relative to the list of  covariance maps $\eta=(\eta_1,...,\eta_n)$ } on $M$ by a $M'$-normal part (separately in all arguments) in the operator space dual :\begin{align*}M^{\otimes_{w^*h}\eta}= &[L^2(M)_r\otimes_{h M'}(\overline{(.\#S^{(1)})_*[L^2(M,\eta_1)_{L^2(M)}]}^{\ell^\infty(I_1)\overline{\otimes}B(L^2(M))}\cdots \otimes_{h M'}L^2(M)_c ]^{* M' normal}
 \\&\cap [L^2(M)_r\otimes_{h M'}(\overline{(.\#S^{(1)})_*[{}_{L^2(M)}L^2(M,\eta_1)]}^{\ell^\infty(I_1)\overline{\otimes}B(L^2(M))}\cdots \otimes_{h M'}L^2(M)_c  ]^{* M' normal}\end{align*}
with weak-* continuous completely quotient map and completely contractive injection (thanks to which one defines the intersection space structure above in $L^2(N_{(\eta)})_c+ (L^2(N_{(\eta)})_r$):$$p_{\eta}:M\otimes_{\sigma h}^{(I_1,1)}... \otimes_{\sigma h}^{(I_n,1)}M\to M^{\otimes_{w^*h}\eta},\ \ i_{\eta}:M^{\otimes_{w^*h}\eta}\subset M^{\otimes_{sw^*h}\eta}\subset L^2(N_{(\eta)})_c\cap (L^2(N_{(\eta)})_r.$$

%We define the \textit{weak-* Haagerup tensor power relative to the list of  covariance maps $\eta=(\eta_1,...,\eta_n)$ } on $M$ by the operator space dual (writing for short $(.\#(S^{(i)}))_{*i}=1^{\otimes (i-1)}\o(.\#(S^{(i)}))\otimes1^{\otimes (n-i)} )_*$ and $[(L^2(N_{\eta_i})_c+ L^2(N_{\eta_i})_r)]_i:=L^1(M)^{\otimes_{eh}^{(I_1,...,I_{i-2},\infty)}}\otimes_{eh}^{(I_{i-1},\infty)}[(L^2(N_{\eta_i})_c+ L^2(N_{\eta_i})_r)]\otimes_{eh}^{(I_{i+1 },\infty)}L^1(M)^{\otimes_{eh}^{(I_{i+2 },...,I_n,\infty)}}]$):$$M^{\otimes_{w^*h}\eta}= (\overline{\cap_{i=1}^n(.\#(S^{(i)}))_{*i}[(L^2(N_{\eta_i})_c+ L^2(N_{\eta_i})_r)]_i}^{ L^1(M)^{\otimes_{eh}^{(I_1,...,I_n,\infty)}}})^*$$
%with weak-* continuous completely quotient map and completely contractive injection :$$p_{\eta}:M\otimes_{\sigma h}^{(I_1,1)}... \otimes_{\sigma h}^{(I_n,1)}M\to M^{\otimes_{w^*h}\eta},\ \ i_{s\eta}:M^{\otimes_{w^*h}\eta}\subset L^2(N_{(\eta)})_c\cap (L^2(N_{(\eta)})_r.$$
We define the \textit{extended Haagerup tensor power relative to $\eta$} on $M$ by the image with induced norm from the inclusion : $M^{\otimes_{eh}\eta}:=p_\eta(M\otimes_{eh}^{(I_1,1)}... \otimes_{eh}^{(I_n,1)}M)\subset M^{\otimes_{w^*h}\eta}.$
\end{definition}
If $\eta$ is the empty list, both spaces are by convention $M$.
Note that the fact that $p_\eta$ is well defined is not obvious and will follow from the next automatic normality result.

The main technical result is the following
\begin{theorem}\label{SubmoduleEtaTHM}For $M$ a finite von Neumann algebra, $\eta=(\eta_1,...\eta_n)$ a list of covariance maps, we have a complete isometry :
$$(M\otimes_{w^*h\eta_1}M)\otimes_{e h M}\cdots\otimes_{e h M}(M\otimes_{w^*h\eta_n}M)\simeq M^{\otimes_{w^*h}\eta}= M^{\otimes_{sw^*h}\eta}.$$
Moreover, for $M\subset N$ finite von Neumann algebras, we have a canonical weak-* continuous inclusion  $M^{\otimes_{w^*h}\eta}\to N^{\otimes_{w^*h}\eta\circ E_M}.$
Finally, there is a canonical complete quotient map $$P_\eta:(M\otimes_{\sigma h}^{(I_1,1)}M)\otimes_{e h M}\cdots\otimes_{e h M}(M\otimes_{\sigma h}^{(I_n,1)}M)\to (M\otimes_{w^*h\eta_1}M)\otimes_{e h M}\cdots\otimes_{e h M}(M\otimes_{w^*h\eta_n}M)$$ such that for the canonical complete isometry  $$J_\eta:(M\otimes_{\sigma h}^{(I_1,1)}M)\otimes_{e h M}\cdots\otimes_{e h M}(M\otimes_{\sigma h}^{(I_n,1)}M)\to \underset{j=1...n}{\overset{\sigma h M}{\bigotimes}}(M\otimes_{\sigma h}^{(I_j,1)}M)\simeq M\otimes_{\sigma h}^{(I_1,1)}... \otimes_{\sigma h}^{(I_n,1)}M$$ we have the commutative diagram $p_{s\eta}\circ J_\eta=P_\eta.$
\end{theorem}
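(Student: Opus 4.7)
My plan is to prove the three assertions in order, treating the main identification as the core step and deducing the others by functoriality.

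For the central complete isometry $(M\otimes_{w^*h\eta_1}M)\otimes_{ehM}\cdots\otimes_{ehM}(M\otimes_{w^*h\eta_n}M) \simeq M^{\otimes_{w^*h}\eta} = M^{\otimes_{sw^*h}\eta}$, I would work on the predual side. Using the remark preceding the definition and tensoring with $(\ell^\infty(I_j))^*$, I identify $M\otimes_{\sigma h}^{(I_j,1)}M$ with the $M'$-normal part of $\ell^\infty(I_j)\overline\otimes B(L^2(M))$, and then interpret $M\otimes_{w^*h\eta_j}M$ as the operator space dual of the weak-$*$ closed $M'$-bimodule subspace $N_j:=\overline{(.\#S^{(j)})_*[L^2(M,\eta_j)_c+L^2(M,\eta_j)_r]}$ of that space. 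The associativity of the extended Haagerup product over $M'$ (Lemma \ref{IteratedModule}) and Theorem \ref{MainHaagerupModule} combine to give that the predual of the iterated $ehM$ product on the left is exactly the iterated $ehM'$ product of the $N_j$'s, which by construction is precisely the predual used in defining $M^{\otimes_{w^*h}\eta}$. The equality $M^{\otimes_{w^*h}\eta}=M^{\otimes_{sw^*h}\eta}$ then comes from the automatic normality result (Section 2.1 and density from Proposition \ref{ModuleLeftDual}): every element in the enveloping $sw^*h$-space factors through the semicircular free product in a way that is automatically separately $M'$-normal.

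For the inclusion $M^{\otimes_{w^*h}\eta}\to N^{\otimes_{w^*h}\eta\circ E_M}$, I would start from the observation that a semicircular system over $M$ with covariance $\eta_j$ extends uniquely to one over $N$ with covariance $\eta_j\circ E_M$, giving a canonical inclusion of target von Neumann algebras $N_{\eta_j}\subset N_{\eta_j\circ E_M}$ and thus an isometric embedding of $L^2$-spaces. Predually, I get $N^{\otimes_{w^*h}(\eta\circ E_M)}_\natural\to M^{\otimes_{w^*h}\eta}_\natural$ by restriction and naturality of $(.\#S)_*$, and dualising yields the weak-$*$ continuous inclusion of the theorem.

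For the last statement: $J_\eta$ is provided directly by the last part of Lemma \ref{IteratedModule} (the complete isometry $J$ for iterated normal Haagerup products), combined with iterated associativity of $\sigma hM$. To build $P_\eta$, I apply functoriality of $\otimes_{ehM}$ to the family of complete quotient maps $p_\eta:M\otimes_{\sigma h}^{(I_j,1)}M\to M\otimes_{w^*h\eta_j}M$; the fact that the tensor of complete quotients remains a complete quotient again comes from Lemma \ref{IteratedModule}. The commutative diagram $p_{s\eta}\circ J_\eta = P_\eta$ is then verified on algebraic elementary tensors $(x_1\otimes y_1)\otimes_M\cdots\otimes_M(x_n\otimes y_n)$ where both sides evaluate to the image of $x_1S^{(1)}y_1\cdots x_nS^{(n)}y_n$ in $M^{\otimes_{sw^*h}\eta}\simeq M^{\otimes_{w^*h}\eta}$, and extended by weak-$*$ density (of these algebraic tensors in the source, via \cite[lemma 5.8]{EffrosRuanDual}) together with joint weak-$*$ continuity of both compositions on bounded sets.

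The main obstacle is the first step: threading the $M'$-normality condition through the iterated tensor structure. The delicate point is to show that the normal part \emph{separately in each argument} of the big $\sigma h$ product (appearing in the definition of $M^{\otimes_{w^*h}\eta}$) is preserved under the isomorphisms given by Magajna's duality when one identifies factors one at a time. Here the associativity results of Lemma \ref{IteratedModule} for strong modules, together with the observation that each $N_j$ is itself a normal dual $M'$-bimodule, are what make the inductive step work; one must check carefully that the $M'$-bimodule action is compatible at each level and that weak-$*$ closedness of $N_j$ inside $\ell^\infty(I_j)\overline\otimes B(L^2(M))$ transfers to the required weak-$*$ closedness inside the iterated product.
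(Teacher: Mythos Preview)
Your overall architecture matches the paper's: the first isometry is indeed obtained by Magajna's module duality (Lemma \ref{IteratedModule}) applied to the module preduals of the $M\otimes_{w^*h\eta_j}M$, and $J_\eta$, $P_\eta$ are built exactly as you say. Two points however are genuine gaps.

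First, your account of $M^{\otimes_{w^*h}\eta}=M^{\otimes_{sw^*h}\eta}$ is not the actual mechanism. The definition of $M^{\otimes_{w^*h}\eta}$ uses the \emph{norm closures} $Y_i,Z_i$ of right/left bounded vectors in $\ell^\infty(I_i)\overline\otimes B(L^2(M))$, while the module predual $(M\otimes_{w^*h\eta_i}M)_\natural$ is the $M'$--$M'$ closure; these are a priori different, and identifying the iterated $ehM'$ product of the $N_j$'s with the predual in the definition requires the nontrivial computation in Lemma \ref{SubmoduleEta} (showing $Y_i^\natural=Z_i^\natural=M\otimes_{w^*h\eta_i}M$). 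Once that lemma gives the intersection formula $M^{\otimes_{w^*h}\eta}=N_{\eta_1}\otimes_{ehM}\cdots\otimes_{ehM}N_{\eta_n}\cap M^{\otimes_{sw^*h}\eta}$ inside $N_{\eta_1}\otimes_{w^*hM}\cdots\otimes_{w^*hM}N_{\eta_n}$, Theorem \ref{MainHaagerupModule} (applied to the finite von Neumann algebras $N_{\eta_i}$, not to $M\otimes_{w^*h\eta_i}M$) collapses $ehM$ and $w^*hM$ there, so the intersection is all of $M^{\otimes_{sw^*h}\eta}$. Your phrase ``automatic normality via the semicircular free product'' does not capture this; the normality is \emph{not} automatic at the level of the original spaces but only becomes so after embedding into the $N_{\eta_i}$ product and invoking the finite von Neumann algebra result.

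Second, your verification of $p_{s\eta}\circ J_\eta=P_\eta$ by ``elementary tensors plus weak-$*$ density'' does not work as stated: the source is an $ehM$ product, not a $\sigma hM$ product, and $P_\eta$ is constructed as a module dual (of an inclusion of strong-module preduals), with no weak-$*$ continuity asserted. The paper instead factors through the auxiliary maps $I_\eta$, $T_\eta$, $Q_\eta$ (canonical inclusion into and projection onto the $w^*hM$ iterated product, and the tensor of the $p_{\eta_j}$ at that level), uses $T_\eta\circ J_\eta=I_\eta$ from Lemma \ref{IteratedModule}, $Q_\eta\circ I_\eta=P_\eta$ by construction, and checks $p_{s\eta}=Q_\eta\circ T_\eta$ by identifying both predual maps as the same chain of completely isometric inclusions $\bigotimes_{hM'}(M\otimes_{w^*h\eta_j}M)_\natural\subset\bigotimes_{hM'}(M\otimes_{\sigma h}^{(I_j,1)}M)_\natural\subset\bigotimes_{ehM'}(M\otimes_{\sigma h}^{(I_j,1)}M)_\natural$. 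That predual argument is what you should replace your density argument with.
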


We will by now consider only the weak-* Haagerup product notation with the weak-* topology coming from the strict weak-* Haagerup definition.

This is an obvious consequence of the following lemma and theorem \ref{MainHaagerupModule} (since the intersection we describe in the bellow is then trivial since one of the spaces is included in the second giving the equality above).

\begin{lemma}\label{SubmoduleEta}The predual as a $M-M$ normal dual operator module of $M\otimes_{w^*h\eta_i}M$ is $$(M\otimes_{w^*h\eta_i}M)_\natural=\overline{(.\#S^{(i)})_*[L^2(M,\eta_i)_{L^2(M)}]}^{M',M'}=\overline{(.\#S^{(i)})_*[{}_{L^2(M)}L^2(M,\eta_i)]}^{M',M'}$$
the closure for the $M'-M'$ topology (cf \cite{MagajnaConvex}) of either right or left bounded vectors in $L^2(M,\eta_i)$ when sent via $(.\#S^{(i)})_* $ to $L^2(M)_c\otimes_{eh}^{(I_i,\infty)}L^2(M)_r \simeq \ell^{\infty}(I_i)\overline{\otimes} B(L^2(M)).$%\subset L^1(M)\otimes_{eh}L^1(M).$
Moreover%, we have the identity $$(\overline{(.\#(S^{(1)},...,S^{(n)}))_*[(L^2(N_{(\eta)})_c+ L^2(N_{(\eta)})_r)]}^{ L^1(M)\otimes_{eh}^{(I_1,\infty)}... \otimes_{eh}^{(I_n,\infty)}L^1(M)})\simeq L^2(M)_r\otimes_{h M'}(M\otimes_{w^*h\eta_1}M)_\natural\otimes_{h M'}\cdots \otimes_{h M'}(M\otimes_{w^*h\eta_n}M)_\natural \otimes_{h M'}L^2(M)_c$$ so that, as a conseqence
, we have a %weak-* homeomorphic
 complete isometry : $$(M\otimes_{w^*h\eta_1}M)\otimes_{e h M}\cdots\otimes_{e h M}(M\otimes_{w^*h\eta_n}M)\simeq M^{\otimes_{w^*h}\eta}.$$
 
 Finally, if we see canonically $$M^{\otimes_{w^*h}\eta}\subset M^{\otimes_{sw^*h}\eta}\subset N_{\eta_1}\otimes_{w^*h M}...\otimes_{w^*h M}N_{\eta_n},$$ $$M^{\otimes_{w^*h}\eta}\subset N_{\eta_1}\otimes_{eh M}...\otimes_{eh M}N_{\eta_n}\subset N_{\eta_1}\otimes_{w^*h M}...\otimes_{w^*h M}N_{\eta_n},$$ then 
 $$M^{\otimes_{w^*h}\eta}=N_{\eta_1}\otimes_{eh M}...\otimes_{eh M}N_{\eta_n}\cap M^{\otimes_{sw^*h}\eta},$$
 so that all inclusions above have weak-* closed image, all the spaces are normal dual operator M-M bimodules with all the inclusions weak-* continuous as well as $\iota_\eta$ which is even weak-* homeomorphic on bounded sets as well as the maps above which are isometric.
% and the unit ball of $M^{\otimes_{w^*h}\eta}$ is weak-* closed in $M^{\otimes_{sw^*h}\eta}$ so that if we define the weak-* topology of $M^{\otimes_{w^*h}\eta}$ as the induced topology, $\iota_\eta$ is weak-* continuous (weak-* homeomorphic on bounded sets) and the unit ball weak-* compact.
\end{lemma}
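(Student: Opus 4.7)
The plan decomposes into three stages: identify the $M$-$M$ module predual of each two-variable factor $M\otimes_{w^*h\eta_i}M$, deduce the iterated isomorphism from this description, then handle the enveloping inclusions into the $N_{\eta_j}$ products.

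\textbf{Step 1: module predual.} The operator space predual is by definition the closure $E_i\subset L^1(M)\otimes_{eh}^{(I_i,\infty)}L^1(M)$ of $(.\#S^{(i)})_*[L^2(M,\eta_i)_c+L^2(M,\eta_i)_r]$. To pass to the $M$-$M$ module predual $(M\otimes_{w^*h\eta_i}M)_\natural=NCB_M(M\otimes_{w^*h\eta_i}M,B(L^2(M)))_M$ (cf.\ \cite[Th 2.17]{M05}), I apply Proposition \ref{ModulePreDual} in the form used in Proposition \ref{ModuleLeftDual}: a right-bounded vector $\xi\in L^2(M,\eta_i)_{L^2(M)}$ maps under $(.\#S^{(i)})_*$ to an element of $\ell^\infty(I_i)\,\overline{\otimes}\,B(L^2(M))$ that is $M$-$M$ bimodular and normal for the $M'$ action. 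Closing in the $M'$-$M'$ pointwise weak-$*$ topology of \cite{MagajnaConvex} extracts exactly the $M$-$M$ normal part of the bimodule dual, while $\tau$-symmetry of $\eta_i$ and the involution on $L^2(M,\eta_i)$ exchange the right- and left-bounded versions, giving both descriptions of $(M\otimes_{w^*h\eta_i}M)_\natural$.

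\textbf{Step 2: iterated isomorphism.} With each $M\otimes_{w^*h\eta_i}M$ now identified as a normal dual $M$-$M$ operator bimodule with strong predual, Lemma \ref{IteratedModule} dualizes the iterated module Haagerup tensor product of the preduals to the iterated extended Haagerup tensor product $\otimes_{ehM}$ of the factors. It remains to match the iterated predual of the $E_i$'s, composed via $\otimes_{hM}$, with the predual space appearing in the definition of $M^{\otimes_{w^*h}\eta}$. This is achieved by the reduced-word decomposition $L^2(N_{(\eta)})=L^2(N_{\eta_1})\otimes_M\cdots\otimes_M L^2(N_{\eta_n})$ of the amalgamated free product, which identifies the degree-one-in-each-variable block with the iterated $(.\#S^{(i)})_*$ image; the intersection of row and column completions in the definition picks out exactly this block at the $L^2$ level. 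The same description applies to $M^{\otimes_{sw^*h}\eta}$, yielding equality of the two.

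\textbf{Step 3: enveloping inclusions.} For each $j$, the multiplication $.\#S^{(j)}$ provides a weak-$*$ continuous completely contractive $M$-$M$ bimodular embedding $M\otimes_{w^*h\eta_j}M\hookrightarrow N_{\eta_j}$; iterating via Theorem \ref{HaagerupModule}(4) yields both chains of inclusions, into $N_{\eta_1}\otimes_{w^*hM}\cdots\otimes_{w^*hM}N_{\eta_n}$ and into its $\otimes_{ehM}$ subspace. The intersection identity reduces to the weak-$*$ closedness of $\otimes_{ehM}$ inside $\otimes_{w^*hM}$, which is exactly Theorem \ref{MainHaagerupModule} applied to the finite von Neumann algebras $N_{\eta_j}\supset M$. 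Weak-$*$ continuity of all maps and the bounded-set weak-$*$ homeomorphism of $\iota_\eta$ are inherited from the surrounding $L^2(N_{(\eta)})$ injection. The main obstacle throughout is Step 1: matching the $M'$-$M'$ closure with the normal bidual part requires the full strength of Proposition \ref{ModulePreDual}; once this identification is in hand, Steps 2 and 3 are essentially formal consequences of Lemma \ref{IteratedModule} and Theorem \ref{MainHaagerupModule}.
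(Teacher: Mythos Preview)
Your three-step outline mirrors the paper's, but two of the steps have genuine gaps.

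\textbf{Step 2 is the main problem.} You assert that the reduced-word decomposition of $L^2(N_{(\eta)})$ identifies the predual of $M^{\otimes_{w^*h}\eta}$ with the iterated $\otimes_{hM'}$ of the factor preduals, and that ``the same description applies to $M^{\otimes_{sw^*h}\eta}$, yielding equality of the two.'' But equality of $M^{\otimes_{w^*h}\eta}$ and $M^{\otimes_{sw^*h}\eta}$ is the content of Theorem \ref{SubmoduleEtaTHM}, proved \emph{afterwards} from this lemma plus Theorem \ref{MainHaagerupModule}; claiming it here is premature. More concretely, the definition of $M^{\otimes_{w^*h}\eta}$ uses the \emph{norm} closure $Y_i=\overline{(.\#S^{(i)})_*[L^2(M,\eta_i)_{L^2(M)}]}$ inside $\ell^\infty(I_i)\overline{\otimes}B(L^2(M))$, whereas your Step 1 predual is the $M'$-$M'$ closure. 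The paper bridges this by computing $Y_i^\natural=(L^2(M)_r\otimes_{hM'}Y_i\otimes_{hM'}L^2(M)_c)^*$ via \cite[Corol 3.4]{M05} and showing that $L^2(M)_r\otimes_{hM'}Y_i\otimes_{hM'}L^2(M)_c$ coincides with the original operator-space predual of $M\otimes_{w^*h\eta_i}M$, whence $Y_i^\natural=M\otimes_{w^*h\eta_i}M$. The paper explicitly remarks that this passage from norm closure to $M'$-$M'$ closure fails for more than one tensor, which is precisely why the $M^{\otimes_{w^*h}\eta}$ definition carries both a $Y_i$ and a $Z_i$ version and takes their intersection. Your reduced-word argument addresses none of this.

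\textbf{Step 3 has two issues.} First, the embedding $M^{\otimes_{sw^*h}\eta}\hookrightarrow N_{\eta_1}\otimes_{w^*hM}\cdots\otimes_{w^*hM}N_{\eta_n}$ is not obtained by iterating Theorem \ref{HaagerupModule}(4); the paper constructs it via auxiliary free semicircular variables $Y_1,\dots,Y_{n-1}$ in $N_{(\eta)}*_M(L(F_n)\otimes M)$, identifying $S^{(i)}Y_i$ with a separating copy of $S^{(i)}$. Second, the intersection identity does not reduce to weak-$*$ closedness of $\otimes_{ehM}$ inside $\otimes_{w^*hM}$. If you invoke Theorem \ref{MainHaagerupModule} on the $N_{\eta_j}$ side, the two $N_{\eta_j}$-products coincide and the intersection collapses to $M^{\otimes_{sw^*h}\eta}$, and you are back to needing $M^{\otimes_{w^*h}\eta}=M^{\otimes_{sw^*h}\eta}$ --- which you have not proved. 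The paper instead takes $f\in N_{\eta_1}\otimes_{ehM}\cdots\cap M^{\otimes_{sw^*h}\eta}$ and verifies the $M'$-normality condition defining $M^{\otimes_{w^*h}\eta}$ directly, using the normality characterization of $N_{\eta_1}\otimes_{ehM}\cdots$ from \cite[Th 3.2]{M05} on predual elements coming from bounded vectors. Your Step 1 is also vaguer than the paper's: the left/right equality is obtained not by ``involution exchanges'' but by the resolvent approximation $\xi\sqrt{\alpha}[\alpha+E_M(\xi^*\xi)]^{-1/2}$, which turns a left-bounded vector into a both-sided-bounded one converging in the $M'$-$\C$ topology.
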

Note that this last statement gives a substitute of associativity, essential for later algebraic uses of this tensor product.
\begin{proof}

First, from the weak-* continuous complete quotient map, we have an inclusion  $(M\otimes_{w^*h\eta_i}M)_\natural\subset (M\otimes_{\sigma h}^{(I_i,1)}M)_\natural=L^2(M)_c\otimes_{eh}^{(I_i,\infty)}L^2(M)_r$ from \cite[Def 4.1, Th 4.2,Th 2.17]{M05} since $({}_MM_{\C})_{\natural}={}_{M'}[L^2(M)_c]_{\C}$ and 
$({}_{\C}M_M)_{\natural}={}_{\C}[L^2(M)_r]_{M'}$ (see e.g. \cite[proof of Corol 3.3]{M05}) and since an extended Haagerup product of strong modules is strong \cite[Prop 4.1]{M97}.%One uses implicity that right M strong and left M strong implies M-M strong since from \cite[Th 3.5]{MagajnaConvex} to be closed in the M-M topology it suffices that the intersection with a bounded set to be closed, and then, from \cite[Cor 3.2]{MagajnaConvex} this is a question separated in being closed in the M left and M right topology, and finally from \cite[Th 3.10]{MagajnaConvex} this is sufficient to be M-M strong. This is also clear from the def given in \cite[P 385]{MagajnaConvex}.

The identification $L^2(M)_c\otimes_{eh}\ell^\infty(I_i)\otimes_{eh}L^2(M)_r=L^2(M)_c\otimes_{w^*min}\ell^\infty(I_i)\otimes_{w^*min}L^2(M)_r
=\ell^{\infty}(I_i)\overline{\otimes} B(L^2(M))$ is well-known and it is clear from the definition that for $\xi\in L^2(M,\eta_i)$ right or left bounded, 
$(.\#S^{(i)})_*(\xi):(x_j)_{j\in I_i}\to (E_M[\xi x_jS^{(i)}_j])_{j\in I_i}$ is the element of $\ell^{\infty}(I_i)\overline{\otimes} B(L^2(M))$ in the identification above. Since the two left hand sides are closed in the $M'-M'$ topology, thus strong $M'-M'$ modules, it suffices, again from \cite[Th 2.17]{M05}, to identify their duals, e.g. for $Y=\overline{(.\#S^{(i)})_*[L^2(M,\eta_i)_{L^2(M)}]}^{M',M'}$: $$M\otimes_{w^*h\eta_i}M=(Y)^\natural =(L^2(M)_r\otimes_{hM'}Y\otimes_{hM'}L^2(M)_c)^*,$$

the last identity from \cite[Corol 3.4]{M05}. And for this by definition, it suffices to check \begin{align*}L^2(M)_r&\otimes_{hM'}Y\otimes_{hM'}L^2(M)_c\\&=\overline{(.\#(S^{(i)}))_*[(L^2(N_{\eta_i})_c+ L^2(N_{\eta_i})_r)]}^{L^2(M)_r\otimes_{h M'}(\ell^{\infty}(I_i)\overline{\otimes} B(L^2(M)))\otimes_{h M'}L^2(M)_c} \end{align*} with the identification of the last remark. Since the left hand side is closed this boils down to a density result, checkable on basic tensors (since we have Haagerup tensor product on both sides), but if one takes $x,y\in L^2(M), Y\ni \xi=\lim_{n}(.\#S^{(i)})_*(\xi_n)$, $\xi_n \in L^2(M,\eta_i)_{L^2(M)}$ (the convergence being in the $M-M$ topology and from \cite[Th 3.5]{MagajnaConvex} the net can be taken bounded in $(\ell^{\infty}(I_i)\overline{\otimes} B(L^2(M)))$) and $y_n\in M$ tending to $y$. Thus $(.\#S^{(i)})_*(y_n\xi_nx)=x\otimes (.\#S^{(i)})_*(\xi_n)\otimes y_n\to x\otimes (.\#S^{(i)})_*(\xi)\otimes y.$ since replacing $y_n$ by $y$ is easy by boundedness and then the normwise convergence $x\otimes (.\#S^{(i)})_*(\xi_n)\otimes y\to x\otimes (.\#S^{(i)})_*(\xi)\otimes y$ is easy from the definition of the $M'-M'$ topology (and basically its definition).

This concludes the proof of the first identity. For the second identity, one uses that for $\xi\in {}_{L^2(M)}L^2(M,\eta_i)] \xi\sqrt{\alpha}[\alpha+E_M(\xi^*\xi)]^{-1/2}\in L^2(M,\eta_i)_{L^2(M)}$ (actually it is both right and left bounded) and it is easy to see the  convergence of its image by $(.\#S^{(i)})_*$ to $(.\#S^{(i)})_*(\xi)$ when $\alpha\to \infty$ in the $M'-\C$ topology. With a symmetric argument, one thus checks the two inclusions concluding to the second equality :
$$(.\#S^{(i)})_*[{}_{L^2(M)}L^2(M,\eta_i)]\subset \overline{(.\#S^{(i)})_*[L^2(M,\eta_i)_{L^2(M)}]}^{M',\C},$$ $$ (.\#S^{(i)})_*[L^2(M,\eta_i)_{L^2(M)}]\subset \overline{(.\#S^{(i)})_*[{}_{L^2(M)}L^2(M,\eta_i)]}^{\C,M'}.$$

From \cite[Corol 3.5]{M05} (and \cite[Prop 3.13]{M05} for the automatic normality on the sides), if we call $$Y_i={}_{ M'}(\overline{(.\#S^{(i)})_*[L^2(M,\eta_i)_{L^2(M)}]}^{\ell^\infty(I_i)\overline{\otimes}B(L^2(M))}_{ M'},$$ $$Z_i={}_{ M'}(\overline{(.\#S^{(i)})_*[{}_{L^2(M)}L^2(M,\eta_i)]}^{\ell^\infty(I_i)\overline{\otimes}B(L^2(M))}_{ M'},$$ then  \begin{align*}M^{\otimes_{w^*h}\eta}&=[Y_1\otimes_{h M'}\cdots \otimes_{h M'} Y_n]^{\natural M' normal}\cap [Z_1\otimes_{h M'}\cdots \otimes_{h M'} Z_n]^{\natural M' normal}\\&\simeq Y_1^{\natural}\otimes_{eh M}\cdots \otimes_{eh M} Y_n^{\natural}\cap Z_1^{\natural}\otimes_{eh M}\cdots \otimes_{eh M} Z_n^{\natural}\end{align*}
with the last isomorphism by the multitensor variant of \cite[Theorem 3.2]{M05}.
But similarly  $Y_i^{\natural}=[L^2(M)_r\otimes_{h M'}(\overline{(.\#S^{(i)})_*[L^2(M,\eta_i)_{L^2(M)}]}^{\ell^\infty(I_i)\overline{\otimes}B(L^2(M))} \otimes_{h M'}L^2(M)_c]^{* }$ but $L^2(M)_r\otimes_{h M'}(\overline{(.\#S^{(i)})_*[L^2(M,\eta_i)_{L^2(M)}]}^{\ell^\infty(I_i)\overline{\otimes}B(L^2(M))} \otimes_{h M'}L^2(M)_c$ is a closed subspace of $L^1(M)\otimes_{eh}^{I_i,\infty}L^1(M)$ (by completeness and injectivity of module Haagerup tensor product) in which $L^2(M)_r\otimes_{h M'} L^2(M,\eta_i)_{L^2(M)}\otimes_{h M'}L^2(M)_c$%=L^2(M)_r\otimes_{h M'} L^2(M,\eta_i)_{L^2(M)}+ 
%{}_{L^2(M)}L^2(M,\eta_i)]\otimes_{h M'}L^2(M)_c$ 
is dense, thus this is the closure of this space which is nothing but $(\overline{(.\#S)_*[(L^2(M,\eta_i)_c)]}$ as explained before. 

One thus obtains $(\overline{(.\#S)_*[(L^2(M,\eta_i)_c)]}=L^2(M)_r\otimes_{h M'}(\overline{(.\#S^{(i)})_*[L^2(M,\eta_i)_{L^2(M)}]}^{\ell^\infty(I_i)\overline{\otimes}B(L^2(M))} \otimes_{h M'}L^2(M)_c$. But similarly one sees this also equals $L^2(M)_r\otimes_{h M'}(\overline{(.\#S^{(i)})_*[L^2(M,\eta_i)_{L^2(M)}]}^{M'-M'} \otimes_{h M'}L^2(M)_c=(\overline{(.\#S)_*[(L^2(M,\eta_i)_c)]}.$ From the previous computations in the $M'-M'$ topology, one sees this is nothing but \begin{align*}L^2(M)_r&\otimes_{h M'}(\overline{(.\#S^{(i)})_*[L^2(M,\eta_i)_{L^2(M)}+{}_{L^2(M)}L^2(M,\eta_i)]}^{M'-M'} \otimes_{h M'}L^2(M)_c\\&=(\overline{(.\#S)_*[(L^2(M,\eta_i)_c)+(L^2(M,\eta_i)_r)]}.\end{align*}
 Thus by definition we have $M\otimes_{w^*h\eta_i}M=Y_i^{\natural}=Z_i^{\natural}$ (similarly) and one gets the stated isomorphism. (The reader should note that this change from norm to $M'-M'$ closure does not work for more than one tensor implying the more restrictive definition in this case).

Note finally that the inclusion \begin{align*}M^{\otimes_{w^*h}\eta}&\subset[Y_1\otimes_{h M'}\cdots \otimes_{h M'} Y_n]^{\natural }\cap [Z_1\otimes_{h M'}\cdots \otimes_{h M'} Z_n]^{\natural }\\&\simeq [Y_1\otimes_{h M'}\cdots \otimes_{h M'} Y_n+Z_1\otimes_{h M'}\cdots \otimes_{h M'} Z_n]^{\natural }=M^{\otimes_{sw^*h}\eta}\end{align*}
since the reasoning above again shows \begin{align*}Y_1\otimes_{h M'}&\cdots \otimes_{h M'} Y_n+Z_1\otimes_{h M'}\cdots \otimes_{h M'} Z_n\\&=(\overline{(.\#(S^{(1)},...,S^{(n)}))_*[(L^2(N_{(\eta)})_c+ L^2(N_{(\eta)})_r)]}^{ L^1(M)\otimes_{eh}^{(I_1,\infty)}... \otimes_{eh}^{(I_n,\infty)}L^1(M)})\end{align*} and the inclusion stated (even in the definition) is now clear.

%Note that the same applies with $Y_i$ replaced by $Z_i=[{}_{ M'}(\overline{(.\#S^{(i)})_*[L^2(M,\eta_i)_{L^2(M)}}^{\ell^\infty(I_i)\overline{\otimes}B(L^2(M))}_{ M'}$ for which $M\otimes_{w^*h\eta_i}M=Z_i^{\natural}$ and thus :
%$$M^{\otimes_{w^*h}\eta}=[Z_1\otimes_{h M'}\cdots \otimes_{h M'} Z_n]^{\natural M' normal}\simeq Z_1^{\natural}\otimes_{eh M}\cdots \otimes_{eh M} Z_n^{\natural}.$$

%But note also that $L^2(M)_r\otimes_{h M'}Z_1\otimes_{h M'}\cdots \otimes_{h M'} Z_n\otimes_{h M'}L^2(M)_c=L^2(M)_r\otimes_{h M'}Y_1\otimes_{h M'}\cdots \otimes_{h M'} Y_n\otimes_{h M'}L^2(M)_c$

From the definition, we have $\iota_{\eta_i}((M\otimes_{w^*h\eta_i}M))\subset N_{\eta_i}$ weak-* continuously so that we have a canonical map by functoriality $M^{\otimes_{w^*h}\eta}\to N_{\eta_1}\otimes_{eh M}...\otimes_{eh M}N_{\eta_n}.$
We will see later it is injective.
We have $N_{\eta_1}\otimes_{eh M}...\otimes_{eh M}N_{\eta_n} \subset N_{\eta_1}\otimes_{w^*h M}...\otimes_{w^*h M}N_{\eta_n}$ as a weak-* closed both dual operator spaces with the induced weak-* topology by proposition \ref{Submodules}.

In order to see the last weak-* continuous inclusion $M^{\otimes_{sw^*h}\eta}\subset N_{\eta_1}\otimes_{w^*h M}...\otimes_{w^*h M}N_{\eta_n}$ we won't build the explicitly, not so obvious, predual map but use a common inclusion in a third space. 
Using example \ref{CondExp2} and the projection $L^1(N_{(\eta)})\to L^1(N_{\eta_i}),$ one gets the weak-* continuous inclusions weak-* homeomorphic on bounded sets :$$I:N_{\eta_1}\otimes_{w^*h M}...\otimes_{w^*h M}N_{\eta_n}\subset N_{(\eta)}^{\otimes_{w^*h M}n}=N_{(\eta)}^{\otimes_{sw^*h (E_M,...,E_M)}}\subset N_{(\eta)}*_M(L(F_{n})\otimes M)$$ with this last embedding realized as $.\#(Y_1,...,Y_{n-1})$ the evaluation on $n-1$ free semicircular variables in $L(F_{n-1}).$
Now it is easy to see that $MS^{(1)}Y_1M...MS^{(n-1)}Y_{n-1}MS^{(n)}M$ generate an isomorphic $M-M$ Hilbert bimodule than $MS^{(1)}M...MS^{(n-1)}MS^{(n)}M$ in $L^2(N_{(\eta)}*_M(L(F_{n})\otimes M)),L^2(N_{(\eta)})$. Let $J$ the corresponding map with $J(S^{(i)})=S^{(i)}Y_i.$ Then $I^{-1}J\circ [ .\#(S^{(1)},...,S^{(n)})]=I^{-1}[ .\#(S^{(1)}Y_1,...,S^{(n-1)}Y_{n-1},S^{(n)})]$ is the weak-* continuous inclusion $M^{\otimes_{sw^*h}\eta}\subset N_{\eta_1}\otimes_{w^*h M}...\otimes_{w^*h M}N_{\eta_n}$ we want. The fact it has closed image is obvious by weak-* compactness.

From the previous inclusions, one thus deduces $M^{\otimes_{w^*h}\eta}\subset N_{\eta_1}\otimes_{eh M}...\otimes_{eh M}N_{\eta_n}\cap M^{\otimes_{sw^*h}\eta},$ let us prove the converse.
For, one takes $f\in M^{\otimes_{sw^*h}\eta}$ and one has to prove the normality condition on the predual in assuming also $f\in N_{\eta_1}\otimes_{eh M}...\otimes_{eh M}N_{\eta_n}$. Thus take by density $$z_1\in(.\#S^{(1)})_*[L^2(M,\eta_1)_{L^2(M)}]\otimes_{h M'}\cdots \otimes_{h M'} (.\#S^{(k)})_*[L^2(M,\eta_k)_{L^2(M)}],$$ $$z_2\in (.\#S^{(k+1)})_*[L^2(M,\eta_{n+1})_{L^2(M)}]\otimes_{h M'}\cdots \otimes_{h M'} (.\#S^{(n)})_*[L^2(M,\eta_n)_{L^2(M)}]$$
and one has to show $m\mapsto f(z_1\otimes mz_2)$ normal in $M'$ but following our isomorphisms, it is clear that $z_1,z_2$ comes from elements for instance in   ${}_{L^2(M)}L^1(N_{\eta_1})\otimes_{h M'}{}_{L^2(M)}L^1(N_{\eta_2})_{L^2(M)}\otimes_{h M'}...\otimes_{h M'}{}_{L^2(M)}L^1(N_{\eta_k})_{L^2(M)}$ giving the expected normality from the characterization of $N_{\eta_1}\otimes_{eh M}...\otimes_{eh M}N_{\eta_n}$ by a normality condition in \cite[Th 3.2]{M05}.

From the now proved equality with an intersection of two weak-* closed sets, $M^{\otimes_{w^*h}\eta}$ is thus a weak-* closed set of a dual operator space $M^{\otimes_{sw^*h}\eta}$, thus a dual operator space. The remaining weak-* continuities are then obvious.
\end{proof}

\begin{proof}[Proof of Thm \ref{SubmoduleEtaTHM}]

As explained, it only remains to build the inclusion. As explained, the equality implies that the normality condition in the definition of $M^{\otimes_{w^*h}\eta}$ can be removed showing that $p_\eta$ is well defined and weak-* continuous as stated, and equal to $p_{s\eta}$.

First, the canonical weak-* continuous map $i_{N,I}^\infty:M^{\otimes_{\sigma h} I}\to N^{\otimes_{\sigma h} I}$ is injective since the predual map is given by a tensor product to projection $E_M:L^1(N)\to L^1(M)$ which is surjective. From the inclusion $i_N^1:L^1(M)\to L^1(N)$ its tensor products and duals, we also see $M^{\otimes_{\sigma h} I}\to N^{\otimes_{\sigma h} I}$ is completely isometric since it is complemented by the map coming from tensor product of conditional expectations $E_M.$

Looking at the composition  $u=p_{\eta\circ E_M}\circ i_{N,I}^\infty: M^{\sigma h I}\to N^{\otimes_{w^* h}\eta\circ E_M}$
An element is in its kernel if and only if it is in the one of 
$\iota_{\eta\circ E_M}\circ u=(.\#(S_1,...,S_n)_{\eta\circ E_M})\circ i_{N,I}^\infty=\iota_{\eta}\circ p_{\eta}$ since a free product with amalgamation over $N_{(\eta\circ E_M)}=N*_MN_{(\eta)}$ gives the canonical space for $\eta\circ E_M$ variables. Thus $Ker u= Ker p_\eta$  
 and $u$ fatorizes to our expected injective map $u': M^{\otimes_{w^* h}\eta }\to N^{\otimes_{w^* h}\eta\circ E_M}$ unique with $u'\circ p_\eta=u.$
 
But note that looking at preduals in the strict Haagerup picture, on  $L^2(N_{(\eta\circ E_M)})$ we have the identity easily checked by duality and characterization of projections : $$(i_{N,I}^\infty)_*\circ(.\#(S_1,...,S_n)_{\eta\circ E_M})_*=(.\#(S_1,...,S_n)_{\eta})_*\circ E_{L^2(N_{(\eta\circ E_M)})\to L^2(N_{(\eta)})}$$
which enables to build an induced map $v: (N^{\otimes_{w^* h}\eta\circ E_M})_*\to (M^{\otimes_{w^* h}\eta })_*$ with $$v|_{(.\#(S_1,...,S_n)_{\eta\circ E_M})_*(L^2(N_{(\eta\circ E_M)})}=(.\#(S_1,...,S_n)_{\eta})_*\circ E_{L^2(N_{(\eta\circ E_M)})\to L^2(N_{(\eta)})}$$ so that  $$(i_{N,I}^\infty)_*\circ (p_\eta)_*=(p_{\eta\circ E_M})_*\circ v$$

and this induced map gives the expected predual map of the strict weak-* Haagerup product. Indeed, the dual relation gives $p_\eta\circ i_{N,I}^\infty=v^*\circ p_{\eta\circ E_M}$ that characterizes our $u'=v^*$. 

Let us turn to proving the commutative diagram. Note that to build $P_\eta,$ we use lemma \ref{IteratedModule} to get an extended Haagerup product of quotient maps $p_{\eta_i}$ using the module predual inclusions computed in lemma \ref{SubmoduleEta}. The construction of $J_\eta$ is obvious from associativity of normal Haagerup product and lemma \ref{IteratedModule}. Note that by this lemma if $$I_\eta:(M\otimes_{\sigma h}^{(I_1,1)}M)\otimes_{e h M}\cdots\otimes_{e h M}(M\otimes_{\sigma h}^{(I_n,1)}M)\subset (M\otimes_{\sigma h}^{(I_1,1)}M)\otimes_{w^* h M}\cdots\otimes_{w^* h M}(M\otimes_{\sigma h}^{(I_n,1)}M),$$
$$T_\eta:(M\otimes_{\sigma h}^{(I_1,1)}M)\otimes_{\sigma h M}\cdots\otimes_{\sigma h M}(M\otimes_{\sigma h}^{(I_n,1)}M)\to (M\otimes_{\sigma h}^{(I_1,1)}M)\otimes_{w^* h M}\cdots\otimes_{w^* h M}(M\otimes_{\sigma h}^{(I_n,1)}M),$$
 are the canonical inclusion and projection we have  $T_\eta\circ J_\eta=I_\eta.$ Moreover, by definition $P_\eta$ is obtained as a restrition of
$$Q_\eta:(M\otimes_{\sigma h}^{(I_1,1)}M)\otimes_{w^* h M}\cdots\otimes_{w^* h M}(M\otimes_{\sigma h}^{(I_n,1)}M)\to \underset{j=1...n}{\overset{w^* h M}{\bigotimes}}(M\otimes_{w^*h\eta_j}M)=\underset{j=1...n}{\overset{e h M}{\bigotimes}}(M\otimes_{w^*h\eta_j}M)$$ so that $Q_\eta\circ I_\eta=P_\eta.$ Thus it remains to check $p_{s\eta}=Q_\eta\circ T_\eta$. This is obvious since the predual maps are compositions of inclusions (using injectivity of Haagerup tensor product on the inclusion of the first part of lemma \ref{SubmoduleEta}) :$$\underset{j=1...n}{\overset{ h M'}{\bigotimes}}(M\otimes_{w^*h\eta_j}M)_\natural\subset \underset{j=1...n}{\overset{ h M'}{\bigotimes}}(M\otimes_{\sigma h}^{(I_j,1)}M)_\natural\subset \underset{j=1...n}{\overset{ eh M'}{\bigotimes}}(M\otimes_{\sigma h}^{(I_j,1)}M)_\natural.$$
 \end{proof}
%We will use a definition that keeps some associativity.

In order to deal with the analogue of commutants needed in this context to produce the operadic structure needed to get composition of analytic functions, and not only in the weak-* Haagerup context but also in the extended Haagerup context we need a definition inspired from our result in the module case.

\begin{definition}
A list of covariance maps $\eta=(\eta_1,...,\eta_n)$ (resp. a set of covariance map $\eta=\{\eta_i, i\in I\}$) has (w*CIEHP) or has weak-* closed inclusion of extended Haagerup powers if the unit ball of  $M^{\otimes_{eh}\eta}$ is weak-* closed in $M^{\otimes_{w^*h}\eta}$ (resp for any $i_1,...,i_n\in I$, $(\eta_{i_1},...,\eta_{i_n})$ has (w*CIEHP))
\end{definition} 
We thus saw in proposition \ref{Submodules} that $\{E_B\}$ has (w*CIEHP) and this can be generalized to $\{E_B, B\subset M\}$. It is an interesting question to determine the set of covariance maps having this property but we won't try to solve this technical problem here.

\subsection{Spaces of variables relative to $\eta$ and multiplication on generalized Haagerup tensor products}\label{etavar}

The goal of this section is first to define spaces of variables relative to $\eta$, or what we could call $\eta$-commutants since they will coincide with commutants of $D$ in the case $\eta=E_D$.

Then we will produce projections on these spaces and we will provide multiplication maps in the spirit of Theorem \ref{HaagerupModule} (4).

Note that if $X$ is a normal dual operator D-bimodule (recall this is written $X\in {}_DNDOM_D$, the canonical map $D\otimes_hX\otimes_hD\to X,$ is normal when restricted to each argument $D,X,D$ (see e.g. \cite[Th 2.9, Tmk 2.10]{M05}, thus by \cite[Prop 5.9]{EffrosRuanDual} is extends uniquely to a weak-* continuous CB map  $m:D\otimes_{\sigma h}X\otimes_{\sigma h}D\to X.$ % and conversely, existence of such an extension garanties normality of the actions thus the fact that X is a normal dual bimodule by remark 2.10 cited.

For such an $X$, there is a completely bounded separately weak-* continuous pairing $$.\#.: D\otimes_{\sigma h}^{(I,1)} D\times \ell^{\infty}(I,X)\to X.$$ Recall that $M_n(\ell^{\infty}(I,X))=\oplus_{i\in I}M_n(X)\simeq CB(\ell^1(I),M_n(X))\simeq  CB([M_n(X)]_*,\ell^\infty(I))\simeq NCB((\ell^\infty(I))^*,M_n(X))$ (where $\ell^1(I)$ is as usual given its maximal operator space structure). %Indeed, it suffices to check CB(X_*,\ell^\infty(I)) has the universal property if we are given CB maps Y\to X=CB(X_*,\C) indexed by $I$ we are given maps CB(Y\hat{\otimes} X_*,\C) one gets a CB map from the universal property CB(Y\hat{\otimes} X_*,\ell^\infty(I))=CB(Y, CB(X_*,\ell^\infty(I))) which gives the wanted map.

%Indeed, it suffices to check $CB(\ell^1(I),X)$ has the universal property of direct sums. But if we are given CB maps Y\to X indexed by $I$ one gets a CB map \ell^1(I,Y)=\ell^1(I)\hat{\otimes}Y\to X namely an element of CB(\ell^1(I)\hat{\otimes}Y,X)CB(Y,CB(\ell^1(I),X)) which the map for the universal property.
It is defined for $x\in M_n(\ell^{\infty}(I,X))$ by $d\#x=(id_{M_n}\otimes m)\mathcal{S}_\sigma[(1\otimes x\otimes 1)(d)]$ since $(1\otimes x\otimes 1)(d)\in D\otimes_{\sigma h}M_n(X)\otimes_{\sigma h}D$ is defined by functoriality (see \cite[p 149]{EffrosRuanDual}) of the normal Haagerup tensor product for normal maps as an element $x\in NCB((\ell^{\infty}(I))^*,M_n(X))$ and $S_\sigma: D\otimes_{\sigma h}(M_n\bar{\otimes}X)\otimes_{\sigma h}D\to M_n\bar{\otimes}[D\otimes_{\sigma h}X\otimes_{\sigma h}D]$ is the shuffle map of \cite[Th 6.1]{EffrosRuanDual}.

\begin{definition}
Let $X\in  {}_DNDOM_D$ and $\eta:D\to B(L^2(I))\otimes D$ a symmetric covariance map. The set of $\eta$ variables (or $\eta$-commutant), written $\eta'\cap \ell^{\infty}(I,X)$ is the subset of $\ell^{\infty}(I,X)$ of elements $x=(x_i)_{i\in I}$ such that $d\#x=0$ for all $d\in Ker (p_\eta)\subset D\otimes_{\sigma h}^{(I,1)} D.$
\end{definition}

We first note this is the convenient condition to define multiplication maps from our tensor product relative to $\eta$. 

\begin{proposition}\label{MultiplicationEtaVar}
Let $X\in  {}_DNDOM_D$ and $\eta:D\to B(L^2(I))\otimes D$ a symmetric covariance map. The paring $.\#.$ induces (after switch of the arguments) %, by unique extension and quotients,
 complete contractions in:
$$M_{\#\sigma h}^{D,I,X}\in CB_D((\ell^{\infty}(I,X))\widehat{\otimes}^{D-D}(D\otimes_{\sigma h}^{(I,1)}D) , X)_D,$$
$$M_{\#w^*h}^{D,\eta,X}\in CB_D([\eta'\cap \ell^{\infty}(I,X)]\widehat{\otimes}^{D-D}(D\otimes_{w^*h\eta}D) , X)_D.$$

They are weak-* continuous in the second argument (i.e. $d\mapsto d\#x$ is weak-* continuous) and the first one also extends uniquely to a complete contraction
$$M_{\#\sigma h*}^{D,I,X}\in NCB_D([CB(\ell^{\infty}(I,X),(D\otimes_{\sigma h}^{(I,1)}D)_\natural)]^\natural , X)_D.$$

Moreover, the pairing $.\#.$  extends to $\ell^\infty(I,D\otimes_{\sigma h}^{(I,1)} D)\times \ell^{\infty}(I,X)\to \ell^{\infty}(I,X)$ and for any $x\in \eta'\cap \ell^{\infty}(I,X)$, any $d\in \ell^\infty(I,Ker (p_\eta)), d\#x=0.$  Finally, $\ell^\infty(I,D\otimes_{w^*h\eta}D)\simeq \ell^\infty(I,D\otimes_{\sigma h}^{(I,1)} D)/\ell^\infty(I,Ker (p_\eta)).$ As a consequence, the extended pairing induces one $CB([\eta'\cap \ell^{\infty}(I,X)]\widehat{\otimes}^{D-D}\ell^\infty(I,D\otimes_{w^*h\eta}D),  \ell^{\infty}(I,X)).$
\end{proposition}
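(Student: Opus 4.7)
My plan is to first build the basic pairing $M_{\#\sigma h}^{D,I,X}$ directly from the formula $d\#x = (id \otimes m)\,\mathcal{S}_\sigma[(1\otimes x\otimes 1)(d)]$ stated just before the definition of $\eta'$-variables. Viewing $x \in \ell^\infty(I,X) \simeq NCB((\ell^\infty(I))^*, X)$, functoriality of the normal Haagerup product gives $1\otimes x\otimes 1 \in NCB(D\otimes_{\sigma h}^{(I,1)}D,\ D\otimes_{\sigma h}X\otimes_{\sigma h}D)$; the shuffle isomorphism $\mathcal{S}_\sigma$ of \cite[Th 6.1]{EffrosRuanDual} handles matricial amplifications; and the weak-$*$ continuous multiplication $m: D\otimes_{\sigma h}X\otimes_{\sigma h}D \to X$, which exists for $X \in {}_D NDOM_D$ by \cite[Prop 5.9]{EffrosRuanDual}, completes the construction. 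The resulting pairing is completely bounded, $D$-bimodular in each argument, and weak-$*$ continuous in the $D\otimes_{\sigma h}^{(I,1)}D$ slot, so the universal property of the module projective tensor product in Corollary \ref{ShuffleStrongModule} produces $M_{\#\sigma h}^{D,I,X}$. For $M_{\#w^*h}^{D,\eta,X}$, the very definition of $\eta' \cap \ell^\infty(I,X)$ forces $d\#x = 0$ whenever $d \in Ker(p_\eta)$ and $x \in \eta' \cap \ell^\infty(I,X)$; combined with weak-$*$ continuity in $d$ and weak-$*$ closedness of $Ker(p_\eta)$, the pairing descends through the complete quotient $D\otimes_{w^*h\eta}D$, and another application of the universal property yields $M_{\#w^*h}^{D,\eta,X}$.

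For the extension $M_{\#\sigma h*}^{D,I,X}$ to $[CB(\ell^\infty(I,X), (D\otimes_{\sigma h}^{(I,1)}D)_\natural)]^\natural$ I would work by module duality rather than try to push the formula through directly. A normal completely contractive $D$-bimodular map from $[CB(\ell^\infty(I,X), (D\otimes_{\sigma h}^{(I,1)}D)_\natural)]^\natural$ to $X$ corresponds via \cite[Th 3.7]{M05} to a completely contractive $D$-bimodular map $\psi: X^\natural \to CB(\ell^\infty(I,X), (D\otimes_{\sigma h}^{(I,1)}D)_\natural)$. The natural candidate is $\psi(\xi)(x)(d) := \xi(d\#x)$: this is well-defined because $\xi \in X^\natural = NCB_D(X, B(L^2(D)))_D$ composes with the weak-$*$ continuous map $d\mapsto d\#x$ to produce a normal map on $D\otimes_{\sigma h}^{(I,1)}D$, which is exactly membership in $(D\otimes_{\sigma h}^{(I,1)}D)_\natural$; complete boundedness, $D$-bimodularity, and the matricial estimates all follow from the analogous properties of $M_{\#\sigma h}^{D,I,X}$. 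Uniqueness of the extension is guaranteed by the commutative diagram $I\circ (1\otimes k\otimes 1)\circ S_{P e h} = S_{P\sigma h}^{D,D}\circ [1\otimes J]$ of Lemma \ref{Shuffle}, which shows that the canonical image of $\ell^\infty(I,X)\widehat{\otimes}^{D-D}(D\otimes_{\sigma h}^{(I,1)}D)$ under the map $k$ is weak-$*$ dense in $[CB(\ldots)]^\natural$.

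The final statements are essentially coordinatewise. The pairing extends to $\ell^\infty(I, D\otimes_{\sigma h}^{(I,1)}D) \times \ell^\infty(I,X) \to \ell^\infty(I,X)$ by $((d_j)_j \# x)_j := d_j\#x$, with uniform bound $\sup_j\|d_j\|\,\|x\|$ coming from the complete boundedness of $M_{\#\sigma h}^{D,I,X}$; vanishing on $\ell^\infty(I,Ker(p_\eta)) \times [\eta'\cap\ell^\infty(I,X)]$ is then immediate. For $\ell^\infty(I, D\otimes_{w^*h\eta}D) \simeq \ell^\infty(I, D\otimes_{\sigma h}^{(I,1)}D)/\ell^\infty(I,Ker(p_\eta))$, I would dualize the weak-$*$ closed inclusion of module preduals $(D\otimes_{w^*h\eta}D)_\natural \hookrightarrow (D\otimes_{\sigma h}^{(I,1)}D)_\natural$, use that the module $\ell^1_{D,D}$ direct sum preserves completely isometric inclusions by Lemma \ref{Ell1Sum}, and then apply module duality once more to convert back into the $\ell^\infty$-quotient (the concrete surjectivity coming from pointwise lifting of bounded selections across the complete quotient $p_\eta$). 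The consequence for the pairing on $[\eta'\cap\ell^\infty(I,X)]\widehat{\otimes}^{D-D}\ell^\infty(I, D\otimes_{w^*h\eta}D)$ is then just coordinatewise $M_{\#w^*h}^{D,\eta,X}$, obtained by composing the quotient formula with the $|I|$-fold diagonal application of the already constructed pairing. The main technical obstacle I expect is verifying complete contractivity of $\psi$ at every matrix level in the duality-based extension step, because $CB(\ell^\infty(I,X), (D\otimes_{\sigma h}^{(I,1)}D)_\natural)$ carries the nonstandard strong-module structure from Lemma \ref{Shuffle} rather than a manifest operator-space structure, so matrix norms have to be reconciled against that construction.
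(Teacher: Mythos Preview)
Your overall strategy matches the paper's: build $M_{\#\sigma h}$ from the pairing formula via the universal property of Corollary~\ref{ShuffleStrongModule}, descend through the quotient $p_\eta$ for $M_{\#w^*h}$, construct the extension $M_{\#\sigma h*}$ by module duality, and handle the $\ell^\infty(I,\cdot)$ statements separately. The paper packages the first step slightly differently, using the shuffle $S_{P\sigma h}^{D,D}$ of Lemma~\ref{Shuffle} followed by an evaluation map $E^*:(\ell^\infty(I,X)\widehat\otimes\ell^1(I))^{**}\to X$, but this amounts to the same construction.

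There is one genuine slip in your extension step. You write that a normal map $[CB(\ell^\infty(I,X),(D\otimes_{\sigma h}^{(I,1)}D)_\natural)]^\natural\to X$ corresponds to $\psi:X^\natural\to CB(\ldots)$, and then identify $X^\natural$ with $NCB_D(X,B(L^2(D)))_D$. But that is the definition of the module \emph{predual} $X_\natural$, not the module dual $X^\natural=CB_D(X,B(L^2(D)))_D$. The distinction matters: for a non-normal $\xi\in X^\natural$, the map $d\mapsto\xi(d\#x)$ need not be weak-$*$ continuous, so $\psi(\xi)(x)$ need not land in $(D\otimes_{\sigma h}^{(I,1)}D)_\natural$. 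The paper builds the predual map in $CB_{D'}(X_\natural,CB(\ldots))_{D'}$, factored as the module predual $m_\natural:X_\natural\to D_{\natural_{D,\C}}\otimes_{eh}X_*\otimes_{eh}D_{\natural_{\C,D}}$ of the multiplication followed by $T\mapsto 1\otimes T\otimes 1$; since source and target are strong, \cite[Prop~3.11,~3.12]{M05} give a module dual $[CB(\ldots)]^\natural\to(X_\natural)^\natural=X$ with the same cb norm. With $X_\natural$ in place of $X^\natural$, your formula $\psi(\xi)(x)(d)=\xi(d\#x)$ is essentially this. For uniqueness, the paper does not rely on the commutative diagram of Lemma~\ref{Shuffle} but decomposes the canonical map as $[CB(B,A^\natural)]_\natural\to[CB(B,A^\natural)]^\natural\to[CB(B,A_\natural)]^\natural$ and checks weak-$*$ density using Goldstine's lemma on the first arrow and \cite[Prop~3.12(ii)]{M05} on the second.

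For the quotient $\ell^\infty(I,D\otimes_{w^*h\eta}D)\simeq\ell^\infty(I,D\otimes_{\sigma h}^{(I,1)}D)/\ell^\infty(I,\mathrm{Ker}\,p_\eta)$, the paper's argument is more direct than yours: identify $\ell^\infty(I,E)\simeq CB(E_*,\ell^\infty(I))$, observe that the restriction along $(D\otimes_{w^*h\eta}D)_*\hookrightarrow(D\otimes_{\sigma h}^{(I,1)}D)_*$ is a complete quotient because $\ell^\infty(I)$ is an injective operator space (\cite[Lemma~4.1.7]{EffrosRuan}), and then read off the kernel coordinatewise. Your route through module $\ell^1_{D,D}$ sums and double module duality would need additional care to produce a \emph{complete} quotient rather than merely an isomorphism.
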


\begin{proof}

 One considers the case $A=\ell^1(I),A^{**}=(\ell^\infty(I))^*, B=(\ell^{\infty}(I,X)), C_1=C_2=D$ of the map built in lemma \ref{Shuffle} and one gets : 
 $$S:%(\ell^{\infty}(I,X))\widehat{\otimes}^{D-D}(D\otimes_{\sigma h}^{(I,1)}D)=
 \ell^{\infty}(I,X)\widehat{\otimes}^{D-D}[D\otimes_{\sigma h}(\ell^\infty(I))^*\otimes_{\sigma h}D]\to D\otimes_{\sigma h}(\ell^{\infty}(I,X)\widehat{\otimes} \ell^1(I))^{**}\otimes_{\sigma h}D.$$
 
But  we saw $(\ell^{\infty}(I,X)\widehat{\otimes} \ell^1(I))^{*}=CB(\ell^{\infty}(I,X),\ell^\infty(I))=CB(CB(X_*,\ell^{\infty}(I)),\ell^\infty(I))$
 and the evaluation map thus gives a completely bounded map $$E:X_*\to CB(CB(X_*,\ell^{\infty}(I)),\ell^\infty(I))=(\ell^{\infty}(I,X)\widehat{\otimes} \ell^1(I))^{*}$$
and by duality a normal CB map $$E^*:(\ell^{\infty}(I,X)\widehat{\otimes} \ell^1(I))^{**}\to X.$$
 Note that for any $x\in \ell^{\infty}(I,X), f\in (\ell^1(I))^{**}$, $E^*(x\otimes f)=x(f)$ in the identification $\ell^{\infty}(I,X)\simeq_\iota CB(X_*,\ell^{\infty}(I) \simeq_\kappa NCB((\ell^1(I))^{**},X)$ for which $\kappa(x)=\iota(x)^*$ is the adjoint map
 Indeed for $y\in X_*$ we have%T\in (\ell^{\infty}(I,X)\widehat{\otimes} \ell^1(I))^{*}$
 $$\langle E(y),(x\otimes f)\rangle=\langle \iota(x)[y], f\rangle\langle y, \kappa(x)[f]\rangle=:\langle y, x(f)\rangle.$$
 
Thus using \cite{EffrosRuanDual} again to build $Id\otimes E^* \otimes Id$  and the extended multiplication map for normal dual modules $m:D\otimes_{\sigma h}X\otimes_{\sigma h}D\to X,$ it remains to show that $m\circ (Id\otimes E^* \otimes Id)\circ S$ is the expected map induced by $.\#.$ and we only have to check on elementary tensor products. 
Since the definition of both maps end with $m$, it suffices to show that  $(Id\otimes E^* \otimes Id)\circ S(x\otimes d)=(1\otimes x\otimes 1)(d)$ for $x\in \ell^{\infty}(I,X)=NCB((\ell^\infty(I))^*,X), d\in [D\otimes_{\sigma h}(\ell^\infty(I))^*\otimes_{\sigma h}D]$ and from the normality we checked in the second argument (for $S$, the folowing compositions being obvious), it suffices to check equality on $d=c_1\otimes f\otimes c_2, c_1, c_2\in D, f\in  \ell^1(I)^*$, Then $$(Id\otimes E^* \otimes Id)\circ S(x\otimes d)=c_1\otimes E^*(x\otimes f)\otimes c_2=c_1\otimes x( f)\otimes c_2$$
 as expected.
 
For the second map induced by quotient, we use again the universal property proved in corollary \ref{ShuffleStrongModule} first to note that 
 $ .\#.\in CB([\eta'\cap \ell^{\infty}(I,X)],CB_D( D\otimes_{\sigma h}^{(I,1)} D, X)_D)$ by restriction. In order to get a map in $ CB_D([\eta'\cap \ell^{\infty}(I,X)]\widehat{\otimes}^{D-D}(D\otimes_{w^*h\eta}D) , X)_D,$ it suffices to note that by definition any element in $d\in Ker (p_\eta)$ induces the 0 map once restricted on the commutant, so that our pairing induces by quotient a map $ .\#.\in CB([\eta'\cap \ell^{\infty}(I,X)],CB_D( D\otimes_{w^*h\eta}D, X)_D).$ This concludes. To see normality in the second argument, since the map before quotient was normal, one can consider pointwise its predual map $(.\#x)_*:X_*\to [D\otimes_{\sigma h}^{(I,1)} D]_*$ but $Ker p_\eta=(D\otimes_{w^*h\eta}D)_*]^\perp$ and from the bipolar theorem $(D\otimes_{w^*h\eta}D)_*=(Ker p_\eta)_\perp$ so that the vanishing on $Ker p_\eta$ of $(.\#x)_*(y)$ for $y\in X, x\in  \eta'\cap \ell^{\infty}(I,X)$ guaranties $(.\#x)_*$ is then valued in $(D\otimes_{w^*h\eta}D)_*$ concluding to the weak-* continuity.
 
 In order to build the extension of the first map, one build the module predual map, one build the module predual map in $CB_{D'}(X_\natural, CB(\ell^{\infty}(I,X),(D\otimes_{\sigma h}^{(I,1)}D)_\natural))_{D'}$ and uses that the source and target space are strong module (see lemma \ref{Shuffle} for the target) so that the dual map will be also contractive by \cite[Prop 3.12]{M05} with same norm. Of course one uses again $\ell^{\infty}(I,X)\simeq CB(X_*,\ell^\infty(I)).$ Note that $(D\otimes_{\sigma h}^{(I,1)}D)_\natural\simeq D_{\natural_{D,\C}}\otimes_{eh}\ell^\infty(I)\otimes_{eh}D_{\natural_{\C,D}}$ and since $X$ is a normal dual operator module we have a normal contraction $D\otimes_{\sigma h} X\otimes_{\sigma h} D\to X$ thus from \cite[Prop 3.11]{M05} there is a unique module predual map $m_\natural X_\natural\to D_{\natural_{D,\C}}\otimes_{eh}X_*\otimes_{eh}D_{\natural_{\C,D}}$ also contractive since both module are strong so that one can use \cite[Prop 3.12]{M05}. One thus gets our map by composing this $m_\natural$ to a map in 

 \begin{align*}CB_{D'}&(D_{\natural_{D,\C}}\otimes_{eh}X_*\otimes_{eh}D_{\natural_{\C,D}}, CB(\ell^{\infty}(I,X),(D\otimes_{\sigma h}^{(I,1)}D)_\natural))_{D'}\\&=CB(CB(X_*,\ell^\infty(I)), CB_{D'}(D_{\natural_{D,\C}}\otimes_{eh}X_*\otimes_{eh}D_{\natural_{\C,D}},D_{\natural_{D,\C}}\otimes_{eh}\ell^\infty(I)\otimes_{eh}D_{\natural_{\C,D}})_{D'})\end{align*}
of course the canonical map corresponding to $T\mapsto (1\otimes T\otimes 1).$ It is apparent that for $x\in \ell^{\infty}(I,X)$ fixed, the dual map is indeed the same as $.\#x$ so that one obtains in the way an extension of $.\#.$ as expected.

For the uniqueness of this extension, it suffices to check the canonical map $B\widehat{\otimes}^{D-D}(D\otimes_{\sigma h}^{(I,1)}D)\to [CB(B,(D\otimes_{\sigma h}^{(I,1)}D)_\natural)]^\natural$ has weak-* dense image.

But note that the map decomposes as  \begin{align*}B\widehat{\otimes}^{D-D}(D\otimes_{\sigma h}^{(I,1)}D)&=[CB(B,(D\otimes_{\sigma h}^{(I,1)}D)^\natural)]_\natural\\&\to [CB(B,(D\otimes_{\sigma h}^{(I,1)}D)^\natural)]^\natural\\&\to [CB(B,(D\otimes_{\sigma h}^{(I,1)}D)_\natural)]^\natural
\end{align*}
where the first map is weak-* dense by Goldstine lemma and the interpretation of the module bidual as a complete quotient of the ordinary bidual in \cite[Corol 3.5 (iii)]{M05}, and the second map is also a weak-* continuous complete quotient map by \cite[Prop 3.12 (ii)]{M05} as the module dual of the complete isometry $CB(B,(D\otimes_{\sigma h}^{(I,1)}D)_\natural)\subset CB(B,(D\otimes_{\sigma h}^{(I,1)}D)^\natural)$, in a case where source and target spaces are strong modules.

%By weak-* density of the algebraic tensor product in the weakly integral one, the first extension is necessarily unique if it exists. To prove it exists, one uses the weak-* continuous canonical map $$(D\otimes_{\sigma h}^{(I,1)}D)\otimes_{wint} (\ell^{\infty}(I,X)), X)\to D\otimes_{\sigma h}[(\ell^{\infty}(I))^*\otimes_{wint}(\ell^{\infty}(I,X)), X)]\otimes_{\sigma h}D.$$
%To get the first quotient map, one builds the  canonical completely contractive map% as soon as $A^*$ has the approximation property
%~:

%we build its predual map

For the last extension one starts by seeing $ .\#.\in CB(\ell^{\infty}(I,X),NCB_D(D\otimes_{\sigma h}^{(I,1)} D, X)_D)$  thus giving using normality and the predual map $ .\#.\in CB(\ell^{\infty}(I,X),CB(X_*,[D\otimes_{\sigma h}^{(I,1)} D]_*))$ and by universal property  of $\ell^1$ direct sums $ .\#.\in CB(\ell^{\infty}(I,X),CB(\ell^1(I,X_*),\ell^1(I,[D\otimes_{\sigma h}^{(I,1)} D]_*)))$ this thus gives again by dual map (and checking by hand the modularity by density of finite sums) the expected $$ .\#.\in CB(\ell^{\infty}(I,X),NCB_D(\ell^{\infty}(I,D\otimes_{\sigma h}^{(I,1)} D), \ell^{\infty}(I,X))_D).$$ The extended relation $d\#x=0$ is obvious by normality and the first argument and using weak-* density of finitely supported sequences in $\ell^\infty(I,Ker (p_\eta))$ and the induced map will be obtained as before once proved the quotient relation. 
But the canonical map from the universal property $\ell^\infty(I,D\otimes_{\sigma h}^{(I,1)} D)\to\ell^\infty(I,D\otimes_{w^*h\eta}D) $ is onto and induces obviously a quotient map  $\ell^\infty(I,D\otimes_{\sigma h}^{(I,1)} D)/\ell^\infty(I,Ker (p_\eta))\to\ell^\infty(I,D\otimes_{w^*h\eta}D).$
Moreover, the universal map corresponds to the restriction map $$\ell^\infty(I,D\otimes_{\sigma h}^{(I,1)} D)\simeq CB((D\otimes_{\sigma h}^{(I,1)} D)_*,\ell^\infty(I))\to CB((D\otimes_{w^*h\eta}D)_*,\ell^\infty(I))\simeq \ell^\infty(I,D\otimes_{w^*h\eta}D) $$ and thus since $\ell^\infty(I)$ is injective, by \cite[lemma 4.1.7]{EffrosRuan}, this map is a complete quotient map, and it suffices to identify its kernel to be $\ell^\infty(I,Ker (p_\eta))$ to conclude. We already said the kernel contain this space, but if $x$ in the kernel, it is obvious that the coordinate map $x(i)\in D\otimes_{\sigma h}^{(I,1)} D)$ is sent to the coordinate for $0 \in \ell^\infty(I,D\otimes_{w^*h\eta}D)$, i.e. $x(i)\in Ker(p_\eta)$ and this concludes.

% Conversely, one check the quotient satisfies the universal property for the direct sum. Given $u_i:Y_i\ to D\otimes_{w^*h\eta}D$ indexed by $i\in I$ contractions 
\end{proof}
%We can also extend this multiplication map to a tensor setting :

\section{Generalized Analytic non-commutative functions relative to covariance maps}
\subsection{Universal Analytic functions}

We are now ready to introduce our generalized analytic functions. We consider $B$ a finite von Neumann algebra, and $B_R$ is then the unit ball of radius $R$.% such that $[D,C]=0.$
Let also $I=(I_1,...,I_n)$ a family of sets (made to evaluate $X_i\in \ell^\infty(I_i,N)$ some finite $N\supset B$)
%We are going to define $B\langle X_1,...,X_n:\eta,R\rangle$ %and its cyclic variant $B_c\langle X_1,...,X_n:D,M\rangle$. 
%This will be a set of non-commutative analytic functions %(the second one commuting with $D$), 
%that will be evaluated at $X_i$ in a set of $\eta_i$ variables. The radius of convergence in $X$ variables will be $R$. For a vector space $X$, $X_R$ will be the ball of radius $R$. %For $\epsilon$ a word of length $n$ in two letters $X$, $Y$ on can consider the subspace of $A\in M_2(B)^{\otimes_{ehc(D\oplus C)}n+1}$ such that when seen in $CB((D\oplus C)^{'n},B(L^2(B)^2))$  evaluated in $1_DA(\epsilon(X=1_D,Y=1_C))1_D$ ($1_D$ means $1\oplus 0$ in the decomposition $D\oplus C$) it gives the same value as evaluted in $A(\epsilon(X=1,Y=1)).$ Said otherwise this space $B^{\otimes_{ehc (D,C)}\epsilon}$ is a subspace of the commutant of $D$ in a extended Haagerup tensor power of $B$ with tensors over $D$ at values of the letter $X$, and over $C$ at values of $Y$.
%For instance we have $B^{\otimes_{ehc (D,C)}XY}\subset D'\cap B\otimes_{eh D}B\otimes_{eh C}B.$ We put on $B^{\otimes_{ehc (D,C)}\epsilon}$ the norm induced from $M_2(B)^{\otimes_{ehc(D\oplus C)}n+1}$ times $M^kR^l$ if $\epsilon$ contains $k$ $X$ and $l$ $Y$. There is obviously an easier non-cyclic variant $B^{\otimes_{eh (D,C)}\epsilon}$
 The set of monomials $m$  in $X_1,...,X_n,$ is written $M(X_1,...,X_n)$, $|m|$ its length and $I(m)=(I_{i_1},...,I_{i_{|m|}})$  if $m=X_{i_1}...X_{i_{|m|}}$.%we define $\epsilon(m)=m(X_1=X,...,X_n=X,Y_1=Y,...,Y_m=Y)$
Recall that for $J= (J_1,...,J_n)$, then $B^{\otimes_{\sigma h} [J,1]}:=(...(B\otimes_{\sigma h}^{(J_1,1)}B)...\otimes_{\sigma h}^{(J_n,1)}B).$ We also set : $$B^{\otimes_{\sigma e h} [J,1]}:=\underset{i=1...n}{\overset{e h B}{\bigotimes}}(B\otimes_{\sigma h}^{(J_i,1)}B)\subset \underset{i=1...n}{\overset{\sigma h B}{\bigotimes}}(B\otimes_{\sigma h}^{(J_i,1)}B)=B^{\otimes_{\sigma h} [J,1]}.$$

Thus we can define  analytic functions with $\ell^1$ direct sums in the above sense for $C\subset B$ a von Neumann subalgebra :%$$B_c\langle X_1,...,X_n:D,R,C\rangle:=(D'\cap B)\oplus^1_{D'\cap C}\ell^1_{D'\cap C}\left(R^{|m|}B^{\otimes_{ehc D}(|m|+1)};m\in M(X_1,...,X_n), |m|\geq 1\right),$$
$$B_{\sigma h}\langle X_1,...,X_n:I,R,C\rangle:= B\oplus^{1}_C\ell^1_C\left(R^{|m|}B^{\otimes_{\sigma h}[I(m),1]};m\in M(X_1,...,X_n), |m|\geq 1\right),$$
$$B_{\sigma e h}\langle X_1,...,X_n:I,R,C\rangle:= B\oplus^{1}_C\ell^1_C\left(R^{|m|}B^{\otimes_{\sigma e h}[I(m),1]};m\in M(X_1,...,X_n), |m|\geq 1\right),$$
When $D=C$ we don't write the extra index $C$ and write $C_1-C_2$ if we consider different modules on each side.
$R^{|m|}E$ means the space $E$ with standard  norm multiplied by $R^{|m|}$. We may write $B_{\sigma h}\langle X:I,R,C\rangle$ for short. 
%From the results above, there is a canonical map $B_c\langle X_1,...,X_n:D,R\rangle\to D'\cap B\langle X_1,...,X_n:D,R\rangle.$
%We will also write $B\langle X_1,...,X_n:D,M;Y_1,...,Y_m,C,R\rangle=:B\langle \overline{X}:D,M;\overline{Y},C,R\rangle\supset B_c\langle X_1,...,X_n:D,M;Y_1,...,Y_m,C,R\rangle=:B_c\langle \overline{X}:D,M;\overline{Y},C,R\rangle.$
There is also an obvious variant with several radius of convergence $R,S$, $B_{\sigma h}\langle X_1,...,X_n;I,R;Y_1,...,Y_m:J, S,C\rangle,$ for a second list of sets $J=(J_1,...,J_m).$ 
We will use it freely later.

%We will also use a Fock variant associated to a $D-D$ module $\mathcal{H}\in {}_DNOM_D$, 

%$$\mathcal{F}_1(\mathcal{H},D):=D\oplus^{1}_D\left(\ell^1_D(\mathcal{H}^{\otimes_{eh D}n},n\geq 1)\right).$$
%In this way, if $\mathcal{H}(B,D,R,n)=(B^{\otimes_{eh D} 2})^{\oplus_{1} n},$ with the usual norm multiplied by $R$, we have (non-isometric) contractive embedding $B\langle X_1,...,X_n:D,R\rangle\to\mathcal{F}_1(\mathcal{H}(B,D,R,n),B)$, by the universal property $\ell^1$ sums. 

Then, we have several basic results, the first one considers evaluations :
%[IT SEEMS THAT WE HAVE TO CHOOSE IN BTWN :

%- HAVING THE USUAL SPACE OF VALUE OF FREE DIFF QUOTIENT, A TENSOR PRODUCT, THE ONLY NATURAL ONE BEING AN EH TENSOR PRODUCT, REQUIRING TO HAVE A D MODULE AND THUS AN $\ell^1$ DIRECT SUM OF D-MODULES. HOWEVER, IN THIS CASE, WE DON'T HAVE A COMPOSITION OF POWER SERIES (without complications : direct limits with plenty of $\ell^1$ sums everywhere btween tensor products).
%(composition with polynomials or elements with small non modular $\ell^1$ direct sum norm seems to be possible) 

%- USING THE $\ell^1$ DIRECT SUM OF OPERATOR SPACES WHICH IS NOT AN OPERATOR MODULE AND THUS WITHOUT EH TENSOR PRODUCT, AND THEN HAVING A NATURAL COMPOSITION. iN THIS CASE THE FREE DIFF QUOTIENT IS VALUED IN A TENSOR VALUED SPACE OF ANALYTIC FUNCTIONS AND NOT IN A TENSOR PRODUCT OF ANALYTIC FUNCTIONS
%I CHOSE HERE THE 1ST SOL SINCE I DON'T THINK WE WILL USE COMPOSITION OF ANALYTIC FUNCTIONS]

\begin{theorem}\label{analyticEvaluationsigma}
$B_{\sigma e h}\langle X_1,...,X_n:I,R,C\rangle \subset B_{\sigma h}\langle X_1,...,X_n:I,R,C\rangle$ are  Banach algebras (even matrix normed) and respectively strong and normal dual operator $C$ bimodules, with separately weak-* continuous product in the second case. The algebra generated by $B,X_1,...,X_n$  is weak-* dense in $B_{\sigma h}\langle X:I,R,C\rangle.$ When $C=\C$ they are even $*$-algebras.

For $N\supset B$ a finite von Neumann algebra, $P\in B_{\sigma h}\langle  X_1,...,X_n:I,R,C\rangle$ defines a map (still written) $$P:\prod_{i=1}^n[ \ell^\infty(I_i,N)]_R\to  N,$$  by evaluation (in the algebraic case and then uniquely extended) and $ev_{(X_1,...,X_n)}^{\sigma h}:P\mapsto P(X_1,...,X_n)\in W^*(B,X_1,...,X_n)$ for $X_i\in [\ell^\infty(I_i,N)]_R$ is a weak-* continuous $C$-bimodular algebra morphism.  This is also a $*$ morphism when $C=\C$ and $X_i=X_i^*.$
\end{theorem}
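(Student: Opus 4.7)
The plan is to establish the structural properties first, then the evaluation map. Each summand $B^{\otimes_{\sigma h}[I(m),1]}$ is an iterated normal Haagerup tensor product; by Theorem \ref{DualHaagerup} and Lemma \ref{IteratedModule} it is a normal dual operator $B$-bimodule via its outermost $B$-factors, hence a normal dual $C$-bimodule. The $\sigma eh$ analogue is a strong operator module by \cite[Prop 4.1]{M97} and Theorem \ref{HaagerupModule}(2). Lemma \ref{Ell1Sum} transfers these structures to the $\ell^1_C$ direct sum and preserves the matrix-normed structure. The Banach algebra product on generators $m\cdot m'$ is defined by concatenating the two iterated tensor monomials and contracting the two resulting interior $B$'s via the weak-$*$ continuous completely contractive multiplication $B\otimes_{\sigma h}B\to B$ of Theorem \ref{DualHaagerup}(2); the $R^{|m|}$ weighting makes the product summandwise contractive, and the universal property in Lemma \ref{Ell1Sum} extends it to the whole space. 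Separate weak-$*$ continuity of the product comes from Theorem \ref{DualHaagerup}(2); on the $\sigma eh$ version the contraction $B\otimes_{eh}B\to B$ from Theorem \ref{DualHaagerup}(3) restricts correctly.

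For weak-$*$ density of the polynomial algebra, note that finite sums are norm-dense in the $\ell^1_C$ direct sum (Lemma \ref{Ell1Sum}) and that algebraic elementary tensors are weak-$*$ dense in each $B^{\otimes_{\sigma h}[I(m),1]}$ by \cite[lemma 5.8]{EffrosRuanDual}. When $C=\C$, Theorem \ref{HaagerupModule}(4) provides an antilinear isometric involution on each $B^{\otimes_{\sigma h}[I(m),1]}$ (reversing tensor order and conjugating), extended through Lemma \ref{Ell1Sum} to the $\ell^1$ direct sum, giving the $*$-algebra structure.

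For evaluation, fix $X_i=(X_{i,\ell})_{\ell\in I_i}\in[\ell^\infty(I_i,N)]_R$, viewed as a normal CB map $(\ell^\infty(I_i))^*\to N$ of norm $\leq R$. For a monomial $m=X_{i_1}\cdots X_{i_k}$, use functoriality of $\otimes_{\sigma h}$ for normal CB maps \cite[(5.22)]{EffrosRuanDual} together with the inclusion $B\hookrightarrow N$ to get a weak-$*$ continuous CB map from $B^{\otimes_{\sigma h}[I(m),1]}$ to the iterated $N\otimes_{\sigma h}N\otimes_{\sigma h}\cdots\otimes_{\sigma h}N$ (with $2k+1$ factors), of norm at most $R^{|m|}$; then iteratively apply the weak-$*$ continuous multiplication $N\otimes_{\sigma h}N\to N$ (Theorem \ref{DualHaagerup}(2)) to contract to $N$. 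Combined with the $R^{|m|}$ weight, each summand contribution is a weak-$*$ continuous $C$-bimodular contraction, and Lemma \ref{Ell1Sum} assembles them into a $C$-bimodular contraction $ev^{\sigma h}_X:B_{\sigma h}\langle X:I,R,C\rangle\to W^*(B,X_1,\dots,X_n)\subset N$. On polynomials $ev^{\sigma h}_X$ is ordinary algebraic substitution; by separate weak-$*$ continuity of multiplication on the source (first paragraph) and target (Theorem \ref{DualHaagerup}(2)), and by weak-$*$ density of polynomials, it is an algebra morphism. The $*$-morphism assertion when $C=\C$ and $X_i=X_i^*$ follows from the same density argument applied to the involution constructed above.

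The most delicate step will be ensuring that this iterated evaluation actually extends weak-$*$ continuously from algebraic tensors to the full normal Haagerup tensor product: this is precisely where functoriality of $\otimes_{\sigma h}$ for normal maps \cite[(5.22)]{EffrosRuanDual} and the iterated associativity of Lemma \ref{IteratedModule} carry the load, essentially the same mechanism that furnished the extended pairing $M^{D,I,X}_{\#\sigma h\,*}$ in Proposition \ref{MultiplicationEtaVar}; once this is in place, all morphism and continuity properties follow routinely from weak-$*$ density of the algebra of polynomials.
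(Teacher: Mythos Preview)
Your proof is essentially correct, and for the evaluation map takes a genuinely more direct route than the paper. The paper constructs $ev^{\sigma h}_X$ through the shuffle map $S_m^{\sigma h}$ of Lemma \ref{Shuffle}, landing in a module normal Haagerup product of spaces $[CB(\ell^\infty(I_{i_j},N),(B\otimes_{\sigma h}^{(I_{i_j},1)}B)_\natural)]^\natural$, then applies the extended pairings $M^{B,I_{i_j},N}_{\#\sigma h*}$ of Proposition \ref{MultiplicationEtaVar} before multiplying in $N$. You bypass all of this by simply viewing each $X_{i_j}\in\ell^\infty(I_{i_j},N)\simeq NCB((\ell^\infty(I_{i_j}))^*,N)$ as a normal CB map and invoking functoriality of $\otimes_{\sigma h}$ from \cite[(5.22)]{EffrosRuanDual} to go directly to $N^{\otimes_{\sigma h}(2|m|+1)}$, then multiplying. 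This is legitimate and cleaner for the present statement; the paper's more elaborate route pays off only in Theorem \ref{analyticEvaluation}, where the parallel construction of $ev^{\sigma e h}_X$ and the commutative diagrams relating it to the quotient maps $P_{\eta(m)}$, $p_{\eta_{i_j}}$ require the shuffle/pairing machinery to match evaluations on the $\eta$-quotient.

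Two points to tighten. First, Lemma \ref{Ell1Sum}'s universal property assembles completely contractive module maps but does not by itself transport weak-$*$ continuity to the direct sum; you should make explicit (as the paper does) that each summandwise normal map has a module predual, and that these are gathered via the $c_0$ duality $(\ell^1_{C,C}(E_i))_\natural=c_0((E_i)_\natural)$ to produce a predual for the assembled evaluation. Second, your citation of Theorem \ref{HaagerupModule}(4) for the involution is not quite on point: that item treats $M\otimes_{\sigma h D}M$, whereas here you need the antilinear reversal $a_1\otimes e_{j_1}\otimes\cdots\otimes a_{|m|+1}\mapsto a_{|m|+1}^*\otimes e_{j_{|m|}}\otimes\cdots\otimes a_1^*$ on the iterated product with $(\ell^\infty(I_i))^*$ factors, sending the $m$-summand to the reversed-monomial summand. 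This extends weak-$*$ continuously by the same density-plus-continuity argument you already use elsewhere, but the reference should be replaced by a direct verification.
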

\begin{proof}From lemma \ref{Ell1Sum}, it suffices to note that each term of the direct sum is a strong (resp. normal dual) operator $B$ (thus $C$) bimodule,  to check that the direct sum is a strong (resp. normal dual) operator $C$ bimodule. Both cases are easy by associativity of $eh$ and $\sigma h$ products. 

For $m_1,m_2\in M(X_1,...,X_n)$% $B^{\otimes_{\sigma h}I(m_i)}$ as an extended Haagerup product over $B$ proved in lemma  \ref{SubmoduleEta}, 
from associativity of module $\sigma h$ product  and its definition as quotient, one deduces a $B$ bimodular completely contractive map :
$$R^{|m_1|}B^{\otimes_{\sigma h}[I(m_1),1]}\widehat{\otimes}_BR^{|m_2|}B^{\otimes_{\sigma h}[I(m_2),1]}\to R^{|m_1|}B^{\otimes_{\sigma h}[I(m_1),1]}{\otimes}_{\sigma h B}R^{|m_2|}B^{\otimes_{\sigma h}[I(m_2),1]}\to R^{|m_1|+|m_2|}B^{\otimes_{\sigma h}[I(m_1m_2),1]}.$$
Note that the separate weak-*continuity of this map is easy from the charaterization of \cite[(5.22)]{EffrosRuanDual}.

From the commutation of $\widehat{\otimes}_C$ and $\ell^1_C$ and there universal properties, one deduces the matrix normed algebra structure :$$B_{\sigma h}\langle X_1,...,X_n:I,R,C\rangle\widehat{\otimes}_CB_{\sigma h}\langle X_1,...,X_n:I,R,C\rangle\to B_{\sigma h}\langle X_1,...,X_n:I,R,C\rangle.$$
Separate weak-* continuity is easily extended to this setting by norm density of finite sums in the $\ell^1$ direct sums, the commutations above and weak-* continuity of the canonical inclusion. Note also that once obtained weak-* continuity in the second argument for $B\langle X_1,...,X_n:I,R,C\rangle\widehat{\otimes}_CR^{|m_2|}B^{\otimes_{\sigma h}[I(m_2),1]}\to B\langle X_1,...,X_n:I,R,C\rangle$, one deduces the full case by considering for fixed first argument the predual maps and gathering them by universal property of $c_0$ direct sums. The case of $B_{\sigma e h}$ is similar.

For the weak-* density of polynomials, it suffices to check it in each space of multilinear variables since finite sums of variables are normwise dense in the full infinite sum. But this is easy since from \cite[lemma 5.8]{EffrosRuanDual}, the algebraic tensor products is weak-* dense (using also $\ell^1(I_i)$ is weak-* dense in its bidual by goldstine lemma and thus so are finite sums in it) in the $\sigma$ Haagerup tensor product.

The antilinear map $a_1\otimes e_i ...\otimes e_k\otimes  a_{|m|}\mapsto a_{|m|}^*\otimes e_k\otimes....\otimes e_i\otimes a_1^*$ extends weak-* continuously to $B^{\otimes_{\sigma h} [I,1]} $ giving a completely contractive map we call ${.}^*:B^{\otimes_{\sigma h} [I,1]}\to B^{\otimes_{\sigma h} [I,1]}$   The corresponding map   on $\ell^1$ direct sums gives our $*$ algebra structure.

For $m=X_{i_1}...X_{i_{|m|}}$, we got from associativity of corollary \ref{ShuffleStrongModule}, $B^{\otimes_{\sigma h} [I,1]}\simeq \underset{j=1...|m|}{\overset{\sigma h B}{\bigotimes}}(B\otimes_{\sigma h}^{ (I_j,1)}B)$ weak-* homeomorphically and the last shuffle map of lemma \ref{Shuffle}, a complete contraction weak-* continuous in the second argument: $$S_m^{\sigma h}\left(\underset{j=1...|m|}{\widehat{\bigotimes}}[ \ell^\infty(I_{i_j},N)]\right)\widehat{\otimes}^{B-B} B^{\otimes_{\sigma h} [I,1]}\to \underset{j=1...|m|}{\overset{\sigma h B}{\bigotimes}}\left( [CB(\ell^\infty(I_{i_j},N),(B\otimes_{\sigma h}^{ (I_j,1)}B)_\natural)]^\natural\right)$$
and then compose with the tensor products of the normal extension obtained in proposition \ref{MultiplicationEtaVar} to get a map valued in $N^{\otimes_{\sigma h B}(|m|+1)}$ :

$$M^{\sigma h(I)}:=\underset{j=1...|m|}{\overset{\sigma h B}{\bigotimes}}M_{\#\sigma h*}^{B,I_{i_j},N}: \underset{j=1...|m|}{\overset{\sigma h B}{\bigotimes}}\left( [CB(\ell^\infty(I_{i_j},N),(B\otimes_{\sigma h}^{ (I_j,1)}B)_\natural)]^\natural\right)\to N^{\otimes_{\sigma h B}(|m|+1)},$$

 which we finally compose to the canonical weak-* continuous multiplication map to $m_\sigma: N^{\otimes_{\sigma h B}(|m|+1)}\to N$. Since the map $m_\sigma \circ M^{\sigma h(I)}\circ S_m^{\sigma h}$ is $C$ bimodular, it extends to direct sum, after using also diagonal maps to projective tensor product, and restriction to unit balls to obtain contractions after multiplication of the norm by $R^{|m|}.$ Normality then enables to build module predual maps with the same cb norm, that can be gathered via the universal property of $c_0$ sums and gives the expected weak-* continuity. This concludes to the definition of $ev.$ The $*$ algebra morphism property is easy on the polynomial algebra and extended by separate weak-* continuity and density.
\end{proof}

For a second list of sets $I'=(I'_1,...,I'_m),$ we will write $B_{\sigma (e)h\otimes I'}\langle X_1,...,X_n:I,R,C\rangle$ or for short, $B_{\sigma (e)h\otimes I'}\langle X:I,R,C\rangle$ the subspace of $B_{\sigma (e)h}\langle X_1,...,X_n,Y_1,...,Y_m:(I,I'),R,C\rangle$ linear in each $Y_k$ with $Y_k$'s in increasing order of $k$ in each monomial.

We will use this space to deal with free difference quotients on this space of analytic functions.

\begin{proposition}\label{analyticDerivationsigma}
%
% $$.\#(.,...,.):B_{\otimes k}\langle X:D,R\rangle\times \prod_{i=1}^kB_{\otimes l_i}\langle X:D,R,\C\rangle\to B_{\otimes(\sum_il_i)}\langle X:D,R\rangle,$$ $l_i\geq 0$ induced from composition in the $Y_k$ variables. (We wrote and may write $B_{\otimes 0}=B,B_{\otimes 1}=B_{\otimes }$)  There is also an induced composition map (in $X_k$ variables) as above on them %[WRITE DEF OF $B_{\otimes}$] 
% {\color{Fuchsia} and there is a cyclic variant $B_{\otimes k c}$}

$B_{\sigma (e) h}\langle X_1,...,X_n:I,R,C\rangle$ is mapped completely boundedly  weak-* continuously by the iterated free difference quotients  $\partial_{(i_1,...,i_k)}^{k}=(\partial_{X_{i_1}}\otimes 1^{\otimes k-1})\ldots \partial_{X_{i_k}}$ to 
\begin{align*}&B_{\sigma (e)h\otimes I_{(i_1,...,i_k)}}\langle X:I,S,C\rangle\\&\subset B_{\sigma  h}\langle X:D,S,C-\C\rangle\otimes_{\sigma h}^{(I_{i_1},1)}B_{\sigma  h}\langle X:D,S,\C\rangle^{\otimes_{\sigma  h} (k-1)}\otimes_{\sigma h}^{(I_{i_k},1)}B_{\sigma h}\langle X:D,S,\C-C\rangle\end{align*}
 (these last inclusions being completely contractive but not isometric) with $I_{(i_1,...,i_k)}=(I_{i_1},...,I_{i_k})$ for $S<R.$ 

Moreover $B_{\sigma (e) h \otimes I'}\langle  X_1,...,X_n:I,R,C\rangle$ is a (matrix normed) bimodule over $B_{\sigma (e) h}\langle X_1,...,X_n:I,R,C\rangle$ with separately weak-* continuous product so that $\partial_{X_{i}}$ becomes a derivation and, for $N\supset B$ a finite von Neumann algebra, $P\in B_{\sigma (e) h \otimes I'}\langle  X_1,...,X_n:I,R,C\rangle$ defines a map (still written) $$P:\prod_{i=1}^n[ \ell^\infty(I_i,N)]_R\to  N^{\otimes_{\sigma (e) h}[I',1]},$$  by evaluation (in the algebraic case and then uniquely extended) and $ev_{(X_1,...,X_n)}^{\sigma (e) h \otimes I'}:P\mapsto P(X_1,...,X_n)\in W^*(B,X_1,...,X_n)^{\otimes_{\sigma (e) h}[I',1]},$ for $X_i\in [ \ell^\infty(I_i,N)]_R$, is a (weak-* continuous in the case without index e) $C$-bimodular contraction and compatible with the $B_{\sigma (e) h}\langle X_1,...,X_n:I,R,C\rangle$ module structure with the relation $$ ev_{(X_1,...,X_n)}^{\sigma  h \otimes I'}\circ\iota_{\otimes I'}=\iota_{N,I'}\circ ev_{(X_1,...,X_n)}^{\sigma e h \otimes I'}$$ for the canonical inclusions $\iota_{N,I'}:N^{\otimes_{\sigma e h}[I',1]}\to N^{\otimes_{\sigma  h}[I',1]}, \iota_{\otimes I'}: B_{\sigma e h \otimes I'}\langle  X_1,...,X_n:I,R,C\rangle\to B_{\sigma  h \otimes I'}\langle  X_1,...,X_n:I,R,C\rangle$

%We also have relations with composition for $P,Q_i$ as above |A REVOIR si $|I|\neq 1$]: \begin{align}\label{CompositionAnalytic}\begin{split}&\partial^{k}_{(j_1,...,j_k)}(P(Q_1,...Q_n))=\sum_{l=2}^k\sum_{n_1,...,n_l}\sum_{1\leq i_1<...<i_l=k} \\ &(\partial_{(n_1,...,n_l)}^{l}(P))(Q_1,...Q_n)\#(\partial_{(j_1,...,j_{i_1})}^{i_1}Q_{n_1},\partial_{(j_{i_1+1},...,j_{i_2})}^{i_2-i_1} Q_{n_2},...,\partial_{(j_{i_{l-1}+1},...,j_{k})}^{k-i_{l-1}} Q_{n_l}).\end{split}\end{align}
\end{proposition}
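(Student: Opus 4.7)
The plan is to build the free difference quotient first on each homogeneous multilinear piece $R^{|m|}B^{\otimes_{\sigma h}[I(m),1]}$ and then assemble globally via the universal property of $\ell_C^1$-direct sums (Lemma \ref{Ell1Sum}). For a monomial $m=X_{i_1}\cdots X_{i_{|m|}}$ and each position $j$ with $i_j=i$, associativity of the normal Haagerup tensor product (Lemma \ref{IteratedModule}) together with the unital weak-$*$ continuous inclusion $B\to B\otimes_{\sigma h}^{(I_i,1)}B$ obtained by dualizing the evaluation on $\ell^1(I_i)$ at a basis vector, inserts an $(I_i,1)$-slot between the $j$-th and $(j{+}1)$-st copies of $B$. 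This yields a completely contractive, weak-$*$ continuous map $\partial^{(j)}_{X_i,m}$; summing over the at most $|m|$ positions and comparing with the smaller radius gives a contribution on the $m$-piece of norm at most $|m|(S/R)^{|m|}$. Since $\sum_{|m|\geq 1}|m|(S/R)^{|m|}<\infty$ for $S<R$, the universal property of the $\ell^1_C$-sum furnishes the desired completely bounded weak-$*$ continuous $C$-bimodular map $\partial_{X_i}$ into $B_{\sigma h\otimes(I_i)}\langle X:I,S,C\rangle$. Iteration uses a chain of intermediate radii $R>S_1>\cdots>S_k=S$ with an extra $1^{\otimes(k-1)}$ factor implemented by the functoriality of the $\sigma h$-product. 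For the extended Haagerup variant one uses that $\partial^{(j)}_{X_i,m}$ is built from the unit inclusion at a single tensor slot and therefore restricts, by injectivity of the module extended Haagerup product (Theorem \ref{HaagerupModule}(2)), to a completely contractive map between the corresponding $\otimes_{\sigma e h}$-subspaces; the geometric estimate is identical.

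The $B_{\sigma(e)h}\langle X:I,R,C\rangle$-bimodule structure on $B_{\sigma(e)h\otimes I'}\langle X:I,R,C\rangle$ is built exactly as the algebra structure in Theorem \ref{analyticEvaluationsigma}, using associativity of the (extended) normal Haagerup product to concatenate the $X$-slots while leaving the $Y$-slots untouched; separate weak-$*$ continuity is inherited from Theorem \ref{HaagerupModule}(4). The Leibniz identity $\partial_{X_i}(fg)=\partial_{X_i}(f)g+f\partial_{X_i}(g)$ is the standard combinatorial computation on the polynomial subalgebra, and it extends to all of $B_{\sigma(e)h}\langle X:I,R,C\rangle$ by the weak-$*$ density of polynomials (Theorem \ref{analyticEvaluationsigma}), the separate weak-$*$ continuity of the bimodule product, and the weak-$*$ continuity of $\partial_{X_i}$ established above.

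To define the evaluation $ev^{\sigma(e)h\otimes I'}_{(X_1,\ldots,X_n)}$, I repeat the construction used for $ev^{\sigma h}$ in Theorem \ref{analyticEvaluationsigma}: on each monomial piece the shuffle map of Lemma \ref{Shuffle} pulls the $X$-slots out of the outer $I'$-tensor structure, after which the normal multiplication maps $M^{B,I_{i_j},N}_{\#\sigma h*}$ of Proposition \ref{MultiplicationEtaVar} contract the $X$-slots against the tuples $X_i\in[\ell^\infty(I_i,N)]_R$, leaving the $Y$-tensor slots alone. Assembling over monomials by the $\ell^1_C$-universal property produces the contraction into $W^*(B,X)^{\otimes_{\sigma(e)h}[I',1]}$; bimodularity and compatibility with the algebra action are algebraic on polynomials and extend by the same weak-$*$ density/continuity argument as before. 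The relation $ev^{\sigma h\otimes I'}\circ\iota_{\otimes I'}=\iota_{N,I'}\circ ev^{\sigma e h\otimes I'}$ is immediate on polynomials, and extends to all of $B_{\sigma e h\otimes I'}\langle X:I,R,C\rangle$ by weak-$*$ density, since both sides equal $ev^{\sigma h\otimes I'}\circ\iota_{\otimes I'}$ which is weak-$*$ continuous. The principal obstacle I anticipate is the bookkeeping around the shuffle map in the extended-Haagerup target: one has to verify that the construction actually lands in the smaller $\otimes_{\sigma e h}$-space rather than only in the ambient $\otimes_{\sigma h}$-space, which requires invoking the commutative diagram $I\circ(1\otimes k\otimes 1)\circ S_{P\,eh}=S^{D,D}_{P\,\sigma h}\circ(1\otimes J)$ of Lemma \ref{Shuffle} in conjunction with the injectivity of the module extended Haagerup product.
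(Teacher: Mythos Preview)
Your overall strategy coincides with the paper's: work monomial by monomial, use the geometric bound $|m|(S/R)^{|m|}$, assemble via the universal property of $\ell^1_C$-sums, and handle the bimodule structure, evaluation, and Leibniz rule by the same shuffle/multiplication machinery and weak-$*$ density arguments as in Theorem~\ref{analyticEvaluationsigma}. Two points, however, need correction or completion.

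First, your description of $\partial^{(j)}_{X_i,m}$ as ``inserting an $(I_i,1)$-slot'' via a unit map $B\to B\otimes_{\sigma h}^{(I_i,1)}B$ is not what actually happens. On the underlying space $B^{\otimes_{\sigma h}[I(m),1]}$ for $m=X_{i_1}\cdots X_{i_{|m|}}$, the $(I_i,1)$-slot at position $j$ is already present (since $i_j=i$); the map $\partial^{(j)}$ is simply the identity on this space, re-indexed so that the $j$-th slot is now labeled as a $Y$-slot rather than an $X$-slot. Your description would raise the tensor degree by two, which is incorrect. This is exactly the paper's ``formal differentiation'' landing in a direct sum of $n_{X_i}(m)$ copies of the same underlying space.

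Second, you do not address the completely contractive inclusion of $B_{\sigma(e)h\otimes I'}\langle X:I,S,C\rangle$ into the iterated tensor product of analytic-function spaces displayed in the second line of the statement. The paper handles this via the normal-Haagerup analogue of the Oikhberg--Pisier lemma stated immediately after the proposition, which identifies $(E_1\otimes_{\sigma h}F_1)\oplus^1(E_2\otimes_{\sigma h}F_2)$ as a completely isometric subspace of $(E_1\oplus^1 E_2)\otimes_{\sigma h}(F_1\oplus^1 F_2)$; this is applied inductively to pass from the $\ell^1$-sum-of-monomial-pieces description to the tensor-product-of-$\ell^1$-sums description. Without this lemma the stated inclusion is unjustified.
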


\begin{proof}
Let us write $n_{X_i}(m)$ the number of variable $X_i$ in a monomial.
For the free difference quotient, we start from the formal differentiation :
$$\partial_{X_i} :B_{\sigma (e) h}\langle X_1,...,X_n:\eta,R,C\rangle\to\ell^1_C\left(S^{|m|}(B^{\otimes_{\sigma (e) h}[I(m),1]})^{\oplus^1_\C n_{X_i}(m)};m\in M(X_1,...,X_n), |m|\geq 1\right).$$
 These are completely bounded maps since $|m|(S/R)^{|m|}$ are bounded.
For the free difference quotient, to see there is a canonical map to the expected $B\langle X_1,...,X_n:D,R\rangle\otimes_{eh D}B\langle X_1,...,X_n:D,R\rangle,$ one uses the following lemma to each term of the direct sum inductively, and then the universal property of $\ell^1$ direct sums to combine them. %(We of course apply after mapping $\ell^1_C$ to $\ell^1_D$ direct sums).
 The iterated case is similar and building the evaluation map similar to the one of the previous theorem. The matrix normed module map is induced by the multiplication on $B_{\sigma (e)h}\langle X_1,...,X_n,Y_1,...,Y_m:(I,I'),R,C\rangle.$
%The free difference quotient and cyclic gradient are built with the usual definitions (on algebraic tensor corresponding to polynomials) using the action of cyclic permutations we built, and the commutativity of $\ell^1$ direct sums and extended Haagerup tensor products for the range of the free difference quotient [DETAIL OR GIVE REFERENCE].
The relation between evaltuations is similar to a relation that will be explained in detail in the proof of Theorem \ref{analyticEvaluation} bellow and also left to the reader.
The derivation property for $\partial_{X_{i}}$ is obvious on polynomials and then extended by weak-* density using various separate weak-* continuities.
\end{proof}

The following result is a normal Haagerup variant of \cite[lemma 7]{OP97}, the proof is the same using universal property of $\ell^1-c_0$ direct sums and standard duality tricks.%\cite[Th 3.9]{M97}. 
%We thus leave the details to the reader.
\begin{lemma}
Let $E_1,E_2,F_1,F_2$ dual operator spaces, let $X=(E_1\oplus^1 E_2)\otimes_{\sigma h}(F_1\oplus^1 F_2)$. Let $S$ be the closure of the subspace obtained by functoriality of Haagerup tensor product  $(E_1\otimes_{\sigma h } F_1)+(E_2\otimes_{\sigma h  } F_2)\subset X.$ Then we have :
$$S\simeq (E_1\otimes_{\sigma h } F_1)\oplus^1(E_2\otimes_{\sigma h } F_2),$$
completely isometrically.
\end{lemma}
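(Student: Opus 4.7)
The plan is to produce the canonical map
\[
\phi : (E_1 \otimes_{\sigma h} F_1) \oplus^1 (E_2 \otimes_{\sigma h} F_2) \to S \subset X
\]
from the universal property of $\ell^1$ direct sums, then to construct a completely contractive left inverse by a duality argument at the level of preduals, which forces $\phi$ to be a complete isometry onto $S$. First, by functoriality of $\otimes_{\sigma h}$ for weak-* continuous completely bounded maps (Theorem \ref{DualHaagerup}) applied to the weak-* continuous complete isometric inclusions $\epsilon^E_i : E_i \hookrightarrow E_1 \oplus^1 E_2$ and $\epsilon^F_j : F_j \hookrightarrow F_1 \oplus^1 F_2$ from Lemma \ref{Ell1Sum}, one obtains weak-* continuous completely contractive maps $\iota_{ii} := \epsilon^E_i \otimes_{\sigma h} \epsilon^F_i : E_i \otimes_{\sigma h} F_i \to X$. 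The universal property of $\oplus^1$ in Lemma \ref{Ell1Sum} then assembles $\iota_{11}$ and $\iota_{22}$ into the completely contractive $\phi$, whose image is by construction norm-dense in $S$.

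Next I switch to the preduals. By Lemma \ref{Ell1Sum} the predual of $E_1 \oplus^1 E_2$ is $E_{1*} \oplus^\infty E_{2*}$, and by the defining identity $A^* \otimes_{\sigma h} B^* = (A \otimes_{eh} B)^*$ from Theorem \ref{DualHaagerup}, one has
\[
X_* = (E_{1*} \oplus^\infty E_{2*}) \otimes_{eh} (F_{1*} \oplus^\infty F_{2*})
\]
and
\[
\bigl[(E_1 \otimes_{\sigma h} F_1) \oplus^1 (E_2 \otimes_{\sigma h} F_2)\bigr]_* = (E_{1*} \otimes_{eh} F_{1*}) \oplus^\infty (E_{2*} \otimes_{eh} F_{2*}).
\]
Injectivity of $\otimes_{eh}$ (Theorem \ref{DualHaagerup}), applied on both factors to the complete isometric inclusions $E_{i*} \hookrightarrow E_{1*} \oplus^\infty E_{2*}$ and $F_{i*} \hookrightarrow F_{1*} \oplus^\infty F_{2*}$, turns each diagonal piece $E_{i*} \otimes_{eh} F_{i*}$ into a complete isometric subspace of $X_*$. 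The universal property of the two-term $\oplus^\infty$ direct sum (namely, a completely contractive map out of it is determined by a pair of completely contractive maps on the summands) then combines these into a completely contractive
\[
\Phi : (E_{1*} \otimes_{eh} F_{1*}) \oplus^\infty (E_{2*} \otimes_{eh} F_{2*}) \to X_*.
\]

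Now I dualize. The dual $\Phi^* : X \to (E_1 \otimes_{\sigma h} F_1) \oplus^1 (E_2 \otimes_{\sigma h} F_2)$ is weak-* continuous and completely contractive. Using the identity $\pi^E_i \circ \epsilon^E_j = \delta_{ij}\, \mathrm{id}$ from Lemma \ref{Ell1Sum} together with functoriality of $\otimes_{\sigma h}$, one checks directly that $\Phi^*(z) = (\pi_{11}(z), \pi_{22}(z))$, where $\pi_{ii} = \pi^E_i \otimes_{\sigma h} \pi^F_i$ are the weak-* continuous completely contractive maps produced by functoriality from the component projections. In particular $\Phi^* \circ \phi = \mathrm{id}$ at every matrix level: indeed for $(a,b)$ in the $\ell^1$ sum, $\pi_{kl}(\iota_{ii}(\cdot)) = \delta_{ki}\delta_{li} \cdot \mathrm{id}$ gives $\Phi^*(\phi(a,b)) = (a,b)$. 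A completely contractive $\phi$ admitting a completely contractive left inverse is a complete isometry, so $\phi$ identifies $(E_1 \otimes_{\sigma h} F_1) \oplus^1 (E_2 \otimes_{\sigma h} F_2)$ completely isometrically with its image, which by density is exactly~$S$.

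The main obstacle is the middle step: one must verify that the two natural inclusions $E_{i*} \otimes_{eh} F_{i*} \hookrightarrow X_*$ remain complete isometries. This rests squarely on the injectivity of the extended Haagerup tensor product---a property \emph{not} shared by $\otimes_{\sigma h}$---and is precisely the reason the argument has to be run through the preduals rather than directly on $X$. Once this complete isometry is secured, the $\oplus^\infty$ universal property and the dualization step are formal.
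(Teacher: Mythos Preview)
Your overall strategy is sound and in fact dualizes the paper's argument: the paper shows that the predual map
\[
\psi:(E_{1*}\oplus^\infty E_{2*})\otimes_{eh}(F_{1*}\oplus^\infty F_{2*})\ \longrightarrow\ (E_{1*}\otimes_{eh}F_{1*})\oplus^\infty(E_{2*}\otimes_{eh}F_{2*})
\]
is a complete quotient (so that $\psi^*=\phi$ is a complete isometry), whereas you try to build a completely contractive section $\Phi$ of $\psi$ and then observe $\Phi^*\circ\phi=\mathrm{id}$. These are equivalent routes, and your identification $\Phi^*(z)=(\pi_{11}(z),\pi_{22}(z))$ is correct.

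There is, however, a genuine gap at the crucial step. You justify that $\Phi$ is completely contractive by invoking ``the universal property of the two-term $\oplus^\infty$ direct sum (namely, a completely contractive map out of it is determined by a pair of completely contractive maps on the summands)''. This is false: $\oplus^\infty$ is a \emph{product}, so its universal property governs maps \emph{into} it, not out of it; a pair of complete contractions on the summands only produces a map out of $A\oplus^\infty B$ of cb-norm at most $2$ in general. So as written you have not shown $\Phi$ is completely contractive, and without that your left-inverse argument collapses.

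The statement $\|\Phi\|_{cb}\le 1$ is nonetheless true, but it requires exactly the kind of factorization work the paper does. Concretely: given $u\in M_n(E_{1*}\otimes_{eh}F_{1*})$, $v\in M_n(E_{2*}\otimes_{eh}F_{2*})$ with norms $<1$, take $eh$-factorizations $u=a_1\odot b_1$, $v=a_2\odot b_2$ with $\|a_i\|,\|b_i\|<1$; then $(\tilde a_1,\tilde a_2)\in M_{n,I\sqcup J}(E_{1*}\oplus^\infty E_{2*})$ has norm $\max(\|a_1\|,\|a_2\|)<1$ (because $M_{n,K}(A\oplus^\infty B)=M_{n,K}(A)\oplus^\infty M_{n,K}(B)$), and likewise for the column, giving $\|\tilde u+\tilde v\|<1$. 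This is precisely the content of the paper's Christensen--Sinclair step (in the separately weak-$*$ continuous version of \cite[Thm~5.1]{EffrosRuanDual}), which produces the contractive lift of $(u,v)$ into $X_*$. Once you insert this argument in place of the incorrect universal-property appeal, your proof goes through and is essentially the paper's proof in dual form.
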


\begin{proof}From the complete isometric inclusion 
$$(E_1\otimes_{h } F_1)\oplus^1(E_2\otimes_{ h } F_2)\to (E_1\oplus^1 E_2)\otimes_{ h}(F_1\oplus^1 F_2)$$ one gets by duality the composition of canonical inclusion and  complete quotient map $$((E_1)_*\oplus^\infty (E_2)_*)\otimes_{e h}((F_1)_*\oplus^\infty (F_2)_*)\subset (E_1^*\oplus^\infty E_2^*)\otimes_{e h}(F_1^*\oplus^\infty F_2^*)\to(E_1^*\otimes_{eh } F_1^*)\oplus^\infty(E_2^*\otimes_{e h } F_2^*) $$
but by the universal property one have a map $$((E_1)_*\oplus^\infty (E_2)_*)\otimes_{e h}((F_1)_*\oplus^\infty (F_2)_*)\to ((E_1)_*\otimes_{eh } (F_1)_*)\oplus^\infty((E_2)_*\otimes_{e h } (F_2)_*)$$ 
that coincides with the previous map when composed with the canonical injection.

To see this map is a complete quotient map, take contractions $u\in ((E_1)_*\otimes_{eh } (F_1)_*)\subset CB(E_1\otimes_{h } F_1,\C), v\in ((E_2)_*\otimes_{e h } (F_2)_*)\subset CB(E_2\otimes_{h } F_2,\C)$ and now use Christensen-Sinclair theorem as in \cite[lemma 7]{OP97} but in the version of \cite[Thm 5.1]{EffrosRuanDual} so that $u(x_1\otimes x_2)=u_1(x_1)u_2(x_2), v(y_1\otimes y_2)=v_1(y_1)v_2(y_2)$ with $v_1\in CB(E_2,B(\C,H)),v_2\in CB(F_2,B(H,\C))$, $u_1\in CB(E_1,B(\C,H)),u_2\in CB(F_1,B(H,\C))$ contractions with all the maps weak-* continuous. Then the map $\alpha=u_1\oplus v_1:(E_1\oplus^1 E_2)\to B(\C,H), \beta =u_2\oplus v_2:(F_1\oplus^1 F_2)\to B(H,\C)$ are complete contractions and weak-* continuous and $\alpha.\beta$ is the extension of $u+v$ to $(E_1\oplus^1 E_2)\otimes_{ h}(F_1\oplus^1 F_2)$. By separate weak-* continuity this extension is in $((E_1)_*\oplus^\infty (E_2)_*)\otimes_{e h}((F_1)_*\oplus^\infty (F_2)_*)$ so that one gets the stated complete quotient map that gives the desired complete isometry by duality.

%obtained from the complete quotient maps $$((E_1)_*\oplus^\infty (E_2)_*)\otimes_{e h}((F_1)_*\oplus^\infty (F_2)_*)\to ((E_i)_*\otimes_{eh } (F_i)_*).$$
%Let us see the direct sum is a complete quotient map again.
%One can define a canonical map $$((E_1)_*\otimes_{eh } (F_1)_*)\oplus^\infty((E_2)_*\otimes_{e h } (F_2)_*)\to ((E_1)_*\oplus^\infty (E_2)_*)\otimes_{e h}((F_1)_*\oplus^\infty (F_2)_*)$$ since for decomposition $u\odot v+w\odot z,$ $u\in M_{n,I}((E_1)_*),v\in M_{I,n}((F_1)_*), w\in M_{n,I}((E_2)_*),z\in M_{I,n}((F_2)_*)$, $(u\oplus w)\odot (v\oplus z)$ gives a decomposition of the image with $||(u\oplus w)||_{ M_{n,I}((E_1)_*\oplus^\infty (E_2)_*)}=\max(||u||,||w||), ||(v\oplus t)||_{ M_{I,n}((F_1)_*\oplus^\infty (F_2)_*)}=\max(||v||,||t||)$ and thus ||(u\oplus w)\odot (v\oplus z)||\leq\max(||u||,||w||) \max(||v||,||t||)\leq 
%
\end{proof}
It will be convenient for us to gather evaluations of free difference quotient in a map with values in series to obtain a Taylor expansion. We need extra notation. For convenience, we write for a monomial $m=X_{i_1}...X_{i_{|m|}}$, $\partial^{|m|}_m=\partial^{|m|}_{(i_1,...,i_{|m|})}.$ For $m\in M(Y_1,...,Y_n)$ with $m=Y_{i_1}...Y_{i_{|m|}}$ we write $m(X)=X_{i_1}...X_{i_{|m|}}\in M(Y_1,...,Y_n)$. A submonomial $l\subset m$ is merely $l=Y_{j_1}...Y_{j_{|l|}}$ for which there is an increasing sequence $k_1<...<k_{|l|}$ with $j_o=i_{k_o}$ the sequence $k$ being fixed in the submonomial (as a range of a map is fixed in a map).  $(l\subset m)(T,Z)$ is then the monomial in $T_1,...,T_{|l|}, Z_1,...,Z_n$ defined by $(l\subset m)(T,Z)=Z_{i_1}...Z_{i_{k_1}-1}T_1Z_{i_{k_1}+1}...Z_{i_{k_2}-1}T_2...$ i.e. we replace $Y_i$ by $Z_i$ except at the positions of $k$ where we substitute $T_i$ in increasing order.

Finally recall that, with $\ell^1$ direct sums, comes canonical injections so that for a word in $m\in M(Z_1,...,Z_n,Y_1,...,Y_m)$
having only one of each $Y_i$ in increasing order, there is a completely contractive weak-* continuous maps $$\iota_m: R^{|m|}B^{\otimes_{\sigma (e)h}[(I,I')(m),1]}\to B_{\sigma (e) h}\langle X:I,R,C\rangle.$$
\begin{lemma}\label{TaylorAnalytic}Let $N\supset B$ a finite von Neumann algebra  $X_i\in [ \ell^\infty(I_i,N)]$ and $n\in M(Y_1,...,Y_n)$, %$X=(X_1,...,X_n)$, 
$A=W^*(B,X_1,...,X_n)$. Assume $\min_i[R-||X_i||]=S>0,$ then for any $P\in B_{\sigma (e) h \otimes I'}\langle  X_1,...,X_n:I,R,C\rangle$ we have 
\begin{align*}ev_{(X_1,...,X_n)}^{\sigma (e) h-an}(\partial^{|n|}_{n(X)}P):=&\left([\partial^{|n|}_{n(X)}P](X)\oplus \bigoplus_{n\subset m\in M(Y_1,...,Y_n)}\iota_{(n\subset m)(Y,Z)}(ev_{(X_1,...,X_n)}^{\sigma (e) h\otimes I(m)}[\partial^{|m|}_{m(X)}(P)])\right)\\&\in A_{\sigma (e) h \otimes I(n)}\langle  Z_1,...,Z_n:I,S,C\rangle\end{align*}
 and $ev_{(X_1,...,X_n)}^{\sigma eh -an}$ is a (weak-* continuous in the case without index e) $C$-bimodular contraction and algebra homomorphism such that for any $Z_i\in [ \ell^\infty(I_i,N)]_S$
we have $$ev_{(Z_1,...,Z_n)}^{\sigma h}(ev_{(X_1,...,X_n)}^{\sigma h-an}(\partial^{|n|}_{n(X)}P))=ev_{(X_1+Z_1,...,X_n+Z_n)}^{\sigma h}(\partial^{|n|}_{n(X)}P).$$
 \end{lemma}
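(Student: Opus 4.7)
The plan is to establish the lemma on the weak-$*$ dense subalgebra of polynomials first, where everything reduces to an explicit combinatorial identity, and then to extend to arbitrary $P$ using weak-$*$ density (Theorem~\ref{analyticEvaluationsigma}) and weak-$*$ continuity of all the building blocks (Proposition~\ref{analyticDerivationsigma} and Lemma~\ref{Ell1Sum}).

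First I would verify the statement on polynomials. For a polynomial $P$ in $B$ and the $X_j$'s, the iterated free difference quotient $\partial^{|m|}_{m(X)}P$ is a finite sum, one term for each occurrence of the subword $m(X)=X_{j_1}\cdots X_{j_{|m|}}$ as a submonomial of $P$, the term being the concrete elementary tensor in $B^{\otimes_{\sigma(e)h}[I(m),1]}$ formed by the remaining monomial blocks. The identity $ev^{\sigma h}_Z\circ ev^{\sigma h-an}_X(\partial^{|n|}_{n(X)}P)=ev^{\sigma h}_{X+Z}(\partial^{|n|}_{n(X)}P)$ is then nothing but the non-commutative binomial expansion: substituting $X_j+Z_j$ for $X_j$ and expanding, the terms group according to the positions at which a $Z$ is selected, producing exactly the sum over $m\supset n$ of $\iota_{(n\subset m)(Y,Z)}([\partial^{|m|}_{m(X)}P](X))$. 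The algebra homomorphism property (for empty $n$) and the $C$-bimodularity follow on polynomials from the Leibniz rule for $\partial$ matched with the corresponding Leibniz rule for the product in $A_{\sigma h\otimes I(n)}\langle Z:I,S,C\rangle$.

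Second, I would establish that $ev^{\sigma(e)h-an}$ is a complete contraction on polynomials, which will in particular ensure convergence of the series in its definition. For a monomial of degree $K$ in the $X$'s of source norm $R^K$, the hypothesis $\|X_i\|\leq R-S$ together with the complete contractions of Theorem~\ref{analyticEvaluationsigma} and Proposition~\ref{analyticDerivationsigma} bounds the $(m\supset n)$-summand evaluated at $X$ by $\binom{K-|n|}{|m|-|n|}(R-S)^{K-|m|}$; weighting against the $S^{|m|-|n|}$ radius factor of the target $\ell^1_C$-direct sum and applying the binomial identity $\sum_j\binom{K-|n|}{j}S^j(R-S)^{K-|n|-j}=R^{K-|n|}$ yields a total bound of $R^{K-|n|}$, which matches the source norm of $\partial^{|n|}_{n(X)}P$ inside $B^{\otimes_{\sigma(e)h}[I(n),1]}$. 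The same bookkeeping at the matrix level (using commutation of $\widehat{\otimes}^{C-C}$ with $\ell^1_C$-sums from Lemma~\ref{Ell1Sum} and the shuffle maps of Lemma~\ref{Shuffle}) yields complete contractivity.

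Third, each coordinate of the direct sum defining $ev^{\sigma h-an}$ is a weak-$*$ continuous composition of the iterated derivative, the evaluation, and the canonical inclusions into the $\ell^1_C$-sum, so the complete contraction on polynomials extends uniquely to a weak-$*$ continuous complete contraction on all of $B_{\sigma h\otimes I'}\langle X:I,R,C\rangle$. The extended Haagerup variant is handled via the compatibility $ev^{\sigma h\otimes I'}\circ\iota_{\otimes I'}=\iota_{N,I'}\circ ev^{\sigma eh\otimes I'}$ of Proposition~\ref{analyticDerivationsigma} and norm density of polynomials. The bimodularity, algebra homomorphism, and Taylor identity extend from polynomials by separate weak-$*$ continuity of the product on $A_{\sigma h\otimes I(n)}\langle Z:I,S,C\rangle$ and of the two evaluation maps $ev^{\sigma h}_Z$ and $ev^{\sigma h}_{X+Z}$. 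The main obstacle will be the complete contraction estimate of the second step: the scalar binomial identity handles the norm bookkeeping cleanly, but upgrading it to a matrix-level complete contraction requires coherently organizing the reassociations of tensor factors through the shuffle maps and the interaction of $\widehat{\otimes}^{C-C}$ with $\ell^1_C$-sums, so that the binomial bound really propagates from the scalar to the full matricial norm rather than only giving bounded-map estimates.
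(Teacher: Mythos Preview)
Your approach is essentially the same as the paper's: establish the Taylor identity on polynomials (where it is the non-commutative binomial expansion), prove complete contractivity via the binomial formula $\sum_j\binom{K}{j}S^jT^{K-j}=(S+T)^K=R^K$ with $T=\max_i\|X_i\|$, and extend by weak-$*$ density and continuity.

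The one organizational difference worth noting is how the paper handles precisely the obstacle you flag at the end. Rather than proving complete contractivity of the composite map directly, the paper factors $ev_X^{\sigma(e)h-an}$ as a composition of two maps. The first is a purely formal differentiation
\[
P\longmapsto \Big(\partial^{|n|}_{n(X)}P\oplus\bigoplus_{n\subset m}\iota_{(n\subset m;P)(Y,Z;X)}\partial^{|m|}_{m(X)}P\Big)\in B_{\sigma(e)h\otimes I(n)}\langle Z:I,S;X:I,T,C\rangle,
\]
landing in the two-variable analytic function space with radii $S$ and $T$; on each monomial block this is just a map of the form $P\mapsto(P,\dots,P)$ with copies weighted by the binomial coefficients, so its complete contractivity is the scalar binomial identity together with the universal property of $\ell^1_C$-sums, and no delicate shuffle-map bookkeeping is needed. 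The second map is the already-constructed evaluation in the $X$-variables (Theorem~\ref{analyticEvaluationsigma}), whose complete contractivity and weak-$*$ continuity are known. This factorization cleanly sidesteps the matrix-level coherence issue you anticipate; your direct argument should also go through, but the paper's decomposition is the tidier route.
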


\begin{proof}
The last equation is obvious on polynomials and is nothing but a rewriting of \cite[(17)]{dabrowski:SPDE} in a more abstract language, or said otherwise this is a non-commutative Taylor formula for polynomials. Once $ev_{(X_1,...,X_n)}^{\sigma (e) h-an}$ built, this will extend by weak-* density and continuity.
Let $T=\max(||X_i||)$ so that $S+T=R$.

To build $ev_{(X_1,...,X_n)}^{\sigma (e) h-an}$ we decompose in the composition of two maps, one associated with a formal differentiation \begin{align*}ev^{\sigma (e) h-an}(\partial^{|n|}_{n(X)}P):=&\left([\partial^{|n|}_{n(X)}P]\oplus \bigoplus_{n\subset m\in M(Y_1,...,Y_n)}\iota_{(n\subset m;P)(Y,Z;X)}([\partial^{|m|}_{m(X)}(P)])\right)\\&\in B_{\sigma (e) h \otimes I(n)}\langle  Z_1,...,Z_n: I,S;X_1,...,X_n:I,T,C\rangle\end{align*}
with $\iota_{(n\subset m;P)(Y,Z;X)}$ as before but with all the other variables of $P$ formally evaluated at $X$. Seeing this map is completely contractive is only applying binomial formula on $(S+T)^n$ for each monomial block $P$ in which case the map reduces to a map of the form $P\mapsto(P....,P)$ of copied monomial index with different weight in the monomial expansion. It then suffices to apply a universal property of $\ell^1$ sums to go beyond the monomial case.%For instance see that the classical ell^1 direct sum map has the right bound, deduce for the dual map the bound, see it is only applying to the matrix case to get the same completely bounded result and deduce back by duality the completely bounded case.
 This map is then composed with evaluation in $X_i$ variables which is treated as before and as much weak-* continuous as before.
\end{proof}

We can also use evaluation in a flexible way to produce operations on non-commutative analytic functions we will need later :

\begin{lemma}
\label{PartialEvaluation}Let $N\supset B$ a finite von Neumann algebra  $X_i\in [ \ell^\infty(I_i,N)]_R$ and $n=X_{i_1}...,X_{i_m}\in M(X_1,...,X_n)$. %$X=(X_1,...,X_n)$, $A=W^*(B,X_1,...,X_n)$. Assume $\min_i[R-||X_i||]=S>0,$ then 
For any $P\in B_{\sigma (e) h }\langle  X_1,...,X_n:I,R,C\rangle$ 
$a_1,...,a_{m-1}\in A=W^*(B,X_1,...,X_n),j_k\in I_{i_k},$ there is $Q\in  CB(A,B_{\sigma (e) h }\langle  X_1,...,X_n:I,R,C\rangle)$ such that for any $a,b,c\in A$, \begin{align*}\tau(ev_{(X_1,...,X_n)}^{\sigma h}(Q(a))b)=\tau(bS^{(m,i_m)}_{j_m}a_{m-1}...a_1S^{(1,i_1)}_{j_1}a[ev_{(X_1,...,X_n)}^{\sigma h}(\partial_n^{|n|}P))]\#(S^{(1,i_1)},...,S^{(m,i_m)}),\end{align*}

\begin{align*}\tau&([ev_{(X_1,...,X_n)}^{\sigma h\otimes I_k}(\partial_{X_{i_{m+1}}}Q(a))\#S^{(m+1,i_{m+1})}]cS^{(m+1,i_{m+1})}_{j_{m+1}}b)\\&=\tau(cS^{(m+1,i_{m+1})}_{j_{m+1}}bS^{(m,i_m)}_{j_m}a_{m-1}...a_1S^{(1,i_1)}_{j_1}a[ev_{(X_1,...,X_n)}^{\sigma h}(\partial_{nX_{i_{m+1}}}^{|n|+1}P))]\#(S^{(1,i_1)},...,S^{(m+1,i_{m+1})})),\end{align*}
where $(S^{(1,i_1)},...,S^{(m+1,i_{m+1})}))$ are a semicircular system free with amalgamation of one another and from $A$ and $S^{(j,i_j)}$ being an $\eta_{i_j}$ semicircular over $B$.
\end{lemma}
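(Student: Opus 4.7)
The plan is to derive, via free Wick calculus, the explicit formula that the trace identity forces upon $ev^{\sigma h}_{X}(Q(a))$, and then realize this formula as the evaluation of an analytic function $Q(a)\in B_{\sigma(e)h}\langle X:I,R,C\rangle$ built using the multiplication machinery of Proposition \ref{MultiplicationEtaVar}.

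First I would compute the right-hand side explicitly. Writing $\partial^{|n|}_{n(X)}P=\sum_{\vec k}P_0^{(\vec k)}\otimes^{k_1}P_1^{(\vec k)}\otimes^{k_2}\cdots\otimes^{k_m}P_m^{(\vec k)}$ via the tensor description of Proposition \ref{analyticDerivationsigma}, evaluation and $\#$-ing with $(S^{(1,i_1)},\ldots,S^{(m,i_m)})$ yield, when inserted into the outer product $bS^{(m,i_m)}_{j_m}a_{m-1}\cdots a_1S^{(1,i_1)}_{j_1}a$, a cyclic trace involving $2m$ semicircular factors. By free independence with amalgamation over $B$ of the families $\{S^{(k,i_k)}\}_k$ from one another and from $A$, only the unique non-crossing \emph{rainbow} pair partition survives (outer $S^{(k,i_k)}_{j_k}$ matched to inner $S^{(k,i_k)}_{k_k}$). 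Iteratively contracting from innermost outwards using the Wick identity $\tau(c_0S_jc_1S_kc_2)=\tau(c_0\eta_{jk}(E_B(c_1))c_2)$ (valid for all $c_\ell\in A$ by freeness of $S$ from $A$ over $B$) gives the right-hand side as $\tau(b\cdot F(a;X))$, with
\begin{equation*}
F(a;X)=\sum_{\vec k}\eta_{i_m,j_mk_m}\!\Bigl(E_B\!\bigl(a_{m-1}\eta_{i_{m-1},j_{m-1}k_{m-1}}\!\bigl(\cdots\eta_{i_1,j_1k_1}(E_B(aP_0^{(\vec k)}(X)))P_1^{(\vec k)}(X)\cdots\bigr)P_{m-1}^{(\vec k)}(X)\bigr)\Bigr)P_m^{(\vec k)}(X)\in A.
\end{equation*}

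Second, I would construct $Q(a)$ so that $ev^{\sigma h}_{X}(Q(a))=F(a;X)$, by realizing the same iterated contraction at the level of the analytic function algebra. Embedding $\partial^{|n|}_{n(X)}P$ in the iterated module tensor product via $\iota_{\otimes I(n)}$ (Proposition \ref{analyticDerivationsigma}, Lemma \ref{IteratedModule}, and the substitute of associativity of Theorem \ref{SubmoduleEtaTHM}), one collapses slots one at a time: at step $k$, multiply slot $k-1$ on the left by $a_{k-1}$ (respectively $a$ for $k=1$) using the $A$-action induced by the algebra structure of $B_{\sigma(e)h}\langle X\rangle$ and the separate weak-$*$ continuities of Theorem \ref{analyticEvaluationsigma}, apply $\eta_{i_k,j_k\cdot}\circ E_B$ to the slot to produce a $B$-valued coefficient, and absorb it into the next slot through the $B$-bimodule structure of the tensor product. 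Boundedness and complete-boundedness of $\eta_{i_k}$, $E_B$ and the multiplication maps of Proposition \ref{MultiplicationEtaVar} ensure $Q\in CB(A,B_{\sigma(e)h}\langle X:I,R,C\rangle)$; the construction extends from polynomial $P$ to general $P$ by CB bounds and the weak-$*$ density of polynomials. The first identity then holds by construction, as the algebraic inverse of the freeness computation.

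For the second identity, I would redo the freeness calculation with one extra semicircular family, applied to $\partial^{|n|+1}_{nX_{i_{m+1}}}P=\partial^{|n|}_n(\partial_{X_{i_{m+1}}}P)$: the rainbow pairing extends by one nested pair and contributes a single additional $\eta_{i_{m+1},j_{m+1}k_{m+1}}\circ E_B$ layer. On the left-hand side, $\partial_{X_{i_{m+1}}}Q(a)$ (Proposition \ref{analyticDerivationsigma}) followed by $\#$-ing with $S^{(m+1,i_{m+1})}$ and pairing in the trace with $cS^{(m+1,i_{m+1})}_{j_{m+1}}b$ introduces exactly one additional Wick contraction of the matching form; it remains to verify that the Leibniz action of $\partial_{X_{i_{m+1}}}$ on the iteratively constructed $Q(a)$ reaches each of the slots $P_\ell^{(\vec k)}$ of $\partial^{|n|}_{n(X)}P$ and reproduces the full tensor decomposition of $\partial^{|n|+1}_{nX_{i_{m+1}}}P$. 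The hard part will be precisely this compatibility: a naive construction that prematurely evaluates the internal $P_\ell^{(\vec k)}(X)$ into frozen elements of $A$ loses the ability of $\partial_{X_{i_{m+1}}}$ to differentiate them; the resolution is to perform every contraction at the level of $B_{\sigma(e)h}\langle X\rangle$ rather than $A$, so that the $P_\ell^{(\vec k)}$'s remain analytic functions with live $X$-dependence, with the multiplication maps of Proposition \ref{MultiplicationEtaVar} and the separate weak-$*$ continuities from Theorem \ref{analyticEvaluationsigma} providing the right framework for this. Once this compatibility is secured, both identities follow uniformly from the freeness computation.
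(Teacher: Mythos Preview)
Your approach is essentially the one the paper sketches: it identifies $Q(a)$ as the ``polynomial version'' of $E_A[S^{(m,i_m)}_{j_m}a_{m-1}\cdots a_1S^{(1,i_1)}_{j_1}a\,[\partial^{|n|}_nP(X)]\#(S^{(1)},\ldots,S^{(m)})]$, builds it monomial-by-monomial, and passes to the $\ell^1_C$-sum by $C$-bimodularity. Your explicit rainbow-pairing computation and the resulting nested $\eta_{i_k}\circ E_B$ formula for $F(a;X)$ are exactly what that conditional expectation unwinds to.

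However, the difficulty you flag as ``the hard part'' is illusory, and your proposed resolution is both unnecessary and not implementable as stated. Recall that $\partial^{m+1}_{(i_1,\ldots,i_{m+1})}=(\partial_{X_{i_1}}\otimes 1^{\otimes m})\cdots(\partial_{X_{i_m}}\otimes 1)\,\partial_{X_{i_{m+1}}}$: the last derivative is applied \emph{first}, and all subsequent derivatives act only on the leftmost slot. Consequently, in every term of $\partial^{|n|+1}_{nX_{i_{m+1}}}P$ the position of the extra $X_{i_{m+1}}$ lies strictly to the right of the positions $p_1<\cdots<p_m$ picked out by $\partial^{|n|}_n$, i.e.\ inside $P_m=P_{>p_m}$. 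Thus the ``naive'' construction---evaluate $P_0,\ldots,P_{m-1}$ at $X$, absorb them into the $B$-coefficient via iterated $\eta_{i_k}\circ E_B$, and keep only $P_m$ as a live formal monomial---already satisfies the second identity: $\partial_{X_{i_{m+1}}}Q(a)$ differentiates $P_m$ alone, and one checks directly (using $\tau$-symmetry of $\eta_{i_{m+1}}$ and traciality) that this matches the extra Wick contraction on the right-hand side. Your alternative, keeping all $P_\ell$ live inside $B_{\sigma(e)h}\langle X\rangle$, cannot be carried out: applying $\eta_{i_k,j_k\cdot}\circ E_B$ requires a $B$-valued argument, which forces evaluation of the inner slots. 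So drop that detour; the rest of your argument goes through and coincides with what the paper leaves to the reader.
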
 
Once $Q(a)$ is understood as a polynomial version of $$E_A[S^{(m,i_m)}_{j_m}a_{m-1}...a_1S^{(1,i_1)}_{j_1}a[ev_{(X_1,...,X_n)}^{\sigma h}(\partial_n^{|n|}P))]\#(S^{(1,i_1)},...,S^{(m,i_m)})],$$
 the result is obvious, we make $C$ act on the space for $Q$ by $(c_1Qc_2)(a)=Q(ac_1)c_2$ so that the map $P\mapsto Q$ will be $C$ bimodular and will pass to modular $\ell^1$ direct sums. The existence of this map by separate evaluation is easy on each monomial space and left to the reader.

\subsection{Definition and evaluations}

We are now ready to introduce our generalized analytic functions relative to covariance maps. We consider $B$ a finite von Neumann algebra. % such that $[D,C]=0.$
Let also $\eta=(\eta_1,...,\eta_n)$ a family of covariance maps $\eta_i:B\to M_{I_i}(B).$

We are going to define $B\langle X_1,...,X_n:\eta,R\rangle$ %and its cyclic variant $B_c\langle X_1,...,X_n:D,M\rangle$. 
This will be a set of non-commutative analytic functions %(the second one commuting with $D$), 
that will be evaluated at $X_i$ in a set of $\eta_i$ variables. The radius of convergence in $X$ variables will be $R$. For a vector space $X$, $X_R$ will be the ball of radius $R$. %For $\epsilon$ a word of length $n$ in two letters $X$, $Y$ on can consider the subspace of $A\in M_2(B)^{\otimes_{ehc(D\oplus C)}n+1}$ such that when seen in $CB((D\oplus C)^{'n},B(L^2(B)^2))$  evaluated in $1_DA(\epsilon(X=1_D,Y=1_C))1_D$ ($1_D$ means $1\oplus 0$ in the decomposition $D\oplus C$) it gives the same value as evaluted in $A(\epsilon(X=1,Y=1)).$ Said otherwise this space $B^{\otimes_{ehc (D,C)}\epsilon}$ is a subspace of the commutant of $D$ in a extended Haagerup tensor power of $B$ with tensors over $D$ at values of the letter $X$, and over $C$ at values of $Y$.
%For instance we have $B^{\otimes_{ehc (D,C)}XY}\subset D'\cap B\otimes_{eh D}B\otimes_{eh C}B.$ We put on $B^{\otimes_{ehc (D,C)}\epsilon}$ the norm induced from $M_2(B)^{\otimes_{ehc(D\oplus C)}n+1}$ times $M^kR^l$ if $\epsilon$ contains $k$ $X$ and $l$ $Y$. There is obviously an easier non-cyclic variant $B^{\otimes_{eh (D,C)}\epsilon}$
 The set of monomials $m$  in $X_1,...,X_n,$ is written $M(X_1,...,X_n)$, $|m|$ its length and $\eta(m)=(\eta_{i_1},...,\eta_{i_{|m|}})$  if $m=X_{i_1}...X_{i_{|m|}}$.%we define $\epsilon(m)=m(X_1=X,...,X_n=X,Y_1=Y,...,Y_m=Y)$

Thus we can define  analytic functions with $\ell^1$ direct sums in the above sense for $C\subset B$ a von Neumann subalgebra :%$$B_c\langle X_1,...,X_n:D,R,C\rangle:=(D'\cap B)\oplus^1_{D'\cap C}\ell^1_{D'\cap C}\left(R^{|m|}B^{\otimes_{ehc D}(|m|+1)};m\in M(X_1,...,X_n), |m|\geq 1\right),$$
$$B\langle X_1,...,X_n:\eta,R,C\rangle:= B\oplus^{1}_C\ell^1_C\left(R^{|m|}B^{\otimes_{w^*h}\eta(m)};m\in M(X_1,...,X_n), |m|\geq 1\right).$$
When $D=C$ we don't write the extra index $C$.
$R^{|m|}E$ means the space $E$ with standard  norm multiplied by $R^{|m|}$.
%From the results above, there is a canonical map $B_c\langle X_1,...,X_n:D,R\rangle\to D'\cap B\langle X_1,...,X_n:D,R\rangle.$
%We will also write $B\langle X_1,...,X_n:D,M;Y_1,...,Y_m,C,R\rangle=:B\langle \overline{X}:D,M;\overline{Y},C,R\rangle\supset B_c\langle X_1,...,X_n:D,M;Y_1,...,Y_m,C,R\rangle=:B_c\langle \overline{X}:D,M;\overline{Y},C,R\rangle.$
There is also an obvious variant with several radius of convergence $R,S$, $B\langle X_1,...,X_n:\eta,R;Y_1,...,Y_m: \eta',S\rangle$ for a second list of covariance maps $\eta'=(\eta'_1,...,\eta'_m).$ We will use it freely later.

%We will also use a Fock variant associated to a $D-D$ module $\mathcal{H}\in {}_DNOM_D$, 

%$$\mathcal{F}_1(\mathcal{H},D):=D\oplus^{1}_D\left(\ell^1_D(\mathcal{H}^{\otimes_{eh D}n},n\geq 1)\right).$$
%In this way, if $\mathcal{H}(B,D,R,n)=(B^{\otimes_{eh D} 2})^{\oplus_{1} n},$ with the usual norm multiplied by $R$, we have (non-isometric) contractive embedding $B\langle X_1,...,X_n:D,R\rangle\to\mathcal{F}_1(\mathcal{H}(B,D,R,n),B)$, by the universal property $\ell^1$ sums. 
Note we will still call weak-* topology the topology on $B_{\sigma e h}\langle X_1,...,X_n:I,R,C\rangle \subset B_{\sigma h}\langle X_1,...,X_n:I,R,C\rangle$ induced by the weak-* topology (even if this is a dense subspace).

Then, we have several basic results, the first one considers evaluation and composition :
%[IT SEEMS THAT WE HAVE TO CHOOSE IN BTWN :

%- HAVING THE USUAL SPACE OF VALUE OF FREE DIFF QUOTIENT, A TENSOR PRODUCT, THE ONLY NATURAL ONE BEING AN EH TENSOR PRODUCT, REQUIRING TO HAVE A D MODULE AND THUS AN $\ell^1$ DIRECT SUM OF D-MODULES. HOWEVER, IN THIS CASE, WE DON'T HAVE A COMPOSITION OF POWER SERIES (without complications : direct limits with plenty of $\ell^1$ sums everywhere btween tensor products).
%(composition with polynomials or elements with small non modular $\ell^1$ direct sum norm seems to be possible) 

%- USING THE $\ell^1$ DIRECT SUM OF OPERATOR SPACES WHICH IS NOT AN OPERATOR MODULE AND THUS WITHOUT EH TENSOR PRODUCT, AND THEN HAVING A NATURAL COMPOSITION. iN THIS CASE THE FREE DIFF QUOTIENT IS VALUED IN A TENSOR VALUED SPACE OF ANALYTIC FUNCTIONS AND NOT IN A TENSOR PRODUCT OF ANALYTIC FUNCTIONS
%I CHOSE HERE THE 1ST SOL SINCE I DON'T THINK WE WILL USE COMPOSITION OF ANALYTIC FUNCTIONS]

\begin{theorem}\label{analyticEvaluation}
$B\langle X_1,...,X_n:\eta,R,C\rangle$ is a Banach algebra (even matrix normed) and a normal dual operator $C$ bimodule. $$P^{\sigma e h}:B_{\sigma e h}\langle X_1,...,X_n:I,R,C\rangle\to B\langle X_1,...,X_n:\eta,R,C\rangle$$ is a weak-* continuous complete quotient map inducing a final topology of the weak-* topology (induced by $B_{\sigma h}$) which is separated and we will call $\sigma e h$-weak * topology. The algebra generated by $B,X_1,...,X_n$ in it is $\sigma e h$-weak-* dense  thus weak-* dense . When $C=\C$ it is even a $*$-algebra.

For $N\supset B$ a finite von Neumann algebra, $P\in B\langle  X_1,...,X_n:\eta,R,C\rangle$ defines a map (still written) $$P:\prod_{i=1}^n[\eta_i'\cap \ell^\infty(I_i,N)]_R\to  N,$$  by evaluation (in the algebraic case and then uniquely extended) and $ev_{(X_1,...,X_n)}:P\mapsto P(X_1,...,X_n)\in W^*(B,X_1,...,X_n)$ for $X_i\in [\eta_i'\cap \ell^\infty(I_i,N)]_R$ is a $C$-bimodular algebra morphism. Moreover we have $ev_{(X_1,...,X_n)}\circ P^{\sigma e h}=ev_{(X_1,...,X_n)}^{\sigma h}$ so that $ev_{(X_1,...,X_n)}$ is $\sigma eh$-weak-* continuous. This is also a $*$ morphism when $C=\C$ and $X_i=X_i^*.$

%[To Be CORRECTED] There is a composition rule, for $P\in B\langle X_1,...,X_n:D,R\rangle, Q_1,...,Q_n\in D'\cap B\langle X_1,...,X_n:D,S,\C\rangle$, such that $||Q_i||\leq R,$  then there is a composition $P(Q_1,...,Q_n)\in B\langle  X_1,...,X_n:D,S\rangle,$ with similar cyclic variants compatible with canonical maps. If $X=(X_1,...X_n)$ we also write $B\langle  X:D,S\rangle.$
\end{theorem}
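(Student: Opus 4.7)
My plan is to systematically transport the structure from Theorem \ref{analyticEvaluationsigma} via the complete quotient map $P_\eta$ from Theorem \ref{SubmoduleEtaTHM}, then build the evaluation by replacing the universal multiplication $M_{\#\sigma h*}^{B,I,N}$ with $M_{\#w^*h}^{B,\eta,N}$ from Proposition \ref{MultiplicationEtaVar}, which is what the commutant condition $X_i\in\eta_i'$ is designed to enable.

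First I would construct $P^{\sigma eh}$. For each monomial $m$, Theorem \ref{SubmoduleEtaTHM} provides a weak-$*$ continuous, $B$-bimodular complete quotient map $P_{\eta(m)}: B^{\otimes_{\sigma eh}[I(m),1]}\to B^{\otimes_{w^*h}\eta(m)}$. By Lemma \ref{Ell1Sum} these assemble via the universal property of $\ell^1_C$ direct sums into a complete quotient $P^{\sigma eh}$; the fact that a coproduct of complete metric surjections is a complete metric surjection follows from dualizing that a coproduct of complete isometries is a complete isometry. The normal dual operator $C$-bimodule structure on the target comes from Lemma \ref{SubmoduleEta} (which shows each $B^{\otimes_{w^*h}\eta(m)}\in {}_BNDOM_B$) plus the stability of $\ell^1_{C,C}$ under this class from Lemma \ref{Ell1Sum}. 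The matrix normed Banach algebra structure is obtained exactly as in Theorem \ref{analyticEvaluationsigma}: associativity in Theorem \ref{SubmoduleEtaTHM} supplies a $B$-bimodule contraction $B^{\otimes_{w^*h}\eta(m_1)}\widehat\otimes_B B^{\otimes_{w^*h}\eta(m_2)}\to B^{\otimes_{w^*h}\eta(m_1m_2)}$, compatible with the $\sigma eh$-multiplication through $P_\eta$ thanks to the diagram $p_{s\eta}\circ J_\eta=P_\eta$, so the multiplication descends to the quotient and again gathers by the universal property. The $\sigma eh$-weak-$*$ topology, defined as the final (quotient) topology for $P^{\sigma eh}$, is separated because $P^{\sigma eh}$ is a complete quotient map between dual operator spaces, so its kernel is norm-closed, and in fact $\sigma eh$-weak-$*$ closed sets correspond exactly to the weak-$*$ closed sets below through the quotient. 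Density of polynomials is then immediate: they are weak-$*$ dense upstairs by Theorem \ref{analyticEvaluationsigma}, and $P^{\sigma eh}$ maps the upstairs polynomial algebra onto the downstairs one with dense image in the $\sigma eh$-weak-$*$, hence weak-$*$, topology.

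For the evaluation at $X_i\in[\eta_i'\cap\ell^\infty(I_i,N)]_R$, I would mimic the construction of $ev^{\sigma h}$ monomial by monomial, but using the key input that $M_{\#w^*h}^{B,\eta_j,N}$ from Proposition \ref{MultiplicationEtaVar} is defined precisely on $[\eta_j'\cap \ell^\infty(I_j,N)]\widehat\otimes^{B-B}(B\otimes_{w^*h\eta_j}B)$. Concretely, for a monomial $m=X_{i_1}\cdots X_{i_{|m|}}$, using the identification of $B^{\otimes_{w^*h}\eta(m)}$ in Theorem \ref{SubmoduleEtaTHM} as an iterated $eh_B$-product of $B\otimes_{w^*h\eta_{i_j}}B$'s, applying shuffle maps from Lemma \ref{Shuffle} (as in Theorem \ref{analyticEvaluationsigma}) and the tensor product of the various $M_{\#w^*h}^{B,\eta_{i_j},N}$'s, then composing with the canonical contraction $N^{\otimes_{eh B}(|m|+1)}\to N$ gives a completely contractive $C$-bimodular evaluation on each summand, assembling to $ev_{(X_1,\dots,X_n)}$ via the universal property. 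The compatibility $ev_{(X_1,\dots,X_n)}\circ P^{\sigma eh}=ev^{\sigma h}_{(X_1,\dots,X_n)}$ then follows diagrammatically from Proposition \ref{MultiplicationEtaVar} (the uniqueness part of the normal extension $M_{\#\sigma h*}^{B,I,N}$ combined with the factorization of $M_{\#\sigma h}^{B,I,N}$ through $M_{\#w^*h}^{B,\eta,N}$ on the commutant), and $\sigma eh$-weak-$*$ continuity of $ev$ is then automatic as its composition with a quotient map is weak-$*$ continuous. The algebra morphism property follows from the agreement on polynomials (where it is clear) plus weak-$*$ density and separate weak-$*$ continuity of multiplication on both sides.

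The $*$-structure when $C=\C$ and the $*$-morphism property for $X_i=X_i^*$ are obtained by transporting the involution from $B_{\sigma h}\langle X:I,R\rangle$ through $P^{\sigma eh}$. The $\tau$-symmetry of each $\eta_i$ makes the $\eta_i$-semicircular generators self-adjoint, so the involution on $B^{\otimes_{\sigma h}[I(m),1]}$ commutes with $.\#(S^{(1)},\dots,S^{(|m|)})$ up to reversing the order of the semicirculars (which yields the same generating family), hence stabilizes $\mathrm{Ker}(p_{s\eta(m)})=\mathrm{Ker}(P_{\eta(m)})$ and descends. Compatibility with $ev$ at self-adjoint $X_i$ reduces to the polynomial case and extends by weak-$*$ density. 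The main technical obstacle I foresee is not any individual verification but the bookkeeping of several compatible shuffle/multiplication diagrams (between $\sigma h$, $eh$, $\sigma eh$, and $w^*h\eta$ variants) — in particular, checking that the multiplication downstairs really is associative and matrix-normed requires one to carefully invoke the identification $M^{\otimes_{w^*h}\eta}\simeq \underset{j}{\overset{ehM}{\bigotimes}}(M\otimes_{w^*h\eta_j}M)$ of Theorem \ref{SubmoduleEtaTHM} at every concatenation step, which is what lets the two-tensor multiplication iterate consistently to higher tensors.
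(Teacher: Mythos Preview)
Your approach is essentially the paper's, and the ingredients you cite are the right ones. Two points deserve sharpening.

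First, the paper organizes the compatibility $ev_{(X)}\circ P^{\sigma eh}=ev^{\sigma h}_{(X)}$ as a genuine two-step argument by introducing an \emph{intermediate} evaluation $ev^{\sigma eh}_{(X)}$ on $B_{\sigma eh}\langle X:I,R,C\rangle$, built with the $eh$-shuffle $S_m^{\sigma eh}$ of Corollary~\ref{ShuffleStrongModule} (not Lemma~\ref{Shuffle}) and the maps $M_{\#\sigma h}^{B,I_{i_j},N}$. One then checks separately (a) $ev_{(X)}\circ P^{\sigma eh}=ev^{\sigma eh}_{(X)}$, which is purely a quotient statement at the $eh$-level and uses only the factorization $M_{\#w^*h}\circ(1\otimes p_\eta)=M_{\#\sigma h}$ together with the functoriality in Corollary~\ref{ShuffleStrongModule}; and (b) $ev^{\sigma eh}_{(X)}=ev^{\sigma h}_{(X)}\circ\iota$, which is an inclusion statement relating $eh$ and $\sigma h$ levels and needs specifically the commutative diagram $I\circ(1\otimes k\otimes 1)\circ S_{Peh}=S_{P\sigma h}\circ[1\otimes J]$ from Lemma~\ref{Shuffle}, the extension relation $M_{\#\sigma h*}\circ k=M_{\#\sigma h}$ from Proposition~\ref{MultiplicationEtaVar}, and crucially $m_\sigma\circ I_N=m$ (the paper notes this last identity has no analogue for the projection between normal and extended Haagerup products, which is why the split matters). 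Your single ``follows diagrammatically'' paragraph blends these two mechanisms; the argument goes through, but the paper's split is what makes it checkable.

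Second, your separatedness argument for the $\sigma eh$-weak-$*$ topology is not right: norm-closedness of $\mathrm{Ker}\,P^{\sigma eh}$ says nothing about Hausdorffness of a final topology coming from an induced weak-$*$ topology. The paper instead observes that $P^{\sigma eh}=P^{\sigma h}\circ\iota$ with $P^{\sigma h}$ weak-$*$-to-weak-$*$ continuous, so the final topology on $B\langle X:\eta,R,C\rangle$ is finer than its own weak-$*$ topology, which is Hausdorff; separatedness follows immediately.
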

The reader should note that our proof of the relation with previous evaluations uses strongly one considers $P^{\sigma e h}$ and not a variant defined on $B_{\sigma h}\langle X:I,R,C\rangle.$

\begin{proof}
For $m_1,m_2\in M(X_1,...,X_n)$, from the canonical isomorphism of $B^{\otimes_{w^*h}\eta(m_i)}$ as an extended Haagerup product over $B$ proved in lemma  \ref{SubmoduleEta}, one deduces a $B$ bimodular completely contractive map :
$$R^{|m_1|}B^{\otimes_{w^*h}\eta(m_1)}\widehat{\otimes}_BR^{|m_2|}B^{\otimes_{w^*h}\eta(m_2)}\to R^{|m_1|+|m_2|}B^{\otimes_{w^*h}\eta(m_1m_2)}.$$
From the commutation of $\widehat{\otimes}_C$ and $\ell^1_C$ and there universal properties, one deduces the matrix normed algebra structure :$$B\langle X_1,...,X_n:\eta,R,C\rangle\widehat{\otimes}_CB\langle X_1,...,X_n:\eta,R,C\rangle\to B\langle X_1,...,X_n:\eta,R,C\rangle.$$
From lemma \ref{Ell1Sum}, it suffices to note that each term of the direct sum is a normal dual operator $B$ (thus $C$) bimodule, what we did in lemma \ref{SubmoduleEta}, to check that the direct sum is a normal dual operator $C$ bimodule. 

$P^{\sigma e h}$ is easily combined from the maps $P_\eta$ built in theorem \ref{SubmoduleEtaTHM}. From the commutative diagram there, we also have a similar complete quotient map $P^{\sigma  h}:B_{\sigma h}\langle X_1,...,X_n:I,R,C\rangle\to B\langle X_1,...,X_n:\eta,R,C\rangle$ which is obviously weak-* continuous from this property for $p_{s\eta}$ and such that if $\iota:B_{\sigma e h}\langle X_1,...,X_n:I,R,C\rangle\subset B_{\sigma h}\langle X_1,...,X_n:I,R,C\rangle$ is the inclusion we have $P^{\sigma  h}\circ\iota =P^{\sigma e h}$. One deduces the stated weak-* continuity for $P^{\sigma e h}.$ The separatedness of the final topology then follows from the separatedness of the weak-* topology at the target.  The $\sigma e h$-weak-* density then follows readily from theorem \ref{analyticEvaluationsigma}.

%For the weak-* density of polynomials, it suffices to check it in each space of multilinear variables since finite sum of variables are normwise dense in the full infinite sum. From weak-* continuity of the injection it suffices to check the result for each $B^{\otimes_{w^*h}\eta(m)}$ which is by definition a quotient of a $\sigma$ Haagerup tensor product. Thus from \cite[lemma 5.8]{EffrosRuanDual}, the algebraic tensor products is weak-* dense (using also $\ell^1(I_i)$ is weak-* dense in its bidual by goldstine lemma and thus so are finite sums in it) in the $\sigma$ Haagerup tensor product, and thus by weak-* continuity of the quotient map, also in the quotient $B^{\otimes_{w^*h}\eta(m)}$.

The antilinear map $a_1\otimes a_2\mapsto a_2^*\otimes a_1^*$ extends weak-* continuously to $B\otimes_{\sigma h} B $ giving a completely contractive map we call ${.}^*:B\otimes_{\sigma h} B\to B\otimes_{\sigma h} B$ and $(X\#S^{(1)})^*=X^*\#S^{(1)}$. Thus it induces a map $B\otimes_{w^*h \eta_i}B\to B\otimes_{w^*h \eta_i}B.$ Of course we extend it to $B^{\otimes_{w^*h} \eta(m)}\to B^{\otimes_{w^*h} \eta(m)}$ by $a_1\otimes_B ...\otimes_B a_{|m|}\mapsto a_{|m|}^*\otimes_B....\otimes_B a_1^*, a_i\in B\otimes_{w^*h \eta(m)_i}B.$ The corresponding map   on $\ell^1$ direct sums gives our $*$ algebra structure.

We now build evaluation maps $ev_{(X_1,...,X_n)},ev_{(X_1,...,X_n)}^{\sigma e h}$ the second one on $B_{\sigma e h}\langle X_1,...,X_n:I,R,C\rangle$. We show the expected relation $ev_{(X_1,...,X_n)}P^{\sigma e h}=ev_{(X_1,...,X_n)}^{\sigma e h}$ and we will then show $ev_{(X_1,...,X_n)}^{\sigma e h}$ coincides with the restriction of the map built in theorem \ref{analyticEvaluationsigma}. Separating the reasoning in two steps will enable to divide questions around quotients from those around inclusion of extended to normal Haagerup products that have to be treated with different preliminary arguments we established earlier.

For $m=X_{i_1}...X_{i_{|m|}}$, we got from associativity and shuffle maps of corollary \ref{ShuffleStrongModule} a complete contraction $$S_m:\left(\underset{j=1...|m|}{\widehat{\bigotimes}}[\eta_{i_j}'\cap \ell^\infty(I_{i_j},N)]\right)\widehat{\otimes}^{B-B} B^{\otimes_{w^*h} \eta(m)}\to \underset{j=1...|m|}{\overset{eh B}{\bigotimes}}\left([\eta_{i_j}'\cap \ell^\infty(I_{i_j},N)]\widehat{\otimes}^{B-B}(B\otimes_{w^*h \eta_{i_j}}B)\right)$$
and similarly
$$S_m^{\sigma e h}:\left(\underset{j=1...|m|}{\widehat{\bigotimes}}[ \ell^\infty(I_{i_j},N)]\right)\widehat{\otimes}^{B-B} B^{\otimes_{\sigma e h} [I(m),1]}\to \underset{j=1...|m|}{\overset{eh B}{\bigotimes}}\left([\ell^\infty(I_{i_j},N)]\widehat{\otimes}^{B-B}(B\otimes_{\sigma h}^{( I_{i_j},1)}B)\right).$$

From the commutative diagram in the stated corollary, one deduces the relation between them if we recall $P_{\eta(m)}=p_{\eta_{i_1}}\otimes_{eh B}...\otimes_{eh B}p_{\eta_{i_{|m|}}}$ from theorem \ref{SubmoduleEtaTHM}, we have when restricted to common domains $$S_m\circ (1\otimes P_{\eta(m)})=[\underset{j=1...|m|}{\overset{eh B}{\bigotimes}}(1\otimes  p_{\eta_{i_j}})]\circ S_m^{\sigma e h}.$$

Then we can compose with the tensor products of maps obtained in proposition \ref{MultiplicationEtaVar} to get a map valued in $N^{\otimes_{eh B}|m|}$ :
$$M^{\sigma e h(I)}:=\underset{j=1...|m|}{\overset{eh B}{\bigotimes}}M_{\#\sigma h}^{B,I_{i_j},N}: \underset{j=1...|m|}{\overset{eh B}{\bigotimes}}\left([\ell^\infty(I_{i_j},N)]\widehat{\otimes}^{B-B}(B\otimes_{\sigma h}^{( I_{i_j},1)}B)\right)\to N^{\otimes_{eh B}(|m|+1)},$$
$$M^{w^* h(\eta)}:=\underset{j=1...|m|}{\overset{eh B}{\bigotimes}}M_{\#w^* h}^{B,\eta_{i_j},N}: \underset{j=1...|m|}{\overset{eh B}{\bigotimes}}\left([\eta_{i_j}'\cap \ell^\infty(I_{i_j},N)]\widehat{\otimes}^{B-B}(B\otimes_{w^*h \eta_{i_j}}B)\right)\to N^{\otimes_{eh B}(|m|+1)} .$$
By definition from restriction and quotient we have $M^{w^* h(\eta)}\circ [\underset{j=1...|m|}{\overset{eh B}{\bigotimes}}(1\otimes  p_{\eta_{i_j}})]=M^{\sigma e h(I)}.$
We of course finally compose to a multiplication map to $m:N^{\otimes_{eh B}(|m|+1)}\to N$. Since the maps $m\circ M^{w^* h(\eta)}\circ S_m$ and $m\circ M^{\sigma e h(I)}\circ S_m^{\sigma e h}$ are $C$ bimodular completely contractive , they extend to direct sum,  using also diagonal maps to projective tensor product, and restriction to unit balls to obtain contractions after multiplication of the norm by $R^{|m|}.$ This concludes to the definition of  $ev_{(X_1,...,X_n)},ev_{(X_1,...,X_n)}^{\sigma e h}$ respectively. From $m\circ M^{w^* h(\eta)}\circ S_m\circ (1\otimes P_{\eta(m)})=m\circ M^{\sigma e h(I)}\circ S_m^{\sigma e h}$, one deduces the expected relation $ev_{(X_1,...,X_n)}P^{\sigma e h}=ev_{(X_1,...,X_n)}^{\sigma e h}.$

It remains to check that $P^{\sigma e h}$ is the restriction of $P^{\sigma h}$. First note that using the argument for the  commutative diagram in lemma \ref{Shuffle} and with some notation from there and with $I:\underset{j=1...|m|}{\overset{e h B}{\bigotimes}}A_j\to\underset{j=1...|m|}{\overset{\sigma h B}{\bigotimes}}A_j $, for $A_j=\left( [CB(\ell^\infty(I_{i_j},N),(B\otimes_{\sigma h}^{ (I_j,1)}B)_\natural)]^\natural\right),$ and $J_{\eta(m)}$ from theorem \ref{SubmoduleEtaTHM}, one gets: $$I\circ(\underset{j=1...|m|}{\overset{eh B}{\bigotimes}}k_{\ell^\infty(I_{i_j},N),(B\otimes_{\sigma h}^{( I_{i_j},1)}B)}^B)\circ S_m^{\sigma e h}=S_m^{\sigma h}\circ (1\otimes J_{\eta(m)})$$

If similarly, $I_N: N^{\otimes_{eh B}(|m|+1)}\to  N^{\otimes_{\sigma h B}(|m|+1)}$ then it is easy to see on canonical representation that 
$M^{\sigma h(I)}\circ I=I_N\circ M^{\sigma e h(I)*}$  with 
$$M^{\sigma e h(I)*}:=\underset{j=1...|m|}{\overset{e h B}{\bigotimes}}M_{\#\sigma h*}^{B,I_{i_j},N}: \underset{j=1...|m|}{\overset{e h B}{\bigotimes}}\left( [CB(\ell^\infty(I_{i_j},N),(B\otimes_{\sigma h}^{ (I_j,1)}B)_\natural)]^\natural\right)\to N^{\otimes_{e h B}(|m|+1)},$$

so that one can use the extension relation obtained in proposition \ref{MultiplicationEtaVar}  $M_{\#\sigma h*}^{B,I_{i_j},N}\circ k_{\ell^\infty(I_{i_j},N),(B\otimes_{\sigma h}^{( I_{i_j},1)}B)}^B=M_{\#\sigma h}^{B,I_{i_j},N}$ to get  using also $m_\sigma \circ I_N=m$ (note this is the relation that has no analogue for the projection between normal and extended Haagerup products) $$m\circ M^{\sigma e h(I)}\circ S_m^{\sigma e h}=m_\sigma\circ I_N\circ M^{\sigma e h(I)}\circ S_m^{\sigma e h}=m_\sigma\circ M^{\sigma h(I)}\circ S_m^{\sigma h}\circ (1\otimes J_{\eta(m)}).$$
Since $\iota :B_{\sigma e h}\langle X_1,...,X_n:I,R,C\rangle \subset B_{\sigma h}\langle X_1,...,X_n:I,R,C\rangle$ is gathered by direct sum from $J_{\eta(m)}$, we have thus obtained our extension relation $ev_{(X_1,...,X_n)}^{\sigma  h}\circ \iota= ev_{(X_1,...,X_n)}^{\sigma  e h}.$

The $*$ algebra morphism property  for $ev_{(X_1,...,X_n)}$ is easy since so are $P^{\sigma e h}$ and $ev_{(X_1,...,X_n)}^{\sigma  h}$.%on polynomials and extended by $\sigma eh$-weak* density and separate continuity.
\end{proof}

For a second list of covariance maps $\eta'=(\eta'_1,...,\eta'_m).$ we will write $B_{\otimes \eta'}\langle X_1,...,X_n:\eta,R,C\rangle$ or for short, $B_{\otimes \eta'}\langle X:\eta,R,C\rangle$ the subspace of $B\langle X_1,...,X_n,Y_1,...,Y_m:(\eta,\eta'),R,C\rangle$ linear in each $Y_k$ with $Y_k$'s in increasing order of $k$ in each monomial.
One also deduces an induced complete quotient map and the corresponding $\sigma eh$-weak-* topology : $$P^{\sigma eh}_{\otimes \eta'}:B_{\sigma e h\otimes I'}\langle X_1,...,X_n:I,R,C\rangle\to B_{\otimes \eta'}\langle X_1,...,X_n:\eta,R,C\rangle.$$

We will use this space to deal with free difference quotients on this space of analytic functions. %[REMINDER $\eta$ tensor product to be extended to make sense to $B_{\otimes \eta'}\langle X:\eta,S,C\rangle\subset B\langle X:D,S\rangle^{\otimes_{eh D} k+1}$]

\begin{proposition}\label{analyticDerivation}
%
% $$.\#(.,...,.):B_{\otimes k}\langle X:D,R\rangle\times \prod_{i=1}^kB_{\otimes l_i}\langle X:D,R,\C\rangle\to B_{\otimes(\sum_il_i)}\langle X:D,R\rangle,$$ $l_i\geq 0$ induced from composition in the $Y_k$ variables. (We wrote and may write $B_{\otimes 0}=B,B_{\otimes 1}=B_{\otimes }$)  There is also an induced composition map (in $X_k$ variables) as above on them %[WRITE DEF OF $B_{\otimes}$] 
% {\color{Fuchsia} and there is a cyclic variant $B_{\otimes k c}$}

$B\langle X_1,...,X_n:\eta,R,C\rangle$ is mapped weak-* continuously completely boundedly  by the iterated free difference quotients  $\partial_{(i_1,...,i_k)}^{k}=(\partial_{X_{i_1}}\otimes 1^{\otimes k-1})\ldots \partial_{X_{i_k}}$ to 
$B_{\otimes \eta'}\langle X:\eta,S,C\rangle$  with $\eta'=(\eta_{i_1},...,\eta_{i_k}).$
Moreover, $P^{\sigma eh}_{\otimes \eta'}\partial_{(i_1,...,i_k)}^{k}=\partial_{(i_1,...,i_k)}^{k}P^{\sigma eh}$ so that $\partial_{(i_1,...,i_k)}^{k}$ is also $\sigma eh$-weak-* continuous.

Finally $B_{\otimes \eta'}\langle  X_1,...,X_n:\eta,R,C\rangle$ is a (matrix normed) bimodule over $B\langle X_1,...,X_n:\eta,R,C\rangle$ and, for $N\supset B$ a finite von Neumann algebra, $P\in B_{\otimes \eta'}\langle  X_1,...,X_n:I,R,C\rangle$ defines a map (still written) $$P:\prod_{i=1}^n[\eta_i'\cap \ell^\infty(I_i,N)]_R\to  N^{\otimes_{w^* h}\eta'\circ E_B},$$  by evaluation (in the algebraic case and then uniquely extended) and $ev_{(X_1,...,X_n)}^{\otimes \eta'}:P\mapsto P(X_1,...,X_n)\in W^*(B,X_1,...,X_n)^{\otimes_{w^* h}\eta'\circ E_B},$ for $X_i\in [\eta_i'\cap \ell^\infty(I_i,N)]_R$, with $$ev_{(X_1,...,X_n)}^{\otimes \eta'}\circ P^{\sigma eh}_{\otimes \eta'}=P_{N,\eta'}\circ ev_{(X_1,...,X_n)}^{\sigma e h\otimes I'}$$ with the canonical map $P_{N,\eta'}:N^{\otimes_{\sigma e  h}[I',1]}\to N^{\otimes_{w^* h}\eta'\circ E_B}$  so that $ev_{(X_1,...,X_n)}^{\otimes \eta'}$ is a $C$-bimodular contraction which is continuous from $\sigma eh$-weak-* to weak-* topology, and compatible with the $B\langle X_1,...,X_n:\eta,R,C\rangle$ module structure.

\end{proposition}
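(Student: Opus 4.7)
The plan is to build every claim in Proposition \ref{analyticDerivation} by descending from the analogous statement already proved for the universal (non-covariance) setting in Proposition \ref{analyticDerivationsigma}, using the complete quotient maps $P^{\sigma e h}$ and $P^{\sigma eh}_{\otimes\eta'}$ built in Theorem \ref{analyticEvaluation}. First I would construct the iterated free difference quotient $\partial^k_{(i_1,\ldots,i_k)}:B\langle X:\eta,R,C\rangle \to B_{\otimes\eta'}\langle X:\eta,S,C\rangle$ monomial-block by monomial-block. On each summand $R^{|m|}B^{\otimes_{w^*h}\eta(m)}$ the formal differentiation splits the monomial at each occurrence of $X_{i_1},\ldots,X_{i_k}$ in the prescribed order, mapping to a direct sum of blocks $S^{|m|-k}B^{\otimes_{w^*h}\eta(m')}$ appearing inside $B_{\otimes\eta'}\langle X:\eta,S,C\rangle$; the factor $|m|^k(S/R)^{|m|}$ stays bounded as $|m|\to\infty$, giving the complete boundedness, and the $\ell^1_C$ universal property of Lemma \ref{Ell1Sum} assembles the blockwise maps into a single map on the $\ell^1_C$ direct sum.

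The compatibility $P^{\sigma eh}_{\otimes\eta'}\partial^k=\partial^k P^{\sigma eh}$ is obvious on the dense subalgebra generated by $B,X_1,\ldots,X_n$ (both sides are just the polynomial free difference quotient), and both sides pass to the $\ell^1_C$ completion by continuity. Combined with the separate weak-$*$ continuity of $\partial^k$ on $B_{\sigma eh}$ established in Proposition \ref{analyticDerivationsigma} and the fact that $P^{\sigma eh}_{\otimes\eta'}$ is weak-$*$ continuous, this forces $\partial^k$ to be continuous for the final ($\sigma eh$-weak-$*$) topologies at source and target, which is the desired weak-$*$ continuity. The module structure of $B_{\otimes\eta'}\langle X:\eta,R,C\rangle$ over $B\langle X:\eta,R,C\rangle$ is read off from the matrix-normed algebra structure of $B\langle X_1,\ldots,X_n,Y_1,\ldots,Y_m:(\eta,\eta'),R,C\rangle$ given by Theorem \ref{analyticEvaluation}, by restriction to the subspace linear and increasing in the $Y_k$; the derivation property of $\partial_{X_i}$ is checked on polynomials and extended by the $\sigma eh$-weak-$*$ continuities just obtained.

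For the evaluation map, the natural idea is to define $ev^{\otimes\eta'}_{(X_1,\ldots,X_n)}$ so that the stated identity
\[
ev^{\otimes\eta'}_{(X_1,\ldots,X_n)}\circ P^{\sigma eh}_{\otimes\eta'}=P_{N,\eta'}\circ ev^{\sigma eh\otimes I'}_{(X_1,\ldots,X_n)}
\]
holds. Since $P^{\sigma eh}_{\otimes\eta'}$ is a complete quotient map, this formula determines $ev^{\otimes\eta'}$ uniquely and yields a $C$-bimodular contraction compatible with the algebra module structure, \emph{provided} the right-hand side vanishes on $\ker P^{\sigma eh}_{\otimes\eta'}$. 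This well-definedness is the main obstacle, and it is precisely where the hypothesis $X_i\in\eta'_i\cap\ell^\infty(I_i,N)$ is used: on each monomial block, $ev^{\sigma eh\otimes I'}_{(X_1,\ldots,X_n)}$ is obtained (as in the proof of Theorem \ref{analyticEvaluation}) by applying the multiplication maps $M^{B,I_{i_j},N}_{\#\sigma h}$ from Proposition \ref{MultiplicationEtaVar} followed by the multiplication $m:N^{\otimes_{eh B}\cdot}\to N^{\otimes_{eh B}\cdot}$ (keeping one factor in the tensor target corresponding to $\eta'$), and by Proposition \ref{MultiplicationEtaVar} the pairing $d\#x$ already factors through $B\otimes_{w^*h\eta_i}B$ whenever $x\in\eta'_i\cap\ell^\infty(I_i,N)$; thus the $\sigma eh$-evaluation descends through each $p_{\eta_i}$ in all arguments corresponding to the differentiated variables, while the outer $p_{\eta'_j}$ on the target are supplied by $P_{N,\eta'}$.

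Once well-definedness is secured, the $C$-bimodularity, contractivity and compatibility with the $B\langle X:\eta,R,C\rangle$-module structure are inherited from the analogous properties of $ev^{\sigma eh\otimes I'}$ (Proposition \ref{analyticDerivationsigma}) through the surjection $P^{\sigma eh}_{\otimes\eta'}$. The continuity from the $\sigma eh$-weak-$*$ topology to the weak-$*$ topology on $N^{\otimes_{w^*h}\eta'\circ E_B}$ then follows directly from the definition of the $\sigma eh$-weak-$*$ topology as the final topology of $P^{\sigma eh}_{\otimes\eta'}$, combined with weak-$*$ continuity of $P_{N,\eta'}$ (from Theorem \ref{SubmoduleEtaTHM}) and weak-$*$ continuity of $ev^{\sigma eh\otimes I'}_{(X_1,\ldots,X_n)}$ established in Proposition \ref{analyticDerivationsigma}.
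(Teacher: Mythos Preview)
Your overall approach matches the paper's: descend everything from the universal $\sigma(e)h$ case via the complete quotient maps $P^{\sigma eh}$ and $P^{\sigma eh}_{\otimes\eta'}$, build $\partial^k$ blockwise with the $|m|^k(S/R)^{|m|}$ bound, read off the module structure from the larger algebra $B\langle X,Y:(\eta,\eta'),R,C\rangle$, and obtain $ev^{\otimes\eta'}$ so that the stated commutative square holds. The one genuine difference is that you \emph{define} $ev^{\otimes\eta'}$ by factoring $P_{N,\eta'}\circ ev^{\sigma eh\otimes I'}$ through the surjection $P^{\sigma eh}_{\otimes\eta'}$, checking well-definedness on the kernel via Proposition~\ref{MultiplicationEtaVar}; the paper instead constructs $ev^{\otimes\eta'}$ directly block by block (in the style of Theorem~\ref{analyticEvaluation}, via shuffle maps $S_m$ and the multiplication maps $M^{w^*h(\eta)}$) and then \emph{verifies} the commutation using the relation $p_{\eta_i\circ E_B}\circ\iota_{\sigma h}=\iota_{w^*h\eta_i}\circ p_{\eta_i}$ from Theorem~\ref{SubmoduleEtaTHM} together with $N$-bimodularity of $p_{\eta_i\circ E_B}$. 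Both routes are equivalent; yours is arguably cleaner once you know the quotient is complete.

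There is, however, one imprecision in your continuity argument. You invoke ``weak-$*$ continuity of $ev^{\sigma eh\otimes I'}_{(X_1,\ldots,X_n)}$ established in Proposition~\ref{analyticDerivationsigma}'', but that proposition explicitly asserts weak-$*$ continuity \emph{only in the case without index $e$}: the map $ev^{\sigma eh\otimes I'}$ is not claimed to be weak-$*$ continuous into $N^{\otimes_{\sigma eh}[I',1]}$ with any intrinsic dual topology. What Proposition~\ref{analyticDerivationsigma} does give you is the relation $ev^{\sigma h\otimes I'}\circ\iota_{\otimes I'}=\iota_{N,I'}\circ ev^{\sigma eh\otimes I'}$, from which one deduces that $ev^{\sigma eh\otimes I'}$ is continuous from the induced weak-$*$ topology (via $\iota_{\otimes I'}$) to the \emph{initial} topology on $N^{\otimes_{\sigma eh}[I',1]}$ for $\iota_{N,I'}$. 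One then needs $P_{N,\eta'}$ continuous from that initial topology to the weak-$*$ topology on $N^{\otimes_{w^*h}\eta'\circ E_B}$, which follows from $p_{s\eta'}\circ J_{\eta'}=P_{\eta'}$ in Theorem~\ref{SubmoduleEtaTHM}. The paper spells out exactly this chain; your argument goes through once you route the continuity through $\iota_{N,I'}$ rather than asserting it directly.
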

\begin{proof}[Sketch of Proof]
The proof being similar, we mostly leave it to the reader. We only emphasize some key tools. We use the following commutation relations for $B\subset N$, $\iota_{w^* h \eta_i}:B\otimes_{w^*h \eta_i}B\to N\otimes_{w^*h \eta_i}N$ the inclusion from theorem \ref{SubmoduleEtaTHM} is defined in the proof there to satisfy with $\iota_{\sigma h }:B\otimes_{\sigma h}^{(I_i,1)}B\to N\otimes_{\sigma h}^{(I_i,1)}N$ the relation $p_{\eta_i\circ E_B}\circ\iota_{\sigma h }=\iota_{w^* h \eta_i}\circ p_{\eta}.$ Moreover, one also uses $p_{\eta_i\circ E_B}$ is $N$ bimodular between normal bimodule so that bimodularity extends to normal or extended haagerup products. Finally we use the relation $ ev_{(X_1,...,X_n)}^{\sigma  h \otimes I'}\circ\iota_{\otimes I'}=\iota_{N,I'}\circ ev_{(X_1,...,X_n)}^{\sigma e h \otimes I'}$ stated in proposition \ref{analyticDerivationsigma} to check $ev_{(X_1,...,X_n)}^{\sigma e h \otimes I'}$ is continuous from the topology induced from the weak-* topology by $\iota_{\otimes I'}$ to the topology induced on $N^{\otimes_{\sigma e  h}[I',1]}$ for the topology induced by the weak-* topology of $N^{\otimes_{\sigma   h}[I',1]}$, i.e. the initial topology for $\iota_{N,I'}.$ We then use $P_{N,\eta'}$ is continuous from this topology to the weak-* topology. Since the $\sigma eh$-weak-* is merely the final topology via  $P^{\sigma eh}_{\otimes \eta'}$ from the one induced from the weak-* topology by $\iota_{\otimes I'}$, the relation we stated $ev_{(X_1,...,X_n)}^{\otimes \eta'}\circ P^{\sigma eh}_{\otimes \eta'}=P_{N,\eta'}\circ ev_{(X_1,...,X_n)}^{\sigma e h\otimes I'}$ implies exactly the stated continuity.
\end{proof}

%The following result is a module extended haagerup variant of \cite[lemma 7]{OP97}, the proof is the same using universal property of $\ell^1$ direct sums and \cite[Th 3.9]{M97}. We thus leave the details to the reader.
%\begin{lemma}
%Let $E_1,E_2\in {}_DSOM_D,F_1,F_2\in {}_DSOM_D$, let $X=(E_1\oplus^1_D E_2)\otimes_{eh D}(F_1\oplus^1_D F_2)$. Let $S$ be the closure of the subspace obtained by injectivity of Haagerup tensor product $(E_1\otimes_{eh D} F_1)+(E_2\otimes_{eh D} F_2).$ Then we have :
%$$S\simeq (E_1\otimes_{eh D} F_1)\oplus^1_D(E_2\otimes_{eh D} F_2),$$
%completely isometrically.
%\end{lemma}
\subsection{Conjugate variables and analytic relations}

We recall the definition from \cite{Shl00} of $\eta$-conjugate variables slightly extended it to a family of ($\tau$-symmetric) covariance maps $\eta=(\eta_1,...,\eta_n)$ with $\eta_i:B\to B\otimes B(L^2(I_i))$. The case stated there is the case with $|I_i|=1$.

Note that with the remark before lemma 3.2, the original definition uses a change concrete realization of te variable with same law to look at algebraically free variables in a way perfectly similar to the more recent definition in \cite{MS14} (in the case $\eta=\tau$). Our phrasing is more similar to this version. 

\begin{definition}
Let $(N,\tau)\supset B$ a $W^*$ probability space and $\eta$ as above. Let $X=(X_1,...,X_n)$, $X_i\in \ell^\infty(I_i,N)$. We say that $(\xi^{(1)},...,\xi^{(n)})$ with $\xi_i\in \ell^\infty(I_i,L^1(W^*(B,X)))$ are (first-order) \emph{conjugate variables for $X$ relative to $\eta$} if for any $P\in B\langle X_1,...,X_n\rangle\subset B_{\sigma e h}\langle X_1,...,X_n:I,R,C\rangle$, the algebra generated by $B,X_1,..,X_n$ in it we have :
$$\forall i, \forall j\in I_i \ \ \ \ \tau(P(X) \xi^{(i)}_j)=\tau(S^{(i)}_j[(\partial_{X_i}P)(X)]\#S^{(i)}), $$

with $(\partial_{X_i}P)(X)\in N\otimes_{\sigma h}^{(I_i,1)}N$ defined in proposition \ref{analyticDerivationsigma}, $(S^{(i)}_j)_{j\in I_i}$ a family of $\eta_i$ semicircular variables relative to $N$ and $.\#S^{(i)}$ is defined before proposition \ref{InclusionEHCovariance}.
\end{definition}

\begin{lemma}
Let $(\xi^{(1)},...,\xi^{(n)})$ be conjugate variables for $X$ relative to $\eta$, $X_i\in \ell^\infty(I_i,N)$. 
\begin{enumerate}
\item If $||X_i||< R$ for all $i$ then, for any $P\in B_{\sigma h}\langle X_1,...,X_n:I,R,C\rangle$ we have $$\forall i, \forall j\in I_i \ \ \ \ \tau(P(X) \xi^{(i)}_j)=\tau(S^{(i)}_j[(\partial_{X_i}P)(X)]\#S^{(i)}). $$
\item If $\xi^{(i)}\in \ell^\infty(I_i,W^*(B,X))$ (we say bounded conjugate variables) then $$\xi^{(i)}\in \eta_i'\cap  \ell^\infty(I_i,W^*(B,X)).$$
\end{enumerate}
\end{lemma}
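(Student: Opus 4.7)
The plan is to prove (1) by weak-$*$ continuity and weak-$*$ density of polynomials, then deduce (2) by an integration-by-parts computation reducing everything to the already known identity $d\#S^{(i)}=0$ for $d\in\ker(p_{\eta_i})$.

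For part (1), since $\|X_i\|<R$ we may pick $S$ with $\|X_i\|<S<R$ so that every $X_i$ lies in $[\ell^\infty(I_i,N)]_S$. The two sides of the identity both depend on $P\in B_{\sigma h}\langle X:I,R,C\rangle$. The left-hand side is $\tau(ev^{\sigma h}_{(X)}(P)\,\xi^{(i)}_j)$, which is weak-$*$ continuous in $P$: the evaluation map $ev^{\sigma h}_{(X)}$ is weak-$*$ continuous into $W^*(B,X)$ by Theorem~\ref{analyticEvaluationsigma}, and $\xi^{(i)}_j\in L^1(W^*(B,X))$ pairs with $W^*(B,X)$ weak-$*$ continuously via $\tau(\cdot\,\xi^{(i)}_j)$. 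The right-hand side is the composition $P\mapsto \partial_{X_i}P\mapsto (\partial_{X_i}P)(X)\mapsto [(\partial_{X_i}P)(X)]\#S^{(i)}\mapsto \tau(S^{(i)}_j\cdot)$: the first map is weak-$*$ continuous by Proposition~\ref{analyticDerivationsigma}, the evaluation $ev^{\sigma h\otimes I_i}_{(X)}$ is weak-$*$ continuous (Proposition~\ref{analyticDerivationsigma}) into $N\otimes_{\sigma h}^{(I_i,1)}N$, the map $\#S^{(i)}$ is weak-$*$ continuous into $L^2(W^*(N,S^{(i)}))_c\cap L^2(\cdot)_r$ by the discussion preceding Proposition~\ref{InclusionEHCovariance} (using Proposition~\ref{InclusionEHCovariancenAry} for the semicircular family free from $N$ over $B$), and finally $Y\mapsto\tau(S^{(i)}_j Y)$ is weak-$*$ continuous on that intersection since $S^{(i)}_j\in L^2$ defines an element of the predual $L^2_c+L^2_r$. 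As polynomials $B\langle X_1,\ldots,X_n\rangle$ are weak-$*$ dense in $B_{\sigma h}\langle X:I,R,C\rangle$ by Theorem~\ref{analyticEvaluationsigma} and the identity holds on polynomials by definition of conjugate variables, it extends to all $P$.

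For part (2), I must show that for every $d\in\ker(p_{\eta_i})\subset B\otimes_{\sigma h}^{(I_i,1)}B$ one has $d\#\xi^{(i)}=0$ in $W^*(B,X)$. Since the trace is faithful and polynomials $B\langle X\rangle$ are $\|\cdot\|_2$-dense in $L^2(W^*(B,X))$, it suffices to show $\tau((d\#\xi^{(i)})\,a)=0$ for every polynomial $a$. For a simple tensor $d=\sum_r a_r\otimes e_{j_r}\otimes b_r$ and $a\in B\langle X\rangle$ I compute using traciality, the definition of conjugate variables applied to $b_r a a_r$, and the Leibniz rule $\partial_{X_i}(b_r a a_r)=b_r(\partial_{X_i}a)a_r$ (valid since $a_r,b_r\in B$), to obtain after regrouping
\[
\tau((d\#\xi^{(i)})\,a)=\sum_k \tau\bigl((d\#S^{(i)})\,U_k(X)\,S^{(i)}_{l_k}\,V_k(X)\bigr),
\]
where $\partial_{X_i}a=\sum_k U_k\otimes e_{l_k}\otimes V_k$ is the (finite) expansion of the free difference quotient of the polynomial $a$. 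The map $d\mapsto d\#\xi^{(i)}\in W^*(B,X)$ is weak-$*$ continuous (normal extension from Proposition~\ref{MultiplicationEtaVar}), hence so is the left-hand side once paired with $a$ via $\tau$; the right-hand side is likewise weak-$*$ continuous in $d$, by weak-$*$ continuity of $d\mapsto d\#S^{(i)}$ into $L^2\cap L^2$ followed by bounded multiplications and a trace. As both sides agree on the weak-$*$ dense set of algebraic tensors (by \cite[Lemma 5.8]{EffrosRuanDual}), the identity extends to all $d\in B\otimes_{\sigma h}^{(I_i,1)}B$. Restricting to $d\in\ker(p_{\eta_i})$, the factor $d\#S^{(i)}=0$ kills the right-hand side, so $\tau((d\#\xi^{(i)})\,a)=0$ for every polynomial $a$, which gives $d\#\xi^{(i)}=0$ as required.

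The main technical step, and the one requiring the most care, is the bookkeeping of weak-$*$ continuities that justifies extending the algebraic identity in part (2) from finitely supported $d$ to arbitrary $d\in B\otimes_{\sigma h}^{(I_i,1)}B$. This relies crucially on the normal extension $M_{\#\sigma h*}^{B,I_i,W^*(B,X)}$ built in Proposition~\ref{MultiplicationEtaVar} (so that $d\mapsto d\#\xi^{(i)}$ is weak-$*$ continuous even though $\xi^{(i)}$ only lies in $\ell^\infty(I_i,W^*(B,X))$ rather than being bounded above by an element for which weak-$*$ continuity would be automatic) and on the weak-$*$ continuity of $\#S^{(i)}$ into the intersection $L^2_c\cap L^2_r$, as packaged in Proposition~\ref{InclusionEHCovariance}.
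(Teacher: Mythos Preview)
Your proof is correct and follows essentially the same route as the paper: part (1) by weak-$*$ density of polynomials and weak-$*$ continuity of evaluation, $\partial_{X_i}$, and $\#S^{(i)}$; part (2) by applying the conjugate-variable relation to $b_r\,a\,a_r$ (the paper writes it as $d_2Pd_1$), using that $B\subset\ker\partial_{X_i}$, extending in $d$ by weak-$*$ density of algebraic tensors, and then invoking $d\#S^{(i)}=0$ for $d\in\ker(p_{\eta_i})$. Your displayed identity $\tau((d\#\xi^{(i)})a)=\sum_k\tau((d\#S^{(i)})U_k(X)S^{(i)}_{l_k}V_k(X))$ is exactly the paper's $\tau(P(X)\,d\#\xi^{(i)})=\tau(d\#S^{(i)}\,[(\partial_{X_i}P)(X)]\#S^{(i)})$ unwound on a finite tensor.
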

\begin{proof}
(1) is obvious from the weak-* continuity and density in theorem \ref{analyticEvaluationsigma}, proposition \ref{analyticDerivationsigma} for evaluations and free difference quotients and the one of $d\mapsto d\#S^{(i)}$ from proposition \ref{MultiplicationEtaVar}

(2) For $d_1,d_2\in D $ since $D$ is in the kernel of $\partial_{X_i}, $ then $$\tau(P(X) d_1\xi^{(i)}_jd_2)=\tau(S^{(i)}_j[(\partial_{X_i}(d_2Pd_1)(X)]\#S^{(i)})=\tau(d_1S^{(i)}_jd_2[(\partial_{X_i}(P)(X)]\#S^{(i)}). $$ 
Now, since $d\mapsto d\#\xi^{(i)}$, and $x\mapsto x\#S^{(i)}$ are weak-* continuous from proposition \ref{MultiplicationEtaVar} and $D\otimes_{alg}\ell^1(I_i)\otimes_{alg} D \subset D\otimes_{\sigma h}^{(I_i,1)} D$ is weak-* dense, one deduces, for all $d\in D\otimes_{\sigma h}^{(I_i,1)} D$ :
$$\tau(P(X) d\#\xi^{(i)})=\tau(d\#S^{(i)}[(\partial_{X_i}(P)(X)]\#S^{(i)}). $$ 
Thus since if $d\in Ker(p_{\eta_i})$, we have  $d\#S^{(i)}=0$ by definition and since $P(X)$ are weak-* dense in $W^*(B,X)$ on deduces $d\#\xi^{(i)}=0$ as expected.
\end{proof}
\begin{exemple}We have the typical example of \cite[Prop 3.12]{Shl00}. Let $X_1,...,X_n\in (N,\tau)$  free with amalgamation over $B$ with $S_1,...,S_n$ free with amalgamation with respect to one another and $S_i$ an $\eta_i$ semicircular variable over $B$, then for $\epsilon>0,$ $Y=(X_1+\sqrt{\epsilon} S_1,...,X_n+\sqrt{\epsilon} S_n)$ has conjugate variable $(\frac{1}{\sqrt{\epsilon}}E_{W^*(Y,B)}(S_1),...,\frac{1}{\sqrt{\epsilon}}E_{W^*(Y,B)}(S_n)).$
\end{exemple}

We are now ready to get an absence of analytic relations in the spirit of \cite[lemma 37]{dabrowski:SPDE}. Our preparatory work reduced the proof to be formally  the same.
\begin{theorem}\label{MainRelation}
Let $(\xi^{(1)},...,\xi^{(n)})$ be conjugate variables for $X$ relative to $\eta$ with $||X_i||<R$. Then the evaluation map $ev_{(X_1,...,X_n)}^{\sigma h}$ on $B_{\sigma h}\langle X_1,...,X_n:I,R,B\rangle$ has its kernel included in the one of the complete quotient map $P^{\sigma h}:B_{\sigma h}\langle X_1,...,X_n:I,R,B\rangle\to B\langle X_1,...,X_n:\eta,R,B\rangle.$

Especially, if we assume $X_i\in \eta_i'\cap\ell^\infty(I_i,N)$  Then the evaluation map $ev_{(X_1,...,X_n)}$ on $B\langle X_1,...,X_n:\eta,R,B\rangle$ from theorem \ref{analyticEvaluation} is one-to-one.
\end{theorem}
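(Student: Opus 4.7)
The second statement reduces immediately to the first: given $\tilde P\in B\langle X:\eta,R,B\rangle$ with $ev_{(X)}(\tilde P)=0$, lift $\tilde P$ through the complete quotient $P^{\sigma h}$ to some $P\in B_{\sigma h}\langle X:I,R,B\rangle$. The commutative diagram $ev_{(X)}\circ P^{\sigma h}=ev^{\sigma h}_{(X)}$ of theorem~\ref{analyticEvaluation} then gives $ev^{\sigma h}_{(X)}(P)=0$, and the first statement forces $\tilde P=P^{\sigma h}(P)=0$.

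For the first statement, fix $P$ with $P(X)=0$. Since $P^{\sigma h}$ is an $\ell^1_B$-sum of the quotients $p_{s\eta(m)}$ over monomial shapes $m$, it suffices to show, for each $m=X_{i_1}\cdots X_{i_{|m|}}$, that $P_m\#(S^{(1)},\dots,S^{(|m|)})=0$ in $L^2(N_{(\eta(m))})$, where the $S^{(\ell)}$ form a tuple of $\eta_{i_\ell}$-semicirculars mutually free with amalgamation over $B$ and free from $A=W^*(B,X)$. My plan is to establish, by induction on $k\geq 0$, the following trace identity $(\dagger_k)$: for every length-$k$ monomial $n=X_{i_1}\cdots X_{i_k}$, every $a_0,\dots,a_k\in A$, and every $j_\ell\in I_{i_\ell}$,
\begin{equation*}
\tau\!\left(a_0\,S^{(k)}_{j_k}\,a_1\cdots a_{k-1}\,S^{(1)}_{j_1}\,a_k\cdot[(\partial^k_n P)(X)]\#(S^{(1)},\dots,S^{(k)})\right)=0.
\end{equation*}
The base case $k=0$ is immediate from $P(X)=0$.

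For the inductive step $(\dagger_k)\Rightarrow(\dagger_{k+1})$, lemma~\ref{PartialEvaluation} encodes the left-hand side of $(\dagger_k)$ as $\tau(Q(a_k)(X)\cdot a_0)$ for a family $Q\in CB(A,B_{\sigma eh}\langle X:I,R,B\rangle)$ built from the remaining data $(a_1,\dots,a_{k-1},j_1,\dots,j_k)$, so the induction hypothesis forces $Q(a_k)(X)=0$ in $A$ for every $a_k\in A$. Applying the conjugate-variable identity — extended to $Q(a_k)\in B_{\sigma h}$ and to multipliers $b,c\in A$ by the weak-* density and continuity of theorem~\ref{analyticEvaluationsigma} — to $b\,Q(a_k)\,c$, using the Leibniz rule for $\partial_{X_i}$ and discarding every term still carrying the vanishing factor $Q(a_k)(X)$, yields
$\tau\!\left(S^{(k+1)}_j\,b\,(\partial_{X_i}Q(a_k))(X)\#S^{(k+1)}\,c\right)=0$;
the second formula of lemma~\ref{PartialEvaluation} then rewrites this precisely as $(\dagger_{k+1})$ for the length-$(k+1)$ monomial $nX_i$, with $c$ and $b$ playing the roles of the new outermost $A$-arguments.

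To extract $P_n\#S=0$ from $(\dagger_{|n|})$, realise all the $S^{(\ell)}$ inside the amalgamated free product $\widetilde M=A*_B N_{\eta_{i_1}}*_B\cdots*_B N_{\eta_{i_k}}$. Using adjoint symmetry, the test vectors $a_0 S^{(k)}_{j_k}\cdots a_k$ (as the $a_\ell$ range over $A$ and the $j_\ell$ over $I_{i_\ell}$) pair in $L^2(\widetilde M)$ with exactly the alternating subspace in which $(\partial^k_n P)(X)\#(S^{(1)},\dots,S^{(k)})$ lives, and are dense there; so $(\dagger_k)$ upgrades to $(\partial^k_n P)(X)\#(S^{(1)},\dots,S^{(k)})=0$ in $L^2(\widetilde M)$. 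Voiculescu's orthogonal decomposition then separates this element into its pure $B$-$S$-alternating component, which contains $P_n\#S$ together with the $E_B$-reductions of the cross terms $(\partial^k_n P_m)(X)\#S$ coming from the longer monomials $m\supsetneq n$, and its strictly longer reduced-word components carrying the $A^\circ$-parts of those cross terms. The main obstacle of the proof is this final disentangling: one has to feed back $(\dagger_{k'})$ for all $n'\supset n$ of length $k'>k$ to force the cross-term contributions to vanish independently in each Voiculescu summand and so isolate $P_n\#S=0$; once this extraction is carried out for every $n$, part~1 is proved and part~2 follows as above. The overall structure parallels \cite[lemma~37]{dabrowski:SPDE} and \cite{MS14,S14}, now transcribed at the level of the $\eta$-weighted Haagerup tensor products prepared in the preceding sections.
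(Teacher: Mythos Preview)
Your induction establishing $(\dagger_k)$ is correct and coincides with the paper's argument: it is exactly the statement $ev_{(X)}^{\otimes\eta(n)}(\partial^{|n|}_{n(X)}P)=0$ in $A^{\otimes_{w^*h}\eta(n)\circ E_B}$, and your use of lemma~\ref{PartialEvaluation} for the step $(\dagger_k)\Rightarrow(\dagger_{k+1})$ is the same mechanism the paper uses.

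The divergence, and the gap, is in the final extraction. You try to isolate $P_n\#S$ from $(\partial^{|n|}_n P)(X)\#S=0$ by projecting onto the pure $B$-alternating summand in the free-product decomposition, and you correctly observe that this summand carries not only $P_n\#S$ but also the $E_B$-reductions of contributions from every longer $m\supsetneq n$. You then propose to ``feed back'' $(\dagger_{k'})$ for all $n'\supsetneq n$ to kill those cross terms. The problem is that this recursion does not terminate: the cross terms at level $n$ depend on all longer $m$, whose own extractions depend on yet longer monomials, ad infinitum. For polynomials you could run the recursion downward from the top degree, but for genuine analytic elements there is no top, and you have given no convergence argument replacing it. You yourself flag this as ``the main obstacle'' and then leave it undone; as written, the argument is incomplete precisely at its hardest point.

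The paper avoids this recursion entirely through the Taylor expansion machinery of lemma~\ref{TaylorAnalytic}. Once all $(\dagger_k)$ are established, the analytic evaluation $ev_{(X)}^{\sigma h\text{-}an}(\partial^{|n|}_{n(X)}P)$ vanishes identically in $A_{\sigma h\otimes I(n)}\langle Z:I,S,B\rangle$; the Taylor identity then gives $ev_{(X+Z)}^{\sigma h}(\partial^{|n|}_{n(X)}P)=0$ for small $Z$, and iterating along $sX$ transports this from $s=1$ down to $s=0$. Evaluation of the $k$-th free difference quotient at $0$ is precisely the degree-$k$ coefficient in $B^{\otimes_{w^*h}\eta(n)}$, so $P^{\sigma h}(P)=0$ follows componentwise. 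In short: your orthogonal-decomposition extraction would need an additional analytic-continuation or summability input to close, and lemma~\ref{TaylorAnalytic} is exactly that input, already packaged.
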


\begin{proof}
Consider $P$ in the kernel of $ev_{(X_1,...,X_n)}^{\sigma h}$.
From the derivation property, of $\partial_{X_i}$ on $B_{\sigma h}\langle X_1,...,X_n:I,R,C\rangle$ we have for  
$P,Q,R\in B_{\sigma h}\langle X_1,...,X_n:I,R,C\rangle$ we have \begin{align*} \tau&(Q(X)S^{(i)}_jR(X)[(\partial_{X_i}P)(X)]\#S^{(i)})\\&=\tau(R(X)P(X)Q(X) \xi^{(i)}_j)-\tau(S^{(i)}_j[(\partial_{X_i}R)PQ+RP(\partial_{X_i}Q)](X)\#S^{(i)})=0. \end{align*}
By weak-* density, one deduces that one can replace $Q(X),R(X)$ by $a, b\in W^*(B,X)$ and then again by density one deduces $ev_{(X_1,...,X_n)}^{\otimes \eta_i}[(\partial_{X_i}P)]=0$ in 
$W^*(B,X)\otimes_{w^*h\eta_i\circ E_B}W^*(B,X)$ since this  space has been defined so that $.\#S^{(i)}$ is one-to-one.

Let us show by induction on $m$ that for $\eta'=(\eta_{i_1},...\eta_{i_m})$,$ev_{(X_1,...,X_n)}^{\otimes \eta'}(\partial^m_{(i_1,...,i_m)}(P))=0$ in $W^*(B,X)^{\otimes_{w^*h}\eta'\circ E_B}.$

Assuming by induction the length $k$, one uses lemma \ref{PartialEvaluation} with $n=X_{i_1}...X_{i_m}.$ From the induction step and the first formula in the lemma, one gets $ev_{(X_1,...,X_n)}^{\otimes \eta_i}(Q(a))=0$ so that by the reasoning above $ev_{(X_1,...,X_n)}^{\otimes \eta_{i_{m+1}}}[(\partial_{X_{i_{m+1}}}Q(a))]=0$ and the the second formula in the lemma implies $ev_{(X_1,...,X_n)}^{\otimes \eta(nX_{i_{m+1}})}(\partial^{m+1}_{(i_1,...,i_m,i_{m+1})}(P))=0$ in $W^*(B,X)^{\otimes_{w^*h}\eta(nX_{i_{m+1}})\circ E_B}.$
This completes the induction step.

This implies $ev_{(X_1,...,X_n)}^{\sigma (e) h-an}(\partial^{|n|}_{n(X)}P)=0$ and thus from lemma \ref{TaylorAnalytic} in seeing $sX_i=X_i+(s-1)X_i$, that for s close to 1, $ev_{(sX_1,...,sX_n)}^{\otimes \eta'}(\partial^k_{(i_1,...,i_k)}(P))=0$ in $W^*(B,X)^{\otimes_{w^*h}\eta'\circ E_B}.$
Then iterating, one gets the same for all $s\in [0,1]$ and thus especially $ev_{(0,...,0)}^{\otimes \eta'}(\partial^k_{(i_1,...,i_k)}(P))=0$ in $B^{\otimes_{w^*h}\eta'}\subset W^*(B,X)^{\otimes_{w^*h}\eta'\circ E_B}$ (from theorem \ref{SubmoduleEtaTHM} for the inclusion) This is exactly $P^{\sigma h}(P)=0$ in $B\langle X_1,...,X_n:\eta,R,B\rangle$ seen componentwise in the $\ell^1$ direct sum. This concludes.\end{proof}

\end{document}